




 
\documentclass[sn-mathphys]{sn-jnl}

\usepackage{amsmath}
\usepackage{amssymb}
\usepackage{nicefrac}
\usepackage{comment}
\usepackage{graphicx}
\usepackage{diagbox}


\newcommand{\atantwo}{\mathrm{atan2}}   

\jyear{2023}%

\theoremstyle{thmstyleone}%
\newtheorem{theorem}{Theorem}[section]
\newtheorem{proposition}[theorem]{Proposition}%
\newtheorem{corollary}{Corollary}[theorem]
\newtheorem{lemma}[theorem]{Lemma}

\theoremstyle{thmstyletwo}%
\newtheorem{example}{Example}%
\newtheorem{remark}{Remark}[section]%

\theoremstyle{thmstylethree}%
\newtheorem{definition}{Definition}[section]%

\providecommand{\abs}[1]{\lvert#1\rvert}

\raggedbottom

\usepackage{xr}
\makeatletter
\newcommand*{\addFileDependency}[1]{
  \typeout{(#1)}
  \@addtofilelist{#1}
  \IfFileExists{#1}{}{\typeout{No file #1.}}
}
\makeatother

\newcommand*{\myexternaldocument}[1]{%
    \externaldocument{#1}%
    \addFileDependency{#1.tex}%
    \addFileDependency{#1.aux}%
}

\myexternaldocument{supplementary}

\begin{document}

\title[Polyarc bounded complex interval arithmetic]
      {Polyarc bounded complex interval arithmetic}


\author*[1]{\fnm{Gábor} \sur{Geréb} }\email{gaborge@uio.no}
\author[2,3]{\fnm{András} \sur{Sándor}}\email{sandora@renyi.hu}

\affil*[1]{\orgdiv{Department of Informatics}, \orgname{University of Oslo}}
\affil[2]{\orgname{Central European University}}
\affil[3]{\orgname{HUN-REN Alfréd Rényi Institute of Mathematics}}


\abstract{
Complex interval arithmetic is a powerful tool for the analysis of computational errors. 
The naturally arising rectangular, polar, and circular (together called primitive) interval types are not closed under simple arithmetic operations, and their use yields overly relaxed bounds. 
The later introduced polygonal type, on the other hand, allows for arbitrarily precise representation of the above operations for a higher computational cost. 
We propose the polyarcular interval type as an effective extension of the previous types. 
The polyarcular interval can represent all primitive intervals and most of their arithmetic combinations precisely and has an approximation capability competing with
that of the polygonal interval.
In particular, in antenna tolerance analysis it can achieve perfect accuracy for lower computational cost then the polygonal type, which we show in a relevant case study. 
In this paper, we present a rigorous analysis of the arithmetic properties of all five interval types, involving a new algebro-geometric method of boundary analysis.
}

\keywords{Interval analysis, Interval arithmetic, Tolerance analysis, Computational geometry, Algebraic geometry, Geometric algebra}


\pacs[MSC Classification]{14Q30, 51M15, 51N20, 53A04, 53Z30, 65GXX, 65E05, 08Axx}

\pacs[Author contributions]{
Conceptualization and methodology: Gábor Geréb; 
Formal analysis and investigation: Gábor Geréb, András Sándor; 
Visualization: Gábor Geréb;
Writing -- original draft preparation: Gábor Geréb, András Sándor; 
Writing -- review and editing: Gábor Geréb, András Sándor; 
}

\maketitle
    

\section{Introduction}\label{sec:Introduction}

Interval analysis is an effective mathematical technique that allows for numerical analysis of problems involving sets. \cite{moore_introduction_2009} It has long been used to put bounds on computational errors and has recently been applied in robotics and robust control. It also proved to be useful for finding all the solutions of nonlinear equations and inequalities. \cite{jaulin_applied_2001} Interval arithmetic studies the properties of the numerical representation and arithmetic operations of intervals, and therefore it is an essential part of interval analysis.

Pioneered by Moore in the 1960s \cite{moore_interval_1963}, the interval arithmetic for analyzing real-valued computational errors was soon extended to complex numbers in the form of Cartesian and polar products by Boche \cite{boche_complex_1965}, and as circular regions by Gargantini and Henrici \cite{gargantini_circular_1971}. Hansen studied the linear algebra of complex intervals and introduced a generalized interval arithmetic \cite{hansen_generalized_1975}. The field received increased attention in the 1990s: Ohta et al. introduced the polygon interval arithmetic \cite{ohta_polygon_1990}, a journal titled Interval Computations (later renamed to Reliable Computing) was established \cite{kreinovich_interval_2023,kearfott_reliable_1995}, the Matlab/Octave software package called INTLAB was published by Rump \cite{rump_intlab_1999}, and the BLAS FORTRAN package received an interval extension \cite{dongarra_blas_1995}. Books on interval analysis and arithmetic were published by Petkovic \cite{petkovic_complex_1998}, Jaulin \cite{jaulin_applied_2001}, Moore \cite{moore_introduction_2009} and Dawood \cite{dawood_theories_2011}.

Complex interval arithmetic benefited greatly from its interconnections with geometric algebra, which is widely used in the fields of computer aided design, image processing, mathematical morphology, geometrical optics, and dynamical stability analysis.  The application of the Minkowski algebra to complex intervals opened up the possibility of the representation and arithmetic combination of intervals bounded by arbitrary explicit and implicit curves in the complex plane. \cite{farouki_minkowski_2001,farouki_boundary_2005} Efficient algorithms for calculating the Minkowski sum of polygons, borrowed from computational geometry, have been successfully applied in the tolerance analysis of antenna arrays. \cite{ohta_polygon_1990,ohta_nonconvex_2000,de_berg_computational_2008,anselmi_tolerance_2015,tenuti_minkowski_2017} The polygonal representation produced much more accurate results than the original representations, given that the vertex count was high enough (Fig. \ref{fig:primitive_arithmetic}). However, a high vertex count came with a higher computational cost.

In the tolerance analysis of sensor arrays, polar intervals defined by independent amplitude and phase intervals are typically the primary operands of the evaluation. Motivated by the success of the polygonal representation, and its shortcomings in representing polar and circular intervals and their arithmetic combinations, we set out to find a more suitable interval type. Replacing vertices by circular arcs came as a natural extension, and lead to a new interval type, the polyarc bounded (polyarcular) interval. By providing perfect representation for a much wider set of intervals in return for a moderate increase in complexity, the new interval type suited our application well. 

Polyarc is an explicit curve type, defined by an ordered set of circular arcs and consists of the defining arcs and implicit edges between them (Fig. \ref{fig:polyarc_curve}). It allows the exact representation of the boundary of the rectangular, polar, circular, and polygonal intervals. It is closed under the addition, negative, reciprocal, union, and intersection operations, and in some cases under multiplication, too. 
As a by-product of the development process, we reviewed the literature on the existing complex interval types, and collected the relevant theorems from geometric algebra to produce a summary of the arithmetic and computational properties of complex interval types (Fig. \ref{fig:complex_subspaces}).

In this paper, we provide a review of the rectangular, circular, polar, and polygonal interval types and present the previously unpublished polyarcular type. We present a rigorous mathematical analysis of their arithmetic properties based on Minkowski algebraic theorems, and compare their computational properties. To demonstrate the advantages of the new type, we show an interval analytical case study where it outperforms the existing types. 

We expect this article to be of interest to both theoretical and applied researchers; therefore, we collected results with more theoretical relevance in Sections \ref{sec:complex_interval_subspaces} and \ref{sec:arithmetic_properties}, and those with more application relevance in Sections \ref{sec:computational_properties} and \ref{sec:case_study}. 

In Section \ref{sec:complex_interval_subspaces} we define the metric space of complex intervals and the subspaces of its finite data representations. We show that primitive intervals can be approximated by polygonal intervals and perfectly described by polyarcular intervals.

In Section \ref{sec:arithmetic_properties} we consider basic arithmetic and set operations applied to complex intervals. We consider their boundaries relying on Minkowski algebra and Matheron's work\cite{matheron_random_1975}, as well as on geometric algebra studies of Farouki et al. \cite{farouki_algorithms_2000,farouki_minkowski_2001}. We show that by Gauss map matching we can identify the operand boundary segments that are relevant for the evaluation of the result boundary. We also present three methods for the analysis of the result boundaries of unary and binary operations on complex intervals, one of which has never been used in this context so far to the best of our knowledge. Then we apply these methods to straight edges and circular arcs, which constitute the boundary segments of all the complex interval types considered in this paper, and show that the arithmetic properties of the newly proposed polyarcular interval is better than those of the primitive and polygonal intervals.

In Section \ref{sec:computational_properties} we consider the data representation and computation of complex intervals. We introduce the tightness measure and define the type casting operation. We show how type casting and arithmetic operations can cause a loss of tightness. Finally, we show two utility processes:  the trimming for extracting the simple boundary when an operation results a self-intersecting boundary, and the backtracking for identifying the subsets of the operands of an operation that maps to a point in the result interval. 

In Section \ref{sec:case_study}, we present a case study of antenna tolerance analysis, where the polyarcular interval type outperform the other four types. 

At the beginning of each section we provide a summary of its findings. Detailed derivations and illustrative examples of the proofs are provided in the Supplementary Material. For the sake of brevity, the typesetting of symbols in equations carry specific meanings, which is summarized in Table \ref{tab:Typesetting}.

\begin{table}[t]
    \centering
    \begin{tabular}{llc}
        Typesetting & Meaning & Example \\
        \hline
        Calligraphic ($\mathcal{I,R}$) & 
        Sets of curves or intervals & 
        $\mathcal{I}(\mathbb{R}) = \left\{\boldsymbol{t}= [\underline{t},\overline{t}] \big\vert \underline{t} \le \overline{t} \right\}$ 
        \\
        Bold-italic ($\boldsymbol{a,A}$) & 
        Sets & 
        $\boldsymbol{A}=\{A\in\mathbb{C} \big\vert \Re(A)=1\}$ 
        \\
        Italic ($a,A$) & 
        Numbers & 
        $A \in \boldsymbol{A} = \{t+it \big\vert t\in\boldsymbol{t} \}$ \\
        Lowercase ($a,\boldsymbol{a}$) & 
        Real numbers and sets & $a\in\boldsymbol{a}\subset \mathbb{R}$\\
        Uppercase ($A,\boldsymbol{A}$) & 
        Complex numbers and sets & 
        $A \in \boldsymbol{A} \subset \mathbb{C}$ \\
        Sans-sherif ($\mathsf{n,N}$) & 
        Natural numbers and sets &
        $\mathsf{n}\in \{1..\mathsf{N}\}\subset \mathbb{N}, 
        P_{2\mathsf{n}-1} = O_\mathsf{n} \!+\! r_\mathsf{n} 
        e^{i\overline{\varphi}_\mathsf{n}}$
        \\
        Under- and overlined ($\underline{t},\overline{t}$) & Infimum and supremum & 
        $[\underline{t},\overline{t}]\ni t, \,
        \underline{t}\leq t \leq \overline{t}$ \\
        Fraktur ($\Re,\Im$) & 
        Real and imaginary part & 
        $A=\Re(A)+i\Im(A)=A^\Re+iA^\Im$
    \end{tabular}
    \caption{Typesetting in mathematical expressions}
    \label{tab:Typesetting}
\end{table}

\section{Complex interval subspaces} \label{sec:complex_interval_subspaces}

In this section, we define the metric space of complex intervals. We also define some of its notable subspaces with finite data representations: the primitive intervals (rectangular, polar, and circular) that are characterized by their various real-valued projections (Fig. \ref{fig:primitive_arithmetic}), and the geometrical intervals (polygonal and polyarcular) that are characterized by their boundaries (Fig. \ref{fig:polyarc_curve}). We show that primitive intervals can be approximated by polygonal intervals and perfectly described by polyarcular intervals (Fig. \ref{fig:complex_subspaces}).

\subsection{Complex intervals} \label{sub:complex_intervals}

Complex intervals are all subsets of the complex plane $\mathbb{C}$, which we will consider our universe. Since there is no common definition for complex intervals, for this discussion we assume that all of them are connected and bounded sets.  As a useful simplification, we also assume that all connected and bounded intervals can be described by a simple, piece-wise smooth, closed boundary curve. 

\begin{definition} \label{def:jordan_curve_subspace}
    A simple curve is the continuous image of a closed real interval in the complex plane with no self-intersections. The set of piecewise smooth simple curves on the complex plane is 
    \[\mathcal{O}(\mathbb{C}) = 
    \{\boldsymbol{\Gamma} \!=\! \Gamma(\boldsymbol{t}) \big\vert \Gamma:\boldsymbol{t} \to \mathbb{C}, \text{piecewise smooth}\},
    \]
    where $\boldsymbol{t}$ is a real interval.
\end{definition}

\begin{definition}
    The set $\mathring{\mathcal{O}}(\mathbb{C})\subset\mathcal{O}(\mathbb{C})$ of piecewise smooth Jordan curves consists of piecewise smooth simple curves
    with matching endpoints:
    \(\Gamma(\underline{t}) = \Gamma(\overline{t}).\)
\end{definition}

\begin{definition} \label{def:complex_interval_space}
A complex set belongs to the set $\mathcal{I}(\mathbb{C})$ of complex intervals if it is bounded by a piecewise smooth Jordan curve:
\begin{equation*}
    \boldsymbol{A}\in\mathcal{I}(\mathbb{C}) \iff 
    \partial \boldsymbol{A} \in \mathring{\mathcal{O}}(\mathbb{C}).
\end{equation*}
\end{definition}

\begin{remark}
    In practice, intervals with holes -- such as the annulus $A=[1,2]e^{i[-\pi,\pi]}$ -- can be handled by narrow cuts leading to the holes. Such curves are called weakly simple.\cite{villafuerte_polygonal_2022}     
\end{remark}

The complex plane can be considered a 2-dimensional Euclidean space $\mathbb{R}^2$ endowed with a multiplication. As the complex space is naturally a metric space with the Euclidean distance, the set $2^\mathbb{C}$ of all its subsets can be endowed with a Hausdorff distance. This distance makes the various subsets of complex intervals metric subspaces with the inherited Hausdorff metric.

\begin{definition} \label{def:hausdorff}
    The Hausdorff distance of two complex sets $\boldsymbol{A},\boldsymbol{B} \subset \mathbb{C}$ is
    \[
    d(\boldsymbol{A},\boldsymbol{B})= \inf \left( \varepsilon \ge 0\big\vert \boldsymbol{A} \oplus \boldsymbol{D}(0,\varepsilon) \supset \boldsymbol{B} \text{ and } \boldsymbol{B} \oplus \boldsymbol{D}(0,\varepsilon) \supset \boldsymbol{A} \right),
    \]
    where $\boldsymbol{D}(0,\varepsilon)$ denotes the closed $\varepsilon$-disk around the origin.
\end{definition}

This is an equivalent reformulation of to the standard definition of the Hausdorff distance, in the language of Minkowski operations.

\begin{figure}[hbtp]
\centering
\includegraphics[height=165mm]{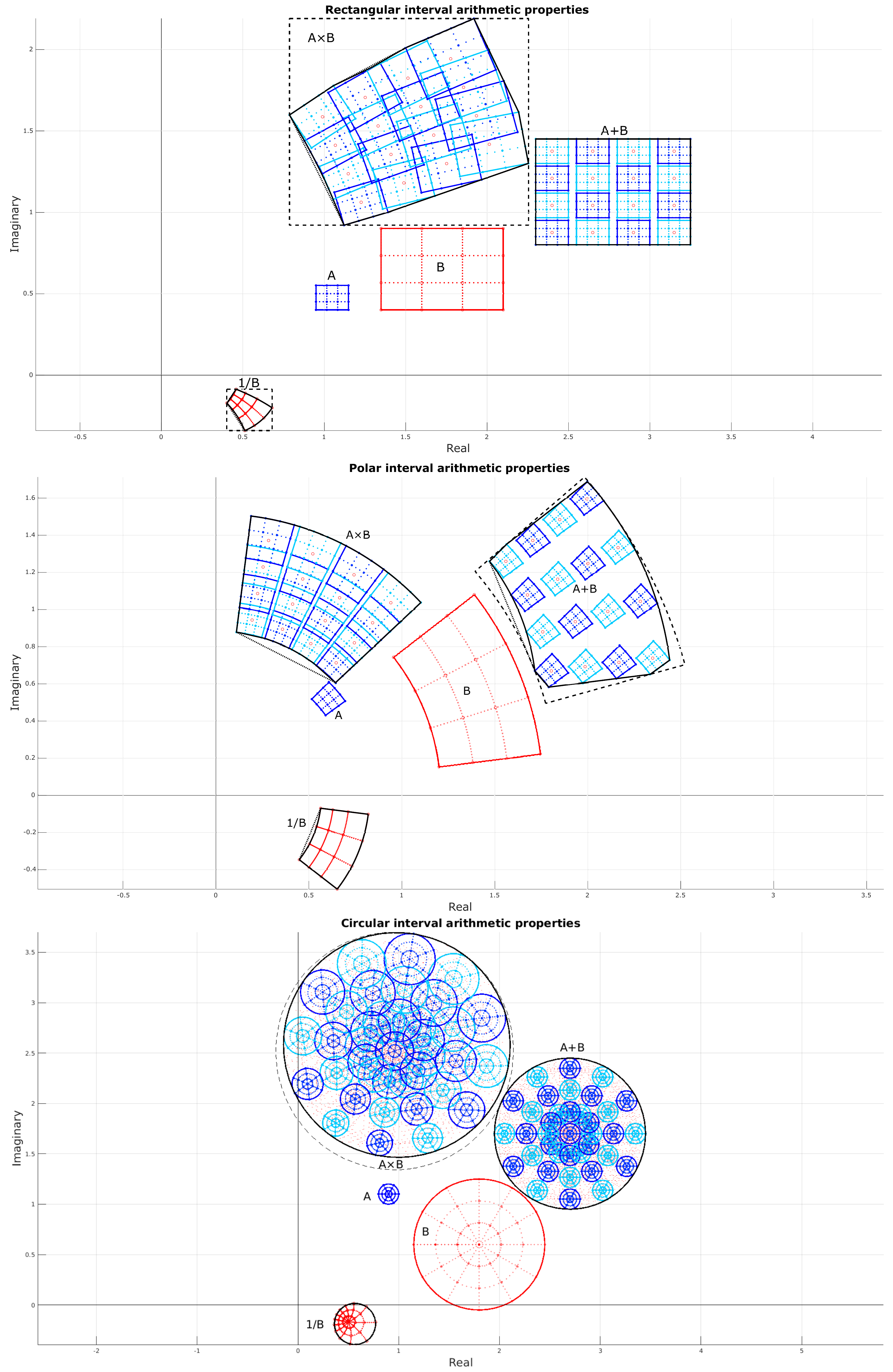}
\caption{Sum, product, and reciprocal of primitive intervals. Dark and light blue lines indicate the boundary of operand $\boldsymbol{A}$ and its translated and rotate-and-scaled copies ($\boldsymbol{A} \oplus\boldsymbol{B}$,$\boldsymbol{A}\otimes \boldsymbol{B}$), while red lines indicate the boundary of operand $\boldsymbol{B}$ and its reciprocal ($\boldsymbol{B}^{-1}$). Red dots identify internal points of the interval $\boldsymbol{B}$ used to translate and rotate-and-scale $\boldsymbol{A}$. Solid black line identifies the polyarcular operation results, while dotted line idenfies the convex polygonal operation results. The dashed black line identifies the boundary of the primitive wrapper of the results.}
\label{fig:primitive_arithmetic}
\end{figure}


\subsection{Primitive intervals}

The foundation stone of primitive complex intervals is the real interval.

\begin{definition} \label{def:real_interval_subspace}
    The set of closed real intervals is
    \[\mathcal{I}(\mathbb{R}) = \left\{\boldsymbol{t}= [\underline{t},\overline{t}] \big\vert \underline{t} \le \overline{t} \right\}
    , \quad [\underline{t},\overline{t}]=\{ x \in \mathbb{R} \big\vert \underline{t} \le x \le \overline{t}\} .\]   
\end{definition}

The three most common primitive complex interval types are the rectangular, the polar \cite{boche_complex_1965} and the circular \cite{gargantini_circular_1971} intervals. Illustrative examples can be found in Figure \ref{fig:primitive_arithmetic}.

\begin{definition} \label{def:rectangular_interval_subspace}
    A rectangular interval 
    \[
    \boldsymbol{a}+\boldsymbol{b}i=\{Z \in \mathbb{C} \; \big\vert \; \Re(Z) \in \boldsymbol{a}, \Im(Z) \in \boldsymbol{b}\}
    \]
    (with $\boldsymbol{a},\boldsymbol{b} \in \mathcal{I}(\mathbb{R})$) is the Cartesian product of two real intervals. The subspace of rectangular intervals is denoted by $\mathcal{R}(\mathbb{C})$.
\end{definition}

\begin{definition} \label{def:polar_interval_subspace}
A polar interval is defined by the polar product of two real intervals, that is
\[
        \boldsymbol{r} e^{i\boldsymbol{\varphi}} = 
        \{Z \in \mathbb{C} \; \big\vert \; \abs{Z} \in \boldsymbol{r}, \angle Z \in \boldsymbol{\varphi} \} 
\]
with $\boldsymbol{r},\boldsymbol{\varphi} \in \mathcal{I}(\mathbb{R})$.
The subspace of polar intervals is denoted by $\mathcal{P}(\mathbb{C})$.
\end{definition}

\begin{definition} \label{def:circular_interval_subspace}
    A circular interval is a closed disk in the complex plane, that is
\[
    O + [0,r]\cdot e^{i[-\pi,\pi]} = 
        \{ Z \in \mathbb{C} \;\big\vert\; \vert Z - O  \vert \leq r  \}
\]
with center $O \in \mathbb{C}$ and radius $r \in \mathbb{R}$. The set of these intervals is denoted by $\mathcal{C}(\mathbb{C})$.
\end{definition}

It can be shown that the boundaries of rectangular, polar, and circular intervals are rectangles, annular sectors, and circles, respectively, which are all piecewise smooth closed curves. Therefore, all primitive intervals are elements of the complex interval subspace.

\begin{lemma}\label{thm:primitive_interval_boundary}
    The boundaries of primitive intervals consist of edges and arcs.
\end{lemma}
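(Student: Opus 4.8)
The plan is to verify the statement separately for each of the three primitive types, since Definitions \ref{def:rectangular_interval_subspace}--\ref{def:circular_interval_subspace} give each of them a concrete coordinate description from which the boundary can be read off directly as a finite concatenation of straight segments (edges) and circular arcs.

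First I would dispose of the two easy cases. For a rectangular interval $\boldsymbol a+\boldsymbol bi$ with $\boldsymbol a=[\underline a,\overline a]$ and $\boldsymbol b=[\underline b,\overline b]$, the set is the closed axis-aligned box $[\underline a,\overline a]\times[\underline b,\overline b]$ in $\mathbb R^2$, whose boundary is the quadrilateral through the corners $\underline a+\underline bi$, $\overline a+\underline bi$, $\overline a+\overline bi$, $\underline a+\overline bi$, i.e.\ a union of four edges. For a circular interval $O+[0,r]e^{i[-\pi,\pi]}=\{Z:\abs{Z-O}\le r\}$, the boundary is the circle $\abs{Z-O}=r$, which is the single arc $t\mapsto O+re^{it}$, $t\in[-\pi,\pi]$, of angular extent $2\pi$ and with matching endpoints. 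In both cases I would also record the degenerate subcases ($\underline a=\overline a$, $\underline b=\overline b$, or $r=0$), where the curve collapses but still consists only of edges and arcs --- albeit no longer a Jordan curve, so that it strictly falls outside $\mathcal I(\mathbb C)$.

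The polar case is the substantive one. For $\boldsymbol r e^{i\boldsymbol\varphi}$ with $\boldsymbol r=[\underline r,\overline r]$, $\boldsymbol\varphi=[\underline\varphi,\overline\varphi]$, $0<\underline r\le\overline r$ and $0<\overline\varphi-\underline\varphi<2\pi$, the candidate boundary is the annular sector built from the outer arc $t\mapsto\overline r e^{it}$ and the inner arc $t\mapsto\underline r e^{it}$ ($t\in[\underline\varphi,\overline\varphi]$) together with the two radial edges $s\mapsto se^{i\underline\varphi}$ and $s\mapsto se^{i\overline\varphi}$ ($s\in[\underline r,\overline r]$). To prove this curve really is $\partial(\boldsymbol r e^{i\boldsymbol\varphi})$, I would use the polar map $\Phi(s,t)=se^{it}$, which maps the rectangle $[\underline r,\overline r]\times[\underline\varphi,\overline\varphi]$ onto the polar interval and, having Jacobian $s\neq0$ on this region, is an open map; hence it sends the interior of the rectangle into the interior of the image, so $\partial(\boldsymbol r e^{i\boldsymbol\varphi})\subseteq\Phi(\partial(\text{rectangle}))$, which is exactly the four listed pieces, and the reverse inclusion follows by noting that a small perturbation in $s$ or in $t$ at a point of any of those pieces leaves the set. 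Finally I would treat the remaining configurations --- $\underline r=0$ (the inner arc degenerates to the origin and the two radial edges meet there, leaving a circular sector of one arc and two edges), $\overline\varphi-\underline\varphi=2\pi$ (the two radial edges coincide, yielding the weakly-simple annulus of the Remark), and the further collapses $\underline r=\overline r$ or $\underline\varphi=\overline\varphi$ --- in each of which the boundary remains a union of edges and arcs.

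The main obstacle is not a hard computation but the bookkeeping around degeneracy and the rigor of the ``topological boundary equals this curve'' step: the openness-of-$\Phi$ argument settles the generic polar case cleanly, but the origin ($\underline r=0$) and the full wrap-around ($\overline\varphi-\underline\varphi=2\pi$) have to be handled by hand, and one must decide how much attention to give configurations that produce non-Jordan curves lying outside $\mathcal I(\mathbb C)$ in the first place.
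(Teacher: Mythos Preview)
Your proposal is correct and follows exactly the approach the paper has in mind: the paper states, just before the lemma, that ``the boundaries of rectangular, polar, and circular intervals are rectangles, annular sectors, and circles, respectively,'' and then records the lemma without further proof. Your case-by-case verification (four edges for the rectangle, one full arc for the disk, two arcs plus two radial edges for the annular sector) is precisely that claim made explicit, and your open-map argument for the polar boundary and your inventory of degenerate configurations go well beyond what the paper actually supplies.
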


\begin{corollary} \label{thm:primitive_intervals_are_complex_intervals}
All primitive intervals are complex intervals.
\begin{equation*}
    \mathcal{R}(\mathbb{C}) \cup 
    \mathcal{P}(\mathbb{C}) \cup
    \mathcal{C}(\mathbb{C}) \subset 
    \mathcal{I}(\mathbb{C})
\end{equation*}
\end{corollary}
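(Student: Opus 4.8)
The plan is to derive the Corollary directly from Lemma~\ref{thm:primitive_interval_boundary} together with Definition~\ref{def:complex_interval_space}: a complex set lies in $\mathcal{I}(\mathbb{C})$ exactly when its boundary is a piecewise smooth Jordan curve, so it suffices to check that the boundary of each primitive interval, which the Lemma describes as a finite cyclic concatenation of straight edges and circular arcs, is genuinely simple (no self-intersection), closed (endpoints match up cyclically), and piecewise smooth. The last point is free: a line segment $t\mapsto P+t(Q-P)$ and a circular arc $\varphi\mapsto O+re^{i\varphi}$ admit $C^\infty$ parametrizations, so any finite endpoint-to-endpoint concatenation of them is piecewise smooth by construction, and the relevant question reduces to the combinatorial/geometric one of showing the concatenation closes up without crossing itself.

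First I would prove the Lemma by a case analysis over the three types. For a rectangular interval $\boldsymbol{a}+\boldsymbol{b}i$ with $\boldsymbol{a}=[\underline{a},\overline{a}]$, $\boldsymbol{b}=[\underline{b},\overline{b}]$ the set is the Cartesian product $[\underline{a},\overline{a}]\times[\underline{b},\overline{b}]$, whose topological boundary in the non-degenerate case is the rectangle through the corners $\underline{a}+\underline{b}i,\ \overline{a}+\underline{b}i,\ \overline{a}+\overline{b}i,\ \underline{a}+\overline{b}i$, a simple closed four-edge polygon. For a polar interval $\boldsymbol{r}e^{i\boldsymbol{\varphi}}$ with $\boldsymbol{r}=[\underline{r},\overline{r}]$, $\boldsymbol{\varphi}=[\underline{\varphi},\overline{\varphi}]$ and $0<\overline{\varphi}-\underline{\varphi}<2\pi$, the boundary is the annular sector made of the outer arc $\{\overline{r}e^{i\varphi}\mid\varphi\in\boldsymbol{\varphi}\}$, the inner arc $\{\underline{r}e^{i\varphi}\mid\varphi\in\boldsymbol{\varphi}\}$ (present only when $\underline{r}>0$), and the two radial edges $\{\rho e^{i\underline{\varphi}}\mid\rho\in\boldsymbol{r}\}$, $\{\rho e^{i\overline{\varphi}}\mid\rho\in\boldsymbol{r}\}$; one verifies these four (or, when $\underline{r}=0$, three) pieces fit endpoint-to-endpoint into one simple closed curve. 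For a circular interval $O+[0,r]e^{i[-\pi,\pi]}$ with $r>0$ the boundary is the single circle $\{O+re^{i\varphi}\mid\varphi\in[-\pi,\pi]\}$, which is already a smooth Jordan curve.

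Then the Corollary follows by assembly: in each case the boundary is a piecewise smooth simple closed curve, hence an element of $\mathring{\mathcal{O}}(\mathbb{C})$, so Definition~\ref{def:complex_interval_space} places the interval in $\mathcal{I}(\mathbb{C})$; taking the union over the three families yields the stated inclusion $\mathcal{R}(\mathbb{C})\cup\mathcal{P}(\mathbb{C})\cup\mathcal{C}(\mathbb{C})\subset\mathcal{I}(\mathbb{C})$.

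The main obstacle is not the generic picture but the degenerate and extreme parameter values. Zero-width real intervals make a rectangular interval collapse to a segment or a point, and a polar interval collapse to a radial segment or to a bare arc; the inner radius $\underline{r}=0$ removes the inner arc and forces the two radial edges of a polar interval to meet at the origin; and the full-angle case $\overline{\varphi}-\overline{\varphi}\ldots$ — more precisely $\overline{\varphi}-\underline{\varphi}=2\pi$ — turns a polar interval into an annulus whose boundary is a disjoint pair of concentric circles, not a single Jordan curve. The hard part will be pinning down exactly which of these are admitted: the truly degenerate (empty-interior) sets should be excluded by a non-degeneracy hypothesis on the defining real intervals, while the annulus is accommodated via the narrow-cut ("weakly simple") device of the Remark. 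I would state this convention explicitly up front and then check that, under it, every remaining configuration of parameters does produce an honest piecewise smooth Jordan curve, so that the case analysis above is exhaustive.
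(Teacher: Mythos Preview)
Your proposal is correct and matches the paper's approach: the Corollary is stated without proof in the paper, as it is meant to follow immediately from Lemma~\ref{thm:primitive_interval_boundary} (boundaries of primitive intervals consist of edges and arcs) together with Definition~\ref{def:complex_interval_space}. Your case analysis and attention to degenerate configurations and the annulus (handled via the weakly-simple remark) are more explicit than anything the paper spells out, but the underlying logic is identical.
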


An illustration of the subspace hierarchy can be found in Figure \ref{fig:complex_subspaces}.


\subsection{Geometric intervals}

Polygon interval arithmetic (PIA) \cite{ohta_polygon_1990} was introduced to represent uncertainty in robust control systems. Since a polygon can approximate any simple closed curve with a finite dataset, it can also represent any complex interval with arbitrary precision limited only by computational constraints.

\begin{definition} \label{def:linear_curve_subspace}
    An edge between $P_1,P_2 \in \mathbb{C}$ is
    \[
    \Gamma = \Bar{\Gamma}(P_1,P_2): [0,1] \to \mathbb{C}
    , \quad
    t \mapsto (1-t)P_1+tP_2.
    \]
    The set of edges is 
    $\Bar{\mathcal{O}}(\mathbb{C}) \subset \mathcal{O}(\mathbb{C}).$ 
\end{definition}

\begin{definition}\label{def:polygon}
    A closed polygonal curve can be given as
    \[    
    \Gamma: [0,\mathsf{n}] \to \mathbb{C}, \quad 
    \Gamma(t)=\Gamma_\mathsf{n}(t) \text{ for } t \in [\mathsf{n}-1,\mathsf{n}]
    \]
    with
    \[
    \Gamma_\mathsf{n}(t) \!=\! \Bar{\Gamma}(P_\mathsf{n},P_\mathsf{n+1})(t-(\mathsf{n}-1))
    , \quad
    P_\mathsf{N+1} = P_\mathsf{1},  \]
    for $\mathsf{n} \in \{1..\mathsf{N}\}$
    .
    The set of closed polygonal curves is 
    $\mathring{\Bar{\mathcal{O}}}(\mathbb{C}) \subset \mathring{\mathcal{O}}(\mathbb{C})$. 
\end{definition}

\begin{definition} \label{def:polygonal_interval_subspace}
The polygonal interval subspace $\mathcal{G}(\mathbb{C})$ is a collection of all complex intervals bounded by polygonal curves.
\begin{equation*}
    \boldsymbol{A} \in \mathcal{G}(\mathbb{C}) \subset \mathcal{I}(\mathbb{C})  \iff 
    \partial \boldsymbol{A} \in \mathring{\Bar{\mathcal{O}}} 
\end{equation*}
\end{definition}

The boundaries of primitive intervals can be represented as ordered sets of straight edges ($\Bar{\Gamma}$) and circular arcs ($\breve{\Gamma}$). Polygons can, however, represent only the edges precisely, while they need to approximate arcs with a number of edges. 

\begin{remark}
Polygonal intervals are complex intervals. All rectangular intervals are polygonal intervals, but polar and circular intervals are not.
\begin{equation*}
    \mathcal{R}(\mathbb{C}) \subset \mathcal{G}(\mathbb{C}), 
    \mathcal{P}(\mathbb{C}) \not\subset \mathcal{G}(\mathbb{C}),
    \mathcal{C}(\mathbb{C}) \not\subset \mathcal{G}(\mathbb{C}),
\end{equation*}
\end{remark}

\begin{figure*}[t]
\centering
\includegraphics[width=5.5in]{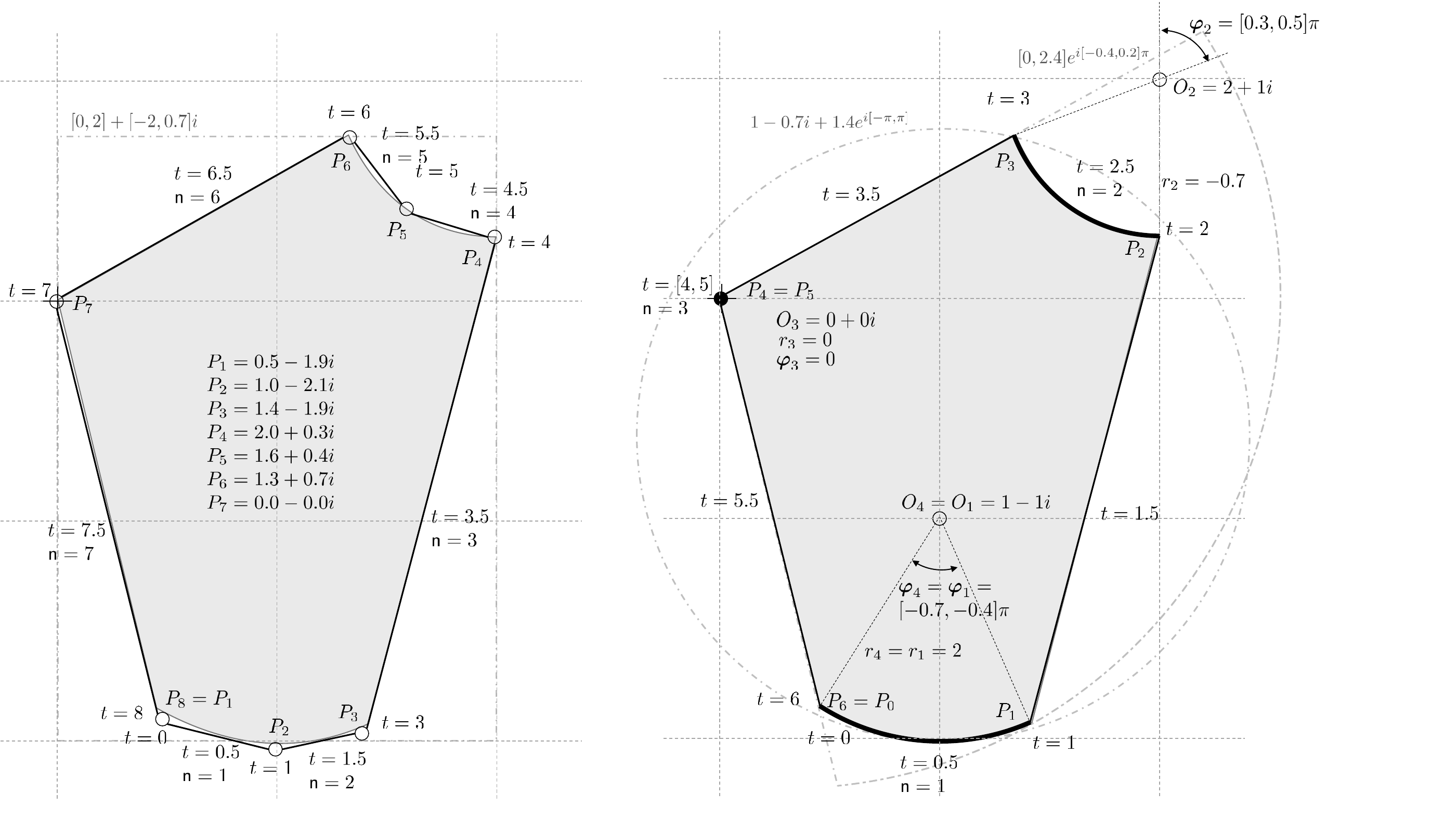}
\caption{Example of a complex interval represented by various complex interval types. The gray area identifies the complex interval; solid black lines identify the polygonal and polyarcular boundary curves. The grey dash-dotted lines indicate the boundaries of inclusive rectangular, polar and circular interval type objects.}
\label{fig:polyarc_curve}
\end{figure*}

It comes as a natural conclusion that a curve consisting of edges and arcs could precisely represent all primitive intervals. The polyarcular curve is a direct extension of the polygonal curve, where instead of complex-valued vertices, we use arcs with a complex-valued center point, a radius and an argument interval each to define a curve consisting of the defining arcs and implicit edges (Fig. \ref{fig:polyarc_curve}.) Based on this curve, we propose a new interval subspace that contains all the primitive complex interval subspaces and has the same or better approximation capability for arbitrary complex intervals as the polygonal intervals. Polyarcular intervals are elements of the complex interval subspace by definition \ref{def:complex_interval_space}.

\begin{figure*}[t]
\centering
\includegraphics[width=4.5in]{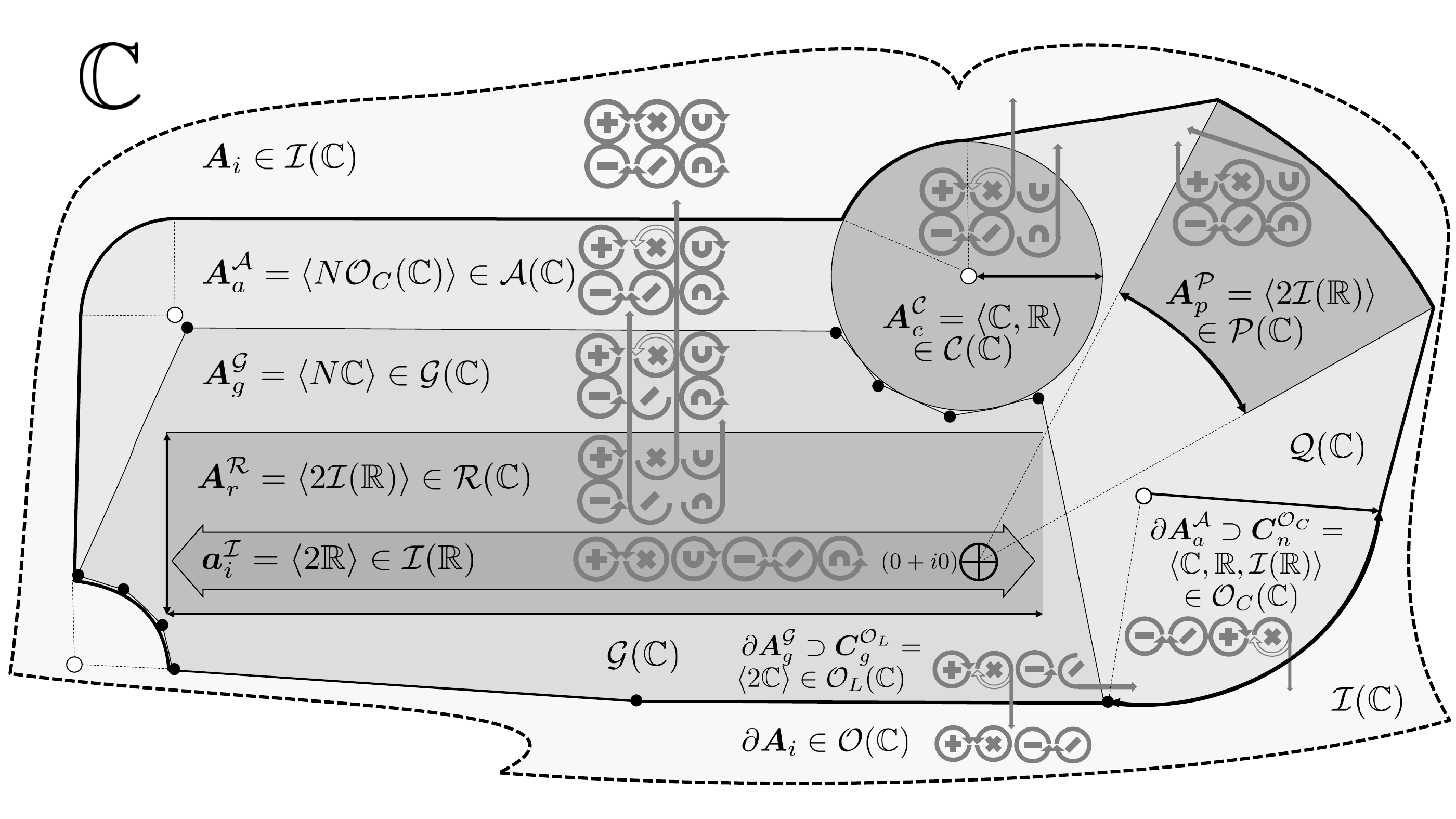}
\caption{Graphical summary of complex interval subspaces, types and operations. 
We figure illustrates a number of complex intervals from the following subspaces: rectangular ($\mathcal{R}$), polar ($\mathcal{P}$), circular ($\mathcal{C}$), polygonal ($\mathcal{G}$), polyarcular ($\mathcal{A}$). In the equation the interval symbol with the type designation is followed by the amount and type of stored parameters, and finally the notation of the narrowest subspace to which it belongs. The $\oplus$ and $\otimes$ indicate the binary addition and multiplication operations; $\ominus$ and $\oslash$ indicate the unary negative and reciprocal operations; $\cup$ and $\cap$ indicate the binary union and intersection operations. Arrows around operators indicate whether the result of an operation is in the same subspace or is outside of it. Hollow arrows indicate significant special cases (trivial exceptions, such as division by zero and union of non-intersecting intervals are not indicated).}
\label{fig:complex_subspaces}
\end{figure*}

\begin{definition} \label{def:circular_curve_subspace}
    An arc is
    \[
    \Gamma = \breve{\Gamma}(O,r,\boldsymbol{\varphi}):  [0,1] \to \mathbb{C}
    , \quad
    t \mapsto O+re^{(1-t)i\underline{\varphi}+ti\overline{\varphi}}
    ,
    \]
    given
    $ O \in\mathbb{C},
    r\in \mathbb{R},
    \boldsymbol{\varphi} \in \mathcal{I}(\mathbb{R}).$
    The set of arcs is 
    $\breve{\mathcal{O}}(\mathbb{C}) \subset \mathcal{O}(\mathbb{C})$.
\end{definition}

\begin{remark}
    When arcs are used to form polyarcular curves, the radius value determines whether the arc will be convex ($r>0$), concave ($r<0$) or just a point ($r=0$). To maintain the required counterclockwise point order in the concave curve segments, the following modified arc function must be used:
    \[
    t \mapsto P + re^{
    (\frac{1}{2}-t') i \underline{\varphi} + 
    (\frac{1}{2}+t') i\overline{\varphi}  }, \quad
    t' = \mathrm{sgn}(r)\left(\frac{1}{2}-t\right)
    .\]
\end{remark}

\begin{definition}\label{def:polyarc}
    A polyarcular curve can be given as
    \[
    \Gamma: [0,2\mathsf{N}] \to \mathbb{C}, \quad \Gamma(t)=
    \begin{cases}
        \breve{\Gamma}_\mathsf{n}
        (t-2\mathsf{n}+2))
        & \text{for } t \in [2\mathsf{n}-2,2\mathsf{n}-1] ,
        \\
        \Bar{\Gamma}_\mathsf{n}
        (t-2\mathsf{n}+1)
        & \text{for } t \in [2\mathsf{n}-1,2\mathsf{n}],
    \end{cases}
    \]
    for $\mathsf{n} \in \{1.. \mathsf{N}\}$, with alternating arc and edge pieces (either can be a single point  if necessary). Precisely
    \[
    \begin{array}{rl}
         \breve{\Gamma}_\mathsf{n} (t) = &
         \breve{\Gamma}(O_\mathsf{n},r_\mathsf{n},\boldsymbol{\varphi}_\mathsf{n})(t),\\
         \Bar{\Gamma}_\mathsf{n} (t) = &
         \Bar{\Gamma}(P_{2\mathsf{n}-1},P_{2\mathsf{n}})(t),
    \end{array}
    \]
    satisfying
    \[
    P_{2\mathsf{n}-1} = O_\mathsf{n} \!+\! 
                     r_\mathsf{n}
                     e^{i\overline{\varphi}_\mathsf{n}},
    \quad
    P_{2\mathsf{n}} = O_\mathsf{n+1} \!+\! 
                     r_\mathsf{n+1} 
                     e^{i\underline{\varphi}_\mathsf{n+1}},
    \]
    with 
    $O_\mathsf{N+1}=O_1,\,
    r_\mathsf{N+1}=r_1, \,
    \boldsymbol{\varphi}_\mathsf{N+1} = \boldsymbol{\varphi}_1, P_{2\mathsf{N}}=P_0$.
    
    
    The set of polyarcular curves is 
    $\mathring{\breve{\mathcal{O}}}(\mathbb{C}) \subset \mathring{\mathcal{O}}(\mathbb{C})$.
\end{definition}

\begin{definition} \label{def:polyarcular_interval_subspace}
The polyarcular interval subspace $\mathcal{A}(\mathbb{C})$ is a collection of all complex intervals bounded by polyarcular curves.
\begin{equation*}
\begin{array}{l}
    \boldsymbol{A} \in \mathcal{A}(\mathbb{C}) \subset \mathcal{I}(\mathbb{C}) \iff
    \partial \boldsymbol{A} \in \mathring{\breve{\mathcal{O}}}
\end{array}
\end{equation*}
\end{definition}

\begin{corollary} \label{thm:primitive_intervals_are_polyarcular_intervals}
All primitive and polygonal intervals are polyarcular intervals.
\begin{equation*}
    (\mathcal{R}(\mathbb{C}) \cup 
    \mathcal{P}(\mathbb{C}) \cup
    \mathcal{C}(\mathbb{C})) \cup
    \mathcal{G}(\mathbb{C}) \subset 
    \mathcal{A}(\mathbb{C})
\end{equation*}
\end{corollary}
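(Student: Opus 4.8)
The plan is to verify the inclusion directly: for each interval $\boldsymbol{A}$ drawn from one of the subspaces on the left I would exhibit a polyarcular curve, in the sense of Definition~\ref{def:polyarc}, that equals $\partial\boldsymbol{A}$, whence $\boldsymbol{A}\in\mathcal{A}(\mathbb{C})$ by Definition~\ref{def:polyarcular_interval_subspace}. Two facts already available do most of the structural work. First, by Corollary~\ref{thm:primitive_intervals_are_complex_intervals} together with Definition~\ref{def:polygonal_interval_subspace}, every such $\partial\boldsymbol{A}$ is a piecewise smooth Jordan curve. Second, by Lemma~\ref{thm:primitive_interval_boundary} for the primitive types and by Definition~\ref{def:polygon} for the polygonal type, each smooth piece of $\partial\boldsymbol{A}$ is either an edge $\bar\Gamma$ or an arc $\breve\Gamma$. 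The remaining task is therefore purely combinatorial: to re-express this finite cyclic list of edge/arc pieces in the strictly alternating arc--edge--arc--edge form demanded by Definition~\ref{def:polyarc}.

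That re-expression is exactly what the clause ``either can be a single point if necessary'' in Definition~\ref{def:polyarc} is for. Traversing $\partial\boldsymbol{A}$ counterclockwise, whenever two consecutive pieces have the same kind I would splice in a degenerate piece of the opposite kind located at their shared endpoint: a point-arc ($r_\mathsf{n}=0$, so $\breve\Gamma_\mathsf{n}\equiv O_\mathsf{n}$, with $\boldsymbol{\varphi}_\mathsf{n}$ arbitrary) between two edges, or a point-edge ($P_{2\mathsf{n}-1}=P_{2\mathsf{n}}$) between two arcs; if the parities still do not match, one extra degenerate pair makes the total length $2\mathsf{N}$ with arcs in the odd slots. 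By construction the resulting data $O_\mathsf{n},r_\mathsf{n},\boldsymbol{\varphi}_\mathsf{n},P_{2\mathsf{n}-1},P_{2\mathsf{n}}$ satisfies the endpoint-matching identities $P_{2\mathsf{n}-1}=O_\mathsf{n}+r_\mathsf{n}e^{i\overline{\varphi}_\mathsf{n}}$ and $P_{2\mathsf{n}}=O_{\mathsf{n}+1}+r_{\mathsf{n}+1}e^{i\underline{\varphi}_{\mathsf{n}+1}}$, and since every inserted point already lies on $\partial\boldsymbol{A}$ no self-intersection is created; thus the curve produced is a genuine element of $\mathring{\breve{\mathcal{O}}}(\mathbb{C})$.

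Concretely, I would spell out the witnesses type by type. A polygon with vertices $P_1,\dots,P_\mathsf{N}$ becomes the polyarc with $O_\mathsf{n}=P_\mathsf{n}$, $r_\mathsf{n}=0$ and the original edges; this also covers the rectangular case, since a rectangle is a $4$-gon (and re-derives $\mathcal{R}(\mathbb{C})\subset\mathcal{G}(\mathbb{C})$). A circular interval of centre $O$ and radius $r$ is the polyarc with $\mathsf{N}=1$ whose single arc is $\breve\Gamma(O,r,[-\pi,\pi])$ and whose single edge is degenerate, since $O+re^{i\pi}=O-r=O+re^{-i\pi}$ forces $P_1=P_0$ --- one should check along the way that a single arc of argument span $2\pi$ is indeed a Jordan curve. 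A polar interval $\boldsymbol{r}e^{i\boldsymbol{\varphi}}$ in the generic case of a proper annular sector is the polyarc with $\mathsf{N}=2$ whose two arcs are the outer arc of radius $\overline{r}$ and the inner arc of radius $\underline{r}$ --- the latter traversed so as to respect the counterclockwise boundary orientation, which is where the concave/negative-radius convention from the Remark after Definition~\ref{def:circular_curve_subspace} comes in --- and whose two edges are the radial segments at arguments $\underline{\varphi}$ and $\overline{\varphi}$; the sub-cases $\underline{r}=0$ (inner arc collapses to the point $0$) and $\overline{\varphi}-\underline{\varphi}=2\pi$ with $\underline r=0$ (the sector is a disk, handled as above) fit the same pattern, while the full annulus $\underline r>0,\ \overline{\varphi}-\underline{\varphi}=2\pi$ is not a complex interval at all (it is only weakly simple, cf.\ the Remark after Definition~\ref{def:complex_interval_space}) and so falls outside the claim's scope.

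The main obstacle is bookkeeping rather than depth: one must check that the degenerate-piece padding can always be arranged to meet \emph{both} the strict arc/edge alternation \emph{and} the two endpoint-matching equations of Definition~\ref{def:polyarc} at once, and in particular get the orientation conventions exactly right for the concave inner arc of the annular sector (where the modified arc parametrisation of the Remark, with $r<0$, is the one that keeps the point order counterclockwise). Once those conventions are pinned down the four cases are immediate, and $(\mathcal{R}(\mathbb{C})\cup\mathcal{P}(\mathbb{C})\cup\mathcal{C}(\mathbb{C}))\cup\mathcal{G}(\mathbb{C})\subset\mathcal{A}(\mathbb{C})$ is just their union.
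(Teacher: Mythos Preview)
Your proposal is correct and matches the paper's reasoning: the paper states this as a bare corollary without proof, relying implicitly on Lemma~\ref{thm:primitive_interval_boundary} (primitive boundaries consist of edges and arcs), Definition~\ref{def:polygon} (polygonal boundaries are edges), and the ``either can be a single point if necessary'' clause of Definition~\ref{def:polyarc} to absorb any edge/arc sequence into the alternating polyarcular format. You have simply made that implicit argument explicit, supplying the concrete witnesses and degenerate-piece padding that the paper leaves to the reader; your case analysis (including the annulus caveat and the concave-arc orientation remark) is sound and more careful than anything the paper writes down.
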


An illustration of the subspace hierarchy can be found in Figure \ref{fig:complex_subspaces}. 


\section{Arithmetic properties}
\label{sec:arithmetic_properties}

In this section, we consider basic arithmetic and set operations applied to complex intervals. The aim is to find out whether the results of these operations belong to the same subspace as the operands, which is important for the computational implementation. 
Although there are existing arithmetic algorithms for primitive complex intervals based on their unique definitions \cite{boche_complex_1965,gargantini_circular_1971,candau_complex_2006}, we focus on their boundaries in order to make this arithmetic framework general across the different subspaces. We are highly relying on the Minkowski algebra structure and Matheron's work \cite{matheron_random_1975}, as well as on geometric algebra studies of Farouki et al. \cite{farouki_algorithms_2000,farouki_minkowski_2001}.  
We show that by Gauss map matching we can identify the subset of either operand that contributes to a particular element in the result interval (Fig. \ref{fig:Gauss_map}). 
We present three methods for the analysis of the result boundary of unary and binary operations on complex intervals (Figures \ref{fig:mixed_methodc}, \ref{fig:parametric_method} and \ref{fig:implicit_method}). 
Then we apply these methods to straight edges and circular arcs, which constitute the boundary segments of all the complex interval representations considered in this paper. 
We show that the arithmetic properties of the newly proposed polyarcular interval is better than those of the primitive intervals and the polygonal interval \cite{boche_complex_1965,gargantini_circular_1971,petkovic_complex_1998,candau_complex_2006,ohta_polygon_1990,ohta_nonconvex_2000} (Table \ref{tab:Arithmetic_properties}, Figures \ref{fig:primitive_arithmetic} and \ref{fig:complex_subspaces}).


\begin{table}[t]
    \centering
    \begin{tabular}{c|ccccc}
        $\boldsymbol{A,B}\in$
         & $\mathcal{R}(\mathbb{C})$
         & $\mathcal{P}(\mathbb{C})$
         & $\mathcal{C}(\mathbb{C})$
         & $\mathcal{G}(\mathbb{C})$
         & $\mathcal{A}(\mathbb{C})$
        \\
        \hline
        $\boldsymbol{A}+\boldsymbol{B}$
        & $\mathcal{R}(\mathbb{C})$
        & $\mathcal{A}(\mathbb{C})$
        & $\mathcal{C}(\mathbb{C})$
        & $\mathcal{G}(\mathbb{C})$
        & $\mathcal{A}(\mathbb{C})$
        \\
        $\boldsymbol{A} \times \boldsymbol{B}$
        & $\color{red}\mathcal{I}(\mathbb{C})$
        & $\mathcal{P}(\mathbb{C})$
        & $\color{red}\mathcal{I}(\mathbb{C})$
        & $\color{red}\mathcal{I}(\mathbb{C})$
        & $\color{red}\mathcal{I}(\mathbb{C})^*$
        \\
        $-\boldsymbol{A}$
        & $\mathcal{R}(\mathbb{C})$
        & $\mathcal{P}(\mathbb{C})$
        & $\mathcal{C}(\mathbb{C})$
        & $\mathcal{G}(\mathbb{C})$
        & $\mathcal{A}(\mathbb{C})$
        \\
        $\boldsymbol{A}^{-1}$
        & $\color{blue}\mathcal{A}(\mathbb{C})$
        & $\mathcal{P}(\mathbb{C})$
        & $\mathcal{C}(\mathbb{C})$
        & $\color{blue}\mathcal{A}(\mathbb{C})$
        & $\mathcal{A}(\mathbb{C})$
        \\
        $\boldsymbol{A}\cap\boldsymbol{B}$
        & $\mathcal{R}(\mathbb{C})$
        & $\mathcal{P}(\mathbb{C})$
        & $\mathcal{A}(\mathbb{C})$
        & $\mathcal{G}(\mathbb{C})$
        & $\mathcal{A}(\mathbb{C})$
        \\
        $\boldsymbol{A}\cup\boldsymbol{B}$
        & $\mathcal{G}(\mathbb{C})$
        & $\mathcal{A}(\mathbb{C})$
        & $\mathcal{A}(\mathbb{C})$
        & $\mathcal{G}(\mathbb{C})$
        & $\mathcal{A}(\mathbb{C})$
\end{tabular}
    \caption{Arithmetic properties of complex interval subspaces.
    The red text indicates when the operation points outside the subspace, except when it points to the subspace of the proposed new type, which is indicated by the blue text.
    (* In the special case where one operand is a member of polar intervals, the result is polyarcular, which has a relevance in our case study in Section \ref{sec:case_study}.)}
    \label{tab:Arithmetic_properties}
\end{table}

\subsection{Element-wise operations}

Arithmetic operations on intervals are typically defined by the Minkowski algebra, which is the collection of pointwise operations on sets.  Minkowski addition and multiplication are both closed binary operations ($\mathcal{I}(\mathbb{C}) \times_c \mathcal{I}(\mathbb{C}) \to \mathcal{I}(\mathbb{C})$, where $\times_c$ indicates the Cartesian product), and the element-wise negation is a closed unary operation ($\mathcal{I}(\mathbb{C}) \to \mathcal{I}(\mathbb{C})$). This means that the results of the sum, product and negative of complex intervals are also complex intervals. However, the reciprocal is only a partial unary operation because it maps the intervals that include the complex zero outside the subspace ($\mathcal{I}(\mathbb{C}) \to \mathcal{I}(\mathbb{C} \cup \frac{1}{0})$). The element-wise subtraction and division can be defined as the combination of the operations mentioned above and consequently they are closed and partial binary operations respectively. For a graphical summary, see Figure \ref{fig:complex_subspaces}.

\begin{definition}
For $\boldsymbol{A},\boldsymbol{B} \in \mathcal{I}(\mathbb{C})$
\[
\begin{array}{cl}
     \boldsymbol{A} \oplus \boldsymbol{B} &= 
     \{A + B \big\vert A \in \boldsymbol{A}, B \in \boldsymbol{B}\},
     \\
     \boldsymbol{A} \otimes \boldsymbol{B} &= 
     \{A \times B  \big\vert A \in \boldsymbol{A}, B \in \boldsymbol{B}\},
     \\
     -\boldsymbol{A} &= \{-A \big\vert A \in \boldsymbol{A} \},
     \\
     \boldsymbol{A}^{-1} &= \{A^{-1} \big\vert A \in \boldsymbol{A} \},
     \\
     \boldsymbol{A} \ominus \boldsymbol{B} &= 
     \{A + (-B) \big\vert A \in \boldsymbol{A}, B \in \boldsymbol{B}\}=
     \boldsymbol{A} \oplus (-\boldsymbol{B}),
     \quad 
     \\
     \boldsymbol{A} \oslash \boldsymbol{B} &= 
     \{A \times B^{-1} \big\vert A \in \boldsymbol{A}, B \in \boldsymbol{B}\}=
     \boldsymbol{A} \otimes (\boldsymbol{B}^{-1}) 
     .
\end{array}
\]
\end{definition}

Similarly to the real intervals, the Minkowski addition and multiplication on complex intervals are commutative and associative but not distributive. \cite{giardina_morphological_1988,petkovic_complex_1998} The non-distributivity is also part of a larger question called the dependency problem. \cite{dawood_theories_2011,dawood_logical_2019}

\begin{theorem}
For $\boldsymbol{A},\boldsymbol{B},\boldsymbol{C} \in \mathcal{I}(\mathbb{C})$
\[
\label{eq:complex_interval_associativity_distributivity}
\begin{array}{ll}
    \boldsymbol{A} \oplus \boldsymbol{B}  = \boldsymbol{B} \oplus \boldsymbol{A},
    &
    \boldsymbol{A} \otimes \boldsymbol{B}  = \boldsymbol{B} \otimes \boldsymbol{A},
    \\
    (\boldsymbol{A} \oplus \boldsymbol{B}) \oplus \boldsymbol{C}  = \boldsymbol{A} \oplus (\boldsymbol{B} \oplus \boldsymbol{C}),
    &
    (\boldsymbol{A} \otimes \boldsymbol{B}) \otimes \boldsymbol{C}  = \boldsymbol{A} \otimes (\boldsymbol{B} \otimes \boldsymbol{C})
    ,
    \\
    \boldsymbol{A} \otimes (\boldsymbol{B} \oplus \boldsymbol{C}) \subseteq (\boldsymbol{A} \otimes \boldsymbol{B}) \oplus  (\boldsymbol{A} \otimes \boldsymbol{C})
    .
\end{array}
\]
\end{theorem}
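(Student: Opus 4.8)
The plan is to verify each identity directly from the pointwise (Minkowski) definitions, reducing every statement about complex intervals to the corresponding statement about individual complex numbers, where the analogous properties hold because $(\mathbb{C},+,\times)$ is a commutative ring. For commutativity of $\oplus$: an element of $\boldsymbol{A}\oplus\boldsymbol{B}$ is $A+B$ with $A\in\boldsymbol{A}$, $B\in\boldsymbol{B}$; since $A+B=B+A$ in $\mathbb{C}$, this element also lies in $\boldsymbol{B}\oplus\boldsymbol{A}$, and the reverse inclusion is symmetric. Commutativity of $\otimes$ is identical with $A\times B=B\times A$. This is the routine, bookkeeping part.

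For associativity of $\oplus$, I would chase an arbitrary element: $X\in(\boldsymbol{A}\oplus\boldsymbol{B})\oplus\boldsymbol{C}$ means $X=(A+B)+C$ for some $A\in\boldsymbol{A}$, $B\in\boldsymbol{B}$, $C\in\boldsymbol{C}$; associativity in $\mathbb{C}$ rewrites this as $A+(B+C)$, which exhibits $X\in\boldsymbol{A}\oplus(\boldsymbol{B}\oplus\boldsymbol{C})$, and again the converse is symmetric. Associativity of $\otimes$ is the same computation with products. The only subtlety worth a sentence is that these unions of pointwise results are genuinely sets (no measure-zero or closure issues arise), so set equality is exactly mutual inclusion of elements — which is immediate here.

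The one genuinely non-trivial item is subdistributivity $\boldsymbol{A}\otimes(\boldsymbol{B}\oplus\boldsymbol{C})\subseteq(\boldsymbol{A}\otimes\boldsymbol{B})\oplus(\boldsymbol{A}\otimes\boldsymbol{C})$, and here the proof is still short but the direction matters. Take $X\in\boldsymbol{A}\otimes(\boldsymbol{B}\oplus\boldsymbol{C})$, so $X=A\times(B+C)$ for a \emph{single} $A\in\boldsymbol{A}$ and some $B\in\boldsymbol{B}$, $C\in\boldsymbol{C}$. By distributivity in $\mathbb{C}$, $X=A\times B+A\times C$; since $A\times B\in\boldsymbol{A}\otimes\boldsymbol{B}$ and $A\times C\in\boldsymbol{A}\otimes\boldsymbol{C}$ (the \emph{same} $A$ is allowed in both, because each factor independently ranges over $\boldsymbol{A}$), we get $X\in(\boldsymbol{A}\otimes\boldsymbol{B})\oplus(\boldsymbol{A}\otimes\boldsymbol{C})$. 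The reason equality can fail — and hence the reason only $\subseteq$ is claimed — is precisely that an element of the right-hand side has the form $A_1\times B+A_2\times C$ with possibly \emph{distinct} $A_1,A_2\in\boldsymbol{A}$, which need not be expressible with a common first factor; this is the dependency phenomenon referenced after the statement, and I would remark on it rather than prove a separate non-equality (a one-line counterexample, e.g. $\boldsymbol{A}=[-1,1]$, $\boldsymbol{B}=\{1\}$, $\boldsymbol{C}=\{-1\}$ viewed inside $\mathbb{C}$, can be cited or deferred to the Supplementary Material). The main ``obstacle'' is therefore not difficulty but exposition: making clear which quantifiers are shared and which are independent, since that is the entire content of why $\otimes$ distributes only one way.
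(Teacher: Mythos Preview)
Your argument is correct and is the standard element-chasing proof: each identity is reduced to the corresponding ring law in $\mathbb{C}$, and for subdistributivity you correctly isolate the key point that on the left a \emph{single} $A\in\boldsymbol{A}$ is used, whereas on the right two independent choices $A_1,A_2\in\boldsymbol{A}$ are permitted. The counterexample $\boldsymbol{A}=[-1,1]$, $\boldsymbol{B}=\{1\}$, $\boldsymbol{C}=\{-1\}$ for strict inclusion is also fine.

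There is nothing to compare against: the paper does not supply its own proof of this theorem. It states the result and immediately defers to the literature (\cite{giardina_morphological_1988,petkovic_complex_1998}), with a further remark linking the failure of distributivity to the dependency problem (\cite{dawood_theories_2011,dawood_logical_2019}). Your write-up therefore already goes beyond what the paper provides, and is exactly the argument one would find in those references.
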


Also, there is no additive and multiplicative inverse element for non-degenerate complex intervals, and therefore no inverse operations either. (Intervals consisting of a single non-zero complex number have inverses.) We only have the following trivial inclusions.

\begin{equation}
\label{eq:complex_interval_additive_multiplicative_inverse}
\begin{array}{ll}
    \boldsymbol{A} \oplus (-\boldsymbol{A}) \supseteq 0,
    &
    (\boldsymbol{A} \oplus \boldsymbol{B}) \ominus \boldsymbol{B} \supseteq \boldsymbol{A},
    \\
    \boldsymbol{A} \otimes \boldsymbol{A}^{-1} \supseteq 1,
    &
    (\boldsymbol{A} \otimes \boldsymbol{B}) \oslash \boldsymbol{B} \supseteq \boldsymbol{A}
    .
\end{array}
\end{equation}

Without a loss of generality we can freely translate (add a complex number to) the operands of the addition or negative operations, and rotate-and-scale (multiply by a complex number) the operands of the multiplication or reciprocal operation \cite{farouki_minkowski_2001}. This allows the operation to be performed on the normalized sets and then get the unnormalized result as
\begin{equation}\label{eq:Curve_segment_normalization}
\begin{array}{l}
    \boldsymbol{A} \oplus \boldsymbol{B} = 
    (\boldsymbol{A} + U) \oplus (\boldsymbol{B} + V) - (U + V), 
    \\
    -\boldsymbol{A} = -(\boldsymbol{A} + U) + U,
    \\
    \boldsymbol{A} \otimes \boldsymbol{B} = 
    (\boldsymbol{A} \times W) \otimes (\boldsymbol{B} \times Z) \times (W^{-1}Z^{-1}),    
     \\
    \boldsymbol{A}^{-1} = (\boldsymbol{A} \times W)^{-1} \times W
    .
\end{array}
\end{equation}

Set operations can be easily defined by element-wise conditions. The union and intersection are both partial binary operations because the union and intersection of two disconnected complex intervals are not bounded by a Jordan curve.

\subsection{Backtracking}
 
The set operations allow the reformulation of the arithmetic operations as

\begin{equation} \label{eq:modified_Minkowski_sum_prod}
\begin{array}{l}
    \boldsymbol{A} \oplus \boldsymbol{B} = 
    \bigcup\limits_{B \in \boldsymbol{B}} \boldsymbol{A} + B = 
    \{ Z \big\vert \boldsymbol{A} \cap (-\boldsymbol{B} + Z) \neq \emptyset\}
    \\
    \boldsymbol{A} \otimes \boldsymbol{B} = 
    \bigcup\limits_{B \in \boldsymbol{B}} \boldsymbol{A} \times B = 
    \{ Z \big\vert \boldsymbol{A} \cap (\boldsymbol{B}^{-1} \times Z) \neq \emptyset\}
    .
\end{array}
\end{equation}

Using these identities, we can identify the subset of either operand that contributes to a particular element in the result interval.

\begin{definition} \label{def:complex_interval_backtracking}
Let $\boldsymbol{A},\boldsymbol{B} \in \mathcal{I}(\mathbb C)$. For an element of the sum $Z \in \boldsymbol{A} \oplus \boldsymbol{B}$, we define
\[ 
    \boldsymbol{A}_Z = \boldsymbol{A} \cap (-\boldsymbol{B} \!+\! Z),\quad
    \boldsymbol{B}_Z = \boldsymbol{B} \cap (-\boldsymbol{A} \!+\! Z),
\]
satisfying
\[
    \forall A \in \boldsymbol{A}_Z, 
    \exists B \in \boldsymbol{B}_Z: A \!+\! B \!=\! Z,
\]
Similarly, for a product element $Z \in \boldsymbol{A} \otimes \boldsymbol{B}$, let
\[
    \boldsymbol{A}_Z = \boldsymbol{A} \cap (\boldsymbol{B}^{-1} \!\times\! Z), \quad
    \boldsymbol{B}_Z = \boldsymbol{B} \cap (\boldsymbol{A}^{-1} \!\times\! Z),
\]
that satisfies
\[
    \forall A \!\in\! \boldsymbol{A}_Z, 
    \exists B \!\in\! \boldsymbol{B}_Z: A \!\times\! B \!=\! Z
    .
\]
\end{definition}

\subsection{Boundary analysis}  \label{sub:boundary_analysis}

For an element $Z$ of the result boundary the subsets $\boldsymbol{A}_Z,\boldsymbol{B}_Z$ of definition \ref{def:complex_interval_backtracking} will be subsets of the operand boundaries, and in many cases they consist of single points. Therefore, the result boundary of the arithmetic operations can be defined using only the boundary points of the operands. \cite{farouki_algorithms_2000}

\begin{proposition}  \label{prop:Minkowski_boundary}
For $\boldsymbol{A},\boldsymbol{B} \in \mathcal{I}(\mathbb C)$.
\[
\begin{array}{ll}
    \partial (\boldsymbol{A} \oplus \boldsymbol{B}) \subseteq 
    \partial \boldsymbol{A} \oplus \partial \boldsymbol{B},
    \quad
     \partial (-\boldsymbol{A}) = - (\partial \boldsymbol{A}),
    \\
    \partial (\boldsymbol{A} \otimes \boldsymbol{B}) \subseteq 
    \partial \boldsymbol{A} \otimes \partial \boldsymbol{B},
    \quad
    \partial (\boldsymbol{A}^{-1}) = (\partial \boldsymbol{A})^{-1},
    \\
    \partial (\boldsymbol{A} \cup \boldsymbol{B}) \subseteq
    \partial \boldsymbol{A}  \cup \partial \boldsymbol{B},
    \\
    \partial (\boldsymbol{A} \cap \boldsymbol{B}) \subseteq 
    \partial \boldsymbol{A}  \cup \partial \boldsymbol{B}
    .
\end{array}
\]
\end{proposition}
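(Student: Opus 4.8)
The plan is to reduce all six relations to two elementary ingredients: first, every complex interval in $\mathcal{I}(\mathbb{C})$ is compact (it is the closed region bounded by a Jordan curve), so $\boldsymbol{A}\oplus\boldsymbol{B}$ and $\boldsymbol{A}\otimes\boldsymbol{B}$ are compact as continuous images of $\boldsymbol{A}\times\boldsymbol{B}$; second, the maps $Z\mapsto -Z$ on $\mathbb{C}$ and $Z\mapsto Z^{-1}$ on $\mathbb{C}\setminus\{0\}$ are homeomorphisms, and a homeomorphism commutes with closure and interior, hence with the boundary operator. The two equalities are immediate from the second ingredient: $\partial(-\boldsymbol{A})=-\partial\boldsymbol{A}$ since negation is a self-homeomorphism of $\mathbb{C}$, and $\partial(\boldsymbol{A}^{-1})=(\partial\boldsymbol{A})^{-1}$ since inversion is a self-homeomorphism of $\mathbb{C}\setminus\{0\}$, which is exactly the domain on which the reciprocal is defined as an interval.

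For the Minkowski sum, I would argue as follows. Let $Z\in\partial(\boldsymbol{A}\oplus\boldsymbol{B})$. Since $\boldsymbol{A}\oplus\boldsymbol{B}$ is compact, $Z$ belongs to it, so by backtracking (Definition \ref{def:complex_interval_backtracking}) $\boldsymbol{A}_Z\ne\emptyset$, i.e. $Z=A_0+B_0$ for some $A_0\in\boldsymbol{A}$, $B_0\in\boldsymbol{B}$. If $A_0$ were interior to $\boldsymbol{A}$, some disk $\boldsymbol{D}(A_0,\varepsilon)\subseteq\boldsymbol{A}$, so $\boldsymbol{D}(Z,\varepsilon)=\boldsymbol{D}(A_0,\varepsilon)+B_0\subseteq\boldsymbol{A}\oplus\boldsymbol{B}$, contradicting $Z\in\partial(\boldsymbol{A}\oplus\boldsymbol{B})$; hence $A_0\in\partial\boldsymbol{A}$, and symmetrically $B_0\in\partial\boldsymbol{B}$, so $Z\in\partial\boldsymbol{A}\oplus\partial\boldsymbol{B}$. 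For the product the same template works with "translate by $B_0$" replaced by "multiply by $B_0$": when $Z\ne 0$ both $A_0$ and $B_0$ are nonzero, multiplication by $B_0$ is a homeomorphism of $\mathbb{C}$, and it sends an interior disk around $A_0$ to an open neighborhood of $Z$ inside $\boldsymbol{A}\otimes\boldsymbol{B}$, yielding the same contradiction and hence $A_0\in\partial\boldsymbol{A}$, $B_0\in\partial\boldsymbol{B}$. The two set inclusions are pure point-set topology: for the union, $\boldsymbol{A}\cup\boldsymbol{B}$ is closed and $\mathrm{int}\,\boldsymbol{A}\cup\mathrm{int}\,\boldsymbol{B}\subseteq\mathrm{int}(\boldsymbol{A}\cup\boldsymbol{B})$, so a boundary point of the union lies in one operand but not its interior; for the intersection, $\mathrm{int}(\boldsymbol{A}\cap\boldsymbol{B})=\mathrm{int}\,\boldsymbol{A}\cap\mathrm{int}\,\boldsymbol{B}$, so a boundary point of the intersection lying in the interior of both operands would be interior to the intersection.

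The main obstacle is the degenerate behaviour of multiplication at the origin: if $Z=0$, then $Z=A_0B_0$ no longer forces $A_0,B_0\ne 0$, and multiplication by $B_0$ is not a homeomorphism when $B_0=0$, so the argument above breaks. I would dispatch this by a short case split. If $0\in\partial\boldsymbol{A}$ (or $0\in\partial\boldsymbol{B}$), then $0=0\cdot B$ for any boundary point $B$ of the other operand, so $0\in\partial\boldsymbol{A}\otimes\partial\boldsymbol{B}$. If $0$ is interior to $\boldsymbol{A}$ while $\boldsymbol{B}$ contains some nonzero $B_0$, then $\mathrm{int}\,\boldsymbol{A}\cdot\{B_0\}$ is an open neighborhood of $0$ contained in $\boldsymbol{A}\otimes\boldsymbol{B}$, contradicting $0\in\partial(\boldsymbol{A}\otimes\boldsymbol{B})$; and the only remaining case, $\boldsymbol{B}=\{0\}$, is the trivial degenerate interval for which both sides reduce to $\{0\}$ (symmetrically with the roles of $\boldsymbol{A},\boldsymbol{B}$ swapped).

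A final, expository remark I would add is why the relations for the sum, product, union, and intersection are stated as inclusions rather than equalities: a matched pair of boundary points $(A_0,B_0)\in\partial\boldsymbol{A}\times\partial\boldsymbol{B}$ can very well have $A_0+B_0$ (resp. $A_0B_0$) in the \emph{interior} of the result, when the contributing boundary arcs have opposing Gauss-map directions and cancel. This is precisely the phenomenon that the Gauss map matching of the following subsection is designed to detect, so I would simply signpost it here and not attempt to characterise the exact boundary at this stage.
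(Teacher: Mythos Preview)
Your argument is correct and carefully handles the only genuinely delicate point, namely the degenerate behaviour of multiplication at the origin. The paper, however, does not supply its own proof of this proposition: it is stated as a known fact, with the surrounding text pointing to Matheron \cite{matheron_random_1975} and Farouki et al.\ \cite{farouki_algorithms_2000,farouki_minkowski_2001} as the underlying sources. So there is no paper-proof to compare against; you have filled in what the paper leaves to the references.

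Two minor remarks. First, the case $\boldsymbol{B}=\{0\}$ that you single out is in fact vacuous under the paper's Definition~\ref{def:complex_interval_space}: a complex interval is bounded by a Jordan curve and therefore has nonempty interior, so $\boldsymbol{B}$ always contains a nonzero point and your contradiction argument already covers everything once $0\in\mathrm{int}\,\boldsymbol{A}$. Second, your closing expository paragraph on why the sum/product/union/intersection relations are only inclusions is a nice bridge to the Gauss-map matching of Section~\ref{sub:Gauss_map}; it is not part of the proof proper, but it is exactly the kind of connective tissue the paper omits between this proposition and Proposition~\ref{prop:boundary_Gauss_map}.
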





\begin{figure}[t!]
    \centering
    \includegraphics[width=4in]{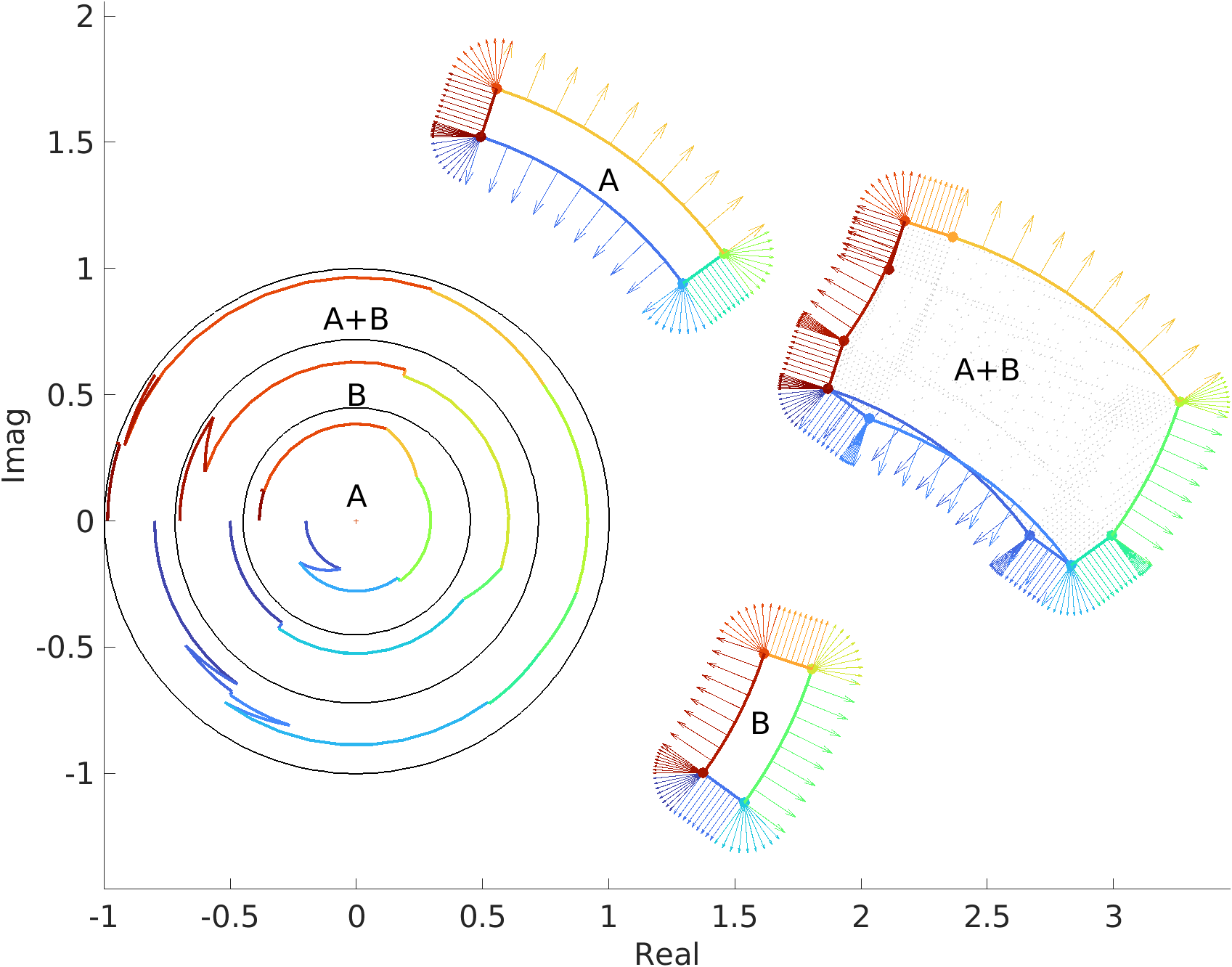}
    \caption{Two polar intervals and their sum combined with their Gauss maps on the same plot. The three concentric curves in the unit circle shows their Gauss maps, where the argument and the color indicates the curve normal angle, and the radius indicates the position on the curve. The arrows show the curve normal vectors.}
    \label{fig:Gauss_map}
\end{figure}

\begin{definition} \label{def:boundary_segment}
    An interval boundary segment is a regular piece of a complex interval boundary (see Definition \ref{def:complex_interval_space}).

    \begin{equation*}
    \boldsymbol{\Gamma}_{A,\mathsf{n}} = 
    \Gamma_{A,\mathsf{n}}(\boldsymbol{s})
    \subset
    \partial\boldsymbol{A} 
      \in \mathring{\mathcal{O}}
\end{equation*}

\end{definition}

\begin{definition} \label{def:boundary_vertex}
    An interval boundary vertex is the intersection point of two interval boundary segments (see definition \ref{def:boundary_segment}).
    \begin{equation}
        P_{A,\mathsf{n}} =  
        \boldsymbol{\Gamma}_{A,\mathsf{n-1}} \cap 
        \boldsymbol{\Gamma}_{A,\mathsf{n}} , \quad
        P_{A,1} = \boldsymbol{\Gamma}_{A,\mathsf{N}} \cap 
        \boldsymbol{\Gamma}_{A,\mathsf{1}},
        \mathsf{n}\in\{1..\mathsf{N}\}
    \end{equation}
\end{definition}

\subsubsection{Gauss map} \label{sub:Gauss_map}

Gauss map matching \cite{farouki_algorithms_2000,farouki_minkowski_2001} or curve convolution \cite{lee_minkowski_1999} is a direct way to identify the operand elements that contribute to the boundary of the sum of two curves. It requires the matching of the normal arguments of the boundary curves. The resulting sum is a boundary point with the same argument as the contributing points (Fig. \ref{fig:Gauss_map}.) This method can be also applied to the boundary of the product using the logarithms of the curves, leading to an equation with the argument of the normalized curve normal.

\begin{proposition}[Gauss map matching] \label{prop:Gauss_map_matching}
For two regular curves $F,G: \mathbb{R} \to \mathbb{C}$
\begin{align*}
    F(s_0) + G(t_0) \in 
    \partial \{ F(\mathbb{R}) + G(\mathbb{R}) \}
    \implies
    \angle F'(s_0) = \angle G'(t_0),
    \\
    F(s_0) \times G(t_0) \in 
    \partial \{ F(\mathbb{R}) \times G(\mathbb{R}) \}
    \implies
    \angle \frac{F'(s_0)}{F(s_0)} = 
    \angle \frac{G'(t_0)}{G(t_0)}.
\end{align*}
\end{proposition}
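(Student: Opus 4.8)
The plan is to treat the additive case first and then reduce the multiplicative case to it via logarithms. For the additive case, suppose $F(s_0)+G(t_0)$ lies on the boundary of $\boldsymbol{S}=\{F(\mathbb{R})+G(\mathbb{R})\}$, but $\angle F'(s_0)\neq\angle G'(t_0)$. I would argue by contradiction: produce a small two-parameter family of points of $\boldsymbol{S}$ near $F(s_0)+G(t_0)$ that covers a full neighbourhood, contradicting the boundary hypothesis. Concretely, consider the map $\Phi(s,t)=F(s)+G(t)$ defined on a neighbourhood of $(s_0,t_0)$. Its Jacobian (viewing $\mathbb{C}\cong\mathbb{R}^2$) has columns $F'(s_0)$ and $G'(t_0)$, which are nonzero by regularity. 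If these two tangent vectors are not parallel, the Jacobian is invertible, so by the inverse function theorem $\Phi$ is an open map near $(s_0,t_0)$; hence $F(s_0)+G(t_0)$ is an interior point of $\boldsymbol{S}$, contradicting membership in $\partial\boldsymbol{S}$. Therefore $F'(s_0)\parallel G'(t_0)$. It remains to upgrade ``parallel'' to ``equal argument'' rather than merely ``equal or opposite argument.'' Here I would use a first-order/second-order perturbation argument: if $F'(s_0)$ and $G'(t_0)$ point in opposite directions, then moving $s$ and $t$ in suitably opposite directions still lets the first-order terms cancel while allowing motion transverse to the common tangent line via the second-order curvature terms; alternatively, one observes that a supporting line of $\boldsymbol{S}$ at $F(s_0)+G(t_0)$ must simultaneously be a supporting line of $F(\mathbb{R})$ at $F(s_0)$ and of $G(\mathbb{R})$ at $G(t_0)$ with the interval lying on the same side, which forces the outward normals — hence the tangent directions with the induced orientation — to agree, not merely to be collinear. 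I would phrase this via the supporting-line characterization, since it is the cleanest.

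For the multiplicative case, I would exploit that multiplication becomes addition under $\log$. Since the statement is local and we may, by the normalization in~\eqref{eq:Curve_segment_normalization}, assume $F(s_0)\neq 0$ and $G(t_0)\neq 0$ (a point of the product boundary arising from a factor equal to zero is a degenerate case one handles separately, as zero is either in the product or the product is the trivial set $\{0\}$), choose local branches of the logarithm near $F(s_0)$ and near $G(t_0)$ and set $\tilde F=\log\circ F$, $\tilde G=\log\circ G$. Because $\log$ is a local biholomorphism on $\mathbb{C}\setminus\{0\}$, it maps a relative neighbourhood of $F(s_0)\times G(t_0)$ in $F(\mathbb{R})\times G(\mathbb{R})$ onto a relative neighbourhood of $\tilde F(s_0)+\tilde G(t_0)$ in $\tilde F(\mathbb{R})+\tilde G(\mathbb{R})$, and it carries boundary points to boundary points (being a homeomorphism onto its image). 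Hence $\tilde F(s_0)+\tilde G(t_0)\in\partial\{\tilde F(\mathbb{R})+\tilde G(\mathbb{R})\}$, and the additive case gives $\angle\tilde F'(s_0)=\angle\tilde G'(t_0)$. Finally $\tilde F'(s)=F'(s)/F(s)$ and $\tilde G'(t)=G'(t)/G(t)$ by the chain rule, which is exactly the claimed identity $\angle\frac{F'(s_0)}{F(s_0)}=\angle\frac{G'(t_0)}{G(t_0)}$.

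I expect the main obstacle to be the sign/orientation refinement in the additive case — the step from ``the tangent vectors are collinear'' to ``they have the same argument.'' The inverse-function-theorem argument only rules out the generic transverse situation; ruling out the antiparallel case requires either a careful second-order (curvature) expansion showing that two oppositely-oriented regular arcs can still sweep out a neighbourhood of their sum, or the cleaner observation that a boundary point of a planar set admits a supporting line and that a supporting line of a Minkowski sum decomposes into supporting lines of the summands with consistent sides, pinning down the outward normal direction uniquely. A secondary but minor technical point is justifying the logarithm transfer rigorously — namely that a local homeomorphism between the ambient spaces restricts to a homeomorphism between the respective subsets and therefore preserves relative boundary — which is routine once one is careful that ``boundary'' here means topological boundary in $\mathbb{C}$ of sets that are, by the standing assumptions, closures of their interiors.
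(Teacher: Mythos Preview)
The paper does not supply its own proof of this proposition; it is stated with citations to Farouki et al.\ and Lee, so there is no in-paper argument to compare against. Your strategy---inverse function theorem on $(s,t)\mapsto F(s)+G(t)$, then a logarithmic reduction for the product---is the standard one, and it correctly establishes that $F'(s_0)$ and $G'(t_0)$ must be \emph{collinear} whenever the sum point lies on the boundary. The multiplicative reduction is also sound, though it is cleaner to apply the Jacobian criterion directly to $(s,t)\mapsto F(s)G(t)$ and avoid the bookkeeping with global branches of $\log$ on the image sets.

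The step you flag as the main obstacle is a genuine gap, and in the stated generality it cannot be closed: without an orientation hypothesis the sharpened conclusion $\angle F'(s_0)=\angle G'(t_0)$ (rather than equality modulo~$\pi$) is false. Take $F(s)=e^{is}$ and $G(t)=e^{-it}$: both are regular, $F(\mathbb{R})+G(\mathbb{R})$ is the closed disk of radius~$2$, and $F(0)+G(0)=2$ lies on its boundary, yet $F'(0)=i$ and $G'(0)=-i$ have opposite arguments. Your supporting-line argument tacitly assumes each curve is the positively oriented boundary of a region, so that an outward normal and a ``same side'' condition make sense; that is indeed how the paper uses the result (Proposition~\ref{prop:boundary_Gauss_map}), but it is not among the hypotheses as written. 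Even with that assumption, a global supporting line need not exist at a non-convex boundary point, so the phrasing should be local: at a regular boundary point each region lies on a definite side of its tangent line, and if the two sides were opposite one could again cover a full neighbourhood of the sum point. Add the orientation hypothesis and make this local one-sidedness explicit, and your argument goes through.
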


\begin{remark}
    The normalization of the operands shown in \eqref{eq:Curve_segment_normalization} does not affect their Gauss maps.
\end{remark}


Let us consider the Gauss map
\[
\gamma(F): \boldsymbol{s} \to \mathbb{R}, \quad 
\gamma(F,s) = \angle i F'(s),
\]
that returns the argument of the normal to the curve. Let us denote the argument of $F$ by $\Dot{\gamma}(F,s)=\angle F(s)$. The logarithmic Gauss map function is then defined as 
\[
\mathring{\gamma}(F,s)=\gamma(\log F,s)=\angle \frac{F'(s)}{F(s)} =\gamma_F-\Dot{\gamma}_F
\]
with a consistent choice of complex logarithm.

Let us apply the Gauss map on a curve segment $F(\boldsymbol{s})$. The result is the real set
\[\boldsymbol{\gamma}_F= \gamma(F,\boldsymbol{s}) =\{\gamma(F,s)\big\vert s\in\boldsymbol{s}\}.
\]
 The same applies to the logarithmic Gauss map.

Let us extend the Gauss map (and the logarithmic Gauss map) to a vertex $P$ at the intersection of two neighboring curve segments $F_1(\boldsymbol{s}_1)$ and $F_2(\boldsymbol{s}_2)$. We define $\gamma_P$ to be a monotonic function onto the interval $[\gamma_{F_1}(\overline{s_1}),\gamma_{F_2}(\underline{s_2})]$ with a consistent choice of the argument and logarithm functions.

We can now apply the Gauss map matching (Proposition \ref{prop:Gauss_map_matching}) to this setup. 

\begin{proposition} \label{prop:boundary_Gauss_map}
    For two boundary segments 
    $\boldsymbol{\Gamma}_{A,\mathsf{n}}\subset\partial \boldsymbol{A}$ and 
    $\boldsymbol{\Gamma}_{B,\mathsf{k}}\subset \partial\boldsymbol{B}$
\[
\begin{array}{cc}
\boldsymbol{\Gamma}_{A,\mathsf{n}} \oplus \boldsymbol{\Gamma}_{B,\mathsf{k}} \subset 
\partial(\boldsymbol{A}\oplus\boldsymbol{B}) \implies
\boldsymbol{\gamma}_{\Gamma_{A,\mathsf{n}}} \cap 
\boldsymbol{\gamma}_{\Gamma_{B,\mathsf{k}}} \neq \emptyset,
\\
\boldsymbol{\Gamma}_{A,\mathsf{n}} \otimes \boldsymbol{\Gamma}_{B,\mathsf{k}} \subset 
\partial(\boldsymbol{A}\otimes\boldsymbol{B}) \implies
\mathring{\boldsymbol{\gamma}}_{\Gamma_{A,\mathsf{n}}} \cap 
\mathring{\boldsymbol{\gamma}}_{\Gamma_{B,\mathsf{k}}} \neq \emptyset.
\end{array}
\]
   For a boundary segment
    $\boldsymbol{\Gamma}_{A,\mathsf{n}}\subset\partial \boldsymbol{A}$ 
    and a boundary vertex
    $P_{B,\mathsf{k}}\in \partial\boldsymbol{B}$
\[
\begin{array}{cc}
\boldsymbol{\Gamma}_{A,\mathsf{n}} + P_{B,\mathsf{k}} \subset 
\partial(\boldsymbol{A}\oplus\boldsymbol{B}) \implies
\boldsymbol{\gamma}_{\Gamma_{A,\mathsf{n}}} \cap 
\boldsymbol{\gamma}_{P_{B,\mathsf{k}}} \neq \emptyset,
\\
\boldsymbol{\Gamma}_{A,\mathsf{n}} \times P_{B,\mathsf{k}} \subset 
\partial(\boldsymbol{A}\otimes\boldsymbol{B}) \implies
\mathring{\boldsymbol{\gamma}}_{\Gamma_{A,\mathsf{n}}} \cap 
\mathring{\boldsymbol{\gamma}}_{P_{B,\mathsf{k}}} \neq \emptyset.
\end{array}
\]
    For two boundary vertices
    $P_{A,\mathsf{n}}\in\partial \boldsymbol{A}$ 
    and
    $P_{B,\mathsf{k}}\in \partial\boldsymbol{B}$
\[
\begin{array}{cc}
P_{A,\mathsf{n}} + P_{B,\mathsf{k}} \subset 
\partial(\boldsymbol{A}\oplus\boldsymbol{B}) \implies
\boldsymbol{\gamma}_{P_{A,\mathsf{n}}} \cap 
\boldsymbol{\gamma}_{P_{B,\mathsf{k}}} \neq \emptyset,
\\
P_{A,\mathsf{n}} \times P_{B,\mathsf{k}} \subset
\partial(\boldsymbol{A}\otimes\boldsymbol{B}) \implies
\mathring{\boldsymbol{\gamma}}_{P_{A,\mathsf{n}}} \cap 
\mathring{\boldsymbol{\gamma}}_{P_{B,\mathsf{k}}} \neq \emptyset.
\end{array}
\]
\end{proposition}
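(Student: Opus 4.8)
The plan is to reduce the whole proposition to the pointwise Gauss--map matching of Proposition~\ref{prop:Gauss_map_matching}, together with a one--line change of variable relating the maps $\gamma,\mathring\gamma$ used here to the raw quantities $\angle F'$ and $\angle(F'/F)$ appearing there. Since $\gamma(F,s)$ and $\mathring\gamma(F,s)$ agree with $\angle F'(s)$ and $\angle\bigl(F'(s)/F(s)\bigr)$ up to a curve--independent constant rotation, and since a consistent branch of the argument (and, in the multiplicative statements, of the logarithm, which exists because $0\notin\boldsymbol A\cup\boldsymbol B$ is implicit there) is fixed, the matching conclusions $\angle F'(s_0)=\angle G'(t_0)$ and $\angle(F'(s_0)/F(s_0))=\angle(G'(t_0)/G(t_0))$ transcribe verbatim into $\gamma(F,s_0)=\gamma(G,t_0)$ and $\mathring\gamma(F,s_0)=\mathring\gamma(G,t_0)$.

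For the segment--segment rows I would argue as follows. The hypothesis provides a point $Z=F(s_0)+G(t_0)$ (respectively $F(s_0)\,G(t_0)$) of the combination that lies in $\partial(\boldsymbol A\oplus\boldsymbol B)$ (resp.\ $\partial(\boldsymbol A\otimes\boldsymbol B)$), where $F,G$ are regular parametrisations of $\boldsymbol\Gamma_{A,\mathsf n}$ and $\boldsymbol\Gamma_{B,\mathsf k}$. Because the two--segment sum (product) is itself contained in $\boldsymbol A\oplus\boldsymbol B$ ($\boldsymbol A\otimes\boldsymbol B$), the point $Z$ is also a boundary point of that smaller set, so Proposition~\ref{prop:Gauss_map_matching} applies; after the transcription above it delivers a value $\gamma(F,s_0)=\gamma(G,t_0)$ (resp.\ its $\mathring\gamma$ analogue) that by definition lies in both $\boldsymbol\gamma_{\Gamma_{A,\mathsf n}}$ and $\boldsymbol\gamma_{\Gamma_{B,\mathsf k}}$ (resp.\ $\mathring{\boldsymbol\gamma}_{\Gamma_{A,\mathsf n}}$ and $\mathring{\boldsymbol\gamma}_{\Gamma_{B,\mathsf k}}$), so these sets meet.

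The vertex rows need an extra step, since a vertex $P$ carries no derivative to feed into Proposition~\ref{prop:Gauss_map_matching}; instead $\boldsymbol\gamma_P$ is, by construction, the whole interval interpolated monotonically between the one--sided normal arguments of its two incident segments — equivalently, the set of outward normal directions available at the corner. I would bridge to the segment case by a limiting argument: a boundary point $A+P$ (or $P_A+P_B$) of $\boldsymbol A\oplus\boldsymbol B$ is a limit of boundary points $A_j+B_j$ with $A_j,B_j$ on smooth sub-arcs, which are dense in the piecewise--smooth boundaries, and $A_j+B_j\to A+P$; the segment result gives common $\gamma$-values $\theta_j$, and a convergent subsequence $\theta_j\to\theta$ lies both in the closure of the $\gamma$-values of the $\boldsymbol A$-piece that $A_j$ approaches (that is, $\boldsymbol\gamma_{\Gamma_{A,\mathsf n}}$, or $\boldsymbol\gamma_{P_A}$ if $A_j$ tends to a vertex) and in $\boldsymbol\gamma_P$, because $B_j\to P$ forces $\theta$ between the one--sided normal arguments there. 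A cleaner, limit--free alternative would use the tangent/normal--cone description of $\partial(\boldsymbol A\oplus\boldsymbol B)$: at a boundary point $A+B$ the tangent cones at $A$ and $B$ cannot sum to the whole plane, hence share a supporting direction $\theta$, and $\theta$ then lies in the normal cone at each point — a single ray ($=\gamma$) at a smooth point and precisely the arc $\boldsymbol\gamma_P$ at a convex vertex. The multiplicative vertex rows are identical after passing to logarithms.

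I expect the vertex analysis to be the only real obstacle, and within it the degenerate and non--convex cases: single--point ``segments'', concave arcs, and reflex vertices, where the tangent cone is lower--dimensional or non--convex and the clean normal--cone duality breaks down. For a reflex corner the hypothesis $\,\cdot\subset\partial(\boldsymbol A\oplus\boldsymbol B)$ turns out to be unsatisfiable unless the mating operand piece is a single point, so those subcases are either vacuous or reduce to a pure translation and can be disposed of directly. These technical points, the identification of $\boldsymbol\gamma_P$ with the corner's normal cone under the paper's orientation convention, and the logarithm--branch bookkeeping for the multiplicative claims are what I would relegate to the Supplementary Material, keeping in the main text only the transcription plus the one--step limit above.
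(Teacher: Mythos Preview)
Your approach is essentially the paper's: reduce everything to Proposition~\ref{prop:Gauss_map_matching}. The paper in fact gives no proof beyond the sentence ``We can now apply the Gauss map matching (Proposition~\ref{prop:Gauss_map_matching}) to this setup,'' so your segment--segment argument and your extra care with the vertex cases (via limits or normal cones) already go further than the paper does.
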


\subsubsection{Arithmetic combination of polyarcular boundaries}
\label{sub:arithmetic_properties_of_polyarcular_intervals}

As a consequence of Propositions \ref{prop:Minkowski_boundary} and \ref{prop:boundary_Gauss_map}, the arithmetic combination of two polyarcular intervals will result in a polyarcular interval iff the arithmetic combinations of the operand boundary segments with overlapping Gauss map produce only vertices, edges and arcs. The polyarcular arithmetic properties are summarized by Figure \ref{fig:complex_subspaces} and Tables \ref{tab:Arithmetic_properties} and \ref{tab:envelopes}.

Since the combination of a vertex with any other boundary element type is a simple translation for addition and a scale-rotation for multiplicative combination, the result boundary segment will be an edge or an arc. Therefore, in the following analysis, we will focus on the combinations of edges and arcs. We describe three methods using their implicit equations in the Cartesian and polar coordinate system, and their parametrizations, to determine the envelope and the parametric condition of the envelope being part of the boundary. We will denote the segments at hand as

\begin{equation}\label{eq:Curve_segment_functions}
\begin{array}{ll}
    \boldsymbol{\Gamma}=\boldsymbol{\Gamma}_{A,\mathsf{n}} = \{F(s) \big\vert s \in \boldsymbol{s}\} 
        &=\{x+iy \big\vert f(x,y)\!=\!0 , 
            s(x,y) \!\in\! \boldsymbol{s}\}  
        \\
        &=\{\rho e^{i\theta} \big\vert \mathring{f}(\rho,\theta) \!=\! 0 ,
            \mathring{s}(\rho,\theta) \!\in\! \boldsymbol{s}\},
            
    \\
    \boldsymbol{\Gamma'}=\boldsymbol{\Gamma}_{B,\mathsf{k}} = \{G(t) \big\vert s \in \boldsymbol{t}\} 
    &=\{x+iy \big\vert g(x,y)\!=\!0 , t(x,y) \!\in\! \boldsymbol{t}\} 
    \\
    &= \{\rho e^{i\theta} \big\vert \mathring{g}(\rho,\theta) \!=\!0 , 
    \mathring{t}(\rho,\theta) \in \boldsymbol{t}\} 
    
    .
\end{array}
\end{equation}

Applying the normalization in \eqref{eq:Curve_segment_normalization}, our analysis of additive operations can be restricted to segments of zero-crossing lines ($\Bar{\mathcal{O}}_{0/}$) and zero-centered circles ($\breve{\mathcal{O}}_0$). While our analysis of multiplicative operations can be restricted to segments of zero-crossing horizontal and one-crossing vertical lines 
($\Bar{\mathcal{O}}_{0\_}$ and $\Bar{\mathcal{O}}_{1\vert}$) and zero-centered and one-centered circles ($\breve{\mathcal{O}}_0$ and $\breve{\mathcal{O}}_1$). We collected the functions of these subspaces in Table \ref{tab:edge_arc_functions}.

\begin{table}[t]
    \centering
    \begin{tabular}{l|c|c|c|c|c}
         & Parametri- 
         & Cartesian
         & Cartesian  
         & Polar  
         & Polar
         \\
         & zation
         & equation
         & boundary
         & equation
         & boundary
         \\
         & $F(s)$ 
         & $f(x,y)$ & $s(x,y)$ 
         & $\mathring{f}(\rho,\theta)$  & $\mathring{s}(\rho,\theta)$
         \\
         & $G(s)$ 
         & $g(x,y)$ & $t(x,y)$ 
         & $\mathring{g}(\rho,\theta)$  & $\mathring{t}(\rho,\theta)$
         \\
         \hline
         $\Bar{\mathcal{O}}_{0/}$
         & $s+i a s$ 
         & $ax-y$ & $x$
         & $\vert \tan(\theta)\vert - a$ & $\rho \cos(\theta)$
         \\
         $\Bar{\mathcal{O}}_{0\_}$
         & $s$ 
         &  $y$  & $x$
         & $\tan(\theta)$ & $\rho$
         \\
         $\Bar{\mathcal{O}}_{1\vert}$
         & $1 + i s$ 
         &  $x-1$  & $y$
         & $\frac{1}{\rho}-\cos(\theta)$  & $\rho\sin(\theta)$
         \\
         $\breve{\mathcal{O}}_0$
         & $r e^{i s}$ 
         &  $x^2+y^2-r^2$
         & $\atantwo \genfrac(){0pt}{2}{y,}{x}$
         & $\rho - r$ & $\theta$
         \\
         $\breve{\mathcal{O}}_1$
         & $r e^{i s}+1$ & $(x\!-\!1)^2 \!+\! y^2 \!\!-r^2$
         & $\atantwo \genfrac(){0pt}{2}{y,}{x-1}$
         & $\rho^2\!-\!2\rho \cos(\theta) \!+\! 1\!-\! r^2$ 
         & $\atantwo \genfrac(){0pt}{2}{\rho\sin(\theta),}{\rho\cos(\theta)-1}$
         \\
    \end{tabular}
    \caption{Functions for the implicit and parametric equations used in the analysis of edges and arcs. The gradient of $\Bar{\mathcal{O}}_0=\Bar{\mathcal{O}}(\mathbb{C},P_1,P_2)$ is $a=\Re(P_2-P_1)/\Im(P_2-P_1)$. Conversion between the coordinate systems: 
    $x(\rho,\theta)\!=\!\rho\cos(\theta)$, 
    $y(\rho,\theta)\!=\!\rho\sin(\theta)$, 
    $\rho(x,y)\!=\!\sqrt{x^2\!+\!y^2}$, 
    $ \theta(x,y) \!=\! \atantwo(y,x)$.}
    \label{tab:edge_arc_functions}
\end{table}

It is well published that the results of additive combinations of lines and circles are bounded by linear and circular curves \cite{de_berg_computational_2008}. However, Farouki et al. showed that multiplicative combinations of lines and circles result in envelopes \cite{bruce_what_1981} that are not linear or circular functions \cite{farouki_minkowski_2001}. Therefore it is clear that some, but not all arithmetic combinations of polyarc curve segments yield polyarc bounded sets. 


\subsubsection{Mixed envelope evaluation}

\begin{figure*}[t]
\centering
\includegraphics[width=\textwidth,trim={0 0 0 10mm},clip]{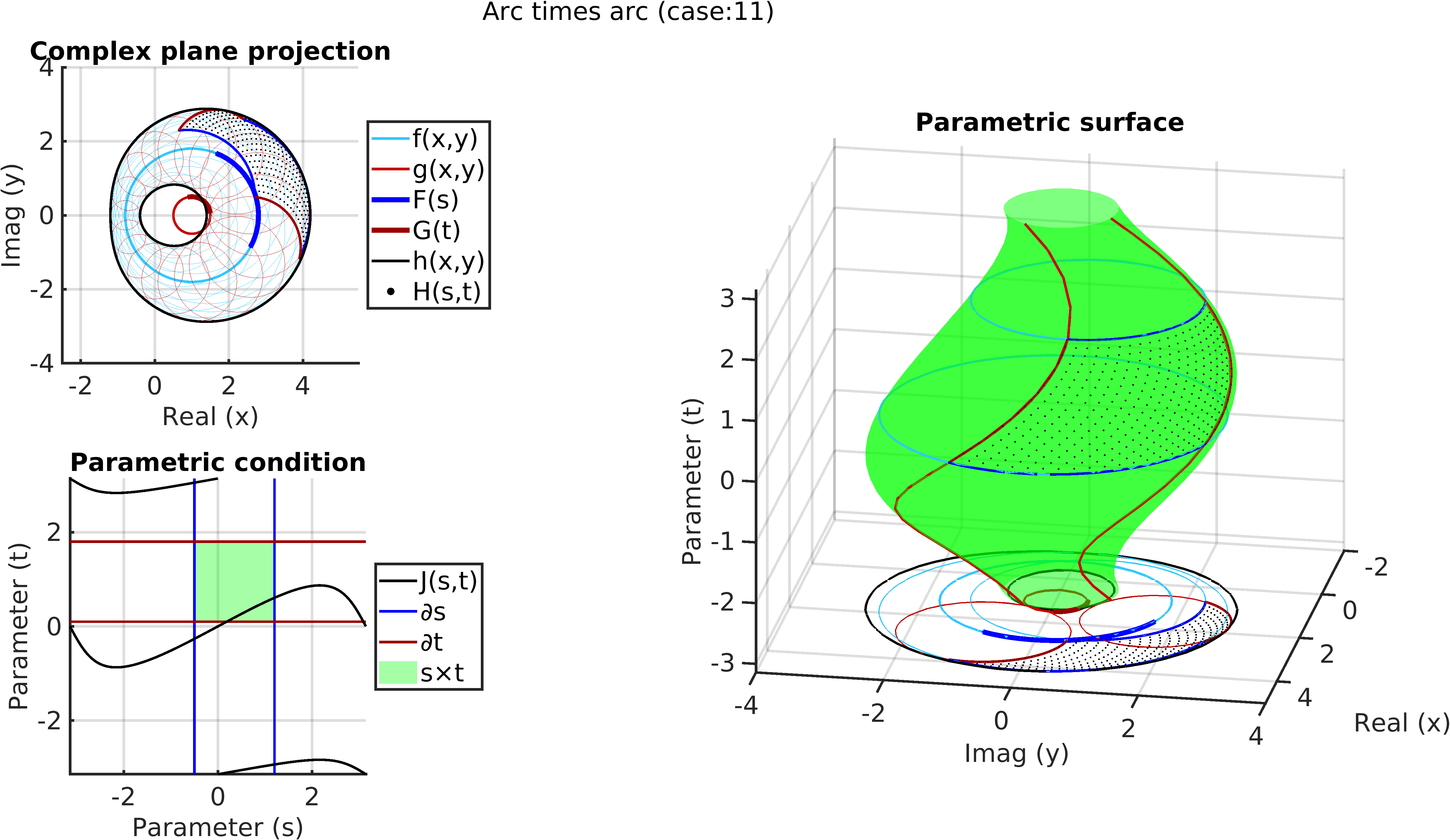}
\caption{Evaluation of the product of two arcs. The plot in the top left  shows the sampled result and the implicit boundary curves besides the operands on the complex plane. The plot on the right shows the sampled result on its implicit surface constrained by planes over the complex plane. The plot in the bottom left shows the parametric condition of the boundary crossing the envelope. Similar figures for all edge and arc arithmetic combinations can be found in the Supplementary Material.}
\label{fig:mixed_methodc}
\end{figure*}

Applying Farouki's method \cite{farouki_minkowski_2001} we can formulate the result of a binary operation ($\bigcirc=\oplus \text{ or }\otimes$) as follows. 
Let us consider the implicit equation of $\boldsymbol{\Gamma}\subset\partial\boldsymbol{A}$ and the parametrization of $\boldsymbol{\Gamma'}\subset\partial\boldsymbol{B}$. That gives the description
\[
\boldsymbol{\Gamma} \bigcirc \boldsymbol{\Gamma'}  
    = \{(x+iy)\bigcirc G(t)\ \big\vert f(x,y)=0, s(x,y)\in \boldsymbol{s}, t\in\boldsymbol{t}\} 
\]
of the result. Similarly to backtracking, if we consider $Z= A+B \in \boldsymbol{\Gamma} \oplus \boldsymbol{\Gamma'}$ (respectively, $Z \in \boldsymbol{\Gamma} \otimes \boldsymbol{\Gamma'}$) then we get $Z-B=Z-G(t)=A \in \boldsymbol{\Gamma}$ for some $t \in \boldsymbol{t}$ (respectively, $Z/G(t) \in \boldsymbol{\Gamma}$). This gives us an equation $\hat{h}=0$ -- in place of the equation $f=0$ -- for the result:

\begin{equation}
    \boldsymbol{\Gamma} \bigcirc \boldsymbol{\Gamma'}  
    = \{x+iy \big\vert \hat{h}(x,y,t)=0, u(x,y,t)\in\boldsymbol{s} \big\vert t\in\boldsymbol{t} \}
    ,
\end{equation}

where 
\begin{equation*}
    \hat{h}(x,y,t) = 
    \begin{cases}
        f\left((x\!+\!iy)-G(t)\right)=f(x\!-\!\Re(G(t)), y\!-\!\Im(G(t))) 
        & \text{if } \bigcirc = \oplus,
        \\
        f\left((x\!+\!iy)/G(t)\right)=\mathring{f}\left(\rho (x,y) / \vert G(t)\vert,\theta(x,y)\!-\!\angle G(t)\right)
        & \text{if } \bigcirc = \otimes,
    \end{cases}
\end{equation*}
where $\rho(x,y)\!=\!\sqrt{x^2+y^2}$, $\theta(x,y)\!=\!\atantwo(y,x)$
and $u(x,y,t)$ can be calculated from $s(x,y)$ and $\mathring{s}(x,y)$ in the same way as $\hat{h}(x,y,t)$ from $f(x,y)$ and $\mathring{f}(x,y)$.

A boundary point in this region is either an envelope point or an extremal point corresponding to one (or both) of the constraints marking the ends of the curve segments. The envelope (or discriminant) of the family defined by $\hat{h}(x,y,t)=0$ is the set of $(x,y)$ points for which there is a $t$ satisfying $h=\partial h / \partial t=0$ (see \cite{bruce_what_1981}, \cite[Chapter 5]{bruce_curves_1984}). Therefore,

\begin{equation} \label{eq: boundary parts}
    \begin{array}{ll}
         \partial(\boldsymbol{\Gamma} \bigcirc \boldsymbol{\Gamma'}) \subset 
        &\{x+iy \big\vert \hat{h}(x,y,t)=0, \partial \hat{h}/\partial t = 0,
        u(x,y,t)\in\boldsymbol{s} \big\vert t\in\boldsymbol{t}\}
        \\
        & \cup \{x+iy \big\vert \hat{h}(x,y,t)=0,u(x,y,t)\in \boldsymbol{s}\vert t\in 
        \{\underline{t},\overline{t}\} 
        \\
        & 
        \cup
        \{x+iy \big\vert \hat{h}(x,y,t)=0,u(x,y,t)\in \{\underline{s},\overline{s}\} \vert t\in\boldsymbol{t}\}.
    \end{array}
\end{equation}

Since the constraint components are edges and arcs translated or multiplied by scalars, the only possibly non-polyarcular segment of the boundary is the envelope component. Therefore the question of this analysis is whether $\{\partial \hat{h}/\partial t=0\}$ is a polyarcular curve, and whether the $\{u(x,y,t)\in\boldsymbol{s},t\in\boldsymbol{t}\}$ condition is fulfilled. A demonstrative example can be found in Figure \ref{fig:mixed_methodc}, and more are availabile in the Supplementary Material.


\subsubsection{Parametric envelope evaluation}

Another method of finding the envelope is by using the parametrizations of both operands.
We are aiming for a parametrization of the result of the operation.
First, let us look at the two parametrizations as map from the two-dimensional parameter space to the Cartesian product $\mathbb{C} \times_c \mathbb{C}=\mathbb{C}^2 \cong \mathbb{R}^4$.

Then apply the map $\varphi_\bigcirc :\mathbb{C}^2 \to \mathbb{C} \cong \mathbb{R}^2$ corresponding to the complex addition or multiplication operation:
\begin{equation} \label{eq:operation map}
\begin{array}{r l}
    \varphi_\oplus (x,y,u,v) & = (x+u,y+v), \\
    \varphi_\otimes (x,y,u,v) & = (xu-yv,xv+yu),
\end{array}
\end{equation}
where $x+iy$ and $u+iv$ are the two complex coordinates of $\mathbb C^2$.

Together, they form a composition
\[
H:\mathbb{R}^2 \overset{F \times_c G}{\longrightarrow} \mathbb{R}^4 \overset{\varphi_\bigcirc}{\longrightarrow} \mathbb{R}^2,
\]
which is the parametrization of the result of the operation.

The points of the envelope are all critical points of the real map $\varphi_\bigcirc$ restricted to the intermediate image $(F \times_c G)(\mathbb{R}^2) \subset \mathbb{R}^4$, or in other words critical points of the parametrization $H(s,t)$. This means that the differential $dH=d (\varphi_\bigcirc \circ (F \times_c G))$ of the composition is not of full rank. This situation is depicted in Fig. \ref{fig:parametric_method}.

\begin{figure*}[t]
\centering
\includegraphics[width=.9\textwidth]{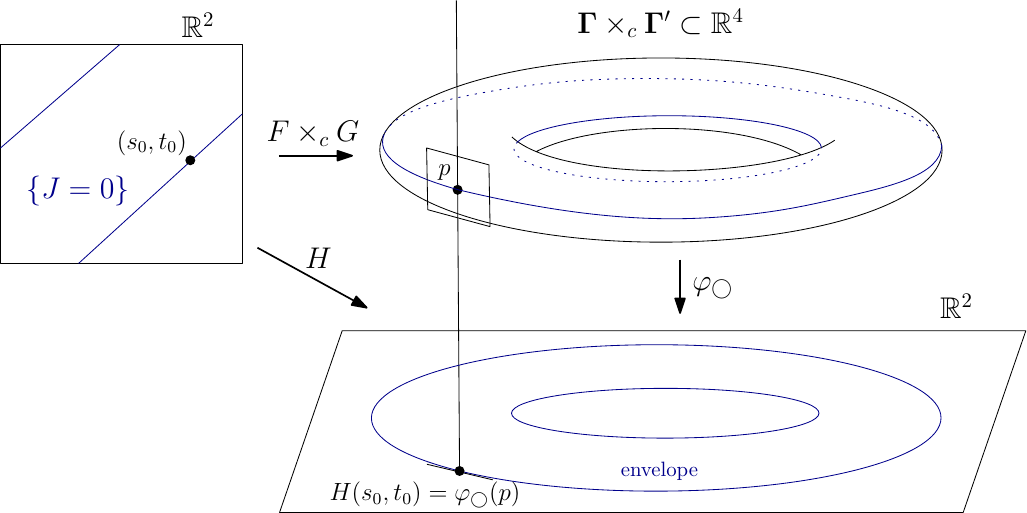}
\caption{Schematic summary of the parametrization of the envelope}
\label{fig:parametric_method}
\end{figure*}

Hence, a point $p=F(s_0) \bigcirc G(t_0)$ can only be an envelope point if the Jacobian determinant $J(s,t)=\det (\mathrm{Jac}_{H}(s_0,t_0))$ of the parametrization $H:\mathbb{R}^2 \to \mathbb{R}^2$ vanish at $(s_0,t_0)$. This gives another way to describe the superset of the boundary with three parts -- as in \eqref{eq: boundary parts}:
\begin{equation}
\partial (\boldsymbol{\Gamma} \bigcirc \boldsymbol{\Gamma'}) \subset \big\{ F(s) \bigcirc G(t) \big\vert J(s,t)=0 \text{ or } s \in \{\underline{s},\overline{s}\} \text{ or } t \in \{\underline{t},\overline{t}\} \big\}.
\end{equation}

In computations we will use the following description. In the additive case
\[
H_\oplus(s,t)=\varphi_\oplus (F(s),G(t))=\big(\Re(F)(s)+\Re(G)(t), \Im(F)(s)+\Im(G)(t)\big), 
\]
\[
J_\oplus (s,t)
=\left\vert \mathrm{Jac}_{H_\oplus}(s,t) \right\vert
=\left\vert \begin{array}{cc}
    \Re(F)'(s) & \Re(G)'(t) \\
    \Im(F)'(s) & \Im(G)'(t)
\end{array} \right\vert.
\]
In the multiplicative case
\[
H_\otimes (s,t)
=\varphi_\otimes (F,G)
=\big(\Re(F)\Re(G)-\Im(F)\Im(G),\Re(F)\Im(G)+\Im(F)\Re(G) \big),
\]
\[
J_\otimes (s,t)
=\left\vert \mathrm{Jac}_{H_\otimes} \right\vert
=\left\vert \begin{array}{cc}
    \Re(F)'\Re(G)-\Im(F)'\Im(G) & \Re(F)\Re(G)'-\Im(F)\Im(G)' \\
    \Re(F)'\Im(G)+\Im(F)'\Re(G) & \Re(F)\Im(G)'+\Im(F)\Re(G)'
\end{array} \right\vert.
\]

In fact, this method of finding the parametrization of the envelope was already mentioned -- in the latter form -- and used in \cite{farouki_boundary_2005}.

\subsubsection{Implicit envelope evaluation} \label{sub:implicit}

A third way of describing the envelope is using the implicit equations of both operands. We aim for an implicit equation of the envelope.

Let the two operands -- without boundary conditions, so far -- be $\boldsymbol{\Gamma}=\{f(x,y)=0\}, \boldsymbol{\Gamma'}=\{g(x,y)=0\} \subset \mathbb{C}$. Their Cartesian product is $\boldsymbol{\Gamma} \times_c \boldsymbol{\Gamma'}=\{f(x,y)=0, g(u,v)=0\} \subset \mathbb{C}^2$. Consider the map $\varphi_\bigcirc : \mathbb{R}^4 \to \mathbb{R}^2$ corresponding to Minkowski addition and multiplication, respectively, as defined in \eqref{eq:operation map}.

Similarly to the previous method, we find the envelope as the set $\widetilde{\boldsymbol{E}}$ of critical points of the map $\varphi_\bigcirc$ restricted to $\boldsymbol{\Gamma}\times_c \boldsymbol{\Gamma'}$. These are the points where the gradients of $f(x,y)$, $g(u,v)$, and that of the two components of $\varphi_\bigcirc$ are not linearly independent. Thus, at these points the Jacobian determinant vanish:
\[
\widetilde{h}(x,y,u,v)=
\det \mathrm{Jac}\big(f,g,\Re(\varphi_\bigcirc),\Im(\varphi_\bigcirc)\big)=0.
\]
The set $\widetilde{\boldsymbol{E}} \subset \mathbb{R}^4$ of critical points is described by the three equations $f=g=\widetilde{h}=0$ and is a real algebraic variety of dimension at most one. \cite{milnor_morse_1963}

The actual envelope is the image $E=\varphi_\bigcirc(\widetilde{E})$ in $\mathbb{R}^2 \cong \mathbb{C}$. In general, the image of a variety on an algebraic map may not be a variety; however, in case of a proper map, it is (see \cite[5.2]{shafarevich_basic_2013}). Our non-trivial cases involve only proper maps. Again, we summarize the spaces involved in Fig. \ref{fig:implicit_method}.

\begin{figure*}[t]
\centering
\includegraphics[width=.65\textwidth]{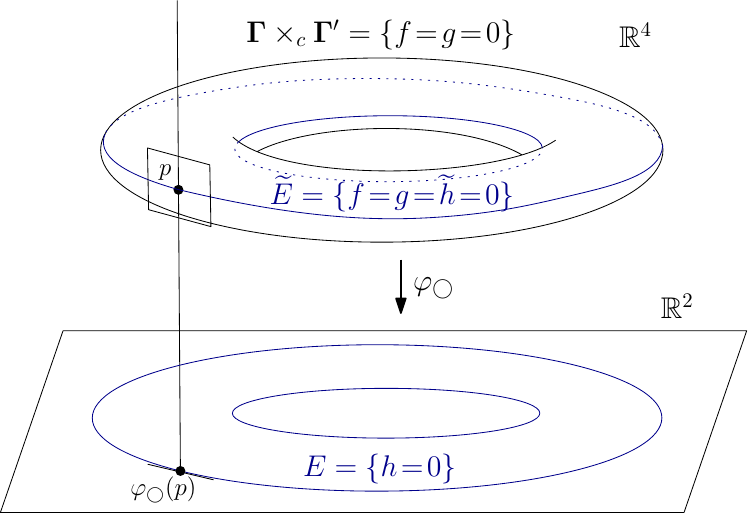}
\caption{Schematic summary of the implicit envelope evaluation}
\label{fig:implicit_method}
\end{figure*}

When we look for the defining equations of the image $E \subset \mathbb{R}^2$, we want to find (the ideal of) those functions $\psi:\mathbb{R}^2 \to \mathbb{R}$ that, when composed with $\varphi_\bigcirc:\mathbb{R}^2 \to \mathbb{R}^2$, the composition $\psi \circ \varphi_\bigcirc$ vanishes on $\widetilde{E}$. (When the map $\varphi_\bigcirc$ is not proper, this ideal defines a variety that contains $E$ as a Zariski open subset.)
To compute the generators of the ideal defining $E$, we need Gröbner bases (\cite[Chapter~1]{adams_introduction_1994}). More precisely, we find the reduced Gröbner basis of the ideal 
\[
I=(f,g,\widetilde{h},\Re(\varphi_\bigcirc)-X,\Im(\varphi_\bigcirc)-Y)
\]
(where $X,Y$ are the coordinate functions of the target $\mathbb{R}^2$) with respect to a suitable elimination order. Lastly, we take those basis elements that only involve $X$ and $Y$. (For the precise details, see \cite[2.4]{adams_introduction_1994}.)
In most of our cases, $\widetilde{E}$ and $E$ are one-dimensional; therefore, the algorithm results in a single polynomial function that will call $h(X,Y)$. We used SageMath for the computations, the code can be found in the Supplementary Section S2.

For each additive and multiplicative combination, the step-by-step computations of the implicit equation $h(x,y)=0$ of the envelope and that of the condition $J(s,t)=0$ describing for what parameter values the envelope is included in the boundary can be found in the Supplementary Materials, together with illustrations.

\subsection{Properties of polyarcular intervals} \label{sub:properties_of_polyarcular_intervals}

\subsubsection{Unary operations} \label{sub:polyarcular_unary_operations}

Let us analyze the result of the two unary operations, negative and reciprocal, using the Cartesian and polar implicit functions. Where necessary, we map between the two coordinate systems. The corresponding computations can be found in the Supplementary Section S1.

\begin{lemma} \label{lem:curve_negative}
    The negative of an edge is also an edge. The negative of an arc is also an arc.
\end{lemma}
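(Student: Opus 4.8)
The plan is to use the fact that negation on $\mathbb{C}$ is the Euclidean isometry $Z \mapsto -Z = e^{i\pi}Z$ (rotation by $\pi$ about the origin); since isometries carry straight lines to straight lines and circles to circles, the negative of an edge is a segment of a line and the negative of an arc is a segment of a circle. The substantive part is then merely to rewrite the negated parametrizations in the canonical forms of Definitions \ref{def:linear_curve_subspace} and \ref{def:circular_curve_subspace}, so that the images are recognised as bona fide elements of $\Bar{\mathcal{O}}(\mathbb{C})$ and $\breve{\mathcal{O}}(\mathbb{C})$ with their stored parameters exhibited explicitly.

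For the edge I would use linearity of $\Bar{\Gamma}$: for $t\in[0,1]$,
\[
-\Bar{\Gamma}(P_1,P_2)(t) = -\big((1-t)P_1 + tP_2\big) = (1-t)(-P_1) + t(-P_2) = \Bar{\Gamma}(-P_1,-P_2)(t),
\]
so the negative of the edge between $P_1$ and $P_2$ is the edge between $-P_1$ and $-P_2$. For the arc the key observation is $-1 = e^{i\pi}$; writing $\boldsymbol{\varphi}+\pi := [\underline{\varphi}+\pi,\overline{\varphi}+\pi]$ (still a valid real interval since $\underline{\varphi}\le\overline{\varphi}$), for $t\in[0,1]$,
\[
-\breve{\Gamma}(O,r,\boldsymbol{\varphi})(t) = (-O) - r e^{(1-t)i\underline{\varphi}+ti\overline{\varphi}} = (-O) + r e^{(1-t)i(\underline{\varphi}+\pi)+ti(\overline{\varphi}+\pi)} = \breve{\Gamma}(-O,r,\boldsymbol{\varphi}+\pi)(t),
\]
so the negative of the arc $\breve{\Gamma}(O,r,\boldsymbol{\varphi})$ is the arc $\breve{\Gamma}(-O,r,\boldsymbol{\varphi}+\pi)$.

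Finally I would note that because the parameter $t$ is left untouched, the point order along the segment is preserved, hence so is the convexity sign carried by $r$ when the arc sits inside a polyarcular curve (cf. the Remark after Definition \ref{def:circular_curve_subspace}); equivalently one may absorb the rotation into the sign of the radius via the modified arc function there. This also shows the identity is the instance $U=0$ of the normalization in \eqref{eq:Curve_segment_normalization}. There is no genuine obstacle: both claims collapse to the one-line substitutions above, the only bookkeeping being the $\pi$-shift of the argument interval and checking that the canonical parametric form is recovered exactly.
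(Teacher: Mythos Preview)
Your proof is correct and follows essentially the same approach as the paper, which simply observes that negation is the reflection through the origin and hence carries lines to lines and arcs to arcs. Your version is more explicit in that you exhibit the resulting edge and arc parameters via the canonical parametrizations, whereas the paper leaves this implicit.
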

\begin{proof}
Taking the negative is simply taking the reflection across the origin which takes lines into lines, and arcs into arcs.
\end{proof}

\begin{lemma} \label{thm:linear_curve_reciprocal} \label{thm:circular_curve_reciprocal}
    The reciprocal of an edge is an edge if the containing line is zero-crossing; otherwise it is an arc. The reciprocal of an arc is also an arc.
\end{lemma}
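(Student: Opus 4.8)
The plan is to read the reciprocal map $\iota:W\mapsto 1/W$ on $\mathbb{C}\setminus\{0\}$ as $\iota(W)=\bar W/\lvert W\rvert^2$, i.e. as the composition of inversion in the unit circle with complex conjugation. Both pieces are (anti-)Möbius transformations of the extended plane, so by the classical theory of inversive geometry they carry the family of generalized circles (lines together with circles) onto itself, with the familiar refinement: conjugation, being a reflection, sends lines to lines and circles to circles; inversion in the unit circle fixes every line through $0$, sends a line not through $0$ to a circle through $0$, a circle through $0$ to a line not through $0$, and a circle not through $0$ to a circle not through $0$. I would then run the case analysis exactly in the spirit of Lemma \ref{lem:curve_negative}. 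An edge is a compact connected subarc of a line that (since $\iota$ is undefined at $0$) avoids $0$; if the carrier line is zero-crossing, $\iota$ maps it into the same line, and since $\iota$ is a homeomorphism of $\mathbb{C}\setminus\{0\}$ with nowhere-vanishing derivative, the image is again a compact connected regular subarc of a line, i.e. an edge. If the carrier line is not zero-crossing, the image lies in a circle, hence is an arc. An arc is a compact connected subarc of a circle avoiding $0$; if its carrier circle also avoids $0$, the image lies in a circle and is again an arc.

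As a second, self-contained check — matching the stated methodology of this subsection using the Cartesian and polar implicit functions of Table \ref{tab:edge_arc_functions} — I would substitute the inverse of the reciprocal map, $x=X/(X^2+Y^2)$, $y=-Y/(X^2+Y^2)$, into the implicit equation of the carrier curve and clear denominators. For a line $ax+by+c=0$ this produces $aX-bY+c(X^2+Y^2)=0$, a line through $0$ when $c=0$ (the zero-crossing case) and a circle (automatically through $0$) when $c\neq 0$. For a circle $x^2+y^2+dx+ey+g=0$ it produces $g(X^2+Y^2)+dX-eY+1=0$, a circle when $g\neq0$, i.e. exactly when the original circle misses $0$, and a line when $g=0$. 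The polar version is shorter still, since in polar coordinates $\iota$ is $(\rho,\theta)\mapsto(1/\rho,-\theta)$, so one simply substitutes $\rho\mapsto1/\rho$, $\theta\mapsto-\theta$ into $\mathring f(\rho,\theta)=0$ from the table; the even parity of the trigonometric terms makes the zero-crossing line case immediate. One also checks that the boundary-selecting functions $s(x,y),\mathring s(\rho,\theta)$ transform the same way, so no splitting occurs away from $0$ and the image of an arc-or-edge is a single arc-or-edge, genuinely a polyarc segment.

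The main obstacle is not the geometry, which is classical, but the bookkeeping around the origin, where $\iota$ is undefined. Two points need care. First, "zero-crossing line" in the hypothesis must be read as: the carrier line passes through $0$ while the edge itself does not. Second, and more delicate, is the degenerate sub-case of the arc statement: if the carrier circle of the arc passes through $0$ (the arc still avoiding $0$), then by the computation above the image lies on a \emph{line}, so the reciprocal is an edge rather than an arc. I would handle this by sharpening the statement to read "the reciprocal of an arc is an arc, unless its carrier circle is zero-crossing, in which case it is an edge" — or by adopting the inversive-geometry convention that treats an edge as a degenerate (infinite-radius) arc. In either reading the downstream closure claim $\mathcal{A}(\mathbb{C})^{-1}=\mathcal{A}(\mathbb{C})$ of Table \ref{tab:Arithmetic_properties} is unaffected, because a polyarcular boundary already consists of both edge and arc pieces, so $\iota$ applied to $\partial\boldsymbol{A}$ (via Proposition \ref{prop:Minkowski_boundary}) still yields a polyarcular curve after re-grouping the resulting edges and arcs into alternating form.
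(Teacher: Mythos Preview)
Your primary argument --- factoring $W\mapsto 1/W$ as inversion in the unit circle followed by reflection in the real axis, and invoking the classical fact that inversive transformations preserve the family of generalized circles --- is exactly the paper's proof. The paper states this in two sentences and stops; you supply, in addition, an explicit coordinate verification via the implicit equations of Table~\ref{tab:edge_arc_functions}, which is redundant for the lemma but useful as a sanity check and matches the computational flavor of the surrounding subsection.

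Where you genuinely go beyond the paper is in flagging the degenerate sub-case: an arc whose carrier circle passes through $0$ (while the arc itself avoids $0$) is sent by $\iota$ to a segment of a line, i.e.\ an edge, not an arc in the sense of Definition~\ref{def:circular_curve_subspace}. The paper's statement and proof both gloss over this --- the proof asserts that inversion ``takes circles and lines into circles'' without singling out circles through the center --- so your observation is a correct refinement, not a gap in your own argument. Your proposed resolutions (sharpen the statement, or treat edges as infinite-radius arcs) are both sound, and as you note, the downstream closure claim $\boldsymbol{A}\in\mathcal{A}(\mathbb{C})\Rightarrow\boldsymbol{A}^{-1}\in\mathcal{A}(\mathbb{C})$ is unaffected either way, since polyarcular boundaries already accommodate both segment types.
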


\begin{proof}
The reciprocal is known to be equivalent to the combination of the geometric inversion with respect to the complex $0$ and the reflection through the real axis. The former takes circles and lines into circles -- except for the lines through the center that are fixed -- and the latter is an isometry.
\end{proof}

\begin{proposition}
    The negative of a polyarcular interval is a polyarcular interval.
    \begin{equation*}
    \boldsymbol{A}\in\mathcal{A}(\mathbb{C})
    \implies
    -\boldsymbol{A} \in \mathcal{A}(\mathbb{C}) 
\end{equation*}
\end{proposition}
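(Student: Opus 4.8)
The plan is to reduce the statement about the polyarcular interval $\boldsymbol{A}$ to the curve-level statement already proved in Lemma~\ref{lem:curve_negative}. The key observation is Proposition~\ref{prop:Minkowski_boundary}, which gives $\partial(-\boldsymbol{A}) = -(\partial\boldsymbol{A})$. So the whole argument is: take the polyarcular boundary curve $\partial\boldsymbol{A} \in \mathring{\breve{\mathcal{O}}}(\mathbb{C})$, apply the reflection $Z \mapsto -Z$, and check that the image is again a polyarcular curve, i.e. an element of $\mathring{\breve{\mathcal{O}}}(\mathbb{C})$; then by Definition~\ref{def:polyarcular_interval_subspace} the set $-\boldsymbol{A}$ is a polyarcular interval.

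First I would invoke the parametrization of $\partial\boldsymbol{A}$ from Definition~\ref{def:polyarc}: it is a concatenation of arc pieces $\breve{\Gamma}(O_\mathsf{n}, r_\mathsf{n}, \boldsymbol{\varphi}_\mathsf{n})$ and edge pieces $\Bar{\Gamma}(P_{2\mathsf{n}-1}, P_{2\mathsf{n}})$, for $\mathsf{n} \in \{1..\mathsf{N}\}$. Applying $Z \mapsto -Z$ to each piece, Lemma~\ref{lem:curve_negative} says each arc maps to an arc and each edge maps to an edge; concretely $-\Bar{\Gamma}(P_1,P_2) = \Bar{\Gamma}(-P_1,-P_2)$ and $-\breve{\Gamma}(O,r,\boldsymbol{\varphi}) = \breve{\Gamma}(-O, r, \boldsymbol{\varphi}+\pi)$ (since $-e^{i\psi} = e^{i(\psi+\pi)}$), so the negated curve is again given by a list of $\mathsf{N}$ alternating arc/edge pieces. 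Next I would check that the incidence/closure conditions of Definition~\ref{def:polyarc} are preserved: the endpoint-matching relations $P_{2\mathsf{n}-1} = O_\mathsf{n} + r_\mathsf{n} e^{i\overline{\varphi}_\mathsf{n}}$ and $P_{2\mathsf{n}} = O_{\mathsf{n}+1} + r_{\mathsf{n}+1} e^{i\underline{\varphi}_{\mathsf{n}+1}}$ are linear in all the data, so they survive negation (with the shifted centers and shifted argument intervals), and the cyclic wrap-around conditions likewise. Finally, since $Z \mapsto -Z$ is a homeomorphism of $\mathbb{C}$, it carries a simple closed (Jordan) curve to a simple closed curve, so the negated curve is in $\mathring{\breve{\mathcal{O}}}(\mathbb{C})$, which is exactly what we need.

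There is essentially no main obstacle here — this is the easy half of the unary analysis. The only bookkeeping point worth stating carefully is the orientation/convexity convention in the Remark following Definition~\ref{def:circular_curve_subspace}: concave arcs ($r<0$) use the modified arc function to keep the counterclockwise point order, and one should note that reflection through the origin preserves orientation (it is an orientation-preserving isometry of the plane, being rotation by $\pi$), so a convex arc stays convex and a concave arc stays concave, and the sign of $r$ is unchanged — consistent with the formula $-\breve{\Gamma}(O,r,\boldsymbol{\varphi}) = \breve{\Gamma}(-O,r,\boldsymbol{\varphi}+\pi)$ above. So the proof is short: negate all the defining data, cite Lemma~\ref{lem:curve_negative} for the piece-wise structure, observe the incidence conditions are linear hence preserved, invoke Proposition~\ref{prop:Minkowski_boundary} to pass from boundary to interval, and conclude $-\boldsymbol{A} \in \mathcal{A}(\mathbb{C})$ by Definition~\ref{def:polyarcular_interval_subspace}.
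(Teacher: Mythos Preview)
Your proposal is correct and follows essentially the same approach as the paper: invoke Proposition~\ref{prop:Minkowski_boundary} to reduce to the boundary, then apply Lemma~\ref{lem:curve_negative} piece-wise to conclude the negated boundary is again polyarcular. You carry out more bookkeeping than the paper does (explicit formulas for the negated arcs, verification of the incidence conditions and orientation), but the logical skeleton is identical.
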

\begin{proof}
    Negation is a closed unary operation on complex intervals, and a complex interval is polyarcular if it is bounded by a polyarc curve. According to Proposition \ref{prop:Minkowski_boundary} the negative of a complex interval is bounded by the negative of the interval boundary, and in Lemma \ref{lem:curve_negative} we show that the negative of the polyarc curve segments are also polyarc curve segments.
\end{proof}

\begin{proposition}
    The reciprocal of a polyarcular interval is a polyarcular interval if the operand does not contain the complex zero.
    \begin{equation*}
    \boldsymbol{A}\in\mathcal{A}(\mathbb{C}), 0 \notin \boldsymbol{A}
    \implies
    \boldsymbol{A}^{-1} \in \mathcal{A}(\mathbb{C}) 
\end{equation*}
\end{proposition}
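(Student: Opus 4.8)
The plan is to mirror the structure of the proof that the negative of a polyarcular interval is polyarcular, using the unary-operation machinery already set up. Since reciprocation is a closed partial unary operation on complex intervals that is defined precisely when the operand avoids the complex zero, and since a complex interval is polyarcular iff its boundary lies in $\mathring{\breve{\mathcal{O}}}$, it suffices to show that the boundary of $\boldsymbol{A}^{-1}$ is a polyarcular curve whenever $0\notin\boldsymbol{A}$.

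First I would invoke Proposition~\ref{prop:Minkowski_boundary}, which gives $\partial(\boldsymbol{A}^{-1})=(\partial\boldsymbol{A})^{-1}$. Thus the boundary of the reciprocal is obtained by applying the reciprocal map pointwise to the boundary curve of $\boldsymbol{A}$. Because $0\notin\boldsymbol{A}$, the reciprocal map $Z\mapsto Z^{-1}$ is a homeomorphism on a neighborhood of $\boldsymbol{A}$ (indeed on $\mathbb{C}\setminus\{0\}$), so it carries the Jordan curve $\partial\boldsymbol{A}\in\mathring{\breve{\mathcal{O}}}$ to another simple closed curve, preserving the cyclic order of its segments. It therefore remains only to check that each individual boundary segment --- an edge or an arc --- is mapped to an edge or an arc, and that the alternating edge/arc incidence structure of Definition~\ref{def:polyarc} is respected (a point-segment may of course appear, which the definition already permits).

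The segment-level claim is exactly Lemma~\ref{thm:linear_curve_reciprocal}: the reciprocal of an edge is an edge when the line through it passes through the origin and an arc otherwise, and the reciprocal of an arc is always an arc. I would note that when $0\notin\boldsymbol{A}$ the origin lies outside the closed region, so no boundary segment passes through $0$; in particular no edge or arc degenerates under inversion (the only troublesome case, a circle or line through the center, is excluded), and each image segment is a genuine edge or arc of finite extent. Consecutive image segments still meet at the images of the original vertices $P_{2\mathsf{n}-1},P_{2\mathsf{n}}$, so the matching-endpoint and alternation conditions of Definition~\ref{def:polyarc} carry over verbatim. Hence $\partial(\boldsymbol{A}^{-1})\in\mathring{\breve{\mathcal{O}}}$, which by Definition~\ref{def:polyarcular_interval_subspace} means $\boldsymbol{A}^{-1}\in\mathcal{A}(\mathbb{C})$.

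The only genuine subtlety --- and the step I would be most careful about --- is verifying that the image curve is still \emph{simple} and still bounds a complex interval, i.e. that $\partial(\boldsymbol{A}^{-1})$ is a Jordan curve and not merely a closed polyarc. This is where the hypothesis $0\notin\boldsymbol{A}$ does the real work: inversion is injective on $\mathbb{C}\setminus\{0\}$, and $\boldsymbol{A}$ being closed and bounded with $0$ in its complement means $\boldsymbol{A}$ is contained in an annulus $0<\rho_{\min}\le|Z|\le\rho_{\max}$ bounded away from $0$, so $\boldsymbol{A}^{-1}$ is again closed and bounded, and the homeomorphism property guarantees its boundary is a simple closed curve enclosing it. With that observed, the proof is a direct assembly of Proposition~\ref{prop:Minkowski_boundary} and Lemma~\ref{thm:linear_curve_reciprocal}, exactly parallel to the negation case.
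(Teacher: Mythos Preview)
Your proposal is correct and follows essentially the same approach as the paper: invoke Proposition~\ref{prop:Minkowski_boundary} to reduce to the boundary, then apply Lemma~\ref{thm:linear_curve_reciprocal} segment by segment. You are in fact more careful than the paper's own argument, explicitly addressing the simplicity of the image curve and the alternating edge/arc structure (handled via degenerate point-segments), both of which the paper leaves implicit.
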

\begin{proof}
    Reciprocal is a partial unary operation on complex intervals, and a complex interval is polyarcular if it is bounded by a polyarc curve. According to Proposition \ref{prop:Minkowski_boundary} the reciprocal of a complex interval is bounded by the reciprocal of the interval boundary, and in Lemma \ref{thm:linear_curve_reciprocal} we showed that the reciprocal of the polyarc curve segments are also polyarc curve segments.
\end{proof}

\begin{table}[t]
    \centering
    \begin{tabular}{c|cc}
        $+$
         & $\Bar{\mathcal{O}}$
         & $\breve{\mathcal{O}}$
        \\
        \hline
        \\
        $\Bar{\mathcal{O}}$
        & $\emptyset$
        & $\Bar{\mathcal{O}}$
        \\
        $\breve{\mathcal{O}}$
        & 
        & $\breve{\mathcal{O}}$
\end{tabular}
\hspace{0.5 cm}
    \begin{tabular}{c|cc}
        $\times$
         & $\Bar{\mathcal{O}}$
         & $\breve{\mathcal{O}}$
        \\
        \hline
        \\
        $\Bar{\mathcal{O}}$
        & $ \mathcal{O}^2
            (\emptyset,
            \Bar{\mathcal{O}})$
        & $ \mathcal{O}^2
            (\Bar{\mathcal{O}},
            \Breve{\mathcal{O}},
            \mathrm{pt})$
        \\
        $\breve{\mathcal{O}}$
        & 
        & $ \mathcal{O}^4
            (\breve{\mathcal{O}})$
\end{tabular}
\caption{Envelope subspace of polyarc segment binary combinations. $\mathrm{pt}$ indicates that the envelope is a point, $\Bar{\mathcal{O}}$ indicates linear curves (edges), $\Breve{\mathcal{O}}$ circular curves (arcs), $\mathcal{O}^2$ quadratic, and $\mathcal{O}^4$ quartic polynomial curves; $\emptyset$ indicates that there is no envelope. (Subspace symbols in brackets indicate special cases, when one or both operands are on a zero crossing line or a zero-centered circle.)}
\label{tab:envelopes}
\end{table}

\subsubsection{Binary operations} \label{sub:polyarcular_binary_operations}

\begin{lemma} \label{thm:edge_plus_edge}
    The sum of two edges is a complex interval bounded by edges. 
\end{lemma}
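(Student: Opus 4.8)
The plan is to run the parametric envelope evaluation of Section~\ref{sub:boundary_analysis} on the simplest pair of boundary segments — two edges — in the additive case $\bigcirc=\oplus$. Parametrize the operands as $F(s)=\Bar{\Gamma}(P_1,P_2)(s)=(1-s)P_1+sP_2$ and $G(t)=\Bar{\Gamma}(Q_1,Q_2)(t)=(1-t)Q_1+tQ_2$ with $s,t\in[0,1]$. Since $F'(s)=P_2-P_1$ and $G'(t)=Q_2-Q_1$ are constant, the Jacobian determinant $J_\oplus(s,t)$ of $H_\oplus$ is the constant $\Re(P_2-P_1)\Im(Q_2-Q_1)-\Im(P_2-P_1)\Re(Q_2-Q_1)=\Im\!\big(\overline{(P_2-P_1)}(Q_2-Q_1)\big)$, i.e.\ up to sign the planar cross product of the two edge directions. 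This clean dichotomy — the Jacobian either vanishes identically or nowhere — drives the whole proof.

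Generic case: if the edges are not parallel then $J_\oplus\neq0$ on $[0,1]^2$, so the envelope part of the boundary superset is empty and the parametric boundary formula of Section~\ref{sub:boundary_analysis} gives that $\partial(\boldsymbol{\Gamma}\oplus\boldsymbol{\Gamma'})$ is contained in the union of the four extremal pieces $P_1+G([0,1])$, $P_2+G([0,1])$, $F([0,1])+Q_1$, $F([0,1])+Q_2$ obtained by freezing $s\in\{0,1\}$ or $t\in\{0,1\}$. Each such piece is a translate of one operand edge by a fixed complex number, hence again an edge, immediate from Definition~\ref{def:linear_curve_subspace}. Thus $\boldsymbol{\Gamma}\oplus\boldsymbol{\Gamma'}$ is the parallelogram with vertices $P_i+Q_j$, a bounded convex set whose boundary is a closed polygonal curve of four edges — a complex interval bounded by edges. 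This is the specialization, via \eqref{eq:Curve_segment_normalization}, of the classical fact that the Minkowski sum of two (here degenerate) convex polygons is a convex polygon whose edges translate the operand edges~\cite{de_berg_computational_2008}, and it also conforms to Proposition~\ref{prop:Minkowski_boundary} when the edges are later reassembled into full polyarcular interval boundaries.

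Degenerate case: if the two edges are parallel then $J_\oplus\equiv0$ and the Minkowski sum collapses to a single segment along the common direction, whose endpoints lie among the $P_i+Q_j$; this is a degenerate interval whose boundary is that single edge (traced back and forth, or equivalently a zero-width polyarc per the convention in Definition~\ref{def:polyarc}). If moreover one summand is a single point, the sum is simply a translate of the other edge. In every subcase the result is bounded by edges.

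I expect the only delicate point to be the bookkeeping of these degenerate configurations — parallel edges, or point-like summands — and settling what "bounded by edges" is to mean for a collapsed one-dimensional interval; the generic argument itself is immediate, because the Jacobian is a nonzero constant, so no curved envelope can arise and only the translated-edge extremal segments survive.
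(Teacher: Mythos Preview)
Your proposal is correct and follows essentially the same approach as the paper: both argue that the additive envelope of two lines is empty (generically) or the line itself (parallel case), so the boundary of the sum consists only of translated operand edges. You make the envelope analysis explicit via the parametric Jacobian $J_\oplus$, while the paper states the conclusion tersely and defers the computation to the supplementary material; your treatment of the degenerate parallel case is in fact a bit more careful than the paper's one-line remark.
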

\begin{proof}
The sum of two lines has no envelope unless the two lines are identical, in which case the envelope is the line itself. Consequently, the sum of two non-parallel edges is bounded only by the operand edges translated by the endpoints of the other operand, while the sum of two parallel edges is a single edge that lies in the line containing the operands. See the derivation in the Supplementary Section S2.1.1.
\end{proof}

\begin{lemma} \label{thm:arc_plus_edge}
    The sum of an arc and an edge is a complex interval bounded by edges and arcs. 
\end{lemma}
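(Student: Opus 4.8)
The plan is to feed the additive combination of an arc $\boldsymbol{\Gamma}$ and an edge $\boldsymbol{\Gamma'}$ into the mixed envelope evaluation \eqref{eq: boundary parts}, just as in Figure \ref{fig:mixed_methodc} for the product of two arcs. By the additive normalization \eqref{eq:Curve_segment_normalization} I may translate the two operands independently, so I put the arc on a zero-centered circle $\breve{\mathcal{O}}_0$ with $F(s)=re^{is}$, $f(x,y)=x^2+y^2-r^2$, and the edge on a zero-crossing line $\Bar{\mathcal{O}}_{0/}$ with $G(t)=t+iat$, reading these off Table \ref{tab:edge_arc_functions}. Then $\hat h(x,y,t)=f\big((x+iy)-G(t)\big)=(x-t)^2+(y-at)^2-r^2$ is a one-parameter family of congruent circles whose centers $G(t)=(t,at)$ slide along the line through the origin in the direction of the edge, and the result curve is $\{x+iy\mid \hat h=0,\ u(x,y,t)\in\boldsymbol{s},\ t\in\boldsymbol{t}\}$ with $u$ obtained from the arc's boundary function $s(x,y)=\atantwo(y,x)$ by the same substitution as $\hat h$.

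Next I would read off the three pieces of $\partial(\boldsymbol{\Gamma}\oplus\boldsymbol{\Gamma'})$ from \eqref{eq: boundary parts}. The piece coming from the edge endpoints, $\{\hat h=0,\ t\in\{\underline t,\overline t\}\}$, is the arc's circle translated by $G(\underline t)$ and $G(\overline t)$, hence, after the parameter cut, translated arcs; the piece coming from the arc endpoints, $\{\hat h=0,\ u\in\{\underline s,\overline s\}\}$, is the edge's line translated by $re^{i\underline s}$ and $re^{i\overline s}$, hence translated edges. Since translations send edges to edges and arcs to arcs, these two contributions are already of the required form.

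That leaves the envelope. Differentiating, $\partial\hat h/\partial t=-2\big[(x-t)+a(y-at)\big]$, so $\partial\hat h/\partial t=0$ forces $t=(x+ay)/(1+a^2)$; substituting back into $\hat h=0$ collapses to $(ax-y)^2=r^2(1+a^2)$, i.e.\ the two lines $ax-y=\pm r\sqrt{1+a^2}$, which are parallel to the containing line of the edge. These belong to $\Bar{\mathcal{O}}$, and once $t\in\boldsymbol{t}$ is imposed they are edges (the envelope exists only when the edge has positive length; otherwise $\boldsymbol{\Gamma}\oplus\boldsymbol{\Gamma'}$ is a single arc). The same conclusion follows conceptually from the Gauss map: the envelope line is the path traced by the unique arc point whose tangent is parallel to the edge as the arc is dragged along the edge vector, and by Proposition \ref{prop:boundary_Gauss_map} it actually appears on the boundary only when the single-point Gauss map of the edge lies inside the interval Gauss map of the arc. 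In every case all three pieces of \eqref{eq: boundary parts} are edges or arcs, so $\partial(\boldsymbol{\Gamma}\oplus\boldsymbol{\Gamma'})$ is covered by edges and arcs.

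Finally I would confirm the sum is a complex interval in the sense of Definition \ref{def:complex_interval_space}: it is connected and bounded because the operands are, and when $r\neq 0$ and the edge is nondegenerate it has nonempty interior --- it is the ``thickened arc'' obtained by sweeping the arc along the edge --- whose boundary is the Jordan curve assembled from the finitely many edge and arc pieces above. Degenerate cases (a point operand, giving a single arc or a single edge) are immediate, and a full circle swept along a short edge, which may leave a hole, is treated as a weakly simple curve per the Remark following Definition \ref{def:complex_interval_space}. The envelope computation itself is elementary; the main obstacle is the bookkeeping of the case analysis --- whether the edge normal lies in the arc's argument interval, the sign of $r$ (convex versus concave arc), and checking that the surviving arc and edge pieces glue along their shared endpoints into one simple closed curve --- which I would relegate to the Supplementary Section, in parallel with the edge-plus-edge derivation of Lemma \ref{thm:edge_plus_edge}.
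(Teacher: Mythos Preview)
Your proposal is correct and follows essentially the same approach as the paper: both use the mixed envelope evaluation \eqref{eq: boundary parts} after normalizing the arc to $\breve{\mathcal{O}}_0$ and the edge to $\Bar{\mathcal{O}}_{0/}$, identify the two extremal pieces as translated arcs and translated edges, and compute the envelope as the pair of lines parallel to the edge at distance $r$. Your explicit derivation of $(ax-y)^2=r^2(1+a^2)$ from $\partial\hat h/\partial t=0$ is exactly the computation the paper defers to its Supplementary Section S2.1.2, and your Gauss-map remark about when the envelope enters the boundary matches the paper's condition ``when the argument of the arc is normal to the edge.''
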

\begin{proof}
The sum of a circle and a line has an envelope consisting of two lines parallel to the edge. The boundary of the result contains sections of the envelope when the argument of the arc is normal to the edge. Consequently, the result boundary consists of the operand edge translated by the arc endpoints, the operand arc translated by the edge endpoints, and edges formed by sections of the envelope. See the derivation in the Supplementary Section S2.1.2.
\end{proof}

\begin{lemma} \label{thm:arc_plus_arc}
    The sum of two arcs is a complex interval bounded by arcs. 
\end{lemma}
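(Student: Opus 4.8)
The plan is to reuse the template of Lemmas~\ref{thm:edge_plus_edge} and~\ref{thm:arc_plus_edge}: write the result boundary as the three-part superset of \eqref{eq: boundary parts} and verify that every part is an edge or an arc. By the normalization \eqref{eq:Curve_segment_normalization} the two operand arcs may be translated independently, so without loss of generality they lie on zero-centered circles, $F(s)=r_1e^{is}$ for $s\in\boldsymbol{s}$ and $G(t)=r_2e^{it}$ for $t\in\boldsymbol{t}$; the original sum is recovered afterwards by a global translation by the two arc centers, which carries arcs to arcs. The two ``constraint'' parts of \eqref{eq: boundary parts} are $\boldsymbol{\Gamma}$ translated by the endpoints of $\boldsymbol{\Gamma'}$ and $\boldsymbol{\Gamma'}$ translated by the endpoints of $\boldsymbol{\Gamma}$; being rigid translates of arcs, these are arcs. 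Thus the whole content of the lemma is the shape of the envelope component.

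I would then compute the envelope by the parametric method (and cross-check it with the implicit one). Parametrically, $H_\oplus(s,t)=(r_1\cos s+r_2\cos t,\ r_1\sin s+r_2\sin t)$ has Jacobian determinant $J_\oplus(s,t)=r_1r_2\sin(t-s)$, which vanishes exactly when $t\equiv s$ or $t\equiv s+\pi\pmod{2\pi}$, i.e.\ when the two radius vectors are parallel or anti-parallel. Substituting $t=s$ gives the envelope point $(r_1+r_2)e^{is}$ and $t=s+\pi$ gives $(r_1-r_2)e^{is}$. Equivalently, the implicit method with $f=x^2+y^2-r_1^2$ and $g=u^2+v^2-r_2^2$ yields the critical-locus equation $yu-xv=0$ and, after elimination, the single envelope polynomial $\bigl(X^2+Y^2-(r_1+r_2)^2\bigr)\bigl(X^2+Y^2-(r_1-r_2)^2\bigr)=0$. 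Either way the envelope is a union of origin-centered circles (degenerating to the point $0$ when $r_1=r_2$), hence a union of arcs --- which is exactly the $\breve{\mathcal{O}}$ entry of Table~\ref{tab:envelopes}.

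It remains to decide which part of this envelope actually lies on $\partial(\boldsymbol{A}\oplus\boldsymbol{B})$, and for which parameters, using Gauss-map matching (Propositions~\ref{prop:Gauss_map_matching} and~\ref{prop:boundary_Gauss_map}). Since $\angle F'(s)=s+\tfrac{\pi}{2}$ and $\angle G'(t)=t+\tfrac{\pi}{2}$, the matching condition forces $t\equiv s$, selecting the radius-$(r_1+r_2)$ arc, traced only while $s$ runs through the overlap of the two argument intervals; the $t\equiv s+\pi$ branch has anti-parallel normals and so does not contribute. Putting this envelope arc together with the two translated operand arcs, $\partial(\boldsymbol{A}\oplus\boldsymbol{B})$ consists of arcs, so the sum is a polyarcular interval; the step-by-step derivation belongs in Supplementary Section~S2.1.3. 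I expect the main obstacle to be this last bookkeeping step rather than the envelope computation itself: one has to track the convex/concave sign conventions of the modified arc function from the Remark after Definition~\ref{def:circular_curve_subspace} (a negative radius flips which of $r_1\pm r_2$ is the relevant outward radius) and check that the envelope arc and the endpoint-constraint arcs glue into a single closed polyarc. Self-intersections of the traced curve may occur, but they are not an obstruction to the lemma, since they are removed by the trimming procedure of Section~\ref{sec:computational_properties} without changing the curve type.
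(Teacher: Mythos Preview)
Your proposal is correct and follows essentially the same route as the paper: normalize both arcs to zero-centered circles, identify the envelope of the sum as the pair of concentric circles of radii $r_1\pm r_2$, note that the endpoint-constraint pieces are rigid translates of the operand arcs, and conclude that every boundary piece is an arc; the paper likewise defers the step-by-step computation to Supplementary Section~S2.1.3. The only refinement worth noting is that the inner envelope branch $t\equiv s+\pi$ should not be discarded in general---it does show up on $\partial(\boldsymbol{\Gamma}\oplus\boldsymbol{\Gamma'})$ (and on $\partial(\boldsymbol{A}\oplus\boldsymbol{B})$ when one arc is concave), exactly the sign issue you flag---but since that branch is still a circle this does not affect the lemma.
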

\begin{proof}
The sum of two circles has an envelope consisting of two circles with radii equal to the sum and difference of the radii of the operand circles. The result boundary contains sections of the envelope if the argument intervals of the two arcs overlap. Consequently, the result boundary consists of the operand arcs translated by the endpoints of the other operand and the arcs formed by sections of the envelope. See the derivation in the Supplementary Section S2.1.3.
\end{proof}

\begin{lemma} \label{thm:edge_times_edge}
    The product of two edges is a complex interval bounded by edges if at least one operand is from a zero-crossing line or if the intersection of their normalized parameter intervals is empty. 
    Otherwise, the boundary of the result has segments of a parabola, a quadratic curve, that are generically neither edges nor arcs.
\end{lemma}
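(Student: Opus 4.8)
The plan is to exploit the multiplicative normalization \eqref{eq:Curve_segment_normalization}, which lets us rotate-and-scale each operand edge by an independent nonzero complex number. A line through the origin can thereby be moved onto the real axis ($\Bar{\mathcal{O}}_{0\_}$ in Table~\ref{tab:edge_arc_functions}, parametrized by $F(s)=s$), and a line not passing through the origin can be moved onto the vertical line $x=1$ ($\Bar{\mathcal{O}}_{1\vert}$, $G(t)=1+it$) after an affine reparametrization of its parameter; the interval obtained from $\boldsymbol{\Gamma}\otimes\boldsymbol{\Gamma'}$ under this normalization is rotated-and-scaled back afterwards, which maps edges to edges and quadratic curves to quadratic curves. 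This reduces the claim to three cases: (a) both lines zero-crossing; (b) exactly one zero-crossing; (c) neither zero-crossing, so that after normalization both operands are segments of the common vertical line $x=1$ with parameter intervals $\boldsymbol{s}$ and $\boldsymbol{t}$ respectively.

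In case (a) the product is $F(s)G(t)=st\in\mathbb{R}$, so $\boldsymbol{\Gamma}\otimes\boldsymbol{\Gamma'}$ is a sub-segment of the real axis, a (degenerate) interval bounded by edges. In case (b) the product is $F(s)G(t)=s(1+it)$, giving the parametrization $H(s,t)=(s,st)$; a direct computation gives the Jacobian determinant $J(s,t)=s$, whose zero locus $s=0$ maps to the single point $0$, so there is no one-dimensional envelope. By the three-part decomposition \eqref{eq: boundary parts} the boundary of the result is then contained in the images of $\boldsymbol{\Gamma}$ and $\boldsymbol{\Gamma'}$ under multiplication by the endpoints of the other operand, and each such image is an affine function of a real parameter, hence an edge; so the result is bounded by edges. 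Cases (a) and (b) thus cover the situations where at least one operand lies on a zero-crossing line.

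Case (c) is where the quadratic appears. With $F(s)=1+is$ and $G(t)=1+it$ we get $F(s)G(t)=(1-st)+i(s+t)$, i.e.\ $H(s,t)=(1-st,\,s+t)$, and a direct computation gives $J(s,t)=s-t$. Thus the envelope lies on $\{s=t\}$, and substituting $t=s$ yields $x=1-s^2$, $y=2s$, namely the parabola $y^2=4(1-x)$ — a quadratic curve that is neither a line nor a circle, hence on any sub-arc of positive length neither an edge nor an arc. To decide when this parabolic piece actually appears on $\partial(\boldsymbol{\Gamma}\otimes\boldsymbol{\Gamma'})$, I would use Gauss-map matching (Propositions~\ref{prop:Gauss_map_matching} and \ref{prop:boundary_Gauss_map}): for $F(s)=1+is$ one has $\angle\tfrac{F'(s)}{F(s)}=\angle\tfrac{i}{1+is}=\pi/2-\atan(s)$, and similarly $\pi/2-\atan(t)$ for $G$, so by monotonicity of $\atan$ the logarithmic Gauss maps of the two segments overlap exactly when $\boldsymbol{s}\cap\boldsymbol{t}\neq\emptyset$, and the matched normal directions occur precisely at $s=t$, i.e.\ on the envelope. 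Hence if the normalized parameter intervals are disjoint there is no Gauss-map match between the relative interiors of the two edges, the boundary reduces via \eqref{eq: boundary parts} to the edge-valued constraint curves, and the result is bounded by edges; if the intervals overlap in a subinterval of positive length, a genuine parabolic arc is forced onto the boundary.

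The hard part will be this last point — verifying that the matched parabolic arc lies on the outer boundary of the product rather than in its interior, and pinning down which sub-arc of the parabola it is and how it attaches to the neighbouring constraint edges. The Gauss-map match (Proposition~\ref{prop:boundary_Gauss_map}) certifies that the candidate points are boundary points, and \eqref{eq: boundary parts} certifies that the boundary consists only of envelope points together with the (edge-valued) constraint curves, so once $\boldsymbol{s}\cap\boldsymbol{t}$ has positive length at least one parabolic arc of positive length must occur; the word "generically" absorbs the remaining degenerate subcases ($\boldsymbol{s}\cap\boldsymbol{t}$ a single point, or an operand degenerating to a point). The explicit identification of the parabolic arc, together with the routine algebra of the Jacobians above, would be relegated to the Supplementary Material, as for Lemmas~\ref{thm:edge_plus_edge}--\ref{thm:arc_plus_arc}.
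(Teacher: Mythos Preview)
Your proposal is correct and follows essentially the same route as the paper: both use the multiplicative normalization of \eqref{eq:Curve_segment_normalization} to reduce to the standard forms $\Bar{\mathcal{O}}_{0\_}$ and $\Bar{\mathcal{O}}_{1\vert}$ of Table~\ref{tab:edge_arc_functions}, split into the three cases (both, one, neither operand zero-crossing), identify the parabola $y^2=4(1-x)$ as the envelope in the generic case via the Jacobian $J(s,t)=s-t$, and use the logarithmic Gauss map to translate the overlap condition into $\boldsymbol{s}\cap\boldsymbol{t}\neq\emptyset$. Your explicit computations and your honest flagging of the remaining verification (that the matched parabolic arc is genuinely exterior) match the paper's deferral to the Supplementary Material.
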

\begin{proof}
The product of two lines has an envelope consisting of a parabolic curve when none of them crosses the origin. The result boundary contains a segment of the envelope when their normalized parameter intervals overlap. If only one line crosses the origin, then there is no envelope, while if both lines cross the origin, then the product is a single line, which is also the envelope. Consequently, the result boundary consist of the operand edges translated by the endpoints of the other operand, and sections of the envelope. See the derivation in the Supplementary Section S2.2.1.
\end{proof}

\begin{lemma} \label{thm:arc_times_edge}
    The product of an edge and an arc is a complex interval bounded by edges and arcs if the edge is from a zero crossing line, or the arc is from a zero-centered circle, or the circle radius is exactly 1, or the following condition is not fulfilled for any of the parameter value pairs of their normalized parameter intervals:
    \[
    r + \cos(s) + \sin(s)/t = 0
    \]
    Otherwise, the boundary of the result has segments of a hyperbola or ellipse, a quadratic curve, that are generically neither edges nor arcs.
    
\end{lemma}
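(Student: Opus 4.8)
The plan is to follow the template of Lemmas~\ref{thm:edge_plus_edge}--\ref{thm:edge_times_edge}: normalize the operands, compute the envelope of the resulting one-parameter family of curves, classify it, and decide for which parameters it contributes to the result boundary. First I would apply the multiplicative normalization~\eqref{eq:Curve_segment_normalization} to rotate-and-scale the operands into canonical position; by Table~\ref{tab:edge_arc_functions} the edge becomes a segment of a zero-crossing horizontal line ($\Bar{\mathcal{O}}_{0\_}$) or of a one-crossing vertical line ($\Bar{\mathcal{O}}_{1\vert}$), and the arc becomes a segment of a zero-centered circle ($\breve{\mathcal{O}}_0$) or of a one-centered circle ($\breve{\mathcal{O}}_1$), leaving four sub-cases, of which $\Bar{\mathcal{O}}_{1\vert}\times\breve{\mathcal{O}}_1$ is the generic one. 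By Proposition~\ref{prop:Minkowski_boundary} the boundary of the product is covered by the operand edge rotated-and-scaled by points of the arc, the operand arc rotated-and-scaled by points of the edge, and the envelope of these families; the first two contributions are again edges and arcs (a scaled-rotated copy of an edge is an edge, of an arc is an arc), so only the envelope can be non-polyarcular, exactly as in Lemma~\ref{thm:edge_times_edge}.

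Second, in the generic sub-case $\Bar{\mathcal{O}}_{1\vert}\times\breve{\mathcal{O}}_1$ with $r\ne1$ I would compute the envelope by any of the three methods of Section~\ref{sub:boundary_analysis}; running the implicit Gröbner-basis procedure of Section~\ref{sub:implicit} on the Cartesian equations from Table~\ref{tab:edge_arc_functions} yields a single defining polynomial $h(X,Y)$ whose quadratic part has nonzero discriminant, so $\{h(X,Y)=0\}$ is a nondegenerate conic --- an ellipse or a hyperbola according to the sign of that discriminant, which is governed by $r$ (in particular by whether the arc is convex or concave). In each of the three excluded situations the conic degenerates into an edge, an arc, or a point, as recorded in the $\Bar{\mathcal{O}}\times\breve{\mathcal{O}}$ entry of Table~\ref{tab:envelopes}: when the edge lies on a zero-crossing line the product is a union of lines through the origin and the envelope collapses to the origin; when the arc lies on a zero-centered circle the product is a family of lines at fixed distance from the origin whose envelope is a zero-centered circle, i.e. an arc; and when $r=1$ the one-centered circle passes through the complex zero and a short computation shows the envelope degenerates to a single point. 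In all three cases the result is therefore bounded only by the scaled-rotated operand edges and arcs.

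Third, I would pin down when a piece of the generically conic envelope actually lies on the result boundary, using the parametric method of Section~\ref{sub:boundary_analysis}. Writing $F(s)=1+is$ for $\Bar{\mathcal{O}}_{1\vert}$ and $G(t)=re^{it}+1$ for $\breve{\mathcal{O}}_1$, a direct evaluation of the Jacobian $J_\otimes(s,t)=\det\mathrm{Jac}_{H_\otimes}$ gives $J_\otimes(s,t)=r\big(rs+\sin t+s\cos t\big)$; for $r\ne0$, dividing the envelope equation $J_\otimes=0$ by the edge parameter turns it into $r+\cos t+\sin t/s=0$, which is exactly the stated relation $r+\cos(s)+\sin(s)/t=0$ up to interchanging the two parameters. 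Hence if this equation has no solution with the parameters lying in the normalized intervals, the envelope never enters the boundary and --- combined with the first paragraph --- the result is bounded only by edges and arcs; otherwise a conic arc is present and, since a generic ellipse or hyperbola arc is neither an edge nor an arc, the interval is generically not polyarcular. The boundary of the result is then assembled from the three parts of~\eqref{eq: boundary parts}, as in Lemmas~\ref{thm:arc_plus_edge} and~\ref{thm:edge_times_edge}: the operand edge rotated-and-scaled by the arc endpoints, the operand arc rotated-and-scaled by the edge endpoints, and the envelope sections cut out by the constraint $\{u(x,y,t)\in\boldsymbol{s},\,t\in\boldsymbol{t}\}$.

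The detailed symbolic work --- the closed form of $h(X,Y)$, the discriminant computation separating the ellipse and hyperbola cases, the verification of the three degenerate cases, and the extraction of the envelope sections --- is relegated to the Supplementary Section~S2.2.2. I expect the main obstacle to be bookkeeping rather than anything conceptual: one must run the Gauss-map overlap test of Proposition~\ref{prop:boundary_Gauss_map} in each of the four normalized sub-cases so that every claimed boundary arc is accounted for, and check that the degeneration in each excluded case is exactly the one listed in Table~\ref{tab:envelopes}. The conic computation itself is mechanical once the normalization is fixed and closely parallels the treatment of Lemma~\ref{thm:edge_times_edge}.
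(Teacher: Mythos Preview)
Your approach matches the paper's: normalize via \eqref{eq:Curve_segment_normalization}, compute the envelope of the one-parameter family, classify it as a conic in the generic $\Bar{\mathcal{O}}_{1\vert}\times\breve{\mathcal{O}}_1$ case, extract the parametric inclusion condition from $J_\otimes=0$, and treat the degenerate sub-cases separately. Your Jacobian $J_\otimes=r(rs+\sin t+s\cos t)$ is correct and gives the stated relation after swapping the parameter names.

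Two of your degenerate-case descriptions are inaccurate, though neither breaks the conclusion. For a zero-crossing edge against a non-zero-centered circle the envelope is \emph{two lines through the origin}, not the single point $0$: if you parametrize by the edge variable the family is $\{(x-s)^2+y^2=s^2r^2\}$, and eliminating $s$ via $\partial/\partial s=0$ yields $y^2=r^2x^2/(1-r^2)$. This is precisely the $\Bar{\mathcal{O}}$ special case listed in Table~\ref{tab:envelopes} and what the paper asserts. Your ``origin'' answer comes from parametrizing by the arc variable instead, which hides the true envelope because every curve in that family already passes through $0$. For the $r=1$ case the paper records the envelope as the real line (with the points $0$ and $2$ excluded), not a single point, and describes the resulting region as two arc-bounded lobes meeting at $0$ and $2$; you should reconcile your computation with that picture. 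Finally, the ellipse/hyperbola split is governed by whether the normalized radius satisfies $r>1$ or $r<1$, as the paper states explicitly; it is not the convex/concave (sign of $r$) distinction you invoke.
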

\begin{proof}
The envelope of a non-zero-centered circle and a non-zero-crossing line is a hyperbola if the circle radius is less than 1, and an ellipse if its radius is more than 1. The result boundary contains a segment of the envelope when the above condition holds for the arguments.
If the circle radius equals 1, the envelope is the real line, except the real 0 and 2 points, and the result is the union of two concave, arc bounded regions touching at the real 0 and 2 points, which technically can be considered a complex interval.
If the circle is zero-centered, but the line is non-zero-crossing, the envelope is a circle. If the line is zero-crossing but the circle is non-zero centered, the envelope is two lines. If both operands are zero-centered, the envelope is the point of origin.
Consequently, the result boundary consist of the operand arc and edge scale-rotated by the endpoints of the other operand, and sections of the envelope.
See the derivation in the Supplementary Section S2.2.2.
\end{proof}

\begin{lemma} \label{thm:arc_times_arc}
    The product of two arcs is a complex interval bounded by arcs if at least one operand is from a zero-centered circle or if the following condition is not fulfilled for any of the parameter value pairs from their normalized parameter intervals:
    \[
    \sin(s-t)=r_1 \sin(t)-r_2 \sin(s),
    \]
    where $r_1,r_2$ denote the radii of the normalized circles. \\
    Otherwise, the boundary of the result has segments of a Cartesian oval, a quartic curve, that are generically neither edges nor arcs.
\end{lemma}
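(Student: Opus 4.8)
The plan is to follow the template of Lemmas~\ref{thm:arc_plus_arc} and~\ref{thm:arc_times_edge}: normalise, isolate the envelope as the only boundary piece that can fail to be an arc, and then evaluate it by two of the methods of Section~\ref{sub:boundary_analysis}. Using the normalisation \eqref{eq:Curve_segment_normalization} I would scale-rotate each operand so that its supporting circle is either zero-centred, $F(s)=r_1e^{is}\in\breve{\mathcal{O}}_0$, or centred at $1$, $F(s)=1+r_1e^{is}\in\breve{\mathcal{O}}_1$, and likewise $G(t)$ with radius $r_2$. By Proposition~\ref{prop:Minkowski_boundary} and the superset decomposition \eqref{eq: boundary parts}, $\partial(\boldsymbol{\Gamma}\otimes\boldsymbol{\Gamma'})$ lies in the union of the envelope with the scale-rotated copies $F(\underline{s})\times\boldsymbol{\Gamma'}$, $F(\overline{s})\times\boldsymbol{\Gamma'}$, $\boldsymbol{\Gamma}\times G(\underline{t})$, $\boldsymbol{\Gamma}\times G(\overline{t})$ of the operand arcs; since multiplication by a fixed complex number is a similarity, these four copies are again arcs (cf.\ Lemma~\ref{lem:curve_negative}), so the whole lemma reduces to controlling the envelope.

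For the parametric side I would apply the Jacobian criterion $J_\otimes(s,t)=0$ of the parametric method. Taking $F(s)=1+r_1e^{is}$, $G(t)=1+r_2e^{it}$, a direct expansion of the $2\times2$ determinant $J_\otimes$ — grouping terms into $\sin$ and $\cos$ of $s$, $t$, and $s+t$ — collapses to
\[
J_\otimes(s,t)=r_1r_2\bigl(r_1\sin t-r_2\sin s-\sin(s-t)\bigr),
\]
so for non-degenerate radii $J_\otimes=0$ is exactly the stated relation $\sin(s-t)=r_1\sin t-r_2\sin s$. If this has no solution with $s$ in the normalised argument interval of $\boldsymbol{\Gamma}$ and $t$ in that of $\boldsymbol{\Gamma'}$, then the envelope term in \eqref{eq: boundary parts} is empty and the result is bounded by the four arcs above — giving the second sufficient condition. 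If one operand lies on a zero-centred circle, say $F(s)=r_1e^{is}$, the same expansion gives $J_\otimes=r_1^2r_2\sin t$ instead (and $J_\otimes\equiv0$ when both are zero-centred, the product then being the single arc $r_1r_2e^{i(s+t)}$); hence $J_\otimes=0$ forces $t\in\{0,\pi\}$, the envelope collapses to the two zero-centred arcs of radii $r_1\abs{1+r_2}$ and $r_1\abs{1-r_2}$, and the result is again bounded by arcs — giving the first sufficient condition. Equivalently, the logarithmic Gauss map of a zero-centred arc is the constant $\pi/2$, so Proposition~\ref{prop:boundary_Gauss_map} already pins the contributing $t$ to a finite set.

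It remains to treat the generic case, $r_1,r_2\neq0$ with both circles centred at $1$ and the relation solvable, where in general a segment of the envelope appears on the boundary (the four arc copies alone cannot close up to bound the whole product). I would compute the envelope by the implicit method of Section~\ref{sub:implicit}: with $f=(x-1)^2+y^2-r_1^2$, $g=(u-1)^2+v^2-r_2^2$ and $\varphi_\otimes(x,y,u,v)=(xu-yv,\,xv+yu)$, form $\widetilde{h}=\det\mathrm{Jac}\bigl(f,g,\Re\varphi_\otimes,\Im\varphi_\otimes\bigr)$ and eliminate $x,y,u,v$ from the ideal $\bigl(f,\,g,\,\widetilde{h},\,xu-yv-X,\,xv+yu-Y\bigr)$ with a Gröbner basis in a suitable elimination order; the elimination ideal in $\mathbb{R}[X,Y]$ is generated by a single polynomial $h(X,Y)$, which I expect to be of degree $4$ and to agree with the bipolar equation of a Cartesian oval whose foci are fixed by the two centres — matching the identification of Minkowski products of circular arcs in Farouki et al.\ \cite{farouki_minkowski_2001,farouki_boundary_2005}. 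Finally I would check that $h$ has a non-vanishing quartic part and is irreducible for generic $(r_1,r_2)$ — it degenerates to a limaçon, an Apollonius circle or a line only on a measure-zero set of radii (e.g.\ $r_1=1$, $r_2=1$, or $r_1r_2=1$) — so that generically no segment of the envelope is an edge or an arc. The result is a complex interval because, as in the previous lemmas, its boundary is a piecewise-smooth Jordan curve after trimming self-intersections (Section~\ref{sec:computational_properties}).

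The hard part is the last paragraph: running the Gröbner elimination with $r_1,r_2$ symbolic and then \emph{certifying} that the resulting quartic is exactly a Cartesian oval (not merely some quartic) and is generically irreducible, which means enumerating every degenerate choice of radii on which the oval collapses. By contrast, the reduction of $J_\otimes$ to the compact trigonometric form is routine once the right grouping is used, and the zero-centred subcase is immediate from the constant logarithmic Gauss map; so essentially all the remaining work sits in the algebro-geometric step, which is why the detailed computation is best deferred to the Supplementary Material and SageMath.
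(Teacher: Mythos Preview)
Your proposal is correct and follows essentially the same approach as the paper: normalise via \eqref{eq:Curve_segment_normalization}, decompose the boundary via Proposition~\ref{prop:Minkowski_boundary} and \eqref{eq: boundary parts}, obtain the parametric condition $\sin(s-t)=r_1\sin t-r_2\sin s$ from the Jacobian $J_\otimes$, treat the zero-centred cases separately, and identify the generic envelope as a Cartesian oval via the implicit (Gr\"obner) method. Your write-up is in fact more explicit than the paper's proof, which merely asserts the conclusions and defers all computations to the Supplementary Material; your derivation of $J_\otimes(s,t)=r_1r_2\bigl(r_1\sin t-r_2\sin s-\sin(s-t)\bigr)$ and the degenerate-case analysis are exactly what that supplementary section carries out.
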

\begin{proof}
The product of two non-zero centered circles has a quartic envelope, a Cartesian oval. The result boundary contains sections of it if the above condition holds for the arguments.
If only one arc is zero-centered, then the envelope consists of two rescaled copies of the zero-centered circle, and the result boundary includes envelope segments if the normalized argument of the nonzero centered circle avoids the values $0$ and $\pi$.
If both arcs are zero-centered, their product is a single arc that also belongs to the boundary (similarly to the case of zero-crossing lines). 
Consequently, the result boundary consist of the operand arcs scale-rotated by the endpoints of the other operand, and sections of the envelope.
See the derivation in the Supplementary Section S2.2.3.
\end{proof}

\begin{proposition}
    The sum of two polyarcular intervals is a polyarcular interval.
\begin{equation*}
    \boldsymbol{A},\boldsymbol{B}\in\mathcal{A}(\mathbb{C})
    \implies
    (\boldsymbol{A} \oplus \boldsymbol{B}) \in \mathcal{A}(\mathbb{C}) 
\end{equation*}
\end{proposition}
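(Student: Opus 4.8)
The plan is to combine three ingredients already in hand: the boundary–containment of Proposition~\ref{prop:Minkowski_boundary}, the Gauss-map restriction of Proposition~\ref{prop:boundary_Gauss_map}, and the three additive edge/arc lemmas (Lemmas~\ref{thm:edge_plus_edge}, \ref{thm:arc_plus_edge}, \ref{thm:arc_plus_arc}). Together these will show that $\partial(\boldsymbol{A}\oplus\boldsymbol{B})$ is contained in a finite union of edges and arcs; the closedness of Minkowski addition on $\mathcal{I}(\mathbb{C})$ then upgrades this to the statement that the boundary actually \emph{is} a polyarc curve.

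\textbf{Step 1 (finite decomposition).} Write $\partial\boldsymbol{A}$ as its finite list of boundary segments $\boldsymbol{\Gamma}_{A,\mathsf{n}}$, each an edge or an arc by Definition~\ref{def:polyarc}, together with the vertices $P_{A,\mathsf{n}}$, and likewise for $\partial\boldsymbol{B}$. By Proposition~\ref{prop:Minkowski_boundary}, $\partial(\boldsymbol{A}\oplus\boldsymbol{B})\subseteq\partial\boldsymbol{A}\oplus\partial\boldsymbol{B}$, and the right-hand side is the finite union over all pairs of pieces of the pairwise sums $\boldsymbol{\Gamma}_{A,\mathsf{n}}\oplus\boldsymbol{\Gamma}_{B,\mathsf{k}}$, $\boldsymbol{\Gamma}_{A,\mathsf{n}}+P_{B,\mathsf{k}}$, and $P_{A,\mathsf{n}}+P_{B,\mathsf{k}}$.

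\textbf{Step 2 (classify each contribution).} Adding a vertex to a segment is a pure translation, which sends an edge to an edge and an arc to an arc (the same observation as in Lemma~\ref{lem:curve_negative}); adding two vertices yields a point. For a segment–segment pair, Proposition~\ref{prop:boundary_Gauss_map} says it can contribute to $\partial(\boldsymbol{A}\oplus\boldsymbol{B})$ only when the two Gauss-map images overlap, and for such a pair Lemmas~\ref{thm:edge_plus_edge}, \ref{thm:arc_plus_edge}, \ref{thm:arc_plus_arc} state that the contribution is itself bounded by a finite union of edges and arcs (edge $+$ edge gives edges; arc $+$ edge gives edges and arcs; arc $+$ arc gives arcs). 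Hence $\partial(\boldsymbol{A}\oplus\boldsymbol{B})$ lies inside a finite union of (sub-segments of) lines and circles, any two of which meet in at most two points, so there are only finitely many pieces meeting at finitely many points.

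\textbf{Step 3 (assemble into a polyarc) and the main obstacle.} Since $\oplus$ is a closed operation on $\mathcal{I}(\mathbb{C})$, the set $\boldsymbol{A}\oplus\boldsymbol{B}$ is a complex interval, so its topological boundary is a piecewise-smooth Jordan curve. A piecewise-smooth Jordan curve contained in a finite union of line and circle arcs must decompose into finitely many maximal smooth pieces, each lying in a single line or circle, hence each an edge or an arc; inserting degenerate point-pieces where two consecutive pieces are of the same type puts it in the alternating edge–arc form of Definition~\ref{def:polyarc}, so $\partial(\boldsymbol{A}\oplus\boldsymbol{B})\in\mathring{\breve{\mathcal{O}}}(\mathbb{C})$ and $\boldsymbol{A}\oplus\boldsymbol{B}\in\mathcal{A}(\mathbb{C})$. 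The delicate point is exactly this last step: the raw union of pairwise sums from Step~1 may be self-intersecting, so the genuine boundary is recovered only after discarding the interior arcs (the ``trimming'' discussed later in the paper); the content of the proof is that, after trimming, every surviving piece is still an edge or an arc — and that follows from Step~2. The edge/arc lemmas and the closedness of $\oplus$ do the real work; the remainder is bookkeeping of finitely many segment interactions.
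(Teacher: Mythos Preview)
Your proposal is correct and follows essentially the same approach as the paper's own proof: both invoke the closedness of $\oplus$ on $\mathcal{I}(\mathbb{C})$, the boundary containment of Proposition~\ref{prop:Minkowski_boundary}, and the three additive segment lemmas (Lemmas~\ref{thm:edge_plus_edge}, \ref{thm:arc_plus_edge}, \ref{thm:arc_plus_arc}) to conclude that the result boundary is built from edges and arcs. Your write-up is in fact more careful than the paper's terse version---you explicitly handle the vertex--segment and vertex--vertex contributions, invoke the Gauss-map filter, and spell out the trimming/assembly step that the paper leaves implicit.
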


\begin{proof}
    Addition is a closed binary operation on complex intervals, and a complex interval is polyarcular if it is bounded by a polyarc curve. According to Proposition \ref{prop:Minkowski_boundary}, the sum of complex intervals is bounded by the sum of the interval boundaries. In equation \eqref{prop:Minkowski_boundary} we show that the sum of polyarc curves is bounded by a subset of the sums of the curve segments. In lemmas \ref{thm:edge_plus_edge}, \ref{thm:arc_plus_edge} and \ref{thm:arc_plus_arc} we show that the sum of polyarcular curve segments is always bounded by edges and arcs. 
\end{proof}

\begin{proposition}
    The product of two polyarcular intervals is a polyarcular interval if in each combination of curve segments that contributes to the result boundary at least one of the operands is from a zero-crossing line or zero-centered circle, or none of the results of the combinations of curve segments touch the envelope (see conditions in lemmas \ref{thm:edge_times_edge}, \ref{thm:arc_times_edge} and \ref{thm:arc_times_arc}). Otherwise, it is a complex interval.
\begin{equation*}
    \boldsymbol{A},\boldsymbol{B}\in\mathcal{A}(\mathbb{C})
    \implies
    (\boldsymbol{A} \otimes \boldsymbol{B}) \in
    \begin{cases}
        \mathcal{A}(\mathbb{C}) & \text{conditions above}
        \\
        \mathcal{I}(\mathbb{C}) & \text{otherwise}   
    \end{cases}
\end{equation*}
\end{proposition}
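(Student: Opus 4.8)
The plan is to run the same argument as for the additive case, with the multiplicative lemmas in place of the additive ones, keeping in mind that $\otimes$ is a closed binary operation on $\mathcal{I}(\mathbb{C})$, so in every case $\boldsymbol{A}\otimes\boldsymbol{B}$ is at least a complex interval; the only thing to decide is whether its bounding Jordan curve lies in $\mathring{\breve{\mathcal{O}}}$, i.e.\ is a polyarc. By Proposition \ref{prop:Minkowski_boundary} we have $\partial(\boldsymbol{A}\otimes\boldsymbol{B})\subseteq\partial\boldsymbol{A}\otimes\partial\boldsymbol{B}$, and since $\partial\boldsymbol{A}$ and $\partial\boldsymbol{B}$ are each a finite ordered union of edges, arcs and vertices, the right-hand side is the finite union of the products $\boldsymbol{\Gamma}_{A,\mathsf{n}}\otimes\boldsymbol{\Gamma}_{B,\mathsf{k}}$ over all pairs of boundary segments, together with the analogous segment–vertex and vertex–vertex products. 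Hence $\partial(\boldsymbol{A}\otimes\boldsymbol{B})$ is covered by finitely many such pieces, and it suffices to show that every such piece which actually meets the result boundary is a union of edges and arcs.

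Next I would discard the pairs that cannot contribute. By the logarithmic Gauss map matching of Proposition \ref{prop:boundary_Gauss_map}, a product $\boldsymbol{\Gamma}_{A,\mathsf{n}}\otimes\boldsymbol{\Gamma}_{B,\mathsf{k}}$ can meet $\partial(\boldsymbol{A}\otimes\boldsymbol{B})$ only when the logarithmic Gauss maps $\mathring{\boldsymbol{\gamma}}_{\Gamma_{A,\mathsf{n}}}$ and $\mathring{\boldsymbol{\gamma}}_{\Gamma_{B,\mathsf{k}}}$ overlap, and similarly in the vertex cases. For the pairs involving at least one vertex the operation is just a scale–rotation of the other segment (or a point), so the resulting boundary piece is again an edge, an arc, or a point. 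This reduces the problem to the three genuinely binary cases: edge$\times$edge, edge$\times$arc and arc$\times$arc.

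For these three cases I would invoke Lemmas \ref{thm:edge_times_edge}, \ref{thm:arc_times_edge} and \ref{thm:arc_times_arc} directly. After the normalization \eqref{eq:Curve_segment_normalization}, each contributing segment product splits into (i) the two operand segments scale–rotated by the endpoints of the other operand, which are always edges or arcs, and (ii) a possible envelope piece. The lemmas say precisely that this envelope piece is absent or degenerates into edges and arcs exactly when at least one operand lies on a zero-crossing line or a zero-centered circle, or (for edge$\times$arc) the radius equals $1$, or the stated parametric non-crossing condition holds for all parameter pairs; otherwise the envelope is a genuine parabolic, hyperbolic/elliptic, or Cartesian-oval arc, which is generically neither an edge nor an arc. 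Assembling over all contributing pairs: under the hypotheses of the proposition every boundary piece is an edge or an arc, and since $\boldsymbol{A}\otimes\boldsymbol{B}$ is already known to be a complex interval, its boundary is a simple, closed, piecewise-smooth curve built from edges and arcs — a polyarc — so $\boldsymbol{A}\otimes\boldsymbol{B}\in\mathcal{A}(\mathbb{C})$. If for some contributing pair none of the exceptional conditions holds, the boundary genuinely carries a non-polyarc envelope segment, and only the weaker conclusion $\boldsymbol{A}\otimes\boldsymbol{B}\in\mathcal{I}(\mathbb{C})$ survives.

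The main obstacle is not the case analysis itself, which is packaged in the three lemmas, but the bookkeeping in passing from the superset $\partial\boldsymbol{A}\otimes\partial\boldsymbol{B}$ to the actual boundary: one must check that the finitely many edge/arc pieces surviving the Gauss-map filter glue into a single simple closed curve (possibly after the trimming of self-intersections discussed in Section \ref{sec:computational_properties}), and that the word ``generically'' in the lemmas is consistent with the clean dichotomy stated here — that is, whenever an envelope segment of positive length lies on the boundary it is not accidentally an edge or an arc. Pinning down those borderline algebraic coincidences (for instance a piece of a Cartesian oval that happens to be circular) is where the argument needs the most care.
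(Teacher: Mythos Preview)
Your proposal is correct and follows essentially the same route as the paper: closure of $\otimes$ on $\mathcal{I}(\mathbb{C})$, the inclusion $\partial(\boldsymbol{A}\otimes\boldsymbol{B})\subseteq\partial\boldsymbol{A}\otimes\partial\boldsymbol{B}$ from Proposition~\ref{prop:Minkowski_boundary}, and then the case analysis via Lemmas~\ref{thm:edge_times_edge}, \ref{thm:arc_times_edge}, \ref{thm:arc_times_arc}. If anything, you are more explicit than the paper about the Gauss-map filtering, the vertex cases, the trimming step, and the caveat around ``generically'', all of which the paper leaves implicit or defers to surrounding sections.
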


\begin{proof}
    Multiplication is a closed binary operation on complex intervals, and a complex interval is polyarcular if it is bounded by a polyarc curve. According to Proposition \ref{prop:Minkowski_boundary} the product of complex intervals is bounded by the product of the interval boundaries. In equation \eqref{prop:Minkowski_boundary} we show that the product of polyarc curves is bounded by a subset of the products of the curve segments. In lemmas \ref{thm:edge_times_edge}, \ref{thm:arc_times_edge} and \ref{thm:arc_times_arc} we show that in case one of the curve segments in the combination is zero-crossing or zero-centered, then the product is always bounded by edges and arcs. In the same lemma we also show that the product envelope of non-zero-crossing/centered curve segments is neither linear nor circular; therefore, the product is bounded by edges and arcs only if the envelope is not part of the boundary.
\end{proof}

\subsubsection{Set operations} \label{sub:polyarcular_set_operations}

\begin{proposition}
    The union and intersection of polyarcular intervals are polyarcular intervals if the intersection of the operands is not empty.
    \begin{equation*}
    \boldsymbol{A},\boldsymbol{B} \in \mathcal{A}(\mathbb{C}), 
    \boldsymbol{A} \cap \boldsymbol{B} \neq \emptyset
    \implies
    \begin{cases}
        (\boldsymbol{A} \cup \boldsymbol{B}) \in \mathcal{A}(\mathbb{C})
        \\
        (\boldsymbol{A} \cap \boldsymbol{B}) \in \mathcal{A}(\mathbb{C})
    \end{cases}
\end{equation*}
\end{proposition}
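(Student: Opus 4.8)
The plan is to follow the same template as the earlier unary and binary propositions: reduce to Proposition~\ref{prop:Minkowski_boundary} and then check that the set operations do not leave the class of edge/arc boundary segments. By Proposition~\ref{prop:Minkowski_boundary} we have $\partial(\boldsymbol{A}\cup\boldsymbol{B})\subseteq\partial\boldsymbol{A}\cup\partial\boldsymbol{B}$ and $\partial(\boldsymbol{A}\cap\boldsymbol{B})\subseteq\partial\boldsymbol{A}\cup\partial\boldsymbol{B}$, so in both cases the candidate result boundary is carved out of the union of the two operand boundaries, each of which is a polyarcular Jordan curve, i.e.\ a finite cyclic sequence of edges and arcs.

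First I would describe the intersection locus $\partial\boldsymbol{A}\cap\partial\boldsymbol{B}$. Each operand boundary is a finite union of edges and arcs; two non-coinciding such segments lie on two distinct lines or circles, which meet in at most two points, so $\partial\boldsymbol{A}\cap\partial\boldsymbol{B}$ is a finite set of points together with (in degenerate cases) finitely many shared sub-edges and sub-arcs. Marking these finitely many intersection points as new vertices subdivides $\partial\boldsymbol{A}$ into finitely many edge and arc pieces, each of which --- having no transversal crossing with $\partial\boldsymbol{B}$ in its interior --- lies entirely in the closure of $\boldsymbol{B}$, entirely in its complement, or entirely on $\partial\boldsymbol{B}$; symmetrically for $\partial\boldsymbol{B}$. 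A sub-edge of an edge is an edge and a sub-arc of an arc is an arc (Definitions~\ref{def:linear_curve_subspace} and~\ref{def:circular_curve_subspace}), so every one of these pieces is a polyarcular boundary segment.

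Next I would assemble the result boundary. For the union it consists of the pieces of $\partial\boldsymbol{A}$ lying outside $\boldsymbol{B}$ together with the pieces of $\partial\boldsymbol{B}$ lying outside $\boldsymbol{A}$; for the intersection, of the pieces of $\partial\boldsymbol{A}$ lying inside $\boldsymbol{B}$ and the pieces of $\partial\boldsymbol{B}$ lying inside $\boldsymbol{A}$, with any shared pieces attributed to whichever side keeps the curve simple and non-degenerate. These selected pieces meet only at the marked vertices, and under the hypothesis $\boldsymbol{A}\cap\boldsymbol{B}\neq\emptyset$ they close up into a single piecewise-smooth Jordan curve built from edges and arcs: non-emptiness rules out the two failure modes of the set operations (the union becoming disconnected and the intersection becoming empty), leaving a valid element of $\mathcal{A}(\mathbb{C})$ by Definition~\ref{def:polyarcular_interval_subspace}.

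The hard part is this last step --- showing that the selected pieces glue into a genuinely \emph{simple} closed curve rather than into several loops or into a curve enclosing a hole. One has to track the combinatorics of the subdivision: at each marked vertex the retained pieces come in matching pairs that can be traversed consistently with the counterclockwise orientation used for polyarc curves (including the concave-arc convention after Definition~\ref{def:circular_curve_subspace}), so that following them yields exactly one cycle; and one must handle tangential contacts and overlapping sub-arcs/sub-edges, where a shared segment may shrink to a point or has to be discarded. This is precisely the situation addressed by the trimming procedure of Section~\ref{sec:computational_properties}, and when $\boldsymbol{A}\cap\boldsymbol{B}$ fails to be connected one falls back on the weakly-simple, narrow-cut description noted after Definition~\ref{def:complex_interval_space}. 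Once the finiteness of the subdivision and the edge/arc-closedness of its pieces are established, the remaining verification is routine bookkeeping.
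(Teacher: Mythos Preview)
Your proposal is correct and follows the same approach as the paper: invoke Proposition~\ref{prop:Minkowski_boundary} to reduce to the union of the operand boundaries, then observe that any closed curve assembled from pieces of two polyarcular boundaries is itself built from edges and arcs. The paper's proof is a two-sentence sketch that stops at exactly this observation; your treatment of the intersection locus, the subdivision into sub-edges and sub-arcs, and the gluing combinatorics (including the trimming and weakly-simple fallbacks) fills in details the paper leaves implicit, but does not depart from its line of argument.
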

\begin{proof}
   Proposition \ref{prop:Minkowski_boundary} shows that the boundary of the union and the intersection of complex intervals consist of the boundary of the operands. The boundaries of polyarcular intervals consist of edges and arcs by definition, and any continous closed curve constructed from intersection polyarc curves will also consist of edges and arcs. 
\end{proof}


\section{Computational properties} \label{sec:computational_properties}

In this section, we consider the data representation and computation of complex intervals. We define a data type for each complex interval representation and determine their data storage requirements. To measure the goodness of their representation capability, we introduce the tightness measure. We then define the type-casting operation between interval types and show how it can cause a loss of tightness through arithmetic and set operations (Table \ref{tab:interval_type_properties}). Finally, we show two utility processes, one for extracting the simple boundary when an operation results a self-intersecting boundary, and another for backtracking the subsets of the operands of an operation that map to a point in the result interval.

\subsection{Interval types}

Interval types are finite data representations of intervals, meaning that they can be identified with particular elements of the corresponding subspace using finite sets of parameter values. A type is a more restrictive class than a subspace as it determines how the complex interval is stored and manipulated in the computational environment, and therefore an interval instance can have only one type assigned to it. It is possible to represent an interval using any of the types, but if the interval is not a member of the corresponding subspace, then we assume the smallest bounding interval of the type, in which case the representation will not be tight (see the definition of tightness in Section \ref{sub:Tightness}). As a shorthand format, we indicate the type of a variable in its upper index. First, we define some utility data types.

\begin{definition}
    The real type represents real numbers with the double precision floating-point data type variable (float).
\end{definition}

\begin{definition}
    The complex type represents complex numbers as two real-type variables (2 floats).
\end{definition}

\begin{definition} \label{def:real_interval_type}
The real interval type represents a bounded real set $\boldsymbol{a} \subset \mathbb{R}$ by storing its bounds as real type variables (2 floats in total).

\begin{equation*}
    \label{eq:real_interval_def}
    \begin{array}{l}
        \boldsymbol{a}^\mathcal{I} =
        \mathcal{I}(\boldsymbol{a}) := 
        \mathcal{I}(\mathbb{R} \big\vert 
        \underline{a},\overline{a}) = 
        [\underline{a},\overline{a}]
        \quad \underline{a}=\inf(\boldsymbol{a}),
        \overline{a}=\sup(\boldsymbol{a})
    \end{array}
\end{equation*}
\end{definition}

\begin{definition} \label{def:linear_curve_segment_type}
The edge type represents edges by storing their endpoints as two complex type variables (4 floats in total).

\begin{equation*}
\label{eq:linear_curve_segment_type}
   \boldsymbol{\Gamma} \in \Bar{\mathcal{O}}(\mathbb{C}) \implies
   \boldsymbol{\Gamma}^{\Bar{\mathcal{O}}} 
   := \{\Bar{\Gamma}(P_1,P_2)(t) \big\vert t\in[0,1]\}
\end{equation*}
\end{definition}

\begin{definition}\label{def:circular_curve_segment_type}
The arc type represents arcs by storing their center point as a complex type, radius as a real type and argument as a real interval type variable (5 floats in total).

\begin{equation*}
\label{eq:circular_curve_segment_type}
   \boldsymbol{\Gamma} \in \breve{\mathcal{O}}(\mathbb{C})
   \implies
   \boldsymbol{\Gamma}^{\breve{\mathcal{O}}} 
   :=   \{\Breve{\Gamma}(O,r,\boldsymbol{\varphi})(t) \big\vert t\in[0,1]\}
\end{equation*}
\end{definition}

A complex interval $\boldsymbol{A} \in \mathcal{I}(\mathbb{C})$ can be represented by various complex interval types in the following way.

\begin{definition} \label{def:rectangular_interval_type}
The rectangular interval type represents complex intervals by storing their bounds along the real and imaginary axes as real intervals (4 floats in total).

\begin{equation*}
    \label{eq:rectangular_interval_type}
        \boldsymbol{A}^\mathcal{R} =  
        \mathcal{R}(\boldsymbol{A}) := 
        \mathcal{R}(\mathbb{C} \big\vert \boldsymbol{a},\boldsymbol{b}) = 
        \boldsymbol{a} + i \boldsymbol{b}
        , \quad
        \boldsymbol{a} = \Re(\boldsymbol{A})
        ,
        \boldsymbol{b} = \Im(\boldsymbol{A})
\end{equation*}
\end{definition}

\begin{definition} \label{def:polar_interval_type}
The polar interval type represents complex intervals by storing their bounds along the radial and angular axes as real intervals (4 floats in total).

\begin{equation*}
    \label{eq:polar_interval_type}
        \boldsymbol{A}^\mathcal{P} =
        \mathcal{P}(\boldsymbol{A}) := 
        \mathcal{P}(\mathbb{C} \vert \boldsymbol{r},\boldsymbol{\varphi}) = 
        \boldsymbol{r} e^{i\boldsymbol{\varphi}}
        ,\quad
        \boldsymbol{r} = \abs{\boldsymbol{A}}
        ,\,
        \boldsymbol{\varphi} = \angle \boldsymbol{A}
\end{equation*}
\end{definition}

\begin{definition} \label{def:circular_interval_type}
The circular interval type represents complex intervals by storing the center point and radius of the smallest bounding circle as a complex type and a real type variable respectively (3 floats in total).

\begin{equation*}
\label{eq:circular_interval_def}
    \boldsymbol{A}^\mathcal{C} =  
    \mathcal{C}(\boldsymbol{A}) := 
    \mathcal{C}(\mathbb{C} \big\vert O, r ) = 
    O + [0,r] e^{[-\pi,\pi]}
\end{equation*}
\end{definition}

\begin{definition} \label{def:polygonal_interval_type}
The polygonal interval type represents complex intervals by storing the ordered set of vertices of the smallest bounding polygon of a given vertex count $\mathsf{N}$ as complex-type variables ($2 \mathsf{N}$ floats in total).

\begin{equation*}
\label{eq:Polygonal_interval_type}
\begin{array}{l}
      \boldsymbol{A}^\mathcal{G} = 
      \mathcal{G}(\boldsymbol{A}) := 
      \mathcal{G}(\mathbb{C} \big\vert 
      \{P_\mathsf{n} \big\vert 
      \mathsf{n} \!\in\! \{1..\mathsf{N}\}\},
      \quad
      \partial\boldsymbol{A}^\mathcal{G} = 
      \{ \Bar{\boldsymbol{\Gamma}}_\mathsf{n}
      \big\vert \mathsf{n}\in\{1..\mathsf{N}\} \}
      ,
\end{array}
\end{equation*}
where $P_\mathsf{n}$ is the $\mathsf{n}^\mathrm{th}$ vertex, and 
\[\Bar{\boldsymbol{\Gamma}}_\mathsf{n} = 
\{\Bar{\Gamma}(P_\mathsf{n},P_\mathsf{n+1})(t)\big\vert t \in [0,1]\}\] 
is the $\mathsf{n}^\mathrm{th}$ implicit edge between the corresponding vertices (see also Definition \ref{def:polygon} and Figure \ref{fig:polyarc_curve}).
\end{definition}

\begin{definition} \label{def:polyarcular_interval_type}
The polyarcular interval type represents complex intervals by storing the ordered set of arcs of the smallest bounding polyarc of a given arc count $\mathsf{N}$ as arc-type variables (total $5 \mathsf{N}$ floats).

\begin{equation*}
\label{eq:Polyarcular_interval_type}
\begin{array}{c}
      \boldsymbol{A}^\mathcal{A} = 
      \mathcal{A}(\boldsymbol{A}) := 
      \mathcal{A}(\mathbb{C} \big\vert 
      \{\Breve{\boldsymbol{\Gamma}}_\mathsf{n} \in \breve{\mathcal{O}}(\mathbb{C}) \big\vert \mathsf{n} \!\in\! \{1..\mathsf{N}\}\}, 
      \quad
      \partial\boldsymbol{A}^\mathcal{A} = 
      \{\Breve{\boldsymbol{\Gamma}}_\mathsf{n}
      \cup
      \Bar{\boldsymbol{\Gamma}}_\mathsf{n}
      \big\vert
      \mathsf{n}\in\{1..\mathsf{N}\}
      \}
      ,
\end{array}
\end{equation*}
where 
\[\Breve{\boldsymbol{\Gamma}}_\mathsf{n} = 
\{\breve{\Gamma}(O_\mathsf{n},r_\mathsf{n},\boldsymbol{\varphi}_\mathsf{n})(t)\big\vert t \in [0,1]\}\] 
is the $\mathsf{n}^\mathrm{th}$ arc, 
\[\Bar{\boldsymbol{\Gamma}}_\mathsf{n} = 
\{\Bar{\Gamma}(P_\mathsf{2n-1},P_\mathsf{2n})(t)\big\vert t \in [0,1]\}\] 
is the $\mathsf{n}^\mathrm{th}$ implicit edge between the corresponding arcs,
$P_\mathsf{2n-1}$ and $P_\mathsf{2n}$ are implicit vertices
(see also Definition \ref{def:polyarc} and Figure \ref{fig:polyarc_curve}).
\end{definition}

\begin{remark}
Polyarcular curves consist of arcs defined by its data set and the implicit edges connecting the endpoints of adjacent arcs. It is possible to suppress circular segments by setting $r_\mathsf{n} =0$, while the linear segment can be suppressed by making sure that $P_\mathsf{2n-1}=P_\mathsf{2n}$. Concave arcs can be created using negative radius values. 
\end{remark}

Figure \ref{fig:polyarc_curve} gives a demonstrative example of a complex interval represented by the mentioned complex interval types.


\subsection{Tightness}\label{sub:Tightness}

The size of a real interval $\boldsymbol{a} \in \mathcal{I}(\mathbb{R})$ is its length:
\begin{equation*}
    \mu(\boldsymbol{a}) = \overline{a} - \underline{a}.
\end{equation*}

Let us measure complex intervals with the standard area (or Lebesgue measure). According to Green's theorem the following holds (\cite[Chapter~16]{stewart_calculus_1999} \cite[X,1]{lang_calculus_2012}). 

\begin{proposition}
The size of a complex interval $\boldsymbol{A} \in \mathcal{I}(\mathbb{C})$ bounded by a simple, closed, piecewise smooth curve equals the following closed line integral
\begin{equation}
    \mu(\boldsymbol{A}) = 
    \frac{1}{2} \oint_{\partial \boldsymbol{A}} x \, d y - y \, d x = 
    \frac{1}{2i} \oint_{\partial \boldsymbol{A}} z^* \, dz 
\end{equation}
where $z = x+iy$ and $z^*=x-iy$.
\end{proposition}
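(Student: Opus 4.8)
The statement to establish is the area formula
\[
\mu(\boldsymbol{A}) = \frac{1}{2}\oint_{\partial\boldsymbol{A}} x\,dy - y\,dx = \frac{1}{2i}\oint_{\partial\boldsymbol{A}} z^*\,dz,
\]
for a complex interval $\boldsymbol{A}$, which by Definition \ref{def:complex_interval_space} is bounded by a piecewise smooth Jordan curve. The plan is to invoke Green's theorem directly: for a positively oriented, piecewise smooth, simple closed curve $\partial\boldsymbol{A}$ enclosing the region $\boldsymbol{A}$, and for $C^1$ functions $P(x,y)$, $Q(x,y)$ on a neighborhood of $\boldsymbol{A}$, one has $\oint_{\partial\boldsymbol{A}} P\,dx + Q\,dy = \iint_{\boldsymbol{A}} \left(\partial Q/\partial x - \partial P/\partial y\right)\,dx\,dy$. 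First I would set $P(x,y) = -y/2$ and $Q(x,y) = x/2$, so that $\partial Q/\partial x - \partial P/\partial y = 1/2 + 1/2 = 1$, giving $\oint_{\partial\boldsymbol{A}} \tfrac12(x\,dy - y\,dx) = \iint_{\boldsymbol{A}} 1\,dx\,dy = \mu(\boldsymbol{A})$, which is precisely the area (Lebesgue measure) of $\boldsymbol{A}$.

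Next I would verify the second equality, which is a purely algebraic rewriting. Writing $z = x + iy$, so $dz = dx + i\,dy$, and $z^* = x - iy$, one computes
\[
z^*\,dz = (x - iy)(dx + i\,dy) = (x\,dx + y\,dy) + i(x\,dy - y\,dx).
\]
The real part $x\,dx + y\,dy = \tfrac12\,d(x^2 + y^2)$ is exact, so its integral around the closed curve $\partial\boldsymbol{A}$ vanishes; hence $\oint_{\partial\boldsymbol{A}} z^*\,dz = i\oint_{\partial\boldsymbol{A}} (x\,dy - y\,dx)$, and dividing by $2i$ recovers the middle expression. This step is routine and needs no subtlety beyond noting the closedness of the curve.

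\textbf{Main obstacle.} The only genuine issue is checking that the hypotheses of Green's theorem are met. The boundary $\partial\boldsymbol{A}$ is a piecewise smooth Jordan curve by Definitions \ref{def:jordan_curve_subspace}--\ref{def:complex_interval_space}, so it is rectifiable and the line integral is well defined; the integrands $x$, $y$ are polynomials, hence $C^\infty$ everywhere, so no regularity concern arises there. The point requiring a word is that Green's theorem in the form above applies to regions whose boundary is only piecewise $C^1$ (not necessarily $C^1$): this is standard — one splits $\partial\boldsymbol{A}$ at its finitely many corner points into smooth arcs and sums, the corner set being Lebesgue-null on the curve so it contributes nothing. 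I would also fix the orientation convention once and for all (counterclockwise, consistent with the polygonal/polyarcular parametrizations in Definitions \ref{def:polygon} and \ref{def:polyarc}, which traverse the boundary with the interior on the left), since an orientation reversal flips the sign. With orientation fixed and the piecewise-smooth version of Green's theorem cited (as the references in the statement do), the proof is essentially a one-line application plus the algebraic identity above.
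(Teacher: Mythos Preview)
Your proposal is correct and matches the paper's approach exactly: the paper states the proposition as a direct consequence of Green's theorem, citing standard calculus references, without spelling out the choice $P=-y/2$, $Q=x/2$ or the algebraic verification of the complex form. Your write-up simply fills in those routine details.
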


This leads to the equation of the polyarcular interval size, where polygonal and primitive intervals represent special cases.

\begin{equation}\label{eq:Polyarcular_size}
\begin{split}
    \mu(\boldsymbol{A}^\mathcal{A}) =
    & \sum_n \frac{1}{2i} \oint_{\breve{\boldsymbol{\Gamma}}_{n}} z^* \, dz + 
    \sum_n \frac{1}{2i} \oint_{\Bar{\boldsymbol{\Gamma}}_{n}} z^* \, dz 
    \\
    = & \sum_n \frac{(\overline{\varphi}_\mathsf{n}-\underline{\varphi}_\mathsf{n}) r_\mathsf{n}^2}{2} + 
            i\frac{O_\mathsf{n}^* r}{2}(e^{i\underline{\varphi}_\mathsf{n}} - e^{i\overline{\varphi}_\mathsf{n}})  
    \\            
    & + \sum_n  \frac{1}{4i}(\vert P_{\mathsf2n} \vert ^2 - \vert P_\mathsf{2n-1} \vert ^2 + 
            2i(P_\mathsf{2n-1}^\Re P_\mathsf{2n}^\Im - P_\mathsf{2n-1}^\Im P_\mathsf{2n}^\Re)
            ,
\end{split}
\end{equation}
where $\Breve{\boldsymbol{\Gamma}}_\mathsf{n},\Bar{\boldsymbol{\Gamma}}_\mathsf{n}\in\partial\boldsymbol{A}$, $\mathsf{n}\in(1..\mathsf{N})$. (See also Definition \ref{def:polyarcular_interval_type}.)

Then we can define the tightness of a representation in the following way.

\begin{definition}
The tightness of a complex interval representation $\boldsymbol{A} \in \mathcal{I}(\mathbb{C})$ is the ratio of its original size and its represented size.
\begin{equation*}
    \tau (\boldsymbol{A}^\mathcal{X}) = \frac{\mu(\boldsymbol{A})}{\mu(\boldsymbol{A}^\mathcal{X})} \in [0,1], \, 
\end{equation*}
where $\mathcal{X}$ is a placeholder for one of the complex interval types.
\end{definition}

\begin{example}
The tightness of a circular interval represented by the rectangular type is
\begin{equation*}
    \boldsymbol{A} \in \mathcal{C}(\mathbb{C}) \implies
    \tau(\boldsymbol{A}^\mathcal{R}) = 
    \frac{\mu(\boldsymbol{A})}{\mu(\boldsymbol{A}^\mathcal{R})} = 
    \frac{r^2 \pi}{(2 r)^2} = 
    \frac{\pi}{4},
\end{equation*}
where $r$ is the radius of the circular interval.
\end{example}

If an interval belongs to the data type's corresponding subspace, then that representation will be tight ($\tau=1$), otherwise it will be loose ($\tau<1$). Figure \ref{fig:complex_subspaces} offers an intuitive demonstration as a Venn diagram where each set is shaped accordingly. The representation and operation tightness of our interval types are listed in Table. \ref{tab:interval_type_properties}.

\begin{remark}
    In certain applications -- such as algorithms for the approximation of boundary curves with polygonal or polyarcular curves -- the Hausdorff distance (Definition \ref{def:hausdorff}) may be preferred over the tightness metric, because it can measure the representation error of a boundary segment, while tightness can only be applied to an entire interval. 
\end{remark}


\subsection{Type casting}

Changing data type can be a useful and sometimes necessary step when handling complex intervals. Binary operations, for example, are typically only defined between operands of the same type. Although it is possible and can be practical in certain cases to define binary operations between different types (e.g. a fast algorithm for determining the smallest polar interval enclosing the product of a polar and a circular interval), we don't discuss these in this paper. Therefore, type casting is necessary when two complex intervals of different types are to be combined, when using another interval type in an operation is preferred, or when the result is not in the operand subspace.

\begin{definition}
Type casting is a unary operation that transforms a finite representation of an interval into another finite representation.
\begin{equation*}
    \mathcal{Y}(\boldsymbol{A}^\mathcal{X}) = 
    \mathcal{Y}(\mathcal{X}(\boldsymbol{A})),
\end{equation*}
where $\mathcal{X}$ and $\mathcal{Y}$ are placeholders for complex interval types.
\end{definition}

There are three kinds of type casting. If the subspace corresponding to the source data type is a subset of the target data type's subspace, it is a widening casting and no tightness will be lost in the process; if it is the other way we talk about narrowing casting, which can result in a loss of tightness if the interval is not in the narrower subspace. If the data types are on the same level in the hierarchy we can talk about lateral casting, which typically results in a loss of tightness as these subspaces have no or very small overlapping regions.

\begin{equation*}
    \tau(\mathcal{Y}(\boldsymbol{A}^\mathcal{X}))
    \begin{cases}
        = \tau(\boldsymbol{A}^\mathcal{X}) 
        & \text{ if } 
        \mathcal{X}(\mathbb{C}) \subset \mathcal{Y}(\mathbb{C})
        \\
         = \tau(\boldsymbol{A}^\mathcal{X}) 
         & \text{ if } 
         \mathcal{X}(\mathbb{C}) \not\subset \mathcal{Y}(\mathbb{C})
         \text{ and } 
         \boldsymbol{A}^\mathcal{X} \in (\mathcal{X}(\mathbb{C}) \cap \mathcal{Y}(\mathbb{C}))
         \\
         < \tau(\boldsymbol{A}^\mathcal{X}) & \text{ otherwise } 
    \end{cases}
\end{equation*}


\begin{table}[t]
    \centering
    \begin{tabular}{c|ccccc|cc|cccc}
        & \multicolumn{5}{c|}{Representation of $\boldsymbol{A} \in$}
        & \multicolumn{2}{c|}{Unary
        $(\boldsymbol{A}^\mathcal{X})$}
        & \multicolumn{4}{c}{Binary 
        $\boldsymbol{A}^\mathcal{X} \bigcirc \boldsymbol{B}^\mathcal{X}$}
        \\
        Type
         & $\mathcal{R}$
         & $\mathcal{P}$
         & $\mathcal{C}$
         & $\mathcal{G}$
         & $\mathcal{A}$
         & $-(.)$
         & $(.)^{-1}$
         & $+$
         & $\times$
         & $\cap$
         & $\cup$
        \\
        \hline
        $\mathcal{R}(\boldsymbol{A})$
        & $=$ & $<$ & $<$ & $<$ & $<$
        & $=$ & $<$
        & $=$ & $<$ & $=$ & $<$
        \\
        $\mathcal{P}(\boldsymbol{A})$
        & $<$ & $=$ & $<$ & $<$ & $<$ 
        & $=$ & $<$ 
        & $<$ & $=$ & $=$ & $<$
        \\
        $\mathcal{C}(\boldsymbol{A})$
        & $<$ & $<$ & $=$ & $<$ & $<$
        & $=$ & $=$
        & $=$ & $<$ & $<$ & $<$
        \\
        $\mathcal{G}(\boldsymbol{A})$
        & $=$ & $\approx$ & $\approx$ & $=$ & $\approx$
        & $=$ & $\approx$
        & $=$ & $\approx$ & $=$ & $=$
        \\
        $\mathcal{A}(\boldsymbol{A})$
        & $=$ & $=$ & $=$ & $=$ & $=$
        & $=$ & $=$
        & $=$ & $\approx$ & $=$ & $=$
    \end{tabular}
    \caption{Tightness of interval representation, and unary and binary operations of complex interval types ($\mathcal{R}$: rectangular, $\mathcal{P}$: polar, $\mathcal{C}$: circular, $\mathcal{G}$: polygonal, $\mathcal{A}$: polyarcular, $\mathcal{X}$ is a placeholder). Each row represents an interval type, and the properties are ordered in columns. The first vertical block indicates how well the type can represent an interval from a given subspace (with the same symbols as the type). The second vertical block shows how well it can represent the result of a unary operation on an interval from the same subspace as the type. The third vertical block indicates how well it can represent the result of a binary operation with two intervals of the same subspace as the type. The property value $=$ indicates perfect tightness ($\tau\!=\!1$), $<$ indicates imperfect tightness ($\tau\!<\!1$) and $\approx$ indicates arbitrarily high tightness ($\tau\!\approx\! 1$).}
    \label{tab:interval_type_properties}
\end{table}

\subsection{Arithmetic operations}

Some of the most important properties of interval types are the computational complexity and accuracy, which are often connected to each other, forcing a compromise between computational speed and accuracy of arithmetic operations. For example, the polygonal type provides increasing accuracy for an increasing number of vertices when representing an interval that is not a member of the polygonal subspace.

Aligned with computational arithmetic conventions, let us force all complex type operations to result in a variable of the same type, and therefore implicit type-casting is not allowed. (For example, in C++ language, the division between the integer variables 2 and 5 results in $5/2=2$ in which case the result is truncated to become an integer.)

\begin{equation*}
    f(\boldsymbol{A}^\mathcal{X}) = 
    \mathcal{X}(f(\boldsymbol{A}^\mathcal{X}))
    ,\quad
    f(\boldsymbol{A}^\mathcal{X},\boldsymbol{B}^\mathcal{X}) = 
    \mathcal{X}(f(\boldsymbol{A}^\mathcal{X},\boldsymbol{B}^\mathcal{X}))
\end{equation*}

In Section \ref{sec:arithmetic_properties} we show that not all combinations of subspace operations result in an interval belonging to the same subspace as the operand(s). When intervals are represented as types, this means that the result has to be re-represented after the operation, which unavoidably decreases the tightness of the representation.

\begin{example}

The inverse of a rectangular interval is not rectangular; therefore, if we perform the inverse operation on its rectangular-type representation, the result will be relaxed to the smallest enclosing rectangle. An accurate result can be achieved by type-casting the operand to polyarcular type and performing the inverse on that.

\begin{equation*}
    \boldsymbol{A} \in \mathcal{R}(\mathbb{C}),
    \boldsymbol{A}^{-1} \not\in \mathcal{R}(\mathbb{C}) \implies
    \left(\boldsymbol{A}^\mathcal{R}\right)^{-1} = 
    \mathcal{R}\left(\boldsymbol{A}^{-1}\right) \supset \boldsymbol{A}^{-1} 
    = \left(\boldsymbol{A}^\mathcal{A}\right)^{-1}
\end{equation*}
\end{example}

When a function combines several intervals from a subspace that is not closed under the operations, the re-representation error can result a cumulative loss of tightness. In this case the bounds of the result representation are not necessarily touching the bounds of the result.

\begin{example}\label{eg:cascading_type_error}

The sum of polar intervals is not polar. Therefore, if the sum is performed on two polar-type operands, the result will be relaxed to the smallest enclosing polar interval. If we add a third polar interval to this relaxed sum, then the result will contain the representation error of the first operation's result and the relaxation of the second operation. This results a cascading error.

\begin{equation*}
\begin{array}{c}
    \boldsymbol{A},\boldsymbol{B},\boldsymbol{C} \in \mathcal{P}(\mathbb{C}), \quad
    \mathcal{P}(\boldsymbol{A}) + \mathcal{P}(\boldsymbol{B})+ \mathcal{P}(\boldsymbol{C}) \not\in \mathcal{P}(\mathbb{C}) \implies
    \\
    \boldsymbol{A}^\mathcal{P} + \boldsymbol{B}^\mathcal{P} + 
    \boldsymbol{C}^\mathcal{P} = 
    \mathcal{P}(\mathcal{P}(\boldsymbol{A} + \boldsymbol{B}) + \boldsymbol{C}) 
    \subset  \mathcal{P}(\boldsymbol{A} + \boldsymbol{B} + \boldsymbol{C})
    \subset  (\boldsymbol{A} + \boldsymbol{B} + \boldsymbol{C})
\end{array}
\end{equation*}
\end{example}

Unlike Example \ref{eg:cascading_type_error}, if the subspace is closed under the operation, then the combination of all the representation errors equals the representation error of the result.

\begin{example}
The sum of rectangular invervals is rectangular. Therefore, if we represent three nonrectangular intervals with the rectangular type, then their sum will contain only the representation error and will be the same as the rectangular representation of the interval sum. The error does not cascade.
\begin{equation*}
\begin{array}{c}
    \boldsymbol{A},\boldsymbol{B},\boldsymbol{C} \not\in \mathcal{R}(\mathbb{C}),
    \quad
    \mathcal{R}(\boldsymbol{A}) + \mathcal{R}(\boldsymbol{B})+ \mathcal{R}(\boldsymbol{C}) \in  \mathcal{R}(\mathbb{C}) \implies
    \\
    \boldsymbol{A}^\mathcal{R} + 
    \boldsymbol{B}^\mathcal{R} + 
    \boldsymbol{C}^\mathcal{R} = 
    (\mathcal{R}(\boldsymbol{A}) + 
    \mathcal{R}(\boldsymbol{B})) + 
    \mathcal{R}(\boldsymbol{C}) =  
    \mathcal{R}(\boldsymbol{A} + \boldsymbol{B} + \boldsymbol{C})
    \subset  \boldsymbol{A} + \boldsymbol{B} + \boldsymbol{C}
\end{array}
\end{equation*}
\end{example}

Algorithms for all basic arithmetic operations with primitive interval types are available in the literature. \cite{petkovic_complex_1998,moore_introduction_2009,dawood_theories_2011} Typically operations that yield a result in the operand's subspace are simpler and faster (e.g. rectangular addition or polar multiplication), while operations that have to re-represent the result are more complex (such as rectangular multiplication, or polar addition). For the latter group of operations, fast and loose algorithms are typically also available, which do not provide the tightest representation around the result but offer a faster calculation (e.g. circular multiplication or rectangular inverse).

The so-called Minkowski method allows performing operations on polygonal intervals using their defining vertices only. This allows the edges of the polygon to stay implicit throughout the entire process, with the exception to the reciprocal operation, which requires the approximation of convex arcs. \cite{ohta_polygon_1990} If combined with the Gauss map matching method \cite{farouki_algorithms_2000}, which is a simple intersection operation between the curve normal argument intervals of each vertex pair, we can get an algorithm with linear complexity by the number of vertices. For the addition and negative operations, this results tight bounds, while for the multiplication it replaces the concave parabolic curves with straight edges. 

Polyarcular intervals are defined by a set of arcs, with the vertices and edges stored implicitly. Similarly to the reciprocal of polygons, the implicit components have to be calculated for certain operations. The negative operation, for example, requires the negation of the defining arcs only. The sum operation has to consider the Gauss map matched sum of the operand vertices and arcs, where vertices simply translate the other segments, but for two arcs the envelope segment has to be calculated. Reciprocal and multiplication operations require the evaluation of all three types of curve segments. Since the product envelope of arcs and edges are not polyarcular in general, their approximation may be necessary. However, since in many cases vertices cover most of the Gauss map, the approximated curve segments typically constitute only a small part of the total curve. The computational complexity therefore depends significantly on the operands and varies case-by-case.  

\subsection{Set operations} \label{sub:set_operations}

Algorithms for all basic set operations that involve primitive interval types are available in the literature. \cite{boche_complex_1965,gargantini_circular_1971,candau_complex_2006,moore_introduction_2009}

In case of polygonal and polyarcular interval types, set operations require the identification of intersections between curve segments, where they can be split, and then recombined according to the operation's logic. The intersection can be easily found at the points where the parametric equations of the segments are equal, while splitting a segment can be easily done by duplicating it and then splitting its parameter interval at the intersection point. Once ordered sets of split segments are available, inner and outer segments can be identified by finding a single extremal point in the whole set and then counting the number of intersections along the boundaries. The boundary of the union then consists of all the outer segments, and the boundary of the intersection consists of all the inner segments.

\subsection{Trimming} \label{sub:trimming}
A special case of the set operations is the trimming, which is a closed unary set operation from closed curves to simple closed curves. The addition and multiplication of the polygonal and polyarcular curve segments can result in self-intersecting boundaries when the operands are non-convex (in the multiplicative case: non-log-convex). Such a boundary is not acceptable as complex intervals have to be bounded by simple curves; therefore we have to extract the outer curve. This can be done using the trimming method described in \cite{farouki_boundary_2005}, or if holes in the interval is not a concern, a simple always-turn-right rule can be used at each intersection of the counter-clock-wise oriented curve. For the sake of brevity, we will assume this step to be implicit and will not indicate in arithmetic equations.

\begin{equation*}
\boldsymbol{\Gamma} = \{\Gamma(t), t\in\boldsymbol{t}\} \notin \mathcal{O}(\mathbb{C}),
\mathcal{O}(\boldsymbol{\Gamma}) \in \mathcal{O}(\mathbb{C})\implies
\mathcal{I}(\mathbb{C} \big\vert \boldsymbol{\Gamma}) := 
\mathcal{I}(\mathbb{C} \big\vert \mathcal{O}(\boldsymbol{\Gamma})),
\end{equation*}
where $\mathcal{O}(\boldsymbol{\Gamma})$ is the trimming operation.

 An example of a non-simple result boundary curve can be seen on Fig. \ref{fig:Gauss_map}.

\subsection{Backtracking} \label{sub:backtracking}

As shown in Definition \ref{def:complex_interval_backtracking} it is possible to elementwise backtrack the ${\mathbb{C} \times_c \mathbb{C} \rightarrow \mathbb{C}}$ mapping of the Minkowski addition and multiplication operations on complex intervals. This requires the negation, translation and intersection operations for the backtracking of an addition; and it requires the reciprocal, rotate-and-scale and intersection operations for the backtracking of a multiplication. We defined all of these operations for polyarcular intervals above; therefore, the algorithms are ready to be implemented.

Backtracking can be used to verify tightness, because the points in the relaxation region of the representation have no corresponding subsets in the operand intervals. It can also be used to investigate the cause of certain outcomes, for example, to identify worst-case error patterns in tolerance analysis. 
\begin{example}
It has been shown that the smallest rectangle enclosing the sum of two non-rectangular intervals will not be tight. If we backtrack a point in the relaxation region (the difference of the interval and its representation), the corresponding operand subsets will be empty (see Definition \ref{def:complex_interval_backtracking}).
\begin{equation*}
    \boldsymbol{A},\boldsymbol{B} \notin \mathcal{R}(\mathbb{C})
    \implies
    \exists Z \in (\boldsymbol{A}^\mathcal{R} \oplus \boldsymbol{B}^\mathcal{R}):
    \boldsymbol{A}_Z = \boldsymbol{A} \cap (-\boldsymbol{B}+Z) = \emptyset
    .
\end{equation*}
\end{example}

Backtracking can also be of use in the investigation of the interval dependency problem. \cite{dawood_logical_2019} When an operand interval appears more than once in a more complicated expression (such as $\boldsymbol{A} \oslash (\boldsymbol{A} \oplus \boldsymbol{B})$), the naive result evaluated assuming independent operands will be too relaxed. The backtracking of each point in this relaxation region to the instances of the repeated operand reveals that they are not valid.

\begin{example}
For the sake of simplicity let us consider the real valued function ${\boldsymbol{a}/(\boldsymbol{a}+\boldsymbol{b})}$ 
with the values 
${\boldsymbol{a}=[1,3]}$ and ${\boldsymbol{b}=[2,4]}$. 
The correct solution is 
${[\underline{a}/(\underline{a}+\overline{b}),\overline{a}/(\overline{a}+\underline{b})] = [1/5,3/5]}$, however the interval arithmetic result is the relaxed interval
${[\underline{a}/(\overline{a}+\overline{b}),\overline{a}/(\underline{a}+\underline{b})] = [1/7,1]}$. If we now backtrack the subsets of the $\boldsymbol{a}$ instances corresponding to the $z\!=\!4/5$ point in the relaxation region, we get
${\boldsymbol{a}_{1z}=\boldsymbol{a}\cap[(\boldsymbol{a}+\boldsymbol{b}) z]=[2.40,3.00]}$ and
${\boldsymbol{a}_{2z}=\boldsymbol{a}\cap[\boldsymbol{a}/z-\boldsymbol{b}]=[1,1.75]}$, which has an empty intersection, so there is no such solution where 
${a_1,a_2\in\boldsymbol{a},b\in\boldsymbol{b},\;a_1\!=\!a_2,\;a_1/(a_2+b)=z}$.
\end{example}

The location and size of the corresponding operand subsets can also provide information about the probability of an outcome if we assume a probability density function over the operand intervals (e.g. uniform).


\begin{figure*}[t!]
\centering
\includegraphics[width=\textwidth]{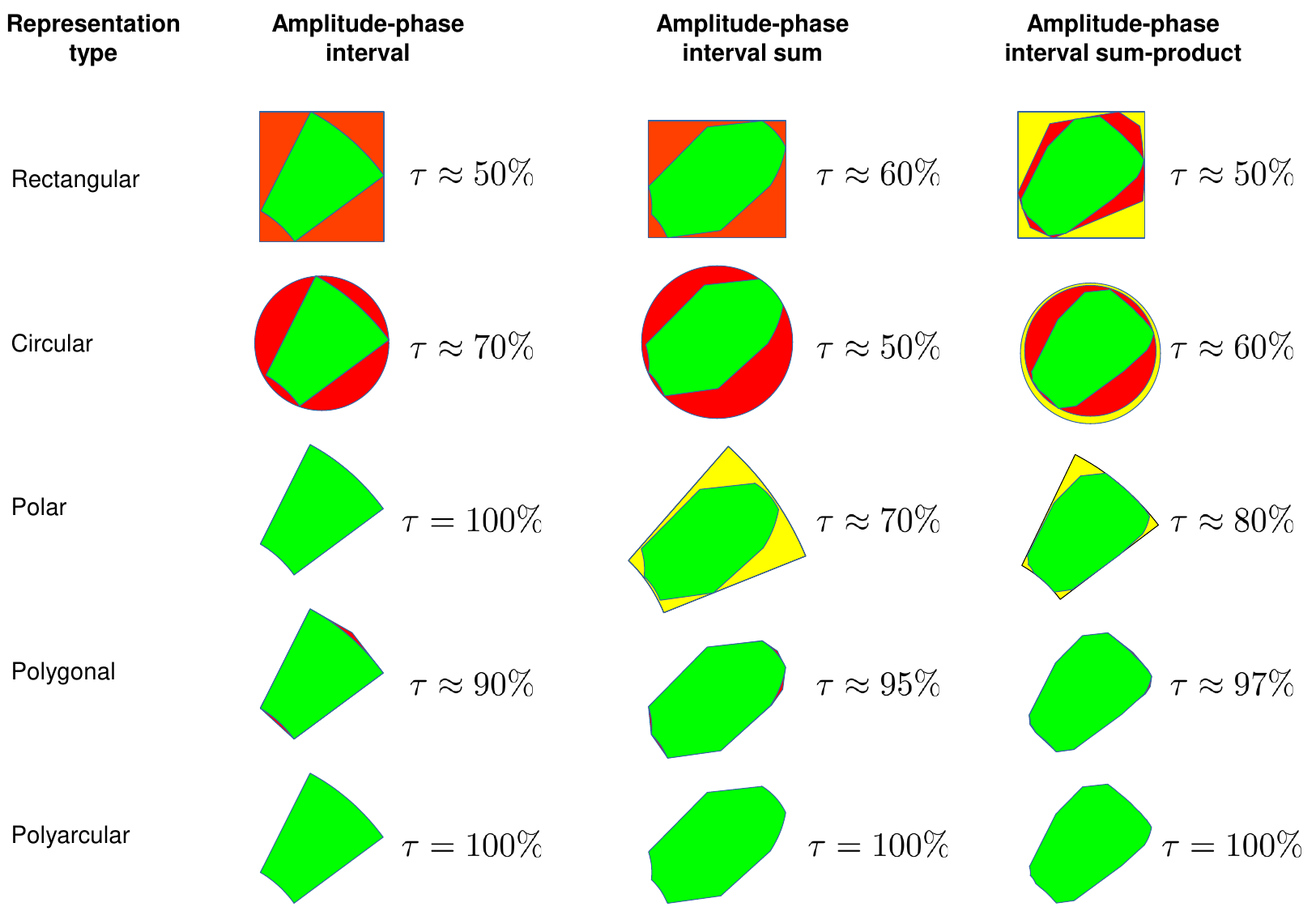}
\caption{Example of a sensor array tolerance analysis using various complex interval types. Green area indicate the complex interval, red area indicates the representation error, yellow area indicates the operation error. (For illustration only.)}
\label{fig:case_study}
\end{figure*}

\section{Case study} \label{sec:case_study}

In this section, we present a case study that motivated the development of the polyarcular interval type. The tolerance analysis of antenna arrays using interval analysis is an active research area within sensor array design and signal processing. \cite{he_comparison_2019,he_impact_2021} We have previously analyzed the worst-case spatial response of acoustic arrays impacted by calibration errors and mutual coupling. \cite{arnestad_worst-case_2023} Later, we found that given the physical model in our study, the polyarcular interval type yields a tight bound around the complex interval of the array response, which we demonstrate in the following.

Let 
${\{\boldsymbol{A}_\mathsf{n} \in \mathcal{P}(\mathbb{C}) \big\vert \mathsf{n} \in \{1..\mathsf{N}\}\}}$ 
represent the combined amplitude and phase sensitivity interval of the individual elements of a sensor array, and let 
${\{\boldsymbol{C}_\mathsf{m,n} \in \mathcal{P}(\mathbb{C}) \big\vert \mathsf{m,n} \in \{1..\mathsf{N}\}\}}$
represent the coupling coefficient interval of each pair of elements, including the self-coupling ${\boldsymbol{C}_\mathsf{n,n}=1}$. Then, assuming a narrow-band, far-field operation, the complex response interval of the array is 
${ \boldsymbol{B} = \sum_\mathsf{n} \boldsymbol{A}_\mathsf{n} \sum_\mathsf{m} \boldsymbol{C}_\mathsf{m,n} }$.

Since the addition of polar intervals is not only not tight, but also computationally heavy, the literature considers using rectangular, circular, and convex polygonal representations to calculate bounds on the array response interval. While computationally very light, the usage of the rectangular and circular types will introduce a signficant loss of tightness at the type-casting and another loss of tightness though the multiplicaton
\[
\tau\left(\boldsymbol{B}^{\mathcal{R} \text{ or } \mathcal{C}}\right) \leq \tau\left(\boldsymbol{C}_\mathsf{m,n}^{\mathcal{R} \text{ or } \mathcal{C}}\right) \leq 1.
\]
The convex polygonal type can represent the outer convex arc of a polar interval with arbitrary precision, but it replaces the concave inner arc with an edge. One could argue that since the addition is a convexifying operation -- in other words, the sums of many concave sets are approximately convex \cite{schneider_convex_1993} -- the representation of the sum will be sufficiently tight in most cases. However, the product of polygonal intervals can include concave parabolic segments, which is relaxed to edges through the mupltiplication causing additional loss of tightness
\[
\tau\left(\boldsymbol{B}^\mathcal{G}\right) \leq \tau\left(\sum_\mathsf{m} \boldsymbol{C}_\mathsf{m,n}^\mathcal{G}\right) \approx 1.
\]

We found that the polyarcular type is an ideal choice for this application, as it provides perfect tightness for a limited increase in computational complexity.
We showed that the sum of the polar intervals is in the polyarcular subspace:
${ \sum_\mathsf{m}\boldsymbol{C}_\mathsf{m,n}^\mathcal{A} \in \mathcal{A}(\mathbb{C}) }$, 
and that the product of the polyarcular intervals is polyarcular if at least one of each pair of operand segments is from a zero-crossing line or a zero-centered circle. 
Since the boundary segments of the polar intervals are all from zero crossing lines or zero-centered circles, the product of a polyarcular and a polar interval is in the polyarcular subspace:
$\boldsymbol{A}_\mathsf{n}^\mathcal{A} \sum_\mathsf{m}\boldsymbol{C}_\mathsf{m,n}^\mathcal{A} \in \mathcal{A}(\mathbb{C})$. 
Finally, the sum of polyarcular intervals is polyarcular; therefore, the complex response interval is in the polyarcular subspace:
${\boldsymbol{B} \in \mathcal{A}(\mathbb{C})}$. 
This means that the polyarcular type provides perfectly tight bounds on the complex response interval:
\[{\tau(\boldsymbol{B}^\mathcal{A})=1.}\]

Figure \ref{fig:case_study} shows an example of such a tolarance analysis.


\section{Conclusion}\label{sec:conclusion}

In this paper, we showed that all commonly used complex interval types can be represented and arithmetically combined using the polyarcular interval type with improved or equivalent tightness in return for a moderate increase in complexity. This makes the polyarcular type a valuable option for performing calculations in various cases of interval analysis. We have shown that, similar to the polygonal type, simple arithmetic operations can be performed with a computational complexity linearly dependent on the number of elements constituting the operand boundaries.  We presented a case study that served as our motivation to develop this method. It shows that the polyarcular interval type is applicable in practical design tasks. 

We found no commonly used formal definition of complex intervals, so we followed suit in finding a practical definition fitting the paper's scope. While it was tempting to further discuss the general properties of complex intervals, we rather relied on the existing literature and focused on the representations in this paper. However, we could not resist trying to fill in some of the gaps in the discussion of the arithmetic properties of primitive complex intervals, which is a significantly less published area than its two neighbours: the real interval arithmetic and the geometry of plane curves.

One could argue that polygonal interval arithmetic is the most general approach to the representation of complex intervals that provides very tight bounds as it can sample the infinite boundary set with an arbitrary resolution and perform point-wise operations on the vertices. (Hence, polygonal interval arithmetic is also called the Minkowski method.) However, as we showed, the inclusiveness of the polygonal bounds is not automatically guaranteed when sampling convex curves or performing the reciprocal operation. \cite{ohta_polygon_1990} We can also argue that within the complex interval subspaces the polyarcular type is more general than the polygonal, and that perfect tightness for a fixed computational complexity can be preferred in some cases over a variable tightness depending on the interval attributes and chosen number of vertices. 

While our derivations provide small theoretical progress beyond the work of Farouki et al., \cite{farouki_minkowski_2001} the closed form parametric conditions allowed us to design efficient algorithms by dividing the segment-wise operations into sub-cases of increasing complexity. We also hope that the used method (see Section \ref{sec:arithmetic_properties}) will enable the analysis of other problems too.

Areas of potential further research are the derivation of additional algebraic functions and conformal mappings to widen the range of applicability in interval analytical problems, while it would also be desirable to attempt to bridge the gap between statistics and interval analysis by investigating how simple unary and binary operations work on complex probabilistic variables (inspired by the grayscale morphology in \cite{giardina_morphological_1988}). We also intend to publish a polyarcular interval type code implementation in the near future in our existing repository of complex interval arithmetic codes: \url{https://github.com/unioslo-mn/ifi-complex-interval-arithmetic/}.

\backmatter

\bmhead{Supplementary information}
Derivation of the arithmetic properties of edges and arcs.

\bmhead{Acknowledgments}

Special thanks to Håvard Arnestad (Department of Informatics, University of Oslo) and Tor Inge Lønmo (Kongsberg Discovery) for their assistance in the case study and the review and editing of the manuscript.

We appreciate the assistance of Andreas Austeng, Jan Egil Kirkebø, Sven Peter Näsholm (Department of Informatics, University of Oslo) and Tom Louis Lindstrøm (Department of Mathematics, University of Oslo) for useful comments on an earlier draft of this manuscript.

Many thanks to Jacob Sznajdman (Neo4j) for his advice during the concept development.

We are grateful to László Surányi for connecting the two authors and we thank Gergő Pintér (Budapest University of Technology) for the enlightening conversations.

G.\ Ger\'eb acknowledge funding from the Research Council of Norway project \emph{Element calibration of sonars and echosounders}, project number 317874. 

A.\ S\'andor acknowledges funding from the Global Teaching Fellowship Program of Central European University and the Élvonal (Frontier) Grant KKP144148 of the NKFIH.

\bibliography{bibliography.bib}


\begin{thebibliography}{37}
\ifx \bisbn   \undefined \def \bisbn  #1{ISBN #1}\fi
\ifx \binits  \undefined \def \binits#1{#1}\fi
\ifx \bauthor  \undefined \def \bauthor#1{#1}\fi
\ifx \batitle  \undefined \def \batitle#1{#1}\fi
\ifx \bjtitle  \undefined \def \bjtitle#1{#1}\fi
\ifx \bvolume  \undefined \def \bvolume#1{\textbf{#1}}\fi
\ifx \byear  \undefined \def \byear#1{#1}\fi
\ifx \bissue  \undefined \def \bissue#1{#1}\fi
\ifx \bfpage  \undefined \def \bfpage#1{#1}\fi
\ifx \blpage  \undefined \def \blpage #1{#1}\fi
\ifx \burl  \undefined \def \burl#1{\textsf{#1}}\fi
\ifx \doiurl  \undefined \def \doiurl#1{\url{https://doi.org/#1}}\fi
\ifx \betal  \undefined \def \betal{\textit{et al.}}\fi
\ifx \binstitute  \undefined \def \binstitute#1{#1}\fi
\ifx \binstitutionaled  \undefined \def \binstitutionaled#1{#1}\fi
\ifx \bctitle  \undefined \def \bctitle#1{#1}\fi
\ifx \beditor  \undefined \def \beditor#1{#1}\fi
\ifx \bpublisher  \undefined \def \bpublisher#1{#1}\fi
\ifx \bbtitle  \undefined \def \bbtitle#1{#1}\fi
\ifx \bedition  \undefined \def \bedition#1{#1}\fi
\ifx \bseriesno  \undefined \def \bseriesno#1{#1}\fi
\ifx \blocation  \undefined \def \blocation#1{#1}\fi
\ifx \bsertitle  \undefined \def \bsertitle#1{#1}\fi
\ifx \bsnm \undefined \def \bsnm#1{#1}\fi
\ifx \bsuffix \undefined \def \bsuffix#1{#1}\fi
\ifx \bparticle \undefined \def \bparticle#1{#1}\fi
\ifx \barticle \undefined \def \barticle#1{#1}\fi
\bibcommenthead
\ifx \bconfdate \undefined \def \bconfdate #1{#1}\fi
\ifx \botherref \undefined \def \botherref #1{#1}\fi
\ifx \url \undefined \def \url#1{\textsf{#1}}\fi
\ifx \bchapter \undefined \def \bchapter#1{#1}\fi
\ifx \bbook \undefined \def \bbook#1{#1}\fi
\ifx \bcomment \undefined \def \bcomment#1{#1}\fi
\ifx \oauthor \undefined \def \oauthor#1{#1}\fi
\ifx \citeauthoryear \undefined \def \citeauthoryear#1{#1}\fi
\ifx \endbibitem  \undefined \def \endbibitem {}\fi
\ifx \bconflocation  \undefined \def \bconflocation#1{#1}\fi
\ifx \arxivurl  \undefined \def \arxivurl#1{\textsf{#1}}\fi
\csname PreBibitemsHook\endcsname

\bibitem{moore_introduction_2009}
\begin{bbook}
\bauthor{\bsnm{Moore}, \binits{R.E.}},
\bauthor{\bsnm{Kearfott}, \binits{R.B.}},
\bauthor{\bsnm{Cloud}, \binits{M.J.}}:
\bbtitle{Introduction to {Interval} {Analysis}}.
\bpublisher{SIAM},
\blocation{Philadelphia}
(\byear{2009})
\end{bbook}
\endbibitem

\bibitem{jaulin_applied_2001}
\begin{bbook}
\bauthor{\bsnm{Jaulin}, \binits{L.}},
\bauthor{\bsnm{Kieffer}, \binits{M.}},
\bauthor{\bsnm{Didrit}, \binits{O.}},
\bauthor{\bsnm{Walter}, \binits{E.}}:
\bbtitle{Applied {Interval} {Analysis}: {With} {Examples} in {Parameter} and
  {State} {Estimation}, {Robust} {Control} and {Robotics}}.
\bpublisher{Springer},
\blocation{Berlin}
(\byear{2001})
\end{bbook}
\endbibitem

\bibitem{moore_interval_1963}
\begin{botherref}
\oauthor{\bsnm{Moore}, \binits{R.E.}}:
Interval arithmetic and automatic error analysis in digital computing.
PhD thesis,
Stanford University California
(1963)
\end{botherref}
\endbibitem

\bibitem{boche_complex_1965}
\begin{botherref}
\oauthor{\bsnm{Boche}, \binits{R.E.}}:
Complex interval arithmetic with some applications.
Technical Report LMSC4-22-66-1,,
Locheed Missiles and Space
(1965)
\end{botherref}
\endbibitem

\bibitem{gargantini_circular_1971}
\begin{barticle}
\bauthor{\bsnm{Gargantini}, \binits{I.}},
\bauthor{\bsnm{Henrici}, \binits{P.}}:
\batitle{Circular arithmetic and the determination of polynomial zeros}.
\bjtitle{Numerische Mathematik}
\bvolume{18}(\bissue{4}),
\bfpage{305}--\blpage{320}
(\byear{1971}).
\doiurl{10.1007/BF01404681}
\end{barticle}
\endbibitem

\bibitem{hansen_generalized_1975}
\begin{bchapter}
\bauthor{\bsnm{Hansen}, \binits{E.R.}}:
\bctitle{A generalized interval arithmetic}.
In: \beditor{\bsnm{Nickel}, \binits{K.}} (ed.)
\bbtitle{Interval {Mathematics}}.
\bsertitle{Lecture {Notes} in {Computer} {Science}},
pp. \bfpage{7}--\blpage{18}.
\bpublisher{Springer},
\blocation{Berlin, Heidelberg}
(\byear{1975}).
\doiurl{10.1007/3-540-07170-9_2}
\end{bchapter}
\endbibitem

\bibitem{ohta_polygon_1990}
\begin{bchapter}
\bauthor{\bsnm{Ohta}, \binits{Y.}},
\bauthor{\bsnm{Gong}, \binits{L.}},
\bauthor{\bsnm{Haneda}, \binits{H.}}:
\bctitle{Polygon interval arithmetic and design of robust control systems}.
In: \bbtitle{29th {IEEE} {Conference} on {Decision} and {Control}},
pp. \bfpage{1065}--\blpage{10672}
(\byear{1990}).
\doiurl{10.1109/CDC.1990.203765}
\end{bchapter}
\endbibitem

\bibitem{kreinovich_interval_2023}
\begin{botherref}
\oauthor{\bsnm{Kreinovich}, \binits{V.}},
\oauthor{\bsnm{Lauter}, \binits{C.}}:
Interval {Computations}
(2023).
\url{https://www.cs.utep.edu/interval-comp/main.html}
\end{botherref}
\endbibitem

\bibitem{kearfott_reliable_1995}
\begin{botherref}
\oauthor{\bsnm{Kearfott}, \binits{R.E.}},
\oauthor{\bsnm{Musaev}, \binits{E.A.}},
\oauthor{\bsnm{Nesterov}, \binits{V.M.}},
\oauthor{\bsnm{Yakovlev}, \binits{A.G.}}:
Reliable {Computing} {\textbar} {Volumes} and issues
(1995).
\url{https://link.springer.com/journal/11155/volumes-and-issues}
\end{botherref}
\endbibitem

\bibitem{rump_intlab_1999}
\begin{bchapter}
\bauthor{\bsnm{Rump}, \binits{S.M.}}:
\bctitle{{INTLAB} — {INTerval} {LABoratory}}.
In: \beditor{\bsnm{Csendes}, \binits{T.}} (ed.)
\bbtitle{Developments in {Reliable} {Computing}},
pp. \bfpage{77}--\blpage{104}.
\bpublisher{Springer},
\blocation{Dordrecht}
(\byear{1999}).
\doiurl{10.1007/978-94-017-1247-7_7}.
\burl{www.tuhh.de/ti3/rump/intlab/}
\end{bchapter}
\endbibitem

\bibitem{dongarra_blas_1995}
\begin{botherref}
\oauthor{\bsnm{Dongarra}, \binits{J.J.}}:
{BLAS} ({Basic} {Linear} {Algebra} {Subprograms})
(1995).
\url{https://netlib.org/blas/}
\end{botherref}
\endbibitem

\bibitem{petkovic_complex_1998}
\begin{bbook}
\bauthor{\bsnm{Petkovic}, \binits{M.}},
\bauthor{\bsnm{Petkovic}, \binits{L.D.}}:
\bbtitle{Complex {Interval} {Arithmetic} and {Its} {Applications}}.
\bpublisher{John Wiley \& Sons},
\blocation{Berlin}
(\byear{1998})
\end{bbook}
\endbibitem

\bibitem{dawood_theories_2011}
\begin{bbook}
\bauthor{\bsnm{Dawood}, \binits{H.}}:
\bbtitle{Theories of {Interval} {Arithmetic}: {Mathematical} {Foundations} and
  {Applications}},
(\byear{2011})
\end{bbook}
\endbibitem

\bibitem{farouki_minkowski_2001}
\begin{barticle}
\bauthor{\bsnm{Farouki}, \binits{R.T.}},
\bauthor{\bsnm{Moon}, \binits{H.P.}},
\bauthor{\bsnm{Ravani}, \binits{B.}}:
\batitle{Minkowski {Geometric} {Algebra} of {Complex} {Sets}}.
\bjtitle{Geometriae Dedicata}
\bvolume{85}(\bissue{1}),
\bfpage{283}--\blpage{315}
(\byear{2001}).
\doiurl{10.1023/A:1010318011860}
\end{barticle}
\endbibitem

\bibitem{farouki_boundary_2005}
\begin{barticle}
\bauthor{\bsnm{Farouki}, \binits{R.T.}},
\bauthor{\bsnm{Han}, \binits{C.Y.}},
\bauthor{\bsnm{Hass}, \binits{J.}}:
\batitle{Boundary evaluation algorithms for {Minkowski} combinations of complex
  sets using topological analysis of implicit curves}.
\bjtitle{Numerical Algorithms}
\bvolume{40}(\bissue{3}),
\bfpage{251}--\blpage{283}
(\byear{2005}).
\doiurl{10.1007/s11075-005-4565-9}
\end{barticle}
\endbibitem

\bibitem{ohta_nonconvex_2000}
\begin{barticle}
\bauthor{\bsnm{Ohta}, \binits{Y.}}:
\batitle{Nonconvex {Polygon} {Interval} {Arithmetic} as a {Tool} for the
  {Analysis} and {Design} of {Robust} {Control} {Systems}}.
\bjtitle{Reliable Computing}
\bvolume{6}(\bissue{3}),
\bfpage{247}--\blpage{279}
(\byear{2000}).
\doiurl{10.1023/A:1009926413485}
\end{barticle}
\endbibitem

\bibitem{de_berg_computational_2008}
\begin{bbook}
\bauthor{\bparticle{de} \bsnm{Berg}, \binits{M.}},
\bauthor{\bsnm{Cheong}, \binits{O.}},
\bauthor{\bparticle{van} \bsnm{Kreveld}, \binits{M.}},
\bauthor{\bsnm{Overmars}, \binits{M.}}:
\bbtitle{Computational {Geometry}: {Algorithms} and {Applications}}.
\bpublisher{Springer},
\blocation{Berlin Heidelberg}
(\byear{2008})
\end{bbook}
\endbibitem

\bibitem{anselmi_tolerance_2015}
\begin{bchapter}
\bauthor{\bsnm{Anselmi}, \binits{N.}},
\bauthor{\bsnm{Poli}, \binits{L.}},
\bauthor{\bsnm{Tenuti}, \binits{L.}},
\bauthor{\bsnm{Rocca}, \binits{P.}},
\bauthor{\bsnm{Viani}, \binits{F.}},
\bauthor{\bsnm{Massa}, \binits{A.}}:
\bctitle{Tolerance analysis of planar arrays through {Minkowski}-based
  {Interval} {Analysis}},
p. \bfpage{2502}
(\byear{2015}).
\doiurl{10.1109/APS.2015.7305639}
\end{bchapter}
\endbibitem

\bibitem{tenuti_minkowski_2017}
\begin{barticle}
\bauthor{\bsnm{Tenuti}, \binits{L.}},
\bauthor{\bsnm{Anselmi}, \binits{N.}},
\bauthor{\bsnm{Rocca}, \binits{P.}},
\bauthor{\bsnm{Salucci}, \binits{M.}},
\bauthor{\bsnm{Massa}, \binits{A.}}:
\batitle{Minkowski {Sum} {Method} for {Planar} {Arrays} {Sensitivity}
  {Analysis} {With} {Uncertain}-{But}-{Bounded} {Excitation} {Tolerances}}.
\bjtitle{IEEE Transactions on Antennas and Propagation}
\bvolume{65}(\bissue{1}),
\bfpage{167}--\blpage{177}
(\byear{2017}).
\doiurl{10.1109/TAP.2016.2627548}
\end{barticle}
\endbibitem

\bibitem{matheron_random_1975}
\begin{bbook}
\bauthor{\bsnm{Matheron}, \binits{G.}}:
\bbtitle{Random Sets and Integral Geometry}.
\bsertitle{Wiley series in probability and mathematical statistics}.
\bpublisher{Wiley},
\blocation{New York}
(\byear{1975})
\end{bbook}
\endbibitem

\bibitem{farouki_algorithms_2000}
\begin{barticle}
\bauthor{\bsnm{Farouki}, \binits{R.T.}},
\bauthor{\bsnm{Moon}, \binits{H.P.}},
\bauthor{\bsnm{Ravani}, \binits{B.}}:
\batitle{Algorithms for {Minkowski} products and implicitly‐defined complex
  sets}.
\bjtitle{Advances in Computational Mathematics}
\bvolume{13}(\bissue{3}),
\bfpage{199}--\blpage{229}
(\byear{2000}).
\doiurl{10.1023/A:1018910412112}
\end{barticle}
\endbibitem

\bibitem{villafuerte_polygonal_2022}
\begin{barticle}
\bauthor{\bsnm{Villafuerte}, \binits{M.}},
\bauthor{\bsnm{Wiederhold}, \binits{P.}}:
\batitle{A {Polygonal} {Approximation} for {General} 4-{Contours}
  {Corresponding} to {Weakly} {Simple} {Curves}}.
\bjtitle{Journal of Mathematical Imaging and Vision}
\bvolume{64}(\bissue{2}),
\bfpage{161}--\blpage{193}
(\byear{2022}).
\doiurl{10.1007/s10851-021-01060-0}
\end{barticle}
\endbibitem

\bibitem{candau_complex_2006}
\begin{barticle}
\bauthor{\bsnm{Candau}, \binits{Y.}},
\bauthor{\bsnm{Raissi}, \binits{T.}},
\bauthor{\bsnm{Ramdani}, \binits{N.}},
\bauthor{\bsnm{Ibos}, \binits{L.}}:
\batitle{Complex {Interval} {Arithmetic} {Using} {Polar} {Form}}.
\bjtitle{Reliable Computing}
\bvolume{12},
\bfpage{1}--\blpage{20}
(\byear{2006}).
\doiurl{10.1007/s11155-006-2966-7}
\end{barticle}
\endbibitem

\bibitem{giardina_morphological_1988}
\begin{bbook}
\bauthor{\bsnm{Giardina}, \binits{C.}},
\bauthor{\bsnm{Dougherty}, \binits{E.}}:
\bbtitle{Morphological Methods in Image and Signal Processing},
(\byear{1988})
\end{bbook}
\endbibitem

\bibitem{dawood_logical_2019}
\begin{botherref}
\oauthor{\bsnm{Dawood}, \binits{H.}},
\oauthor{\bsnm{Dawood}, \binits{Y.}}:
A {Logical} {Formalization} of the {Notion} of {Interval} {Dependency}:
  {Towards} {Reliable} {Intervalizations} of {Quantifiable} {Uncertainties}
\textbf{1},
15--36
(2019).
\doiurl{10.5281/zenodo.3234184}
\end{botherref}
\endbibitem

\bibitem{lee_minkowski_1999}
\begin{botherref}
\oauthor{\bsnm{Lee}, \binits{I.-k.}},
\oauthor{\bsnm{Kim}, \binits{M.-S.}},
\oauthor{\bsnm{Elber}, \binits{G.}},
\oauthor{\bsnm{Wien}, \binits{T.}}:
The {Minkowski} {Sum} of {2D} {Curved} {Objects}
(1999)
\end{botherref}
\endbibitem

\bibitem{bruce_what_1981}
\begin{barticle}
\bauthor{\bsnm{Bruce}, \binits{J.W.}},
\bauthor{\bsnm{Giblin}, \binits{P.J.}}:
\batitle{What {Is} an {Envelope}?}
\bjtitle{The Mathematical Gazette}
\bvolume{65}(\bissue{433}),
\bfpage{186}--\blpage{192}
(\byear{1981}).
\doiurl{10.2307/3617131}.
\bcomment{Publisher: Mathematical Association}
\end{barticle}
\endbibitem

\bibitem{bruce_curves_1984}
\begin{bbook}
\bauthor{\bsnm{Bruce}, \binits{J.W.}},
\bauthor{\bsnm{Giblin}, \binits{P.}}:
\bbtitle{Curves and {Singularities}}.
\bpublisher{Cambridge University Press},
\blocation{UK}
(\byear{1984})
\end{bbook}
\endbibitem

\bibitem{milnor_morse_1963}
\begin{bbook}
\bauthor{\bsnm{Milnor}, \binits{J.W.}}:
\bbtitle{Morse {Theory}}.
\bsertitle{Annals of mathematics studies},
vol. \bseriesno{51}.
\bpublisher{Princeton University Press},
\blocation{USA}
(\byear{1963})
\end{bbook}
\endbibitem

\bibitem{shafarevich_basic_2013}
\begin{bbook}
\bauthor{\bsnm{Shafarevich}, \binits{I.R.}}:
\bbtitle{Basic {Algebraic} {Geometry} 1: {Varieties} in {Projective} {Space}}.
\bpublisher{Springer},
\blocation{Berlin, Heidelberg}
(\byear{2013})
\end{bbook}
\endbibitem

\bibitem{adams_introduction_1994}
\begin{bbook}
\bauthor{\bsnm{Adams}, \binits{W.W.}},
\bauthor{\bsnm{Loustaunau}, \binits{P.}}:
\bbtitle{An Introduction to {Gröbner} Bases}.
\bsertitle{Graduate studies in mathematics},
vol. \bseriesno{3}.
\bpublisher{American mathematical society},
\blocation{Providence (R.I.)}
(\byear{1994})
\end{bbook}
\endbibitem

\bibitem{stewart_calculus_1999}
\begin{bbook}
\bauthor{\bsnm{Stewart}, \binits{J.}}:
\bbtitle{Calculus},
\bedition{4th ed} edn.
\bpublisher{Brooks/Cole},
\blocation{Pacific Grove, CA}
(\byear{1999})
\end{bbook}
\endbibitem

\bibitem{lang_calculus_2012}
\begin{bbook}
\bauthor{\bsnm{Lang}, \binits{S.}}:
\bbtitle{Calculus of {Several} {Variables}},
\bedition{3 ed} edn.
\bpublisher{Springer},
\blocation{USA}
(\byear{2012}).
\bcomment{OCLC: 899736081}
\end{bbook}
\endbibitem

\bibitem{he_comparison_2019}
\begin{barticle}
\bauthor{\bsnm{He}, \binits{G.}},
\bauthor{\bsnm{Gao}, \binits{X.}},
\bauthor{\bsnm{Zhou}, \binits{H.}},
\bauthor{\bsnm{Zhu}, \binits{H.}}:
\batitle{Comparison of three interval arithmetic-based algorithms for antenna
  array pattern upper bound estimation}.
\bjtitle{Electronics Letters}
\bvolume{55}(\bissue{14}),
\bfpage{775}--\blpage{776}
(\byear{2019}).
\doiurl{10.1049/el.2019.1229}
\end{barticle}
\endbibitem

\bibitem{he_impact_2021}
\begin{barticle}
\bauthor{\bsnm{He}, \binits{G.}},
\bauthor{\bsnm{Gao}, \binits{X.}},
\bauthor{\bsnm{Zhang}, \binits{R.}}:
\batitle{Impact {Analysis} and {Calibration} {Methods} of {Excitation} {Errors}
  for {Phased} {Array} {Antennas}}.
\bjtitle{IEEE Access}
(\byear{2021}).
\doiurl{10.1109/ACCESS.2021.3073222}
\end{barticle}
\endbibitem

\bibitem{arnestad_worst-case_2023}
\begin{barticle}
\bauthor{\bsnm{Arnestad}, \binits{H.K.}},
\bauthor{\bsnm{Geréb}, \binits{G.}},
\bauthor{\bsnm{Lønmo}, \binits{T.I.B.}},
\bauthor{\bsnm{Kirkebø}, \binits{J.E.}},
\bauthor{\bsnm{Austeng}, \binits{A.}},
\bauthor{\bsnm{Näsholm}, \binits{S.P.}}:
\batitle{Worst-case analysis of array beampatterns using interval arithmetic}.
\bjtitle{The Journal of the Acoustical Society of America}
(\byear{2023}).
\doiurl{10.1121/10.0019715}
\end{barticle}
\endbibitem

\bibitem{schneider_convex_1993}
\begin{bbook}
\bauthor{\bsnm{Schneider}, \binits{R.}}:
\bbtitle{Convex {Bodies}: {The} {Brunn}-{Minkowski} {Theory}}.
\bpublisher{Cambridge University Press},
\blocation{Cambridge, UK}
(\byear{1993})
\end{bbook}
\endbibitem

\end{thebibliography}


\end{document}


\newcommand{\atan}{\mathrm{atan}}   
\newcommand{\atantwo}{\mathrm{atan2}} 

\renewcommand*{\thesection}{S\arabic{section}}

\title{Derivation of the arithmetic properties of edges and arcs
\\ 
{\normalsize Supplementary material for "Polyarc-bounded complex interval arithmetic" paper}}

\author{Gábor Geréb, András Sándor}

\maketitle

\tableofcontents

\vfill \eject

\section{Unary operations}\label{sup:unitary_operations}

\textbf{Operand equation}

For $\boldsymbol{A} \in \mathcal{I}(\mathbb{C})$,
$\boldsymbol{\Gamma} \subset \partial \boldsymbol{A}$

\begin{equation*}
\begin{array}{lll}
\boldsymbol{\Gamma}&=\left\lbrace F(s)\big\vert s\in \boldsymbol{s}\right\rbrace
\\
&=\left\lbrace x+iy\big\vert f(x,y)=0,s(x,y)\in \boldsymbol{s}\right\rbrace 
\\
&=\left\lbrace x+iy\big\vert\mathring{f}(\rho,\theta)=0,\mathring{s}(\rho,\theta)\in \boldsymbol{s}\right\rbrace 
\end{array}
\end{equation*}

\subsection{Negative}\label{sup:negative}

\textbf{Result equation}

\begin{equation*}
\begin{array}{ll}
    -\boldsymbol{\Gamma} &= \left\lbrace -F(s)\big\vert s\in \boldsymbol{s}\right\rbrace
    \\ &=   \left\lbrack -x-iy \big\vert f(x,y)=0,s(x,y)\in \boldsymbol{s}\right\rbrack 
    \\ &=   \left\lbrack x+iy \big\vert f(-x,-y)=0,s(-x,-y)\in \boldsymbol{s}\right\rbrack 
\end{array}
\end{equation*}

\vfill\eject

\subsubsection{Edge negative}\label{sup:edge_negative}

\begin{figure}[H]
\centering
\includegraphics[width=\textwidth,trim={0 0 0 10mm},clip]{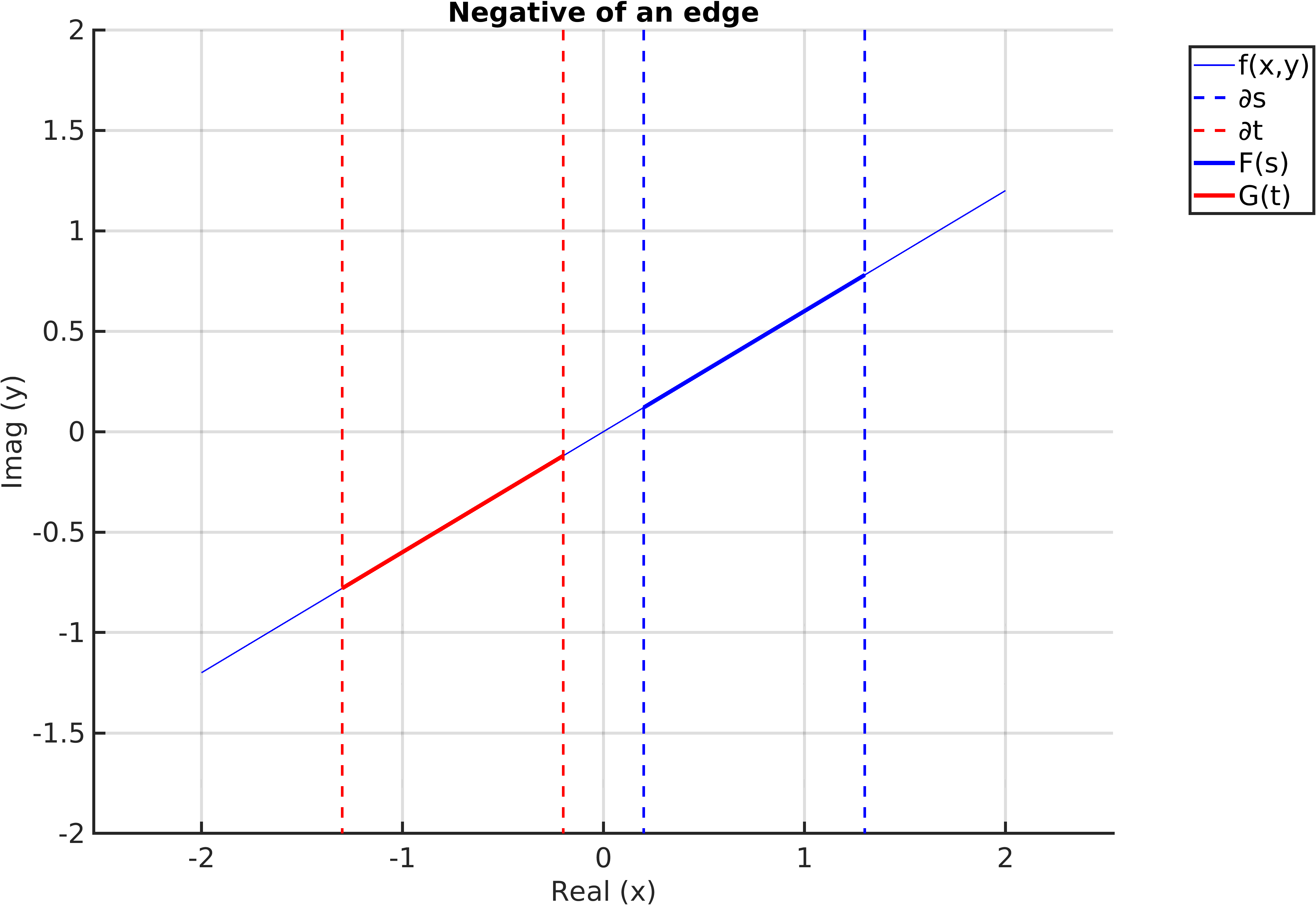}
\caption{Negative of an edge, where $G(t)=-F(s)$. Parameters: $a=0.6$, $\boldsymbol{s}=(0.2,1.3)$.}
\label{fig:EdgeNegative}
\end{figure}

\begin{equation*}
\begin{array}{lll}
f(x,y)&=a x-y
\\
s(x,y)&=x
\end{array}
\end{equation*}

\begin{equation*}
\begin{array}{ll}
    -\boldsymbol{\Gamma} &= \left\lbrack -x-iy\big\vert ax-y=0,x\in \boldsymbol{s}\right\rbrack 
    \\
    &=\left\lbrack x+iy\big\vert ax-y=0,-x\in \boldsymbol{s}\right\rbrack
\end{array}
\end{equation*}

\vfill \eject

\subsubsection{Arc negative}\label{sup:arc_negative}

\begin{figure}[H]
\centering
\includegraphics[width=\textwidth,trim={0 0 0 10mm},clip]{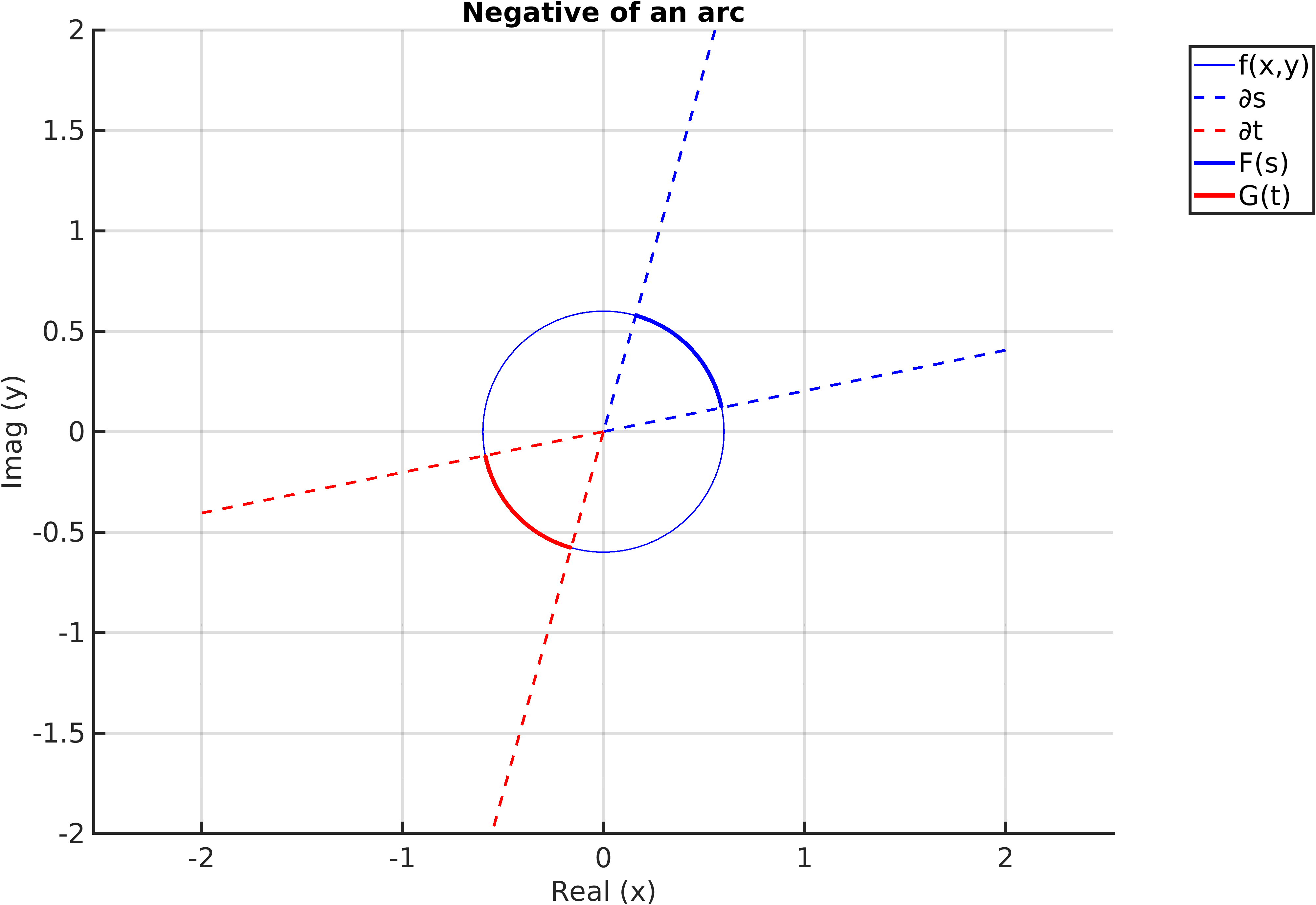}
\caption{Negative of an arc, where $G(t)=-F(s)$. Parameters: r=0.6, s=(0.2,1.3).}
\label{fig:ArcNegative}
\end{figure}

\begin{equation*}
\begin{array}{lll}
f(x,y)&=x^2-y^2-r^2
\\
s(x,y)&=\atantwo(y,x)
\end{array}
\end{equation*}

\begin{equation*}
\begin{array}{ll}
    -\boldsymbol{\Gamma} &= 
    \left\lbrace -x-\mathrm{iy}\big\vert x^2 +y^2 -r^2 =0,\atantwo\left(-y,-x\right)\in \boldsymbol{s}\right\rbrace
    \\
    &=\left\lbrace x+\mathrm{iy}\big\vert x^2 +y^2 -r^2 =0,\atantwo\left(y,x\right)-\pi \in \boldsymbol{s}\right\rbrace
\end{array}
\end{equation*}

\vfill \eject

\subsection{Reciprocal}\label{sup:reciprocal}

\begin{equation*}
\begin{array}{ll}
    \boldsymbol{\Gamma}^{-1} 
    &= \left\lbrace \frac{1}{F(s)}\big\vert s\in \boldsymbol{s}\right\rbrace
    \\ &=   \left\lbrack \frac{1}{\rho}e^{-i\theta} \big\vert \mathring{f}(\rho,\theta)=0,\mathring{s}(\rho,\theta)\in \boldsymbol{s}\right\rbrack 
    \\ &=   \left\lbrack \rho e^{i\theta} \big\vert \mathring{f}(\frac{1}{\rho},-\theta)=0,\mathring{s}(\frac{1}{\rho},-\theta)\in \boldsymbol{s}\right\rbrack
\end{array}
\end{equation*}

\vfill \eject
\subsubsection{Edge reciprocal}\label{sup:edge_reciprocal}

\begin{figure}[H]
\centering
\includegraphics[width=\textwidth,trim={0 0 0 10mm},clip]{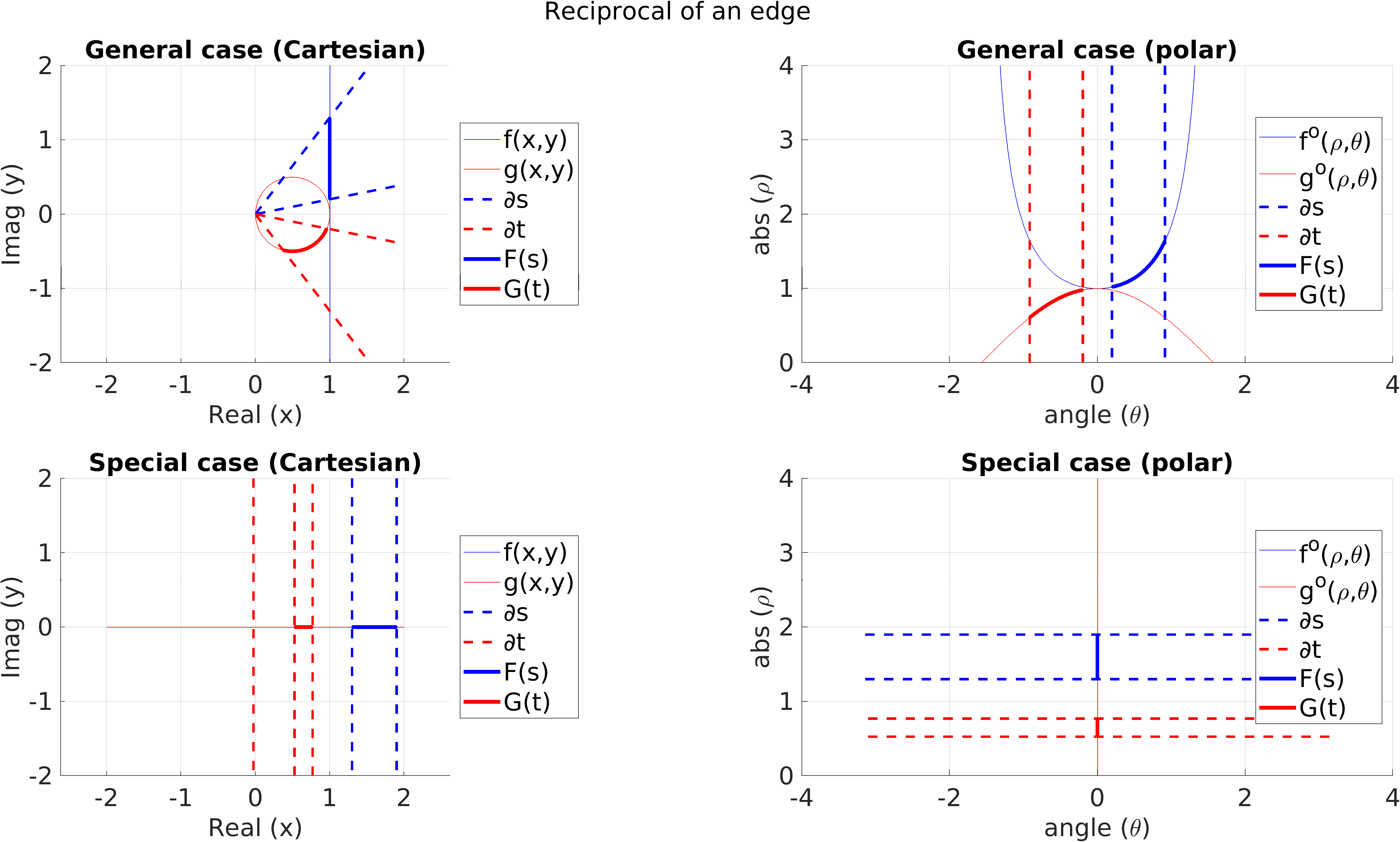}
\caption{Reciprocal of an edge, where $G(t)\!=\!1/F(s)$, $g(x,y)\!=\!1/f(x,y)$, and $\mathring{g}(\rho,\theta)\!=\!1/\mathring{f}(\rho,\theta)$. Parameters: $\boldsymbol{s}=(0.2,1.3)$ in the general case, and $\boldsymbol{s}=(1.3,1.9)$ in the special case.}
\label{fig:EdgeReciprocal}
\end{figure}

\paragraph{General case} ~\\

\begin{equation*}
\begin{array}{ll}
    f(x,y) &= x-1
    \\
    s(x,y) &= y
    \\
    \mathring{f}(\rho,\theta) &= \frac{1}{\rho}-\cos(\theta)
    \\
    \mathring{s}(\rho,\theta) &= \rho\sin(\theta)
\end{array}
\end{equation*}

\begin{equation*}
\begin{array}{ll}
    \boldsymbol{\Gamma}^{-1} 
    &= \left\lbrace \rho e^{i\theta} \big\vert \rho - \cos(-\theta)=0 , \frac{1}{\rho}\sin(-\theta)\in\boldsymbol{s}  \right\rbrace
    \\
    &= \left\lbrace \rho e^{i\theta} \big\vert \rho - \cos(\theta)=0 , -\frac{1}{\rho}\sin(\theta)\in\boldsymbol{s}  \right\rbrace
    \\
    &= \left\lbrace x+iy \big\vert \sqrt{x^2+y^2} - \cos(\atantwo(y,x))=0 , -\frac{1}{\sqrt{x^2+y^2}}\sin(\atantwo(y,x))\in\boldsymbol{s}  \right\rbrace
    \\
    &= \left\lbrace x+iy \big\vert \sqrt{x^2+y^2} - \frac{x}{\sqrt{x^2+y^2}}=0 , -\frac{1}{\sqrt{x^2+y^2}}\frac{y}{\sqrt{x^2+y^2}}\in\boldsymbol{s}  \right\rbrace
    \\
    &= \left\lbrace x+iy \big\vert x^2+y^2 - x=0 , -\frac{y}{x^2+y^2}\in\boldsymbol{s}  \right\rbrace
    \\
    &= \left\lbrace x+iy \big\vert (x-\frac{1}{2})^2+y^2-(\frac{1}{2})^2=0 , -\frac{y}{x^2+y^2}\in\boldsymbol{s}  \right\rbrace
\end{array}
\end{equation*}

\paragraph{Special case: the edge is on a zero crossing line} ~\\

\begin{equation*}
\begin{array}{ll}
    f(x,y) &= y
    \\
    s(x,y) &= x
    \\
    \mathring{f}(\rho,\theta) &= \tan(\theta)
    \\
    \mathring{s}(\rho,\theta) &= \rho
\end{array}
\end{equation*}

\begin{equation*}
\begin{array}{ll}
    \boldsymbol{\Gamma}^{-1} 
    &= \left\lbrace \rho e^{i\theta} \big\vert \tan(-\theta)=0 , \frac{1}{\rho}\in\boldsymbol{s}  \right\rbrace
    \\
    &= \left\lbrace \rho e^{i\theta} \big\vert -\tan(\theta)=0 , \frac{1}{\rho}\in\boldsymbol{s}  \right\rbrace
    \\
    &= \left\lbrace x+iy \big\vert -\tan(\atantwo(y,x))=0 , -\frac{1}{\sqrt{x^2+y^2}}\in\boldsymbol{s}  \right\rbrace
    \\
    &= \left\lbrace x+iy \big\vert y=0 , \frac{1}{x}\in\boldsymbol{s}  \right\rbrace
\end{array}
\end{equation*}

\vfill \eject

\subsubsection{Arc reciprocal}\label{sup:arc_reciprocal}

\begin{figure}[H]
\centering
\includegraphics[width=\textwidth,trim={0 0 0 10mm},clip]{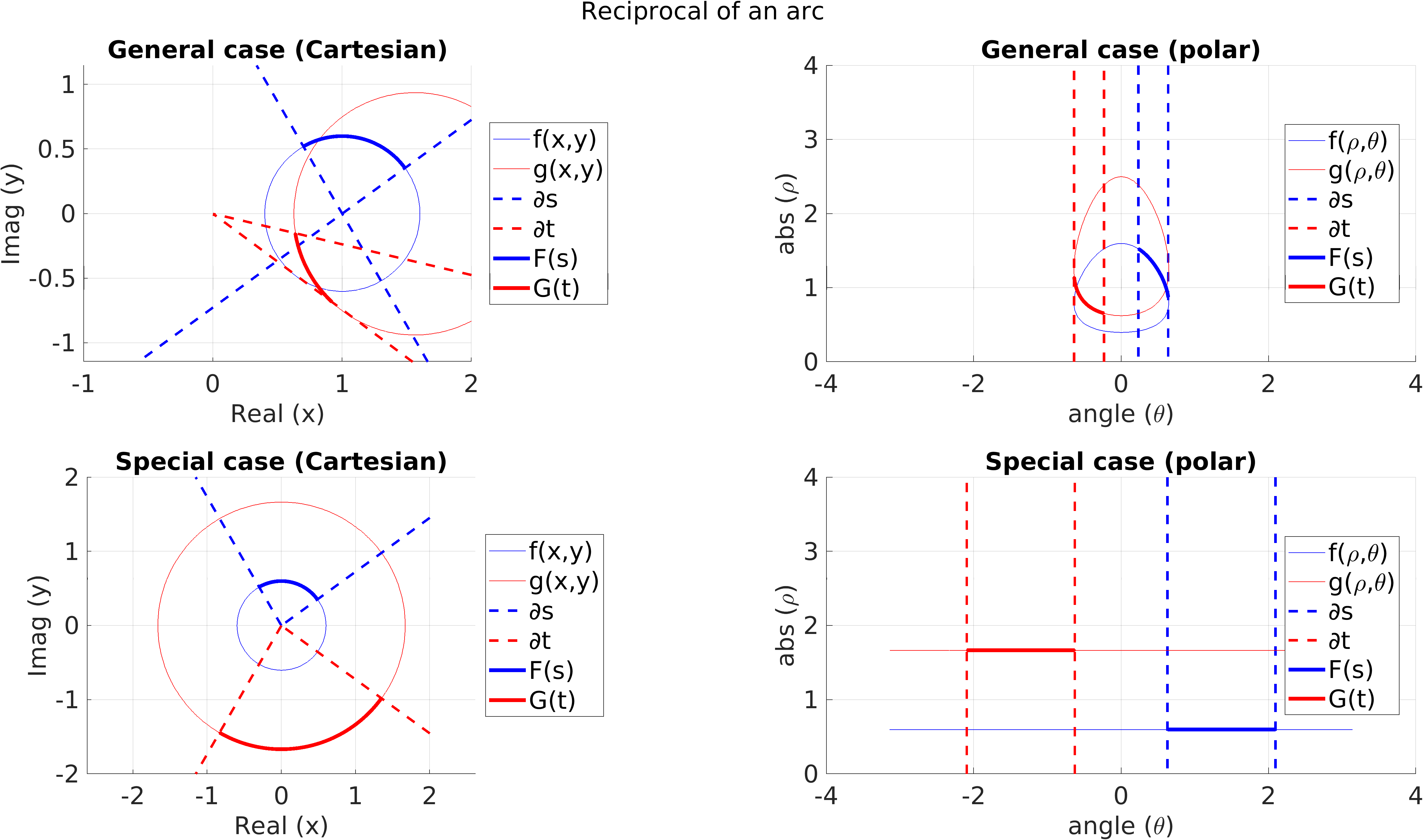}
\caption{Reciprocal of an arc, where $G(t)\!=\!1/F(s)$, $g(x,y)\!=\!1/f(x,y)$, and $\mathring{g}(\rho,\theta)\!=\!1/\mathring{f}(\rho,\theta)$. Parameters: $r=0.6$, $\boldsymbol{s}=(\pi/5,2\pi/3)$ in both cases.}
\label{fig:ArcReciprocal}
\end{figure}

\paragraph{General case} ~\\

\begin{equation*}
\begin{array}{ll}
    f(x,y) &= (x-1)^2 + y^2 - r^2
    \\
    s(x,y) &= \atantwo(y,x-1)
    \\
    \mathring{f}(\rho,\theta) &= \rho^2 - 2\rho\cos(\theta)+1-r^2
    \\
    \mathring{s}(\rho,\theta) &= \atantwo(\rho\sin(\theta),\rho\cos(\theta)-1)
\end{array}
\end{equation*}

\begin{equation*}
\begin{array}{ll}
    \boldsymbol{\Gamma}^{-1} 
    &= \left\lbrace \rho e^{i\theta} \big\vert \frac{1}{\rho^2} + \frac{2}{\rho}\cos(-\theta)+1-r^2=0 , \atantwo(\frac{1}{\rho}sin(-\theta),\frac{1}{\rho}\cos(-\theta)-1) \in \boldsymbol{s}  \right\rbrace
    \\
    &= \left\lbrace \rho e^{i\theta} \big\vert 1 + 2\rho\cos(\theta)+\rho^2(1-r^2)=0 , \atantwo(-\frac{1}{\rho}\sin(\theta),\frac{1}{\rho}\cos(\theta)-1) \in \boldsymbol{s}  \right\rbrace
    \\
    &= \left\lbrace x+iy \big\vert 1 + 2\sqrt{x^2+y^2}\cos(\atantwo(y,x))+(x^2+y^2)(1-r^2)=0 ,
    \right.\\ & \left. \quad\quad \atantwo(-\frac{1}{\sqrt{x^2+y^2}}\sin(\atantwo(y,x)), \frac{1}{\sqrt{x^2+y^2}}\cos(\atantwo(y,x))-1) \in \boldsymbol{s}  \right\rbrace
    \\
    &= \left\lbrace x+iy \big\vert 1 + 2\sqrt{x^2+y^2}\frac{x}{\sqrt{x^2+y^2}}+(x^2+y^2)(1-r^2)=0 ,
    \right.\\ & \left. \quad\quad \atantwo\left(-\frac{1}{\sqrt{x^2+y^2}}\frac{y}{\sqrt{x^2+y^2}},\frac{1}{\sqrt{x^2+y^2}}\frac{x}{\sqrt{x^2+y^2}}-1\right) \in \boldsymbol{s}  \right\rbrace
    \\
    &= \left\lbrace x+iy \big\vert 1 + 2x+(x^2+y^2)(1-r^2)=0 ,
    \atantwo\left(-\frac{y}{x^2+y^2},\frac{x}{x^2+y^2}-1\right) \in \boldsymbol{s}  \right\rbrace
    \\
    &= \left\lbrace x+iy \big\vert x^2 + \frac{2x}{1-r^2} + \frac{1}{1-r^2} +y^2 =0 ,
    \atantwo\left(-\frac{y}{x^2+y^2},\frac{x}{x^2+y^2}-1\right) \in \boldsymbol{s}  \right\rbrace
    \\
    &= \left\lbrace x+iy \big\vert \left(x-\frac{1}{1-r^2}\right)^2 +y^2 - \left(\frac{1}{1-r^2}\right)^2  =0 ,
    \atantwo\left(-\frac{y}{x^2+y^2},\frac{x}{x^2+y^2}-1\right) \in \boldsymbol{s}  \right\rbrace
\end{array}
\end{equation*}

\paragraph{Special case: the arc is on a zero centered circle} ~\\

\begin{equation*}
\begin{array}{ll}
    f(x,y) &= x^2 + y^2 - r^2
    \\
    s(x,y) &= \atantwo(y,x)
    \\
    \mathring{f}(\rho,\theta) &= \rho-r
    \\
    \mathring{s}(\rho,\theta) &= \theta
\end{array}
\end{equation*}

\begin{equation*}
\begin{array}{ll}
    \boldsymbol{\Gamma}^{-1} 
    &= \left\lbrace \rho e^{i\theta} \big\vert \frac{1}{\rho}-r=0 , -\theta\in\boldsymbol{s}  \right\rbrace
    \\
    &= \left\lbrace \rho e^{i\theta} \big\vert \rho=\frac{1}{r} , -\theta\in\boldsymbol{s}  \right\rbrace
    \\
    &= \left\lbrace \rho e^{i\theta} \big\vert \sqrt{x^2+y^2}=\frac{1}{r} , -\atantwo(y,x)\in\boldsymbol{s}  \right\rbrace
    \\
    &= \left\lbrace \rho e^{i\theta} \big\vert x^2+y^2-\frac{1}{r^2}=0 , -\atantwo(y,x)\in\boldsymbol{s}  \right\rbrace
\end{array}
\end{equation*}

\vfill \eject

\section{Binary operations} \label{sup:binary}



We use the following algorithm in SageMath to compute the implicit equation of the envelope, as described in Subsection 3.3.5.

\begin{verbatim}
P.<x,y,u,v,Q,R,X,Y> = PolynomialRing(QQ, 8,order='degrevlex(4),degrevlex(4)')
    #(x,y), (u,v): coordinates of the two operands
    #Q,R: fixed parameters such as radii and slopes
    #(X,Y): coordinates of the result
def imageideal(f,g,o):
        #f,g: equations of the two operands in (x,y) and (u,v) respectively
        #o: operation, 0 is addition, 1 is multiplication
    if o==0:
        re=x+u
        im=y+v
            #addition
    else:
        re=x*u-y*v
        im=x*v+y*u
            #multiplication
    h=jacobian((f,g,re,im),(x,y,u,v)).det()
        #h: equation for the critical points
    I=ideal(f,g,h,re-X,im-Y)
    B=I.groebner_basis()
    Bred=[p for p in B if p(x=0,y=0,u=0,v=0)==p]
        #we eliminate the "heavy" variables, the Groebner basis of image ideal remains
    return len(Bred),Bred
        #returns the number of basis functions and their list
\end{verbatim}

\textit{\textbf{Operand equations}}

For 
$\boldsymbol{A},\boldsymbol{B} \in \mathcal{I}(\mathbb{C})$, 
$\boldsymbol{\Gamma} \subset \partial \boldsymbol{A}$,
$\boldsymbol{\Gamma'} \subset \partial \boldsymbol{B}$

\begin{flalign*}
\boldsymbol{\Gamma}&=\left\lbrace x+iy\big\vert f(x,y)=0,s(x,y)\in \boldsymbol{s}\right\rbrace \\&=\left\lbrace \rho e^{i\theta } \big\vert\mathring{f} \left(\rho ,\theta \right)=0,s^{\circ } \left(\rho ,\theta \right)\in \boldsymbol{s}\right\rbrace \\&=\left\lbrace F(s)\big\vert s\in \boldsymbol{s}\right\rbrace \\
\\ \boldsymbol{\Gamma'}&=\left\lbrace x+iy\big\vert g\left(x,y\right)=0,t\left(x,y\right)\in \boldsymbol{t}\right\rbrace \\&=\left\lbrace \rho e^{i\theta } \big\vert\mathring{g} \left(\rho ,\theta \right)=0,t^{\circ } \left(\rho ,\theta \right)\in \boldsymbol{t}\right\rbrace \\&=\left\lbrace G(t)\big\vert t\in \boldsymbol{t}\right\rbrace
\end{flalign*}

\vfill\eject

\subsection{Addition}\label{sup:addition}

\textit{\textbf{Parametric combination}}
\begin{flalign*}
H(s,t)&=\left(\Re \left(F(s)+G(t)\right),\Im \left(F(s)+G(t)\right)\right)=
\big(\Re(F)(s)+\Re(G)(t), \Im(F)(s)+\Im(G)(t)\big)\\
\\
J(s,t)&=\left| \begin{array}{cc}
\frac{\partial H_{\Re } \left(s,t\right)}{\partial s} & \frac{\partial H_{\Re } \left(s,t\right)}{\partial t}\\
\frac{\partial H_{\Im } \left(s,t\right)}{\partial s} & \frac{\partial H_{\Im } \left(s,t\right)}{\partial t}
\end{array} \right|
=
\left\vert \begin{array}{cc}
    \Re(F)'(s) & \Re(G)'(t) \\
    \Im(F)'(s) & \Im(G)'(t)
\end{array} \right\vert \\
J(s,t) & =0 \Longrightarrow \text{envelope}
\end{flalign*}

\textit{\textbf{Implicit combination}}
\begin{flalign*}
\varphi_\oplus (x,y,u,v) & = (x+u,y+v) \\
\widetilde{h}(x,y,u,v) & = \left\vert \mathrm{Jac} \big(f,g,\Re(\varphi_\oplus),\Im(\varphi_\oplus)\big) \right\vert \\
& =\left\vert
\begin{array}{cccc}
    \partial f/ \partial x & \partial f/ \partial y & 0 & 0 \\
    0 & 0 & \partial g/ \partial u & \partial g/ \partial v \\
    1 & 0 & 1 & 0 \\
    0 & 1 & 0 & 1 
\end{array}
\right\vert \\
I & =(f,g,\widetilde{h},\Re(\varphi_\oplus)-X,\Im(\varphi_\oplus)-Y) \\
h & = \text{Gröbner basis elements involving only $X,Y$}
\end{flalign*}

\textit{\textbf{Mixed combination}}
\begin{flalign*}
u(x,y,t)&=s\left(x-G^{\Re } \left(t\right),y-G^{\Im } \left(t\right)\right)\\
\\
\hat{h}(x,y,t)&=f\left(x-G^{\Re } \left(t\right),y-G^{\Im } \left(t\right)\right)\\
\frac{\partial \hat{h}}{\partial t}=0 & \Longrightarrow t\left(x,y\right)\\
h(x,y)&=\hat{h}(x,y,t(x,y))=0\\
\\
x\left(s,t\right)&=\Re \left(F(s)+G(t)\right)\\
y\left(s,t\right)&=\Im \left(F(s)+G(t)\right)\\
J(s,t)&=h\left(x\left(s,t\right),y\left(s,t\right)\right)
\end{flalign*}

\subsubsection{Edge plus edge}\label{sup:edge_plus_edge}

\begin{figure}[H]
\centering
\includegraphics[width=\textwidth,trim={0 0 0 10mm},clip]{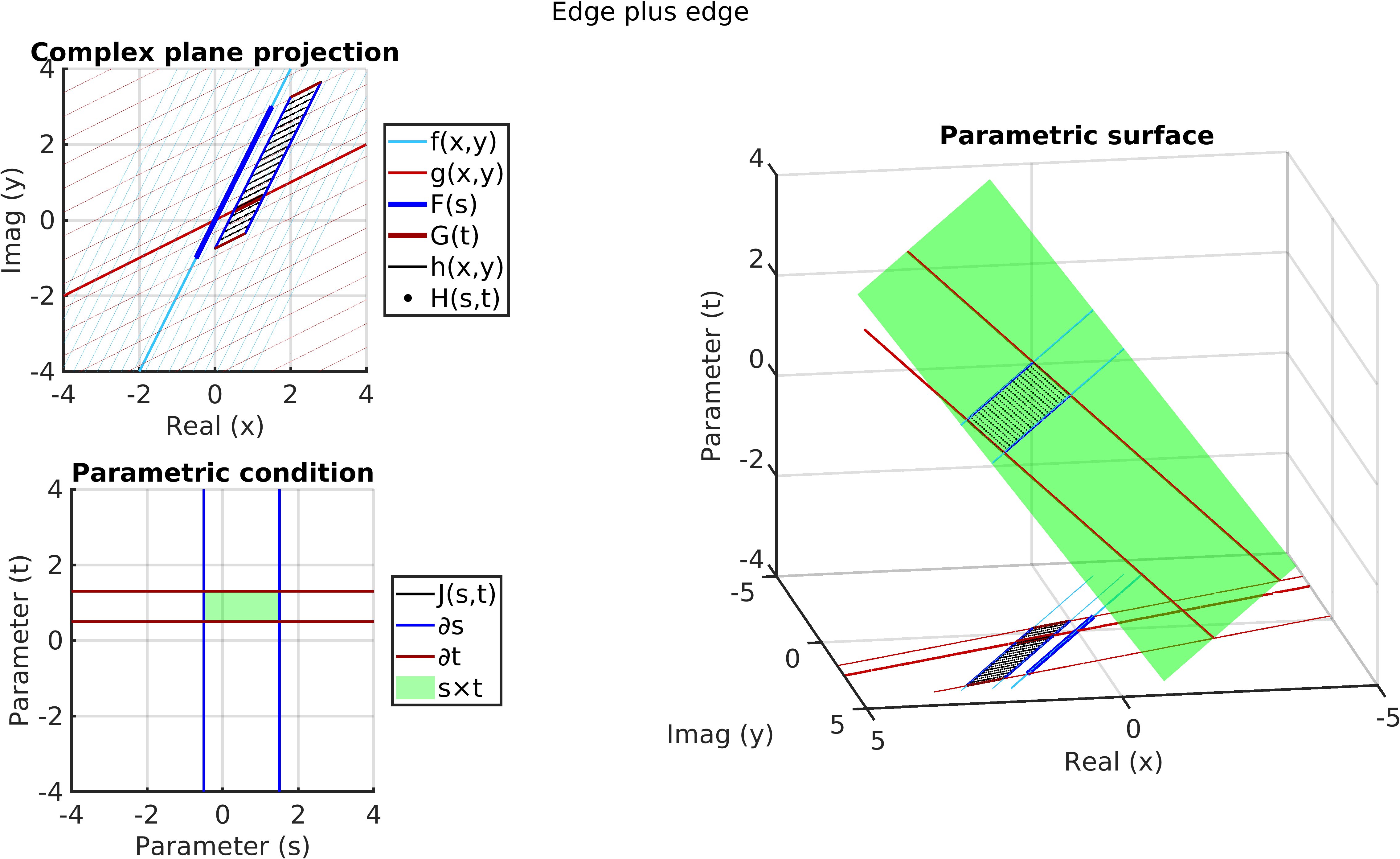}
\caption{Addition of two edges. Parameters: $a_1=2.0$, $a_2=0.5$, $s=(-0.5,1.5)$, $t=(0.5,1.3)$}
\label{fig:EdgePlusEdge}
\end{figure}

\textit{\textbf{Operand equations}}
\begin{flalign*}
f(x,y)&=a_1 x-y\\
s(x,y)&=x\\
F(s)&=s+i{a}_1 s=\left(s,a_1 s\right)\\
\\
g\left(x,y\right)&=a_2 x-y\\
t\left(x,y\right)&=x\\
G(t)&=t+{ia}_2 t=\left(t,a_2 t\right)
\end{flalign*}

\textit{\textbf{Parametric combination}}
\begin{flalign*}
H(s,t)&=\left(s+t,a_1 s+a_2 t\right)\\
J(s,t)&=\left\vert \begin{array}{cc}
1 & 1\\
a_1  & a_2 
\end{array}\right\vert
=a_2 -a_1 \\
\text{Envelope: }& 
\boxed{\left\lbrace \begin{array}{ll}
\text{all } (s,t) & \text{if } a_1 =a_2 \\
\emptyset  & \text{if } a_1 \neq a_2 
\end{array}\right.} 
\end{flalign*}

\textit{\textbf{Implicit combination}}
\begin{flalign*}
\text{Envelope: } &
\boxed{\left\lbrace \begin{array}{ll}
a_2 x-y & \text{if } a_1 =a_2 \\
1  & \text{if } a_1 \not= a_2
\end{array}\right.}\end{flalign*}

\textit{\textbf{Mixed combination}}
\begin{flalign*}
\hat{h}(x,y,t)
&=a_1 \left(x-t\right)-\left(y-a_2 t\right)\\
\frac{\partial \hat{h}}{\partial t}&=a_2 -a_1 \\
\frac{\partial \hat{h}}{\partial t}=0 &\Longrightarrow a_1 =a_2 
\\
\text{Envelope: }& 
\boxed{\left\lbrace \begin{array}{ll}
a_1 x-y=0 & \text{if } a_1 =a_2 \\
\emptyset  & \text{if } a_1 \neq a_2 
\end{array}\right. }
\end{flalign*}

\vfill \eject

\subsubsection{Arc plus edge}\label{sup:arc_plus_edge}

\begin{figure}[H]
\centering
\includegraphics[width=\textwidth,trim={0 0 0 10mm},clip]{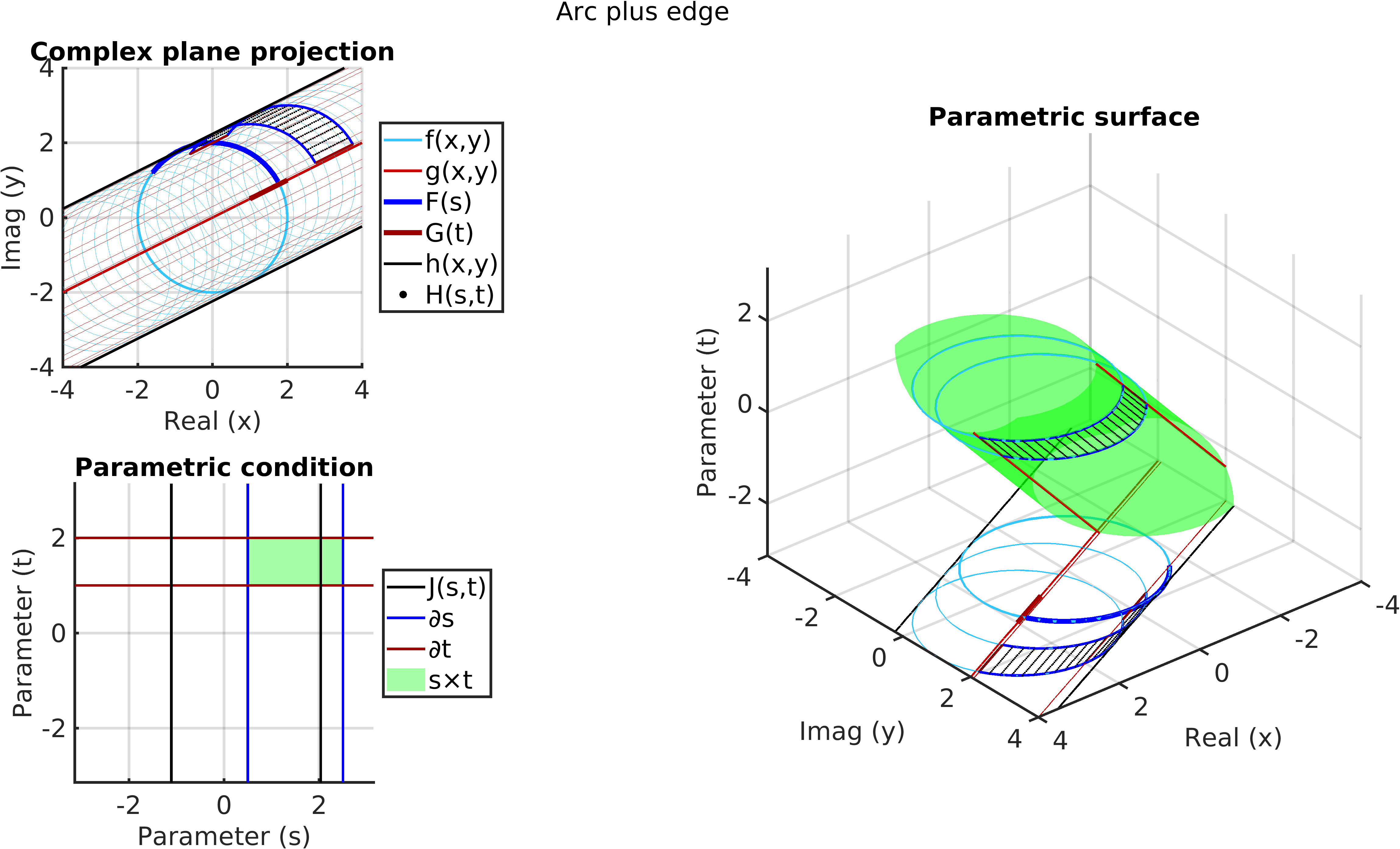}
\caption{Addition of an arc and an edge. Parameters: $r=2.0$, $a=0.5$, $s=(0.5,2.5)$, $t=(1.0,2.0)$}
\label{fig:EdgePlusArc}
\end{figure}

\textit{\textbf{Operand equations}}
\begin{flalign*}
f(x,y)&=x^2 +y^2 -r^2 \\
s(x,y)&=\atantwo\left(y,x\right)\\
F(s)&={re}^{is} =\left(r\cos (s),r\sin (s)\right)\\
\\ 
g\left(x,y\right)&=\mathrm{ax}-y\\
t\left(x,y\right)&=x\\
G(t)&=t+iat=\left(t,at\right)
\end{flalign*}

\textit{\textbf{Parametric combination}}
\begin{flalign*}
H(s,t)&=\left(r\cos (s)+t,r\sin (s)+at\right)\\
J(s,t)&=\left\vert \begin{array}{cc}
-r\sin (s) & 1\\
r\cos (s) & a
\end{array}\right\vert\\
&=-r\left(a\sin (s)+\cos (s)\right)\\
\text{Envelope: }&\boxed{ \tan \left(s\right)=-1/a}
\end{flalign*}

\textit{\textbf{Implicit combination}}
\begin{flalign*}
h(x,y)&=a^2 x^2 -a^2 r^2 -2axy+y^2 -r^2 \\
&={\left(ax-y\right)}^2 -r^2 \left(a^2 +1\right)\\
\text{Envelope: } &
\boxed{ax-y\pm r\sqrt{\left(a^2 +1\right)}=0}
\end{flalign*}

\textit{\textbf{Mixed combination}}
\begin{flalign*}
u(x,y,t)
&=\atantwo\left(y-at,x-t\right)
\\ \\
\hat{h}(x,y,t)
&={\left(x-t\right)}^2 +{\left(y-at\right)}^2 -r^2 \\
\frac{\partial \hat{h}}{\partial t}&=2t+2x+2a\left(y-at\right)\\
\frac{\partial \hat{h}}{\partial t} =0 & \Longrightarrow t\left(x,y\right)=\frac{x+\mathrm{ay}}{a^2 +1}\\
h(x,y)
&={\left(x-\frac{x+\mathrm{ay}}{a^2 +1}\right)}^2 +{\left(y-a\frac{x+\mathrm{ay}}{a^2 +1}\right)}^2 -r^2 \\
&={\left(\frac{a^2 x+x-x-\mathrm{ay}}{a^2 +1}\right)}^2 +{\left(\frac{a^2 y+y-\mathrm{ax}-a^2 y}{a^2 +1}\right)}^2 -r^2 \\
&={\left(a^2 x-ay\right)}^2 +{\left(y-\mathrm{ax}\right)}^2 -r^2 {\left(a^2 +1\right)}^2 \\
&=a^2 {\left(\mathrm{ax}-y\right)}^2 +{\left(ax-y\right)}^2 -r^2 {\left(a^2 +1\right)}^2 \\
&=\left(a^2 +1\right){\left(\mathrm{ax}-y\right)}^2 -r^2 {\left(a^2 +1\right)}^2 =0\\
&\Longrightarrow {\left(ax-y\right)}^2 -r^2 \left(a^2 +1\right)=0\\
\text{Envelope: } & \boxed{ax-y\pm r\sqrt{a^2 +1}=0}
\\ \\
x\left(s,t\right) &=r\cos (s)+t\\ 
y\left(s,t\right) &=r\sin (s)+at\\ 
J(s,t)
&={\left(ax\left(s,t\right)-y\left(s,t\right)\right)}^2 -r^2 \left(a^2 +1\right)\\
&={\left(a\left(r\cos (s)+t\right)-\left(r\sin (s)+at\right)\right)}^2 -r^2 \left(a^2 +1\right)\\
&={\left(ar\cos (s)-r\sin (s)\right)}^2 -r^2 \left(a^2 +1\right)=0\\
&\Longrightarrow {\left(ar\cos (s)-r\sin (s)\right)}^2 =r^2 \left(a^2 +1\right)\\
\text{Envelope: } & \boxed{\left(a\cos (s)-\sin (s)\right)^2 = a^2 +1}
\end{flalign*}

\vfill \eject

\subsubsection{Arc plus arc}\label{sup:arc_plus_arc}

\begin{figure}[H]
\centering
\includegraphics[width=\textwidth,trim={0 0 0 10mm},clip]{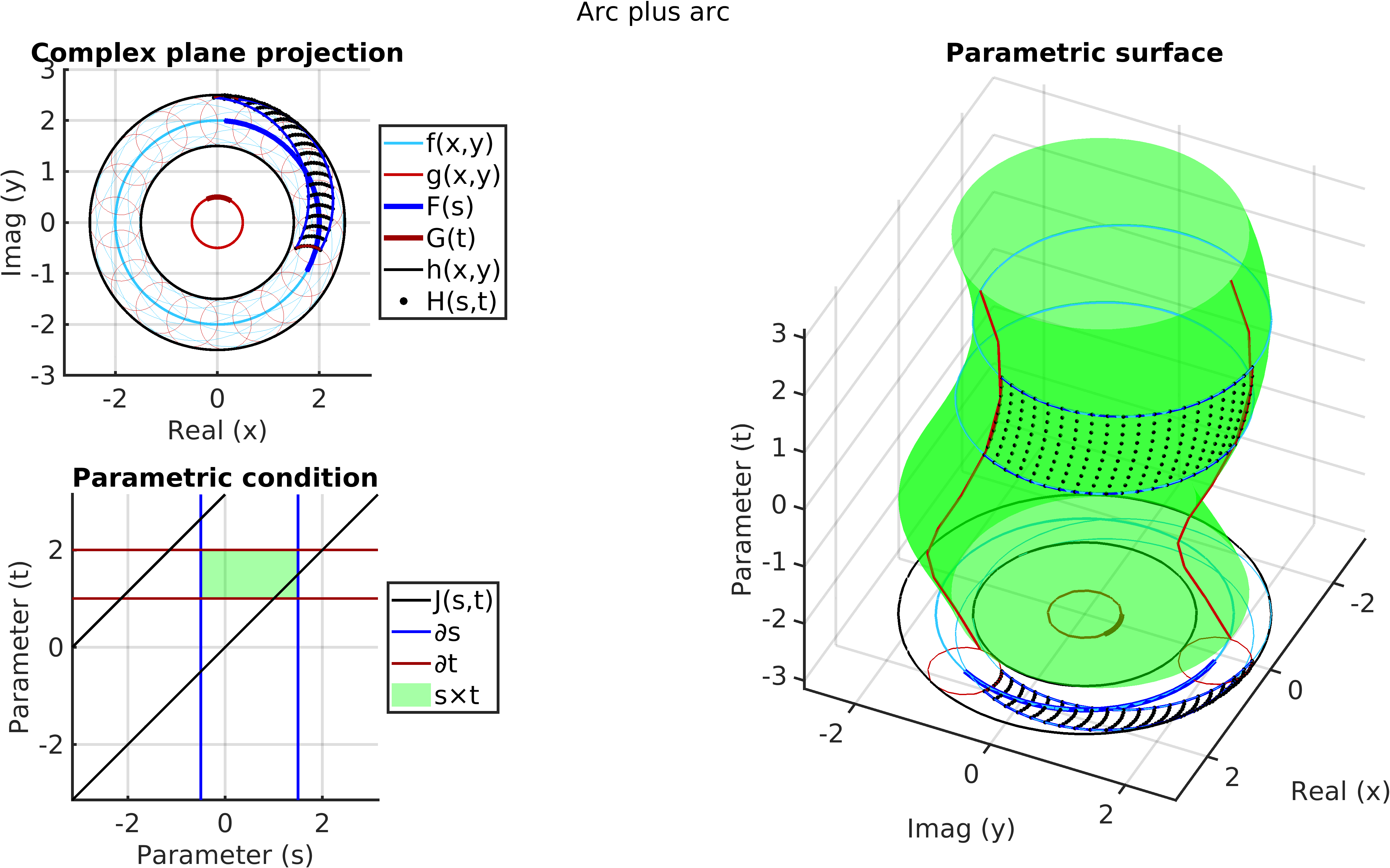}
\caption{Addition of two arcs. Parameters: $r_1=2.0$, $r_2=0.5$, $s=(-0.5,1.5)$, $t=(1.0,2.0)$}
\label{fig:ArcPlusArc}
\end{figure}

\textit{\textbf{Operand equations}}
\begin{flalign*}
f(x,y)&=x^2 +y^2 -r_1^2 \\
s(x,y)&=\atantwo\left(y,x\right)\\
F(s)&={r_1 e}^{is} =\left(r_1 \cos (s),r_1 \sin (s)\right)\\
\\ 
g\left(x,y\right)&=x^2 +y^2 -r_2^2 \\
t\left(x,y\right)&=\atantwo\left(y,x\right)\\
G(t)&={r_2 e}^{it} =\left(r_2 \cos (t),r_2 \sin (t)\right)
\end{flalign*}

\textit{\textbf{Parametric combination}}
\begin{flalign*}
H(s,t) &=\left(r_1 \cos (s)+r_2 \cos (t),r_1 \sin (s)+r_2 \sin (t)\right)
\\
J(s,t)&=\left| \begin{array}{cc}
-r_1 \sin (s) & -r_2 \sin (t)\\
r_1 \cos (s) & r_2 \cos (t)
\end{array} \right|\\
&=r_1 r_2 \left(\sin (t)\cos (s)-\sin (s)\cos (t)\right)\\
&=r_1 r_2 \sin (t-s) \\
\text{Envelope: } & \boxed{s=t+k\pi \; (k\in \mathbb{Z})}
\end{flalign*}

\textit{\textbf{Implicit combination}}
\begin{flalign*}
h(x,y)&=r_2^4 -2r_2^2 r_1^2 +r_1^4 -2r_2^2 x^2 -2r_1^2 x^2 +x^4 -2r_2^2 y^2 -2r_1^2 y^2 +2x^2 y^2 +y^4 \\
&=\left(x^2 +y^2 -{\left(r_1 +r_2 \right)}^2 \right)\left(x^2 +y^2 -{\left(r_1 -r_2 \right)}^2 \right)\\
\text{Envelope: } & \boxed{x^2 +y^2 -\left(r_1 \pm r_2 \right)^2=0}
\end{flalign*}

\textit{\textbf{Mixed combination}}
\begin{flalign*}
u(x,y,t)
&=\atantwo\left(y-r_2 \cos (t),x-r_2 \sin (t)\right)\\
\\
\hat{h}(x,y,t)
&={\left(x-r_2 \cos (t)\right)}^2 +{\left(y-r_2 \sin (t)\right)}^2 -r_1^2 \\
\frac{\partial \hat{h}}{\partial t}&=2r_2 x\sin (t)-2r_2^2 \cos (t)\sin (t)-2r_2 y\cos (t)+2r_2^2 \cos (t)\sin (t)\\
&=2r_2 \left(x\sin (t)-y\cos (t)\right)\\
\frac{\partial \hat{h}}{\partial t}=0&\Longrightarrow x\sin (t)=y\cos (t)\\ 
h(x,y)
&={\left(x-r_2 \cos \left(\atan\left(\frac{y}{x}\right)\right)\right)}^2 +{\left(y-r_2 \sin \left(\atan\left(\frac{y}{x}\right)\right)\right)}^2 -r_1^2 \\
&={{\left(x-\frac{r_2 \,x}{\sqrt{x^2 +y^2 }}\right)}}^2 +{{\left(y-\frac{r_2 \,y}{\sqrt{x^2 +y^2 }}\right)}}^2 -r_1^2 \\
&=x^2 -\frac{2{r_2 \,x}^2 }{\sqrt{x^2 +y^2 }}+\frac{{\left(r_2 \,x\right)}^2 }{x^2 +y^2 }+y^2 -\frac{2{r_2 \,y}^2 }{\sqrt{x^2 +y^2 }}+\frac{{\left(r_2 \,y\right)}^2 }{x^2 +y^2 }-r_1^2 \\
&=x^2 +y^2 -\frac{2r_2 \,\left(x^2 +y^2 \right)}{\sqrt{x^2 +y^2 }}+\frac{r_2^2 \left(x^2 +y^2 \right)}{x^2 +y^2 }-r_1^2 \\
&=x^2 +y^2 -2r_2 \sqrt{x^2 +y^2 }+r_2^2 -r_1^2 \\
&={\left(\sqrt{x^2 +y^2 }-r_2 \right)}^2 -r_1^2 =0\\
&\Longrightarrow \sqrt{x^2 +y^2 }-r_2 =\pm r_1 \\
&\Longrightarrow \sqrt{x^2 +y^2 }=r_2 \pm r_1 \\
\text{Envelope: } & \boxed{x^2 +y^2 -{{\left(r_1 \pm r_2 \right)}}^2 =0}\\
\\
x\left(s,t\right)
&=r_1 \cos (s)+r_2 \cos (t)\\
y\left(s,t\right)
&=r_1 \sin (s)+r_2 \sin (t)\\
J(s,t)
&={\left(r_1 \cos (s)+r_2 \cos (t)\right)}^2 +{\left(r_1 \sin (s)+r_2 \sin (t)\right)}^2 -{{\left(r_1 \pm r_2 \right)}}^2 \\
&=r_1^2 \cos^2 \left(s\right)+2r_1 r_2 \cos (s)\cos (t)+r_2^2 \cos^2 \left(t\right)+r_1^2 \sin^2 \left(s\right) \\ 
& \quad +2r_1 r_2 \sin (s)\sin (t)+r_2^2 \sin^2 \left(t\right)-{{\left(r_1 \pm r_2 \right)}}^2 \\
&=r_1^2 +r_2^2 +2r_1 r_2 \left(\cos (s)\cos (t)+\sin (s)\sin (t)\right)-r_1^2 \pm {2r}_1 r_2 -r_2^2 \\
&=2r_1 r_2 \cos \left(s-t\right)\pm {2r}_1 r_2
\\
J(s,t)=0 & \Longrightarrow \cos \left(s-t\right)=\pm 1\\
\text{Envelope: }&\boxed{s=t+k\pi \; (k\in \mathbb{Z}) }
\end{flalign*}

\vfill \eject

\subsection{Multiplication}\label{sup:multiplication}

\textit{\textbf{Parametric combination}}
\begin{flalign*}
H(s,t)&=\left(\Re \left(F(s)G(t)\right),\Im \left(F(s)G(t)\right)\right)\\
&=\big(\Re(F)(s)\Re(G)(t)-\Im(F)(s)\Im(G)(t),\Re(F)(s)\Im(G)(t)+\Im(F)(s)\Re(G)(t) \big)\\
\\
J(s,t)&=\left|\begin{array}{cc}
\frac{\partial H_{\Re } \left(s,t\right)}{\partial s} & \frac{\partial H_{\Re } \left(s,t\right)}{\partial t}\\
\frac{\partial H_{\Im } \left(s,t\right)}{\partial s} & \frac{\partial H_{\Im } \left(s,t\right)}{\partial t}
\end{array} \right| \\
&= \left\vert \begin{array}{cc}
    \Re(F)'(s)\Re(G)(t)-\Im(F)'(s)\Im(G)(t) & \Re(F)(s)\Re(G)'(t)-\Im(F)(s)\Im(G)'(t) \\
    \Re(F)'(s)\Im(G)(t)+\Im(F)'(s)\Re(G)(t) & \Re(F)(s)\Im(G)'(t)+\Im(F)(s)\Re(G)'(t)
\end{array} \right\vert\\
J(s,t) & =0 \Longrightarrow \text{envelope}
\end{flalign*}

\textit{\textbf{Implicit combination}}
\begin{flalign*}
\varphi_\otimes (x,y,u,v) & = (xu-yv,xv+yu) \\
\widetilde{h}(x,y,u,v) & =  \left\vert \mathrm{Jac} \big(f,g,\Re(\varphi_\otimes),\Im(\varphi_\otimes)\big) \right\vert \\
& =\left\vert
\begin{array}{cccc}
    \partial f/ \partial x & \partial f/ \partial y & 0 & 0 \\
    0 & 0 & \partial g/ \partial u & \partial g/ \partial v \\
    u & -v & x & -y \\
    v & u & y & x 
\end{array}
\right\vert \\
I & =(f,g,\widetilde{h},\Re(\varphi_\otimes)-X,\Im(\varphi_\otimes)-Y) \\
h & = \text{Gröbner basis elements involving only $X,Y$}
\end{flalign*}

\textit{\textbf{Mixed combination}}
\begin{flalign*}
u^{\circ } (\rho ,\theta ,t)&=s^{\circ } \left(\rho /|G(t)|,\theta -\angle G(t)\right)\\
u(x,y,t)&=u^{\circ } \left(\sqrt{x^2 +y^2 },\atantwo\left(y,x\right),t\right)\\
\\
h(\rho,\theta)&=\mathring{f} \left(\frac{\rho }{|G(t)|},\theta -\angle G(t)\right)\\
\hat{h}(x,y,t)&=h\left(\rho \left(x,y\right),\theta \left(x,y\right)\right)\\
\frac{\partial \hat{h}}{\partial t}=0 
&\Longrightarrow t(x,y)\\
h(x,y)&=\hat{h}(x,y,t(x,y))\\
\\
x\left(s,t\right)&=\Re \left(F(s)G(t)\right)\\
y\left(s,t\right)&=\Im \left(F(s)G(t)\right)\\
J(s,t)&=h\left(x\left(s,t\right),y\left(s,t\right)\right)
\end{flalign*}

\vfill\eject
\subsubsection{Edge times edge}\label{sup:edge_times_edge}

\paragraph{General case} ~\\

\begin{figure}[H]
\centering
\includegraphics[width=\textwidth,trim={0 0 0 10mm},clip]{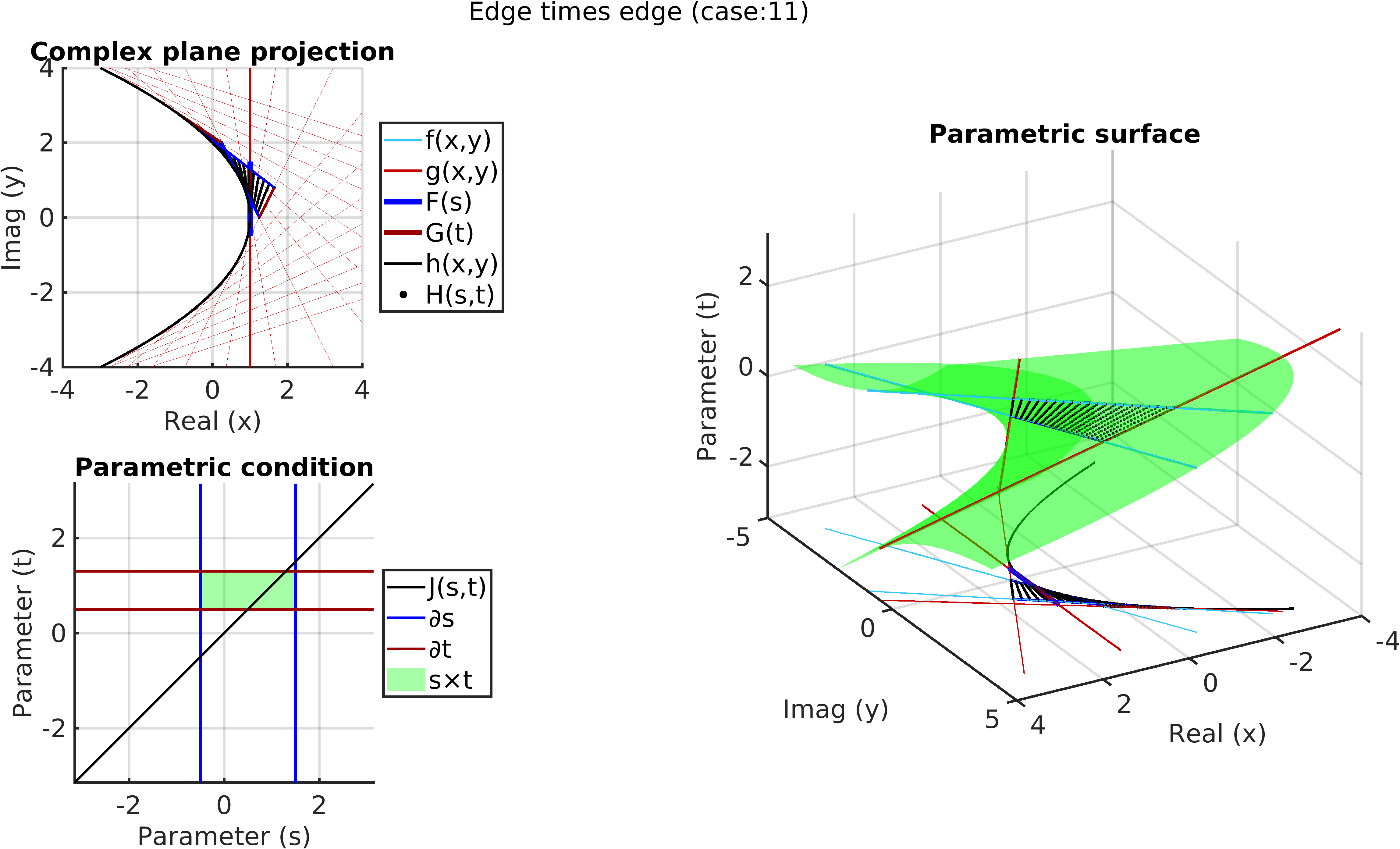}
\caption{Multiplication of two edges in the general case. Parameters: $s=(-0.5,1.5)$, $t=(0.5,1.3)$.}
\label{fig:EdgeTimesEdge11}
\end{figure}

\textit{\textbf{Operand equations}}
\begin{flalign*}
f(x,y)&=x-1\\ \mathring{f} \left(\rho ,\theta \right)&=1/\rho -\cos \left(\theta \right)\\
s(x,y)&=y\\ s^{\circ } \left(\rho ,\theta \right)&=\rho \sin \left(\theta \right)\\
F(s)&=1+is=\left(1,s\right)\\
\\ 
g\left(x,y\right)&=x-1
\\
{g}^{\circ } \left(\rho ,\theta \right)&=1/\rho -\cos \left(\theta \right)\\
t\left(x,y\right)&=y\\ t^{\circ } \left(\rho ,\theta \right)&=\rho \sin \left(\theta \right)\\
G(t)&=1+\mathrm{it}=\left(1,t\right)
\end{flalign*}

\textit{\textbf{Parametric combination}}
\begin{flalign*}
H(s,t)
&=\left(1-\mathrm{st},s+t\right)\\
J(s,t)&=\left| \begin{array}{cc}
-t & -s\\
1 & 1
\end{array} \right| \\
&=s-t\\
\text{Envelope: } & \boxed{s=t}
\end{flalign*}

\textit{\textbf{Implicit combination}}
\begin{flalign*}
h(x,y)&= y^2+4x-4\\
\text{Envelope: } & \boxed{x =-y^2/4 +1}
\end{flalign*}

\textit{\textbf{Mixed combination}}
\begin{flalign*}
u^{\circ } (\rho ,\theta ,t)
&=\frac{\rho }{\sqrt{1+t^2 }}\sin \left(\theta -\atantwo\left(t,1\right)\right)\\
&=\frac{\rho }{\sqrt{1+t^2 }}\sin \left(\theta -\atan\left(t\right)\right)\\
u(x,y,t)
&=\frac{\sqrt{x^2 +y^2 }}{\sqrt{1+t^2 }}\sin \left(\atantwo\left(y,x\right)-\atan\left(t\right)\right)\\
u(x,y,t)=0
&\Longrightarrow \atantwo\left(y,x\right)-\atan\left(t\right)=0+k\pi \\
\\
h(\rho,\theta)
&=\frac{\sqrt{t^2 +1}}{\rho }-\cos \left(\theta -\atantwo\left(t,1\right)\right)\\
&=\frac{\sqrt{t^2 +1}}{\rho }-\cos \left(\theta -\atan\left(t\right)\right)\\
\hat{h}(x,y,t)
&=\frac{\sqrt{t^2 +1}}{\sqrt{x^2 +y^2 }}-\cos \left(\atantwo\left(y,x\right)-\atan\left(t\right)\right)=0\\
&=\frac{\sqrt{t^2 +1}}{\sqrt{x^2 +y^2 }}-\cos \left(\atantwo\left(y,x\right)\right)\cos \left(\atan\left(t\right)\right)-\sin \left(\atantwo\left(y,x\right)\right)\sin \left(\atan\left(t\right)\right)=0\\
&=\frac{\sqrt{t^2 +1}}{\sqrt{x^2 +y^2 }}-\frac{x}{\sqrt{x^2 +y^2 }}\cdot \frac{1}{\sqrt{t^2 +1}}-\frac{y}{\sqrt{x^2 +y^2 }}\cdot \frac{t}{\sqrt{t^2 +1}}=0\\
&=t^2 +1-x-\mathrm{yt}\\
\frac{\partial \hat{h}}{\partial t}&=2t-y\\
\frac{\partial \hat{h}}{\partial t}=0
&\Longrightarrow t=\frac{y}{2}\\
h(x,y)&=\frac{y^2 }{4}+1-x-\frac{y^2 }{2}\\
h(x,y)=0 &\Longrightarrow x+\frac{y^2 }{4}-1=0\\
\text{Envelope: } & \boxed{x =-y^2/4 +1}
\\
x(s,t)
&=1-\mathrm{st}\\
y(s,t)
&=s+t\\
J(s,t)
&=h\left(1-st,s+t\right)\\
&=1-st+\frac{{\left(s+t\right)}^2 }{4}-1\\
J(s,t)=0
&\Longrightarrow {\left(s+t\right)}^2 =4st\\
&\Longrightarrow s^2 +2st+t^2 =4st\\
&\Longrightarrow s^2 -2st+t^2 =0\\
&\Longrightarrow {\left(s-t\right)}^2 =0\\
&\Longrightarrow s=t\\
\text{Envelope: } & \boxed{s=t}
\end{flalign*}

\vfill \eject

\paragraph{Special case: one edge is on a zero crossing line} ~\\

\begin{figure}[H]
\centering
\includegraphics[width=\textwidth,trim={0 0 0 10mm},clip]{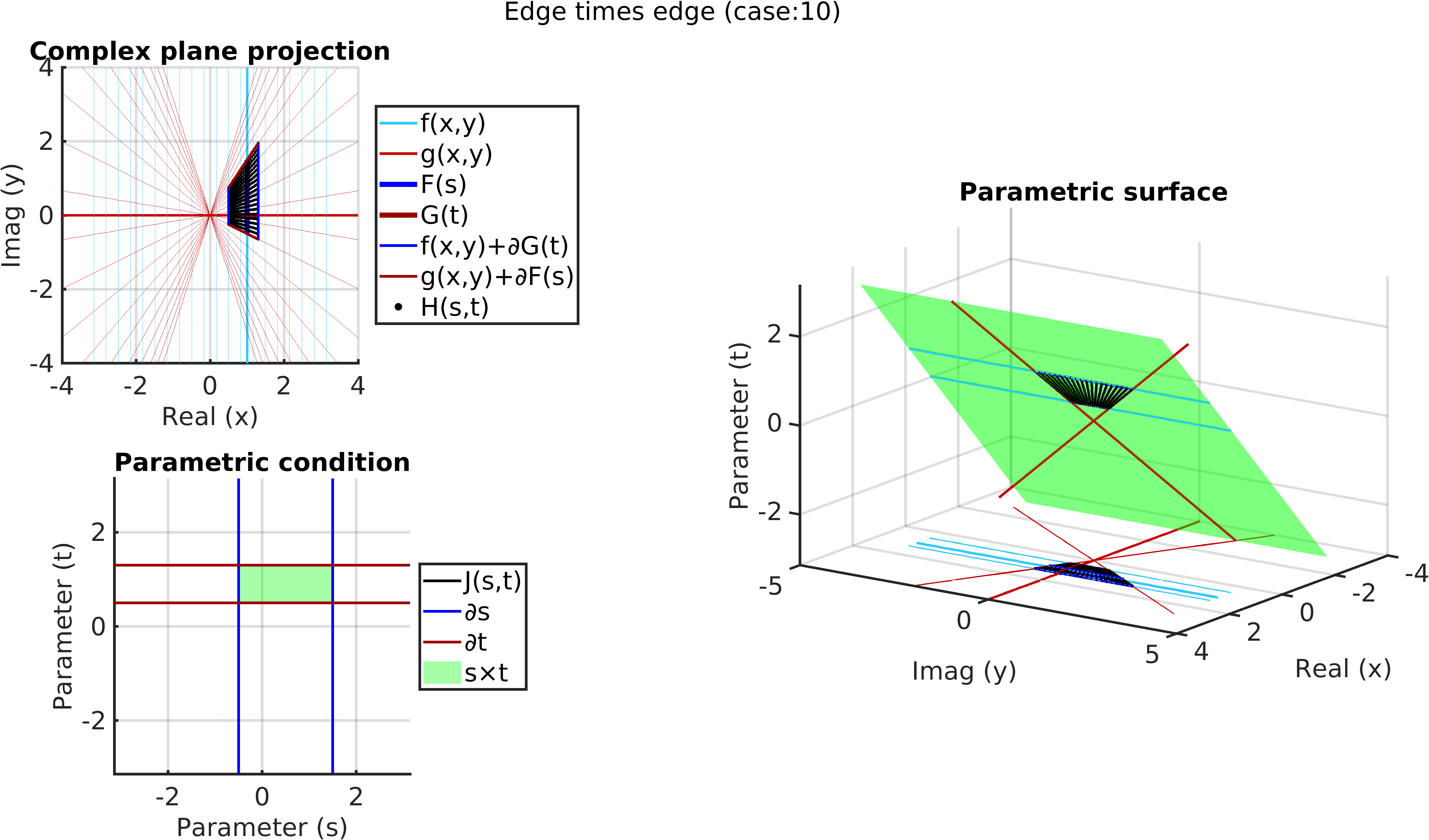}
\caption{Multiplication of two edges when both are on zero crossing lines. Parameters: $s=(-0.5,1.5)$, $t=(0.5,1.3)$.}
\label{fig:EdgeTimesEdge10}
\end{figure}

\textit{\textbf{Operand equations}}
\begin{flalign*}
f(x,y)&=x-1
\\ 
\mathring{f} \left(\rho ,\theta \right)&=1/\rho -\cos \left(\theta \right)\\
s(x,y)&=y
\\
s^{\circ } \left(\rho ,\theta \right)&=\rho \sin \left(\theta \right)\\
F(s)&=1+is=\left(1,s\right)\\
\\
g\left(x,y\right)&=y
\\
{g}^{\circ } \left(\rho ,\theta \right)&=\tan \left(\theta \right)\\
t\left(x,y\right)&=x
\\
t^{\circ } \left(\rho ,\theta \right)&=\rho \\
G(t)=&t=\left(t,0\right)
\end{flalign*}

\textit{\textbf{Parametric combination}}
\begin{flalign*}
H(s,t)
&=\left(s,st\right)\\
\\
J(s,t)&=\left| \begin{array}{cc}
1 & 0\\
t & s
\end{array} \right| \\
&=s\\
\text{Envelope: } & \boxed{s=0}
\end{flalign*}

\textit{\textbf{Implicit combination}}
\begin{flalign*}
h_1(x,y) &=x \\
h_2(x,y) &=y \\
\text{Envelope: } & \boxed{(0,0) \text{ (i.e. the origin)}} 
\end{flalign*}

\textit{\textbf{Mixed combination}}
\begin{flalign*}
u^{\circ } (\rho ,\theta ,t)
&=\frac{\rho }{t}\sin \left(\theta \right)\\
\\
u(x,y,t)
&=\frac{\sqrt{x^2 +y^2 }}{t}\sin \left(\mathrm{atan2}\left(y,x\right)\right)\\
\\
h(\rho,\theta)
&=\frac{t}{\rho }-\cos \left(\theta \right)\\
\\
\hat{h}(x,y,t)
&=\frac{t}{\sqrt{x^2 +y^2 }}-\cos \left(\mathrm{atan2}\left(y,x\right)\right)\\
&=\frac{t-x}{\sqrt{x^2 +y^2 }}\\
\hat{h}(x,y,t)=0 &\Longrightarrow t=x\\
\\
\frac{\partial \hat{h}}{\partial t}&=1\\
\\
h(x,y)&=\hat{h}(x,y,t(x,y))=\emptyset 
\\
\text{Envelope: } & \boxed{\emptyset}
\end{flalign*}

\vfill\eject

\paragraph{Special case: both edges are on zero crossing lines.} ~\\

\begin{figure}[H]
\centering
\includegraphics[width=\textwidth,trim={0 0 0 10mm},clip]{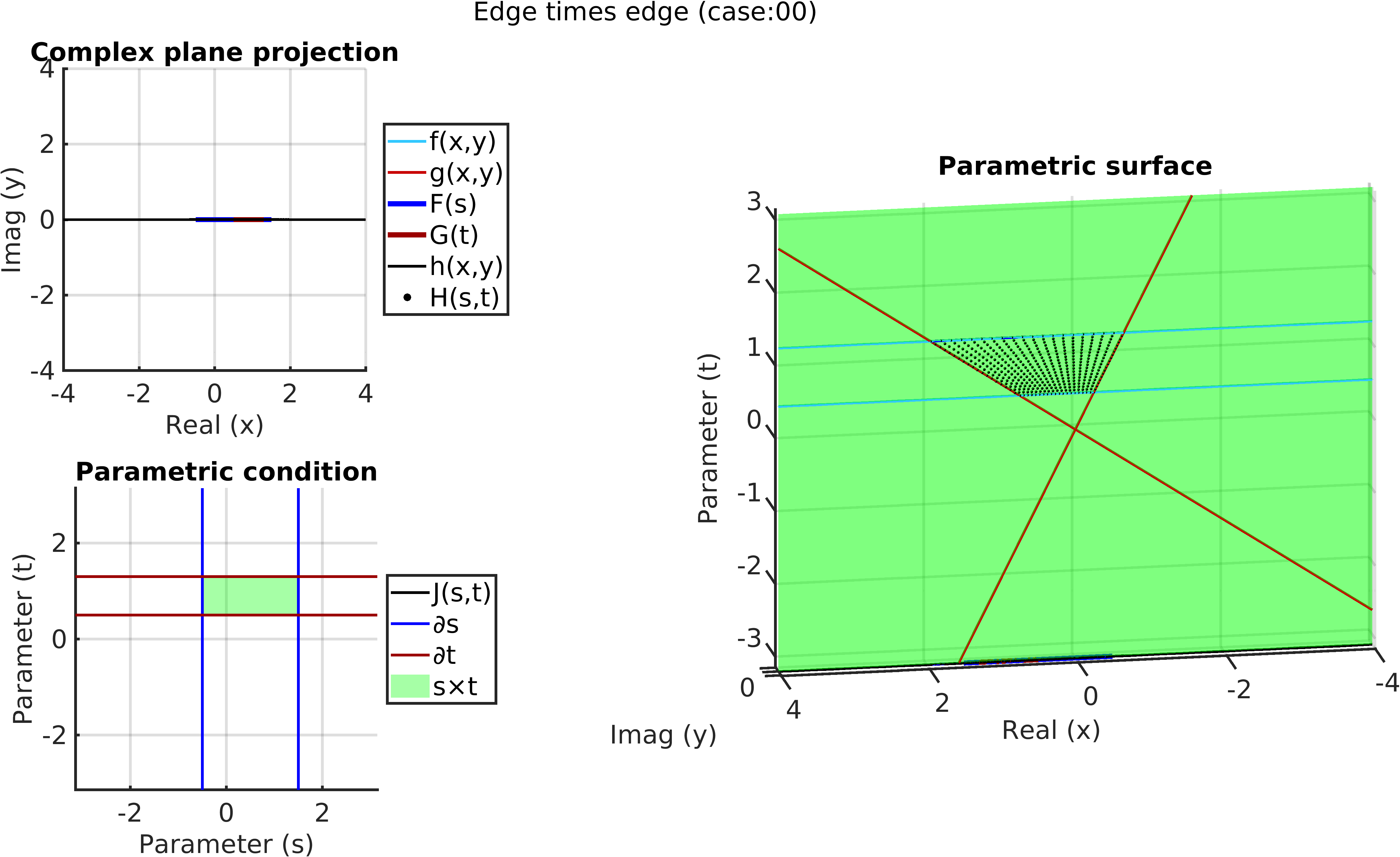}
\caption{Multiplication of two edges when both are on zero crossing lines. Parameters: $s=(-0.5,1.5)$, $t=(0.5,1.3)$.}
\label{fig:EdgeTimesEdge00}
\end{figure}

\textit{\textbf{Operand equations}}
\begin{flalign*}
f(x,y)&=y\\ \mathring{f} \left(\rho ,\theta \right)&=\tan \left(\theta \right)\\
s(x,y)&=x\\ s^{\circ } \left(\rho ,\theta \right)&=\rho \\
F(s)&=s=\left(s,0\right)\\
\\ 
g\left(x,y\right)&=y
\\
{g}^{\circ } \left(\rho ,\theta \right)&=\tan \left(\theta \right)\\
t\left(x,y\right)&=x\\ t^{\circ } \left(\rho ,\theta \right)&=\rho \\
G(t)&=t=\left(t,0\right)
\end{flalign*}

\textit{\textbf{Parametric combination}}
\begin{flalign*}
H(s,t)
&=\left(st,0\right)
\\
J(s,t)
&=\left| \begin{array}{cc}
t & s\\
0 & 0
\end{array} \right| \\
&=0 \\
\text{Envelope: } & \boxed{\text{all } (s,t)}
\end{flalign*}

\textit{\textbf{Implicit combination}}
\begin{flalign*}
h(x,y) &=y \\
\text{Envelope: } & \boxed{y=0}
\end{flalign*}

\textit{\textbf{Mixed combination}}
\begin{flalign*}
u^{\circ } (\rho ,\theta ,t)
&=\frac{\rho }{t}\\
\\
u(x,y,t)
&=\frac{\sqrt{x^2 +y^2 }}{t}\\
\\
h(\rho,\theta)
&=\tan \left(\frac{\theta }{t}\right)\\
\\
\hat{h}(x,y,t)
&=\tan \left(\frac{\atantwo\left(y,x\right)}{t}\right)\\
\hat{h}(x,y,t)=0
&\Longrightarrow \atantwo\left(y,x\right)=\atan\left(0\right)\cdot t\\
&\Longrightarrow \atantwo\left(y,x\right)=0+k\pi \\
&\Longrightarrow y=0\\
\\
\frac{\partial \hat{h}}{\partial t}&=0\\
\\
h(x,y)&=y\\
\text{Envelope: } & \boxed{y=0}
\end{flalign*}

\vfill \eject

\vfill \eject

\subsubsection{Arc times edge}\label{sup:arc_times_edge}

\paragraph{General case} ~\\

\begin{figure}[H]
\centering
\includegraphics[width=\textwidth,trim={0 0 0 10mm},clip]{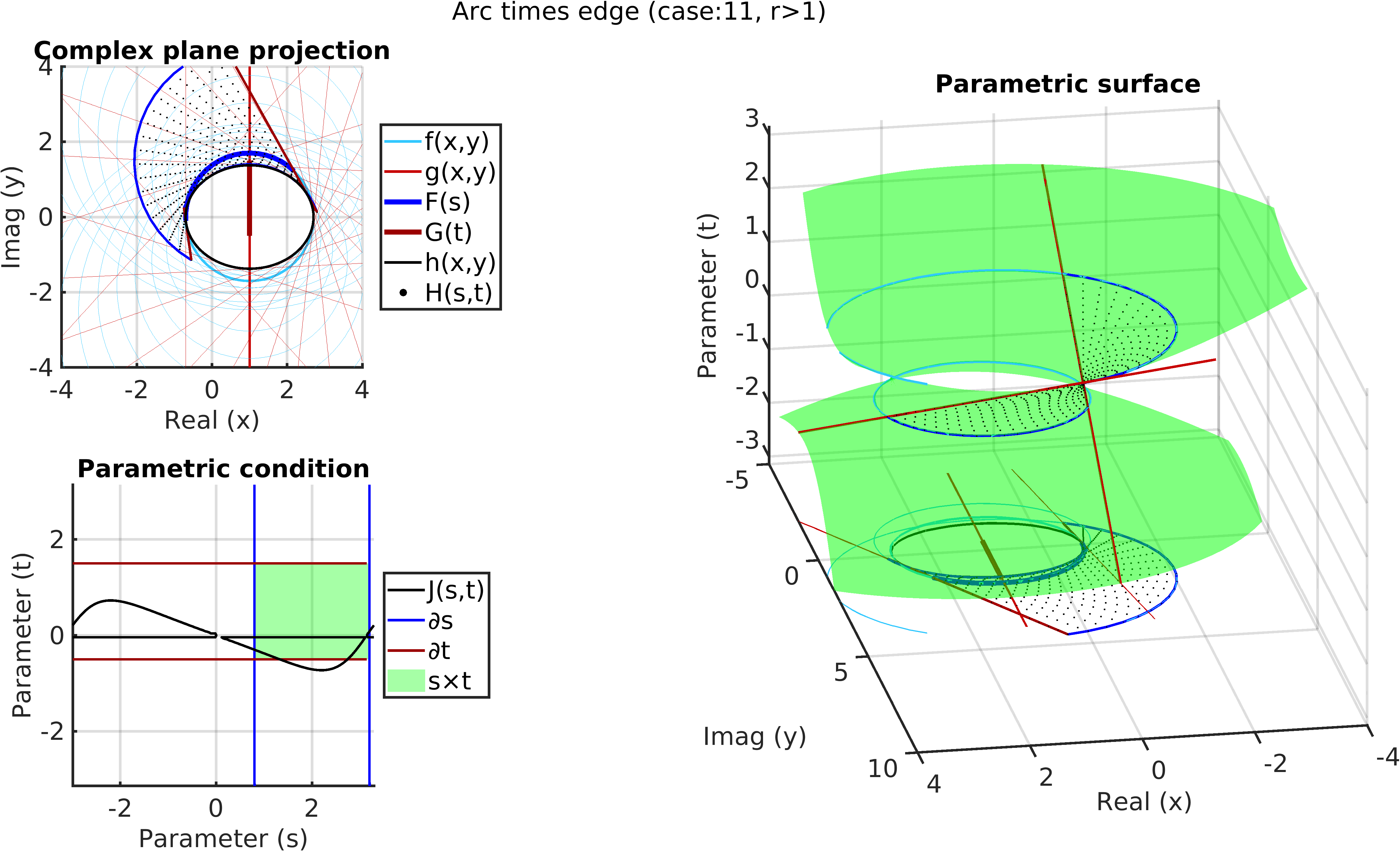}
\caption{Multiplication of an arc and an edge when neither the edge is on a zero crossing line, nor the arc is on a zero centered circle and the radius is greater than one. Parameters: $r=1.7$, $s=(0.8,3.2)$, $t=(-0.5,1.5)$.}
\label{fig:ArcTimesEdge11rg1}
\end{figure}

\begin{figure}[H]
\centering
\includegraphics[width=\textwidth,trim={0 0 0 10mm},clip]{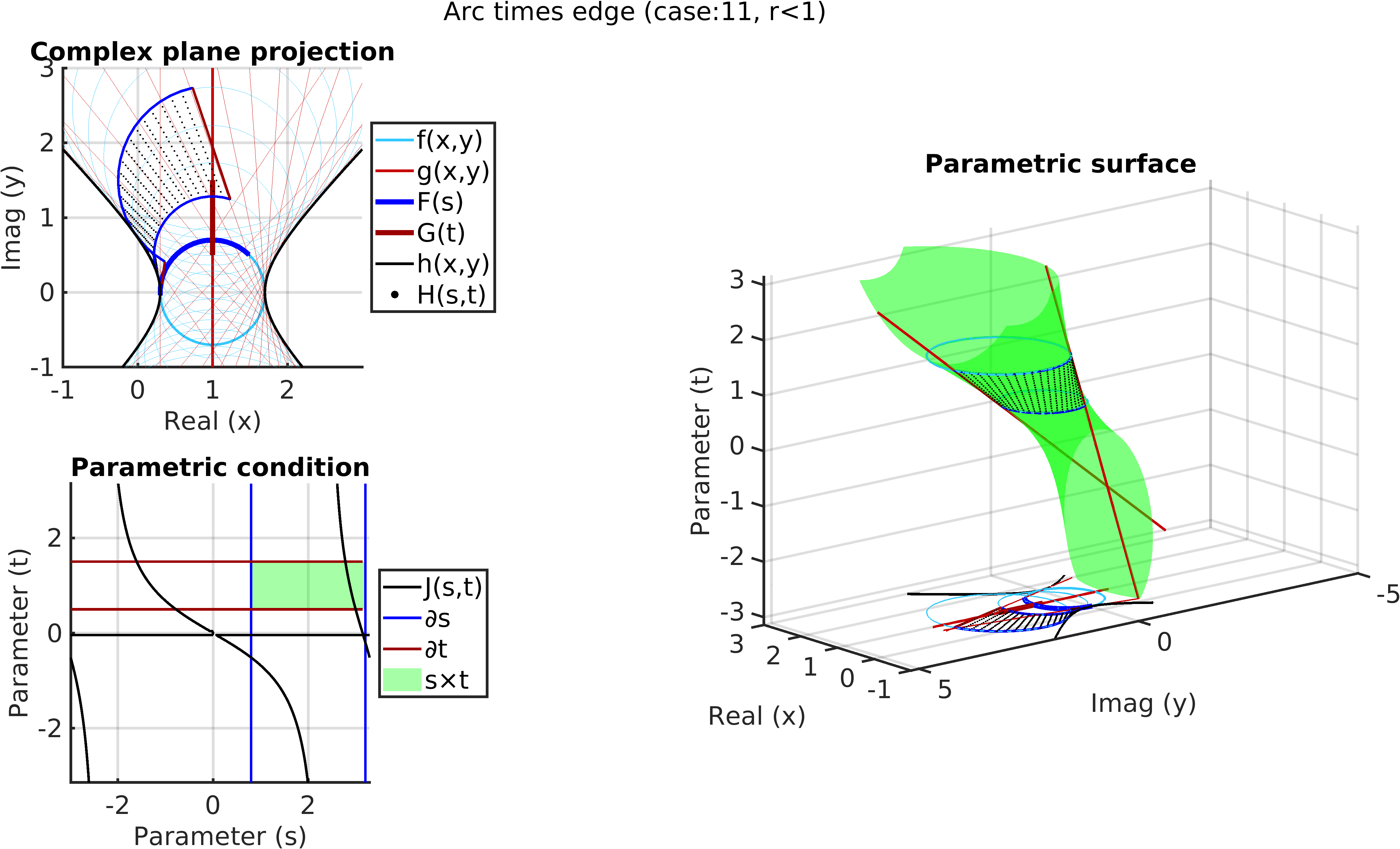}
\caption{Multiplication of an arc and an edge when neither the edge is on a zero crossing line, nor the arc is on a zero centered circle and the radius is less than one. Parameters: $r=0.7$, $s=(0.8,3.2)$, $t=(0.5,1.5)$.}
\label{fig:ArcTimesEdge11rl1}
\end{figure}

\begin{figure}[H]
\centering
\includegraphics[width=\textwidth,trim={0 0 0 10mm},clip]{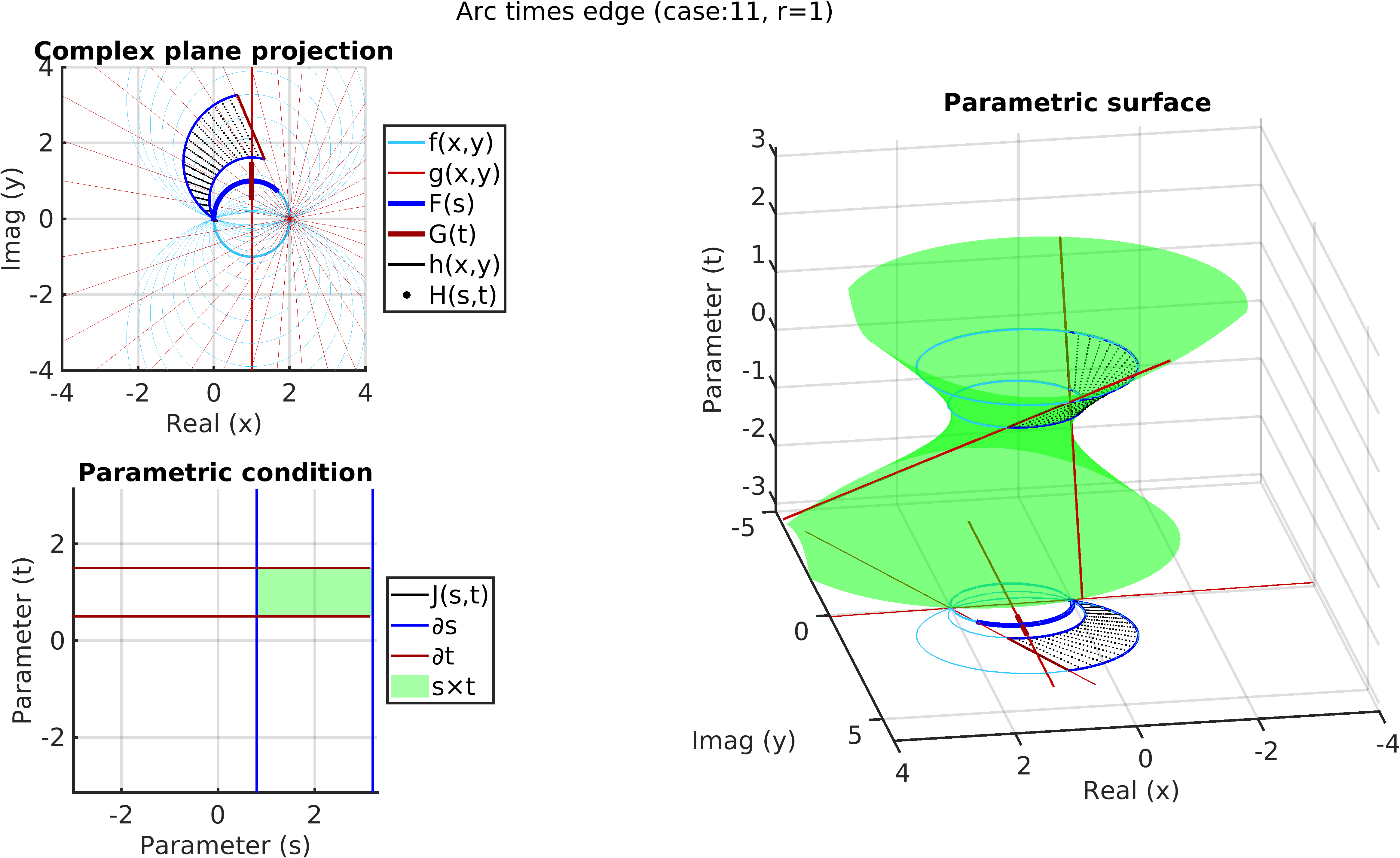}
\caption{Multiplication of an arc and an edge when neither the edge is on a zero crossing line, nor the arc is on a zero centered circle and the radius is equal to one. Parameters: $r=1$, $s=(0.8,3.2)$, $t=(-0.5,1.5)$.}
\label{fig:ArcTimesEdge11re1}
\end{figure}

\textit{\textbf{Operand equations}}
\begin{flalign*}
f(x,y)&={\left(x-1\right)}^2 +y^2-r^2 \\ \mathring{f} \left(\rho ,\theta \right)&=\rho^2 -2\rho \cos \left(\theta \right)+1-r^2 \\
s(x,y)&=\atantwo\left(y,x-1\right)\\ s^{\circ } \left(\rho ,\theta \right)&=\atantwo\left(\rho \sin \left(\theta \right),\rho \cos \left(\theta \right)-1\right)\\
F(s)&={re}^{is} +1=\left(1+r\cos (s),r\sin (s)\right)\\
\\ 
g\left(x,y\right)&=x-1
\\
{g}^{\circ } \left(\rho ,\theta \right)&=1/\rho -\cos \left(\theta \right)\\
t\left(x,y\right)&=y\\ t^{\circ } \left(\rho ,\theta \right)&=\rho \sin \left(\theta \right)\\
G(t)&=1+\mathrm{it}=\left(1,t\right)
\end{flalign*}

\textit{\textbf{Parametric combination}}
\begin{flalign*}
H(s,t)
&=\left(1+r\cos (s)-tr\sin (s),t+rt\cos (s)+r\sin (s)\right)\\
\\
J(s,t)
&=\left\vert \begin{array}{cc}
   -r \sin(s)-rt \cos(s)  & -r \sin (s) \\
   -rt \sin (s)+r \cos (s)  & 1+r \cos (s)
\end{array} \right\vert
\\
&=-r\sin (s)-rt\cos (s)-r^2 t \\
\text{Envelope: } &  \boxed{\sin (s)+t\cos (s)+tr=0}
\end{flalign*}

\textit{\textbf{Implicit combination}}
\begin{flalign*}
\text{Envelope: } & \boxed{r^4 - r^2x^2 - r^2y^2 + 2r^2 x - 2r^2 + x^2 - 2x + 1=0}
\end{flalign*}

\textit{\textbf{Mixed combination}}
\begin{flalign*}
u^{\circ } (\rho ,\theta ,t)
&=\atantwo\left(\frac{\rho }{\sqrt{1+t^2 }}\sin \left(\theta -\atantwo\left(t,1\right)\right),\frac{\rho }{\sqrt{1+t^2 }}\cos \left(\theta -\atantwo\left(t,1\right)\right)-1\right)\\
&=\atantwo\left(\frac{\rho }{\sqrt{1+t^2 }}\sin \left(\theta -\atan\left(t\right)\right),\frac{\rho }{\sqrt{1+t^2 }}\cos \left(\theta -\atan\left(t\right)\right)-1\right)\\
u(x,y,t)
&=\atantwo\left(\frac{\sqrt{x^2 +y^2 }}{\sqrt{1+t^2 }}\sin \left(\atantwo\left(y,x\right)-\atan\left(t\right)\right),\frac{\sqrt{x^2 +y^2 }}{\sqrt{1+t^2 }}\cos \left(\atantwo\left(y,x\right)-\atan\left(t\right)\right)-1\right)\\
u(x,y,t)=0
&\Longrightarrow \cos \left(\atantwo\left(y,x\right)\right)\cos \left(\atan\left(t\right)\right)+\sin \left(\atantwo\left(y,x\right)\right)\sin \left(\atan\left(t\right)\right)=\frac{\sqrt{1+t^2 }}{\sqrt{x^2 +y^2 }}\\
&\Longrightarrow \frac{x}{\sqrt{x^2 +y^2 }}\cdot \frac{t}{\sqrt{1+t^2 }}+\frac{y}{\sqrt{x^2 +y^2 }}\cdot \frac{1}{\sqrt{1+t^2 }}=\frac{\sqrt{1+t^2 }}{\sqrt{x^2 +y^2 }}\\
&\Longrightarrow xt+y=1+t^2 \\
\\
h(\rho,\theta)
&=\frac{\rho^2 }{1+t^2 }-\frac{2\rho }{\sqrt{1+t^2 }}\cos \left(\theta -\atan\left(t\right)\right)+1-r^2 \\
h(\rho,\theta)=0
&\Longrightarrow \frac{\rho^2 }{\sqrt{1+t^2 }}-2\rho \cos \left(\theta -\atan\left(t\right)\right)+\sqrt{1+t^2 }\left(1-r^2 \right)\\
\hat{h}(x,y,t)
&=\frac{x^2 +y^2 }{\sqrt{1+t^2 }}-2\sqrt{x^2 +y^2 }\cos \left(\atantwo\left(y,x\right)-\atan\left(t\right)\right)+\sqrt{1+t^2 }\left(1-r^2 \right)\\
&=\frac{x^2 +y^2 }{\sqrt{1+t^2 }}-2\sqrt{x^2 +y^2 }\left(\cos \left(\atantwo\left(y,x\right)\right)\cos \left(\atan\left(t\right)\right)+\sin \left(\atantwo\left(y,x\right)\right)\sin \left(\atan\left(t\right)\right)\right)
\\ & +\sqrt{1+t^2 }\left(1-r^2 \right)\\
\hat{h}(x,y,t)=0
&\Longrightarrow \frac{x^2 +y^2 }{\sqrt{1+t^2 }}-2\sqrt{x^2 +y^2 }\left(\frac{x}{\sqrt{x^2 +y^2 }}\cdot \frac{1}{\sqrt{t^2 +1}}+\frac{y}{\sqrt{x^2 +y^2 }}\cdot \frac{t}{\sqrt{t^2 +1}}\right)\\
&\quad\quad+\sqrt{1+t^2 }\left(1-r^2 \right)=0\\
&\Longrightarrow \frac{x^2 +y^2 }{\sqrt{1+t^2 }}-2\frac{x+yt}{\sqrt{t^2 +1}}+\sqrt{1+t^2 }\left(1-r^2 \right)=0\\
&\Longrightarrow x^2 +y^2 -2\left(xt+y\right)+\left(1+t^2 \right)\left(1-r^2 \right)=0\\
&\Longrightarrow {\left(x-1\right)}^2 +{\left(y-t\right)}^2 -r^2 \left(1+t^2 \right)=0\\
\frac{\partial \hat{h}}{\partial t}&=2t+2y-2r^2 t\\
\frac{\partial \hat{h}}{\partial t}=0
&\Longrightarrow t\left(1-r^2 \right)=y\\
&\Longrightarrow t=\frac{y}{1-r^2 }\\
h(x,y)
&={\left(x-1\right)}^2 +{\left(y-\frac{y}{1-r^2 }\right)}^2 -r^2 \left(1+{\left(\frac{y}{1-r^2 }\right)}^2 \right)\\
&={\left(x-1\right)}^2 +\frac{{\left(y\left(1-r^2 \right)-y\right)}^2 }{{\left(1-r^2 \right)}^2 }-r^2 \frac{{\left(1-r^2 \right)}^2 +y^2 }{{\left(1-r^2 \right)}^2 }\\
h(x,y)=0
&\Longrightarrow {\left(x-1\right)}^2 {\left(1-r^2 \right)}^2 +y^2 {\left(1-r^2 \right)}^2 -2y^2 \left(1-r^2 \right)+y^2 -r^2 {\left(1-r^2 \right)}^2 -r^2 y^2 =0\\
&\Longrightarrow {\left(x-1\right)}^2 {\left(1-r^2 \right)}^2 +y^2 {\left(1-r^2 \right)}^2 -2y^2 \left(1-r^2 \right)-r^2 {\left(1-r^2 \right)}^2 +y^2 \left(1-r^2 \right)=0\\
&\Longrightarrow \left\lbrack {\left(x-1\right)}^2 +y^2 -r^2 \right\rbrack {\left(1-r^2 \right)}^2 -y^2 \left(1-r^2 \right)=0\\
&\Longrightarrow {\left(x-1\right)}^2 +y^2 -r^2 -\frac{y^2 }{1-r^2 }=0\\
&\Longrightarrow {\left(x-1\right)}^2 -\frac{r^2 y^2 }{1-r^2 }-r^2 =0\\
\mathrm{if}\; r=1\\
h(x,y)&=\left(1-r^2 \right){\left(x-1\right)}^2 -r^2 y^2 -\left(1-r^2 \right)r^2 \\
h(x,y)=0
&\Longrightarrow y=0\\
h\left(x,0\right)&={\left(x-1\right)}^2 -r^2 \\
h(x,y)=0
&\Longrightarrow x=1\pm r=\left\lbrace 0,2\right\rbrace \\
\text{Envelope: } & 
\boxed{\left\lbrace \begin{array}{ll}
{\left(x-1\right)}^2 -\frac{r^2 y^2 }{1-r^2 }-r^2 =0 & \text{if } r \neq 0 \\
x=\{0,2\}  & \text{if } r=1 
\end{array}\right.} 
\end{flalign*}

\begin{flalign*}
x(s,t)
&=1+r\cos (s)-tr\sin (s),\\
y(s,t)
&=t+tr\cos (s)+r\sin (s)\\
J(s,t)
&=h\left(1+r\cos (s)-tr\sin (s),t+tr\cos (s)+r\sin (s)\right)\\
&={\left(1+r\cos (s)-tr\sin (s)-1\right)}^2 -\frac{r^2 {\left(t+tr\cos (s)+r\sin (s)\right)}^2 }{1-r^2 }-r^2 \\
J(s,t)=0
&\Longrightarrow \left(1-r^2 \right)r^2 {\left(\cos (s)-t\sin (s)\right)}^2 -r^2 {\left(t+r\left(t\cos (s)+\sin (s)\right)\right)}^2 -\left(1-r^2 \right)r^2 =0\\
&\Longrightarrow \left(1-r^2 \right)\left(\cos^2 \left(s\right)-2t\sin (s)\cos (s)+t^2 \sin^2 \left(s\right)-1\right)-{\left(t+r\left(t\cos (s)+\sin (s)\right)\right)}^2 =0\\
&\Longrightarrow \cos^2 \left(s\right)-2t\sin (s)\cos (s)+t^2 \sin^2 \left(s\right)-1-r^2 \cos^2 \left(s\right)-2r^2 t\sin (s)\cos (s) \\
&+r^2 t^2 \sin^2 \left(s\right)-r^2-t^2 -2rt^2 \cos (s)+2rt\sin (s)-r^2 {\left(t\cos (s)+\sin (s)\right)}^2 =0\\
&\Longrightarrow \cos^2 \left(s\right)-2t\sin (s)\cos (s)+t^2 \sin^2 \left(s\right)-1-r^2 \cos^2 \left(s\right)-2r^2 t\sin (s)\cos (s)+r^2 t^2 \sin^2 \left(s\right) \\
&-r^2-t^2 -2t^2 r\cos (s)+2tr\sin (s)-r^2 t^2 \cos^2 \left(s\right)+2r^2 \mathrm{tsin}\left(s\right)\cos (s)+r^2 \sin^2 \left(s\right)=0\\
&\Longrightarrow r^2 \,t^2 +2\,r\,t^2 \,\cos (s)+2\,r\,t\sin (s)\,+t^2 \,{\cos (s)}^2 +2t\,\sin (s)\,\cos (s)+\sin^2 \left(s\right)=0\\
&\Longrightarrow r^2 \,t^2 +2\,r\,t\,\left(\sin (s)+t\cos (s)\right)+{\left(\sin (s)+t\cos (s)\right)}^2 =0\\
&\Longrightarrow {\left\lbrack rt+\left(\sin (s)+t\cos (s)\right)\right\rbrack }^2 =0\\
&\Longrightarrow \frac{\sin (s)}{t}+\cos (s)+r=0\\
\text{Envelope: } &  \boxed{\sin (s)+t\cos (s)+tr=0}
\end{flalign*}


\paragraph{Special case: the edge is on a zero crossing line} ~\\

\begin{figure}[H]
\centering
\includegraphics[width=\textwidth,trim={0 0 0 10mm},clip]{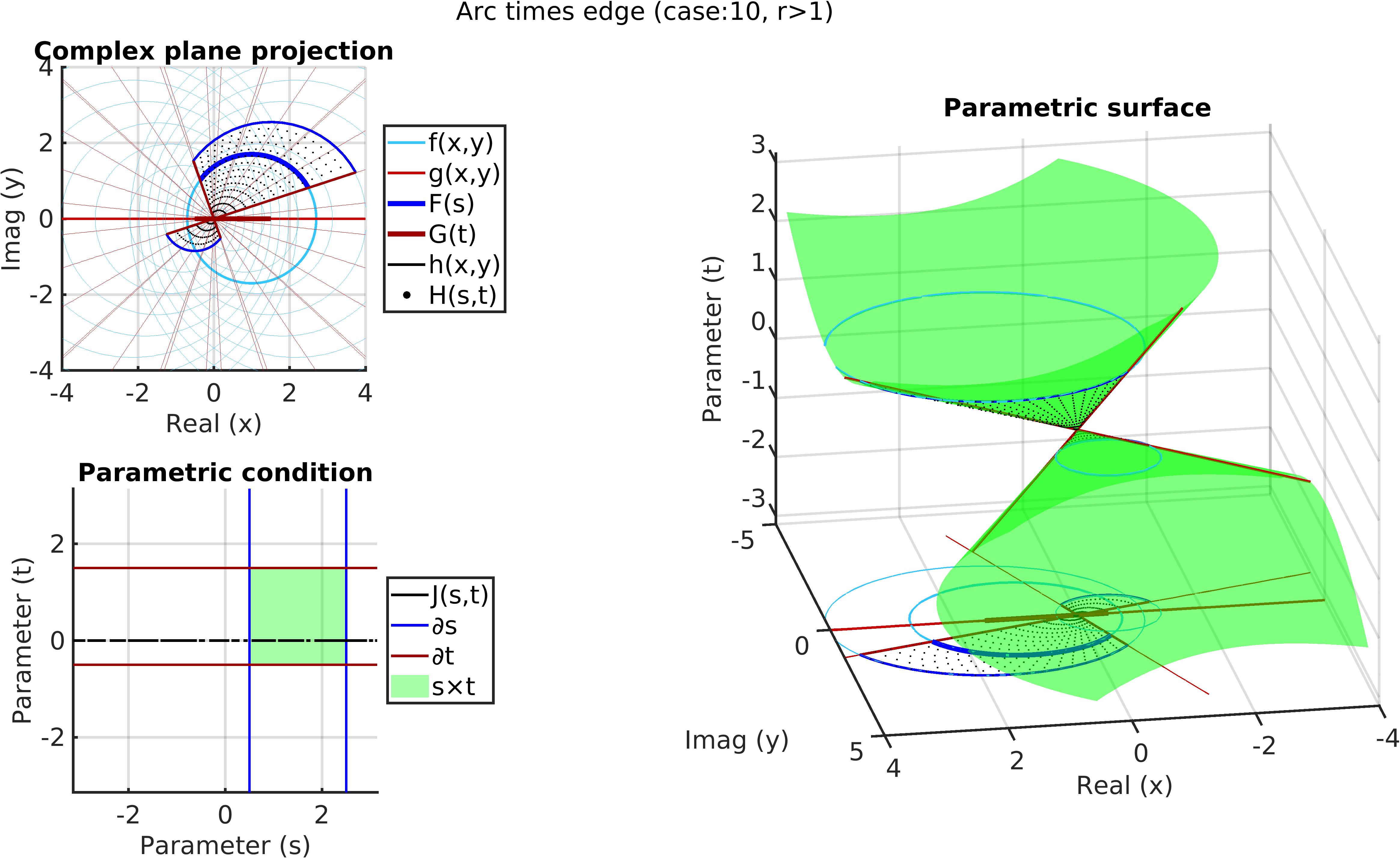}
\caption{Multiplication of an arc and an edge when the edge is on a zero crossing line, but the arc is not on a zero centered circle and the radius is greater than one. Parameters: $r=1.7$, $s=(0.5,2.5)$, $t=(-0.5,1.5)$.}
\label{fig:ArcTimesEdge10rg1}
\end{figure}

\begin{figure}[H]
\centering
\includegraphics[width=\textwidth,trim={0 0 0 10mm},clip]{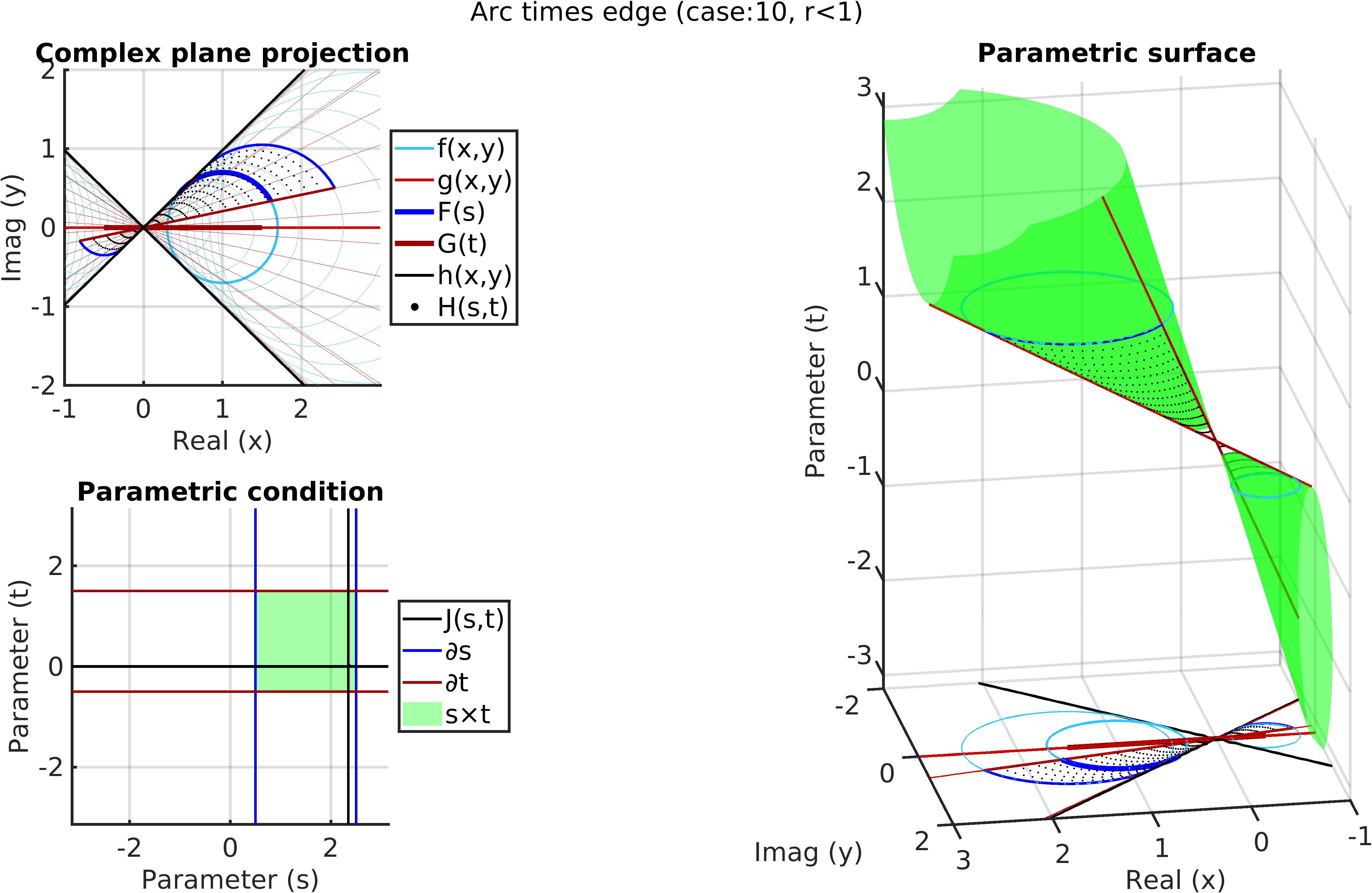}
\caption{Multiplication of an arc and an edge when the edge is on a zero crossing line, but the arc is not on a zero centered circle and the radius is less than one. Parameters: $r=0.7$, $s=(0.5,2.5)$, $t=(-0.5,1.5)$.}
\label{fig:ArcTimesEdge10rl1}
\end{figure}

\begin{figure}[H]
\centering
\includegraphics[width=\textwidth,trim={0 0 0 10mm},clip]{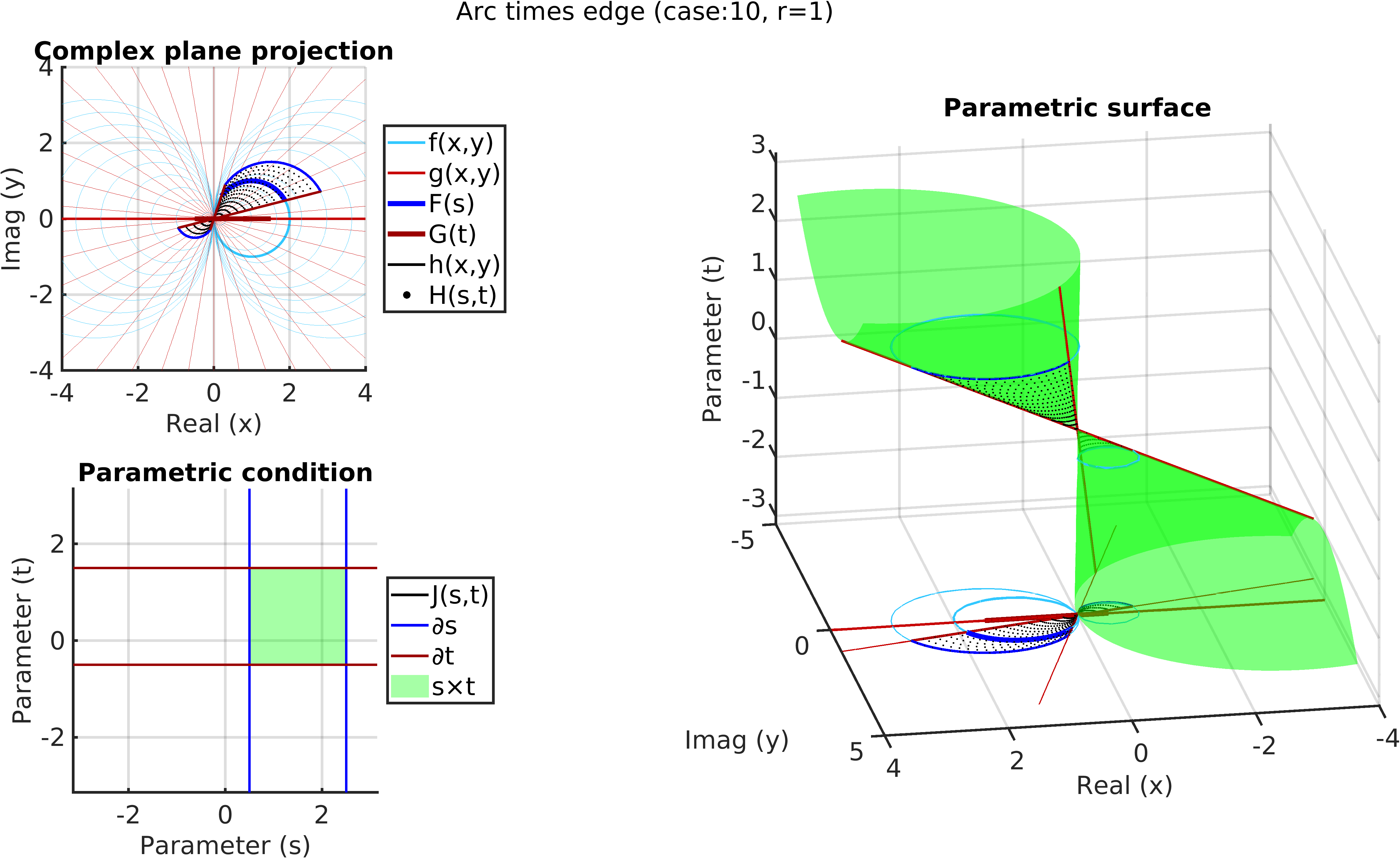}
\caption{Multiplication of an arc and an edge when the edge is on a zero crossing line, but the arc is not on a zero centered circle and the radius is equal to one. Parameters: $r=1$, $s=(0.5,2.5)$, $t=(-0.5,1.5)$.}
\label{fig:ArcTimesEdge10re1}
\end{figure}

\textit{\textbf{Operand equations}}
\begin{flalign*}
f(x,y)&={\left(x-1\right)}^2 +y^2-r^2 \\ \mathring{f} \left(\rho ,\theta \right)&=\rho^2 -2\rho \cos \left(\theta \right)+1-r^2 \\
s(x,y)&=\atantwo\left(y,x-1\right)\\ s^{\circ } \left(\rho ,\theta \right)&=\atantwo\left(\rho \sin \left(\theta \right),\rho \cos \left(\theta \right)-1\right)\\
F(s)&={re}^{is} +1=\left(1+r\cos (s),r\sin (s)\right)\\
\\ 
g\left(x,y\right)&=y
\\
{g}^{\circ } \left(\rho ,\theta \right)&=\tan \left(\theta \right)\\
t\left(x,y\right)&=x\\ t^{\circ } \left(\rho ,\theta \right)&=\rho \\
G(t)&=t=\left(t,0\right)
\end{flalign*}

\textit{\textbf{Parametric combination}}
\begin{flalign*}
H(s,t)
&=\left(t+rt\cos (s),rt\sin (s)\right)\\
\\
J(s,t)&=\left\vert \begin{array}{cc}
   -rt \sin(s)  & 1+r \cos(s) \\
   rt \cos (s)  & r \sin(s)
\end{array} \right\vert
\\
&= -r^2t \sin^2(s)-r^2t \cos^2(s) -rt\cos(s)\\
&=-rt(r+\cos (s)) \\
\text{Envelope: } &  \boxed{\left\lbrace \begin{array}{ll}
t=0 & \text{if } r>1 \\
\{\cos(s)=-r\} \cup \{t=0\}  & \text{if } r \le 1 
\end{array}\right.} 
\end{flalign*}

\textit{\textbf{Implicit combination}}
\begin{flalign*}
h(x,y) & = r^2x^2 + r^2y^2 - y^2 \\
& = r^2x^2 + (r^2-1)y^2 \\
\text{Envelope: } & \boxed{\left\lbrace \begin{array}{ll}
x=y=0 & \text{if } r>1 \\
x=\pm \frac{\sqrt{1-r^2}}{r} y  & \text{if } r \le 1 
\end{array}\right.}
\end{flalign*}

\textit{\textbf{Mixed combination}}
\begin{flalign*}
u^{\circ } (\rho ,\theta ,t)
&=\atantwo\left(\frac{\rho }{t}\sin \left(\theta \right),\frac{\rho }{t}\cos \left(\theta \right)-1\right)\\
\\
u(x,y,t)
&=\atantwo\left(\frac{\sqrt{x^2 +y^2 }}{t}\sin \left(\atantwo\left(y,x\right)\right),\frac{\sqrt{x^2 +y^2 }}{t}\cos \left(\atantwo\left(y,x\right)\right)-1\right)\\
u(x,y,t) = 0
&\Longrightarrow \frac{\sqrt{x^2 +y^2 }}{t}\cos \left(\atantwo\left(y,x\right)\right)-1=0,\frac{y}{t}>0\\
&\Longrightarrow \sqrt{x^2 +y^2 }\frac{x}{\sqrt{x^2 +y^2 }}=t\\
&\Longrightarrow x=t,\frac{y}{t}>0\\
h(\rho,\theta)
&=\frac{\rho^2 }{t^2 }-2\frac{\rho }{t}\cos \left(\theta \right)+1-r^2\\
h(\rho,\theta)=0
&\Longrightarrow \rho^2 -2\rho t\cos \left(\theta \right)+t^2 \left(1-r^2 \right)=0\\
\hat{h}(x,y,t)
&=x^2 +y^2 -2\sqrt{x^2 +y^2 }t\cos \left(\atantwo\left(y,x\right)\right)+t^2 \left(1-r^2 \right)\\
&=x^2 +y^2 -2tx+t^2 -r^2 t^2\\
\hat{h}(x,y,t)=0
&\Longrightarrow {\left(x-t\right)}^2 +y^2 -r^2 t^2 =0\\
\frac{\partial \hat{h}}{\partial t}&=2\left(t-x\right)-2{r}^2 t\\
\frac{\partial \hat{h}}{\partial t} = 0 
&\Longrightarrow t-x-r^2 t=0\\
&\Longrightarrow t\left(1-r^2 \right)=x\\
&\Longrightarrow t=\frac{x}{1-r^2 }\\
h(x,y)
&={\left(x-\frac{x}{1-r^2 }\right)}^2 +y^2 -r^2 {\left(\frac{x}{1-r^2 }\right)}^2 \\
&=\frac{{\left(-r^2 x\right)}^2 }{{\left(1-r^2 \right)}^2 }+y^2 -\frac{r^2 x^2 }{{\left(1-r^2 \right)}^2 }\\
h(x,y)=0
&\Longrightarrow r^4 x^2 +y^2 {\left(1-r^2 \right)}^2 -r^2 x^2 =0\\
&\Longrightarrow y^2 {\left(1-r^2 \right)}^2 =r^2 x^2 \left(1-r^2 \right)\\
&\Longrightarrow y^2 \left(1-r^2 \right)=r^2 x^2 \\
&\Longrightarrow \frac{r^2 }{1-r^2 }x^2 -y^2 =0\\
& \Longrightarrow x=\pm \frac{\sqrt{1-r^2}}{r} y\\
\mathrm{if}\;r=1\\
h(x,y)&=r^2 x^2 -y^2 \left(1-r^2 \right)\\
h(x,y)=0
&\Longrightarrow x=0\\
h(x,0)&=x^2 -\frac{\left(1-r^2 \right)}{r^2 }y^2 \\
h(x,0)=0
&\Longrightarrow y=0 \\
\text{Envelope: } & \boxed{\left\lbrace \begin{array}{ll}
x=y=0 & \text{if } r\geq1 \\
x=\pm \frac{\sqrt{1-r^2}}{r} y  & \text{if } r < 1 
\end{array}\right.}
\end{flalign*}

\begin{flalign*}
x(s,t)
&=t+rt\cos (s)\\
y(s,t)
&=rt\sin (s)\\
J(s,t)
&=h\left(t+rt\cos (s),rt\sin (s)\right)\\
&=\frac{r^2 }{1-r^2 }{\left(t+rt\cos (s)\right)}^2 -{\left(rt\sin (s)\right)}^2 \\
J(s,t)=0
&\Longrightarrow r^2 \left(t^2 +2rt^2 \cos (s)+r^2 t^2 \cos^2 \left(s\right)\right)=\left(1-r^2 \right)r^2 t^2 \sin^2 \left(s\right)\\
&\Longrightarrow t^2 +2rt^2 \cos (s)+r^2 t^2 \cos^2 \left(s\right)=t^2 \sin^2 \left(s\right)-r^2 t^2 \sin^2 \left(s\right)\\
&\Longrightarrow t^2 +2rt^2 \cos (s)+r^2 t^2 =t^2 \sin^2 \left(s\right)\\
&\Longrightarrow 1+2r\cos (s)+r^2 =1-\cos^2 \left(s\right),t=0\\
&\Longrightarrow \cos^2 \left(s\right)+2r\cos (s)+r^2 =0\\
&\Longrightarrow {\left(\cos (s)+r\right)}^2 =0\\
&\Longrightarrow s=\mathrm{acos}\left(-r\right),t=0\\
\text{Envelope: } &  \boxed{\left\lbrace \begin{array}{ll}
t=0 & \text{if } r>1 \\
\{\cos(s)=-r\} \cup \{t=0\}  & \text{if } r \le 1 
\end{array}\right.} 
\end{flalign*}

\vfill \eject

\paragraph{Special case: the arc is on a zero centered circle} ~\\

\begin{figure}[H]
\centering
\includegraphics[width=\textwidth,trim={0 0 0 10mm},clip]{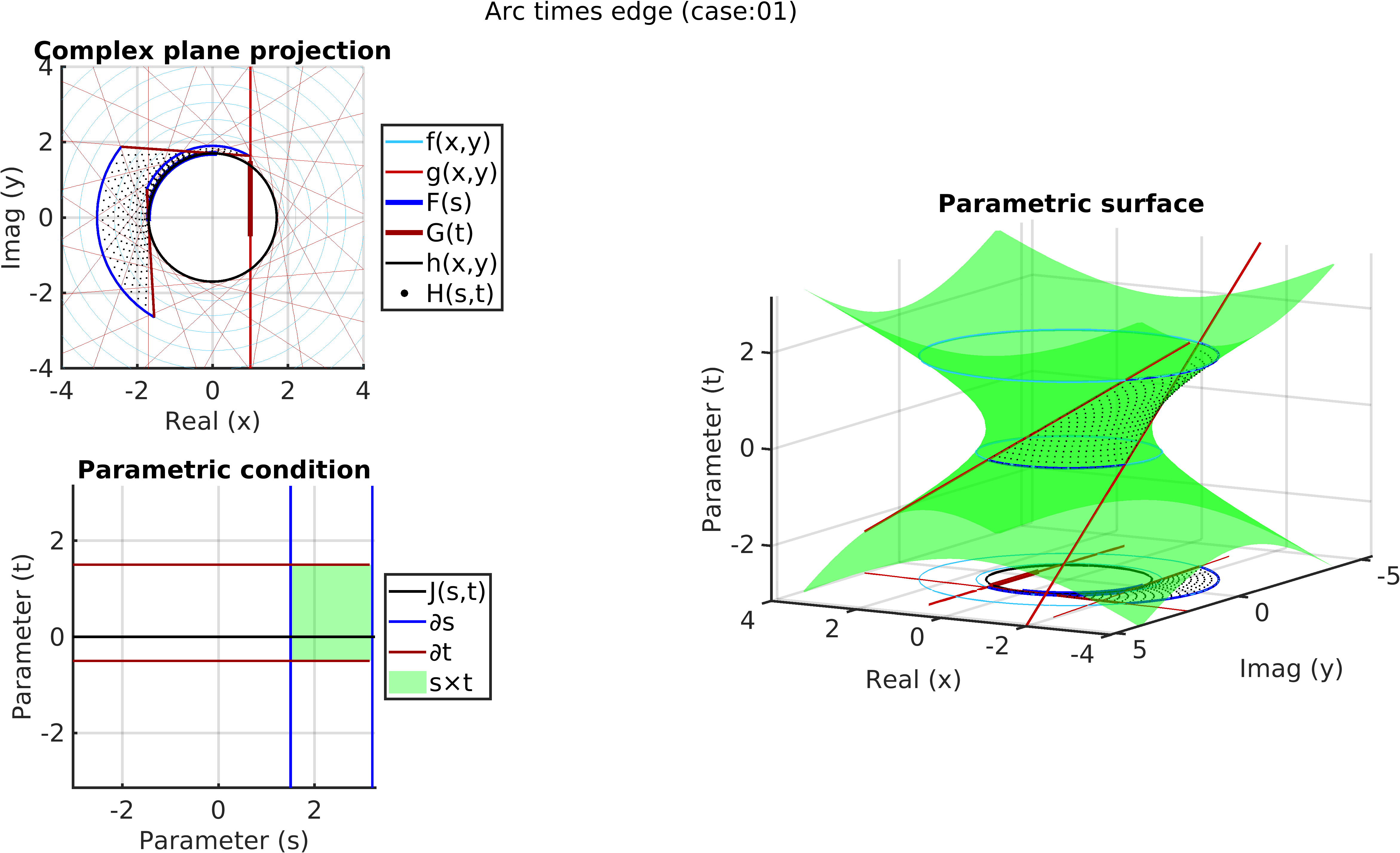}
\caption{Multiplication of an arc and an edge when the edge is not on a zero crossing line, but the arc is on a zero centered circle. Parameters: $r=1.7$, $s=(1.5,3.2)$, $t=(-0.5,1.5)$.}
\label{fig:ArcTimesEdge01}
\end{figure}

\textit{\textbf{Operand equations}}
\begin{flalign*}
f(x,y)&=x^2 +y^2-r^2 \\ \mathring{f} \left(\rho ,\theta \right)&=\rho -r\\
s(x,y)&=\atantwo\left(y,x\right)\\ s^{\circ } \left(\rho ,\theta \right)&=\theta \\
F(s)&={re}^{is} =\left(r\cos (s),r\sin (s)\right)\\
\\ 
g\left(x,y\right)&=x-1
\\
{g}^{\circ } \left(\rho ,\theta \right)&=1/\rho -\cos \left(\theta \right)\\
t\left(x,y\right)&=y\\ t^{\circ } \left(\rho ,\theta \right)&=\rho \sin \left(\theta \right)\\
G(t)&=1+\mathrm{it}=\left(1,t\right)
\end{flalign*}

\textit{\textbf{Parametric combination}}
\begin{flalign*}
H(s,t)
&=\left(r\cos (s)-rt\sin (s),rt\cos (s)+r\sin (s)\right)\\
J(s,t)&=\left\vert \begin{array}{cc}
   -r \sin(s)-rt \cos(s)  & -r \sin(s) \\
   -rt \sin (s)+r\cos(s)  & r \cos(s)
\end{array} \right\vert
\\
&= -r^2t (\sin^2(s)+\cos^2(s))\\
&=-r^2t \\
\text{Envelope: } & \boxed{t=0}
\end{flalign*}

\textit{\textbf{Implicit combination}}
\begin{flalign*}
h(x,y)& =r^2-x^2-y^2
\\
\text{Envelope: } & \boxed{x^2+y^2=r^2}
\end{flalign*}

\textit{\textbf{Mixed combination}}
\begin{flalign*}
u^{\circ } (\rho ,\theta ,t)
&=\theta -\atantwo\left(t,1\right)\\
\\
u(x,y,t)
&=\atantwo\left(y,x\right)-\atantwo\left(t,1\right)=0\\
&\Longrightarrow \atantwo\left(y,x\right)=\atan\left(t\right)
\end{flalign*}

\begin{flalign*}
h(\rho,\theta)
&=\frac{\rho }{\sqrt{t^2 +1}}-r\\
\\
\hat{h}(x,y,t)
&=\frac{\sqrt{x^2 +y^2 }}{\sqrt{t^2 +1}}-r=0\\
&=\frac{x^2 +y^2 }{t^2 +1}=r^2 \\
&\Longrightarrow x^2 +y^2 -r^2 t^2 -r^2 =0\\
\frac{\partial \hat{h}}{\partial t}&={-2r}^2 t=0\\
&\Longrightarrow t=0\\
h(x,y) &=x^2 +y^2 -r^2 =0\\
\text{Envelope: } & \boxed{x^2+y^2=r^2}
\end{flalign*}

\begin{flalign*}
x(s,t)
&=r\cos (s)-rt\sin (s)\\
y(s,t)
&=rt\cos (s)+r\sin (s)\\
J(s,t)
&=h\left(r\cos (s)-rt\sin (s),rt\cos (s)+r\sin (s)\right)\\
&={\left(r\cos (s)-rt\sin (s)\right)}^2 +{\left(rt\cos (s)+r\sin (s)\right)}^2 -r^2 \\
& = r^2 \cos^2 \left(s\right)-2r^2 t\sin (s)\cos (s)+r^2 t^2 \sin^2 \left(s\right)\\
&\quad\quad+r^2 t^2 \cos^2 \left(s\right)+2r^2 t\sin (s)\cos (s)+r^2 \sin^2 \left(s\right)-r^2\\
J(s,t)=0
&\Longrightarrow r^2 t^2 =0\\
\text{Envelope: } & \boxed{t=0}
\end{flalign*}

\vfill \eject

\paragraph{Special case: the edge is on a zero crossing line and the arc is on a zero centered circle} ~\\

\begin{figure}[H]
\centering
\includegraphics[width=\textwidth,trim={0 0 0 10mm},clip]{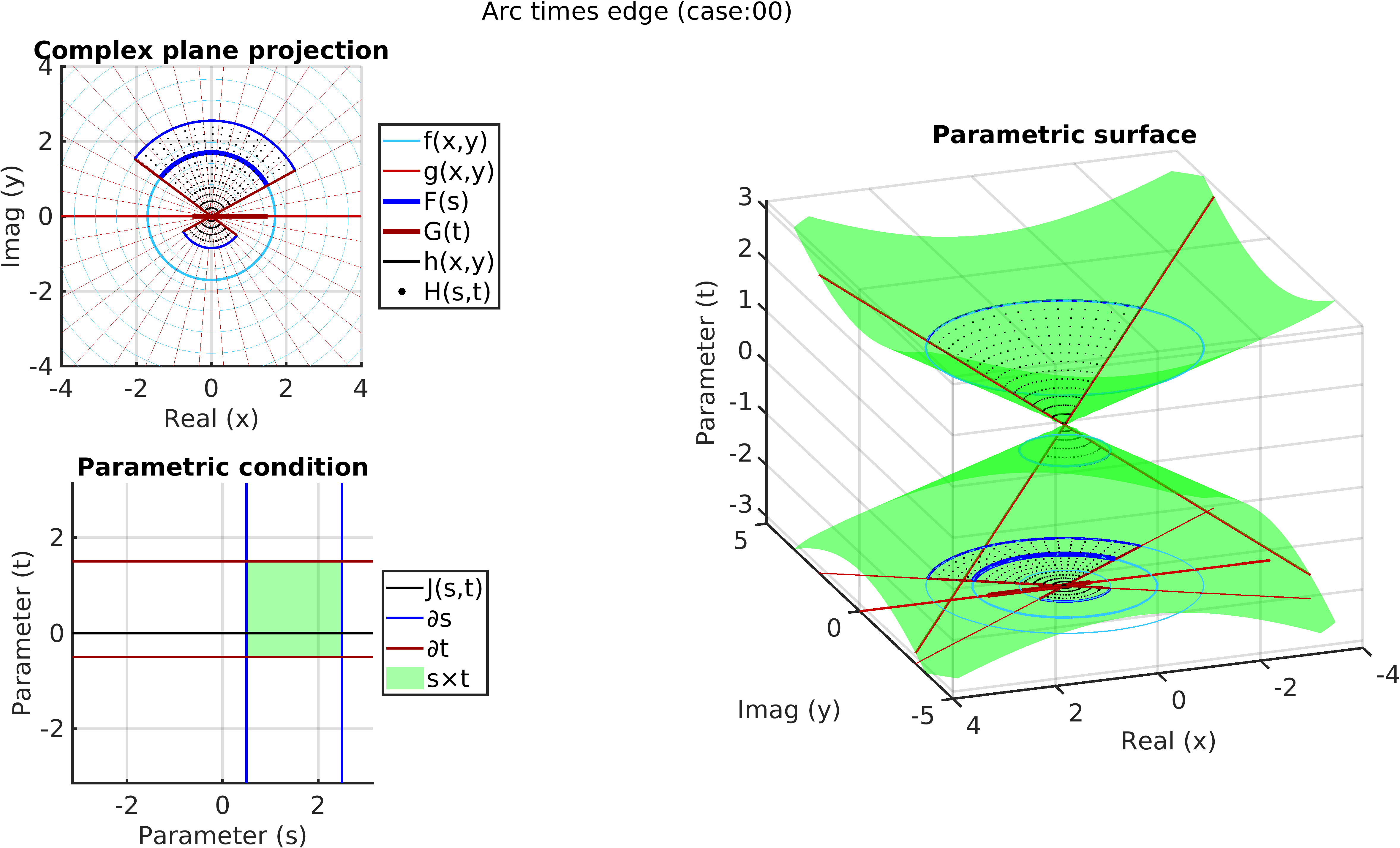}
\caption{Multiplication of an arc and an edge when the edge is on a zero crossing line, and the arc is on a zero centered circle. Parameters: $r=1.7$, $s=(0.5,2.5)$, $t=(-0.5,1.5)$.}
\label{fig:ArcTimesEdge00}
\end{figure}

\textit{\textbf{Operand equations}}
\begin{flalign*}
f(x,y)&=x^2 +y^2-r^2 \\ \mathring{f} \left(\rho ,\theta \right)&=\rho -r\\
s(x,y)&=\atantwo\left(y,x\right)\\ s^{\circ } \left(\rho ,\theta \right)&=\theta \\
F(s)&={re}^{is} =\left(r\cos (s),r\sin (s)\right)\\
\\ 
g\left(x,y\right)&=y,{g}^{\circ } \left(\rho ,\theta \right)=\tan \left(\theta \right)\\
t\left(x,y\right)&=x\\ t^{\circ } \left(\rho ,\theta \right)&=\rho \\
G(t)&=t=\left(t,0\right)
\end{flalign*}

\textit{\textbf{Parametric combination}}
\begin{flalign*}
H(s,t)
&=\left(rt\cos (s),rt\sin (s)\right)\\
J(s,t)&=\left\vert \begin{array}{cc}
   -r t\sin(s)  & r \cos(s) \\
   rt \cos (s)  & r \sin(s)
\end{array} \right\vert
\\
&= -r^2t (\sin^2(s)+\cos^2(s))\\
&=-r^2t \\
\text{Envelope: } & \boxed{t=0}
\end{flalign*}

\textit{\textbf{Implicit combination}}
\begin{flalign*}
h_1(x,y) &=x \\
h_2(x,y) & =y \\
\text{Envelope: } & \boxed{(0,0) \; \text{(i.e. the origin)}}
\end{flalign*}

\textit{\textbf{Mixed combination}}
\begin{flalign*}
u^{\circ } (\rho ,\theta ,t)
&=\theta \\
\\
u(x,y,t)
&=\atantwo\left(y,x\right)\\
u(x,y,t)=0
&\Longrightarrow y=0,x>0
\end{flalign*}

\begin{flalign*}
h(\rho,\theta)
&=\frac{\rho }{t}-r\\
\hat{h}(x,y,t)
&=\frac{\sqrt{x^2 +y^2 }}{t}-r\\
\hat{h}(x,y,t)=0
&\Longrightarrow x^2 +y^2 -r^2 t^2 =0\\
\frac{\partial \hat{h}}{\partial t}&={2r}^2 t\\
\frac{\partial \hat{h}}{\partial t}=0
&\Longrightarrow t=0\\
h(x,y)
&=x^2 +y^2 =0\\
\text{Envelope: } & \boxed{(0,0) \; \text{(i.e. the origin)}}
\end{flalign*}

\begin{flalign*}
x(s,t)
&=rt\cos (s)\\
\\
y(s,t)
&=rt\sin (s)\\
\\
J(s,t)
&=h\left(rt\cos (s),rt\sin (s)\right)\\
&={\left(rt\cos (s)\right)}^2 +{\left(rt\sin (s)\right)}^2 \\
&=r^2 t^2 \cos^2 \left(s\right)+r^2 t^2 \sin^2 \left(s\right)\\
&=r^2 t^2 \\
J(s,t)=0 &\Longrightarrow t=0\\
\text{Envelope: } & \boxed{t=0}
\end{flalign*}

\vfill \eject

\subsubsection{Arc times arc}\label{sup:arc_times_arc}

\paragraph{General case} ~\\

\begin{figure}[H]
\centering
\includegraphics[width=\textwidth,trim={0 0 0 10mm},clip]{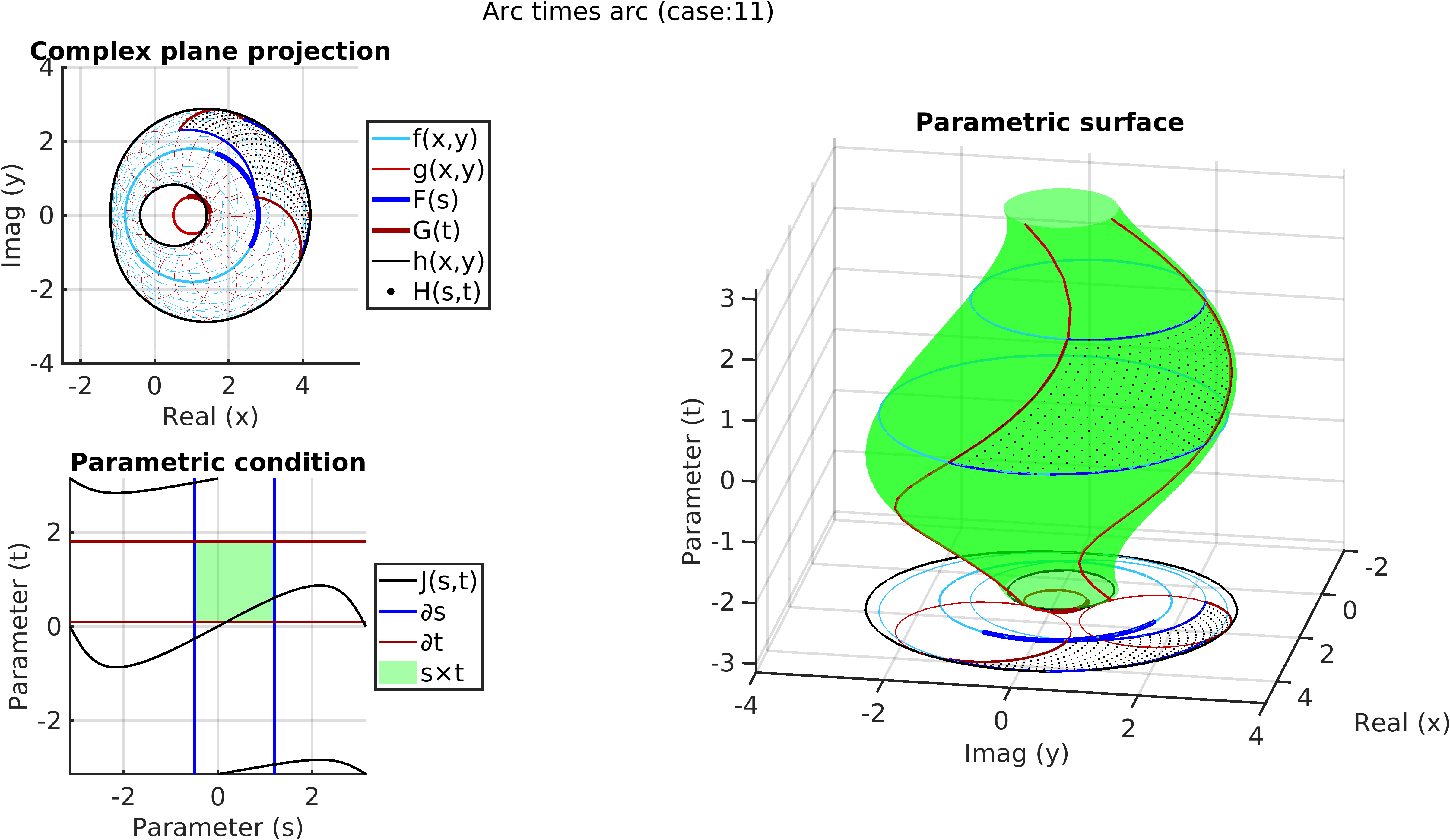}
\caption{Multiplication of two arcs when none of them is on a zero centered circle. Parameters: $r_1=1.8$, $r_2=0.5$, $s=(-0.5,1.2)$, $t=(0.1,1.8)$.}
\label{fig:ArcTimesArc11}
\end{figure}

\textit{\textbf{Operand equations}}
\begin{flalign*}
f(x,y)&={\left(x-1\right)}^2 +y^2-r_1^2 \\ \mathring{f} \left(\rho ,\theta \right)&=\rho^2 -2\rho \cos \left(\theta \right)+1-r_1^2 \\
s(x,y)&=\atantwo\left(y,x-1\right)\\ s^{\circ } \left(\rho ,\theta \right)&=\atantwo\left(\rho \sin \left(\theta \right),\rho \cos \left(\theta \right)-1\right)\\
F(s)&={r_1 e}^{is} +1=\left(1+r_1 \cos (s),r_1 \sin (s)\right)\\
\\ 
g\left(x,y\right)&={\left(x-1\right)}^2 +y^2-r_2^2 
\\
{g}^{\circ } \left(\rho ,\theta \right)&=\rho^2 -2\rho \cos \left(\theta \right)+1-r_2^2 \\
t\left(x,y\right)&=\atantwo\left(y,x-1\right)\\ t^{\circ } \left(\rho ,\theta \right)&=\atantwo\left(\rho \sin \left(\theta \right),\rho \cos \left(\theta \right)-1\right)\\
G(t)&={r_2 e}^{\mathrm{it}} +1=\left(1+r_2 \cos (t),r_2 \sin (t)\right)
\end{flalign*}

\textit{\textbf{Parametric combination}}
\begin{flalign*}
H(s,t)
&=\big(r_1 \cos (s)+r_2 \cos (t)-r_1 r_2 \sin (s)\sin (t)+r_1 r_2 \cos (s)\cos (t)+1,
\\&\quad\;\; r_1 \sin (s)+r_2 \sin (t)+r_1 r_2 \cos (s)\sin (t)+r_1 r_2 \sin (s)\cos (t)\big)\\
&=\big( 1+r_1 \cos(s)+r_2\cos(t)+r_1r_2\cos(s+t), r_1 \sin(s)+r_2\sin(t)+r_1r_2\sin(s+t) \big) \\
\\
J(s,t)
&=\left\vert \begin{array}{cc}
        -r_1\sin (s) - r_1r_2 \sin (s+t) & -r_2\sin t -r_1r_2 \sin (s+t) \\
        r_1\cos (s) + r_1r_2 \cos (s+t) & r_2\cos (t) +r_1r_2 \cos (s+t)
    \end{array} \right\vert
\\
&=r_1r_2\big( \sin(t-s)+r_1\sin((s+t)-s)+r_2\sin(t-(s+t))+r_1r_2\sin((s+t)(s+t)) \big) \\
&=r_1 r_2 \big(\sin (t-s) -r_2 \sin (s)+r_1 \sin (t)\big)\\
\text{Envelope: } & \boxed{\sin (s-t)=r_1 \sin (t)-r_2 \sin (s)}
\end{flalign*}

\textit{\textbf{Implicit combination}}
\begin{flalign*}
\text{Envelope: } & \boxed{
\begin{array}{l}
y^4 + 2x^2y^2 - 4xy^2 - 2r_2^2r_1^2y^2 - 2r_1^2y^2 - 2r_2^2y^2 + 2y^2 
\\
 + x^4 - 4x^3 - 2r_2^2r_1^2x^2 - 2r_1^2x^2 - 2r_2^2x^2 + 6x^2 - 4r_2^2r_1^2x + 4r_1^2x + 4r_2^2x - 4x 
\\
 + r_2^4r_1^4 - 2r_2^2r_1^4 + r_1^4 - 2r_2^4r_1^2 + 4r_2^2r_1^2 - 2r_1^2 + r_2^4 - 2r_2^2 + 1
\end{array}}
\end{flalign*}

\textit{\textbf{Mixed combination}}
\begin{flalign*}
u^{\circ } (\rho ,\theta ,t)
&=\atantwo\left(\frac{\rho }{\sqrt{{\left(1+r_2 \cos (t)\right)}^2 +r_2^2 \sin^2 \left(t\right)}}\sin \left(\theta -\atantwo\left(r_2 \sin (t),1+r_2 \cos (t)\right)\right)
\right. 
\\ & \left. \quad\quad\quad\quad,\frac{\rho }{\sqrt{{\left(1+r_2 \cos (t)\right)}^2 +r_2^2 \sin^2 \left(t\right)}}\cos \left(\theta -\atantwo\left(r_2 \sin (t),1+r_2 \cos (t)\right)\right)-1\right)\\
&=\atantwo\left(\frac{\rho }{\sqrt{r_2^2 +2r_2 \cos (t)+1}}\sin \left(\theta -\atantwo\left(r_2 \sin (t),1+r_2 \cos (t)\right)\right)\right.
\\ & \left. \quad\quad\quad\quad,\frac{\rho }{\sqrt{r_2^2 +2r_2 \cos (t)+1}}\cos \left(\theta -\atantwo\left(r_2 \sin (t),1+r_2 \cos (t)\right)\right)-1\right)\\
\\
u(x,y,t)
&=\atantwo\left(\frac{\sqrt{x^2 +y^2 }}{\sqrt{r_2^2 +2r_2 \cos (t)+1}}\sin \left(\atantwo\left(y,x\right)-\atantwo\left(r_2 \sin (t),1+r_2 \cos (t)\right)\right) \right.
\\ & \left. \quad\quad\quad\quad,\frac{\sqrt{x^2 +y^2 }}{\sqrt{r_2^2 +2r_2 \cos (t)+1}}\cos \left(\atantwo\left(y,x\right)-\atantwo\left(r_2 \sin (t),1+r_2 \cos (t)\right)\right)-1\right)\\
u(x,y,t)=0
&\Longrightarrow \frac{\sqrt{x^2 +y^2 }}{\sqrt{r_2^2 +2r_2 \cos (t)+1}}\cos \left(\atantwo\left(y,x\right)-\atantwo\left(r_2 \sin (t),1+r_2 \cos (t)\right)\right)-1=0\\
&\Longrightarrow \cos \left(\atantwo\left(y,x\right)-\atantwo\left(r_2 \sin (t),1+r_2 \cos (t)\right)\right)=\frac{\sqrt{r_2^2 +2r_2 \cos (t)+1}}{\sqrt{x^2 +y^2 }}\\
&\Longrightarrow \sin \left(\atantwo\left(y,x\right)\right)\sin \left(\atantwo\left(r_2 \sin (t),1+r_2 \cos (t)\right)\right)
\\ & +\cos \left(\atantwo\left(y,x\right)\right)\cos \left(\atantwo\left(r_2 \sin (t),1+r_2 \cos (t)\right)\right)=\frac{\sqrt{r_2^2 +2r_2 \cos (t)+1}}{\sqrt{x^2 +y^2 }}\\
&\Longrightarrow \frac{y}{\sqrt{x^2 +y^2 }} \frac{1+r_2 \cos (t)}{\sqrt{r_2^2 +2r_2 \cos (t)+1}}+\frac{x}{\sqrt{x^2 +y^2 }} \frac{r_2 \sin (t)}{\sqrt{r_2^2 +2r_2 \cos (t)+1}}\\
&\quad\quad=\frac{\sqrt{r_2^2 +2r_2 \cos (t)+1}}{\sqrt{x^2 +y^2 }}\\
&\Longrightarrow y\left(1+r_2 \cos (t)\right)+xr_2 \sin (t)=r_2^2 +2r_2 \cos (t)+1\\
&\Longrightarrow y\left(1+r_2 \cos (t)\right)+xr_2 \sin (t)-r_2^2 -2r_2 \cos (t)-1=0
\end{flalign*}

\begin{flalign*}
h(\rho,\theta)
&=\frac{\rho^2 }{r_2^2 +2r_2 \cos (t)+1}-\frac{2\rho }{\sqrt{r_2^2 +2r_2 \cos (t)+1}}\cos \left(\theta -\atantwo\left(r_2 \sin (t),1+r_2 \cos (t)\right)\right)\\
&\quad\quad+1-r_1^2 =0\\
\hat{h}(x,y,t)
&=\frac{x^2 +y^2 }{r_2^2 +2r_2 \cos (t)+1}-\frac{2\sqrt{x^2 +y^2 }}{\sqrt{r_2^2 +2r_2 \cos (t)+1}}\cos \left(\atantwo\left(y,x\right)-\atantwo\left(r_2 \sin (t),1+r_2 \cos (t)\right)\right)\\
&\quad+1-r_1^2 \\
&=\frac{x^2 +y^2 }{r_2^2 +2r_2 \cos (t)+1}-\frac{2\sqrt{x^2 +y^2 }}{\sqrt{r_2^2 +2r_2 \cos (t)+1}}\left\lbrack 
\frac{y (1+r_2 \cos (t)) + x r_2 \sin(t)}{\sqrt{x^2 +y^2 }\sqrt{r_2^2 +2r_2 \cos (t)+1}}\cdot \right\rbrack +1-r_1^2 \\
&=\frac{x^2 +y^2 }{r_2^2 +2r_2 \cos (t)+1}-2\frac{y\left(1+r_2 \cos (t)\right)+xr_2 \sin (t)}{r_2^2 +2r_2 \cos (t)+1}+1-r_1^2 \\
\hat{h}(x,y,t)=0
&\Longrightarrow x^2 +y^2 -2y\left(1+r_2 \cos (t)\right)+2xr_2 \sin (t)+\left(1-r_1^2 \right)\left(r_2^2 +2r_2 \cos (t)+1\right)=0\\
&\Longrightarrow \left(x-1-r_2 \cos (t){\left.\right)}^2 +\left(y-r_2 \sin (t){\left.\right)}^2 -r_1^2 \left(r_2^2 +2r_2 \cos (t)+1\right)=0\right.\right.\\
\\
\frac{\partial \hat{h}}{\partial t}&=2r_1^2 r_2 \sin (t)-2r_2 \cos (t)\left(y-r_2 \sin (t)\right)-2r_2 \sin (t)\left(r_2 \cos (t)-x+1\right)\\
\frac{\partial \hat{h}}{\partial t}=0
&\Longrightarrow \sin (t)\left(1-r_1^2 -x\right)+\mathrm{ycos}\left(t\right)=0\\
&\Longrightarrow t=\atan\left(\frac{r_1^2 +x-1}{y}\right)\\
h(x,y)
&=\left(x-1-r_2 {\cos \left(\atan\left(\frac{r_1^2 +x-1}{y}\right)\right)}\right)^2 
+ \left(y-r_2 {\sin \left(\atan\left(\frac{r_1^2 +x-1}{y}\right)\right)}\right)^2 
\\&-r_1^2 \left(r_2^2 +2r_2 \cos \left(\atan\left(\frac{r_1^2 +x-1}{y}\right)\right)+1\right)\\
&={\left(x-1-r_2 \frac{y}{\sqrt{y^2 +{\left(r_1^2 +x+1\right)}^2 }}\right)}^2 +{\left(y-r_2 \frac{r_1^2 +x-1}{\sqrt{y^2 +{\left(r_1^2 +x+1\right)}^2 }}\right)}^2 
\\&-r_1^2 \left(r_2^2 +2r_2 \frac{y}{\sqrt{y^2 +{\left(r_1^2 +x+1\right)}^2 }}+1\right) \\
h(x,y)= 0 
&\Longrightarrow {{\left(y-\frac{r_2 \,y}{\sqrt{{{\left({r_1 }^2 +x-1\right)}}^2 +y^2 }}\right)}}^2 -{r_1 }^2 \,r_2^2 +{{\left(x-\frac{r_2 \,{\left({r_1 }^2 +x-1\right)}}{\sqrt{{{\left({r_1 }^2 +x-1\right)}}^2 +y^2 }}\right)}}^2 =0\\
&\Longrightarrow
\text{... simplified with Matlab Symbolic Math toolbox}\\
\text{Envelope: } & \boxed{{{\left({{\left(x-1\right)}}^2 +y^2 -{r_1 }^2 \,{\left(r_2^2 +1\right)}+r_2^2 \right)}}^2 -4\,r_2^2 \,{\left({{\left(x-1+{r_1 }^2 \right)}}^2 +y^2 \right)}=0}
\end{flalign*}

\begin{flalign*}
x(s,t)
&=\left(1+r_1 \cos (s)\right)\left(1+r_2 \cos (t)\right)-r_1 r_2 \sin (s)\sin (t)\\
y(s,t)
&=\left(1+r_1 \cos (s)\right)r_2 \sin (t)+\left(1+r_2 \cos (t)\right)r_1 \sin (s)\\
J(s,t)
&=h\left(\left(1+r_1 \cos (s)\right)\left(1+r_2 \cos (t)\right)-r_1 r_2 \sin (s)\sin (t)
\right. \\ & \left. \quad\quad,\left(1+r_1 \cos (s)\right)r_2 \sin (t)+\left(1+r_2 \cos (t)\right)r_1 \sin (s)\right)\\
&=\left({\left(r_1 \,\sin (s)\,\left(r_2 \,\cos (t)+1\right)+r_2 \,\sin (t)\,\left(r_1 \,\cos (s)+1\right)\right)}^2 
\right. \\ & \left. +{\left(r_1 \,r_2 \,\sin (s)\,\sin (t)-\left(r_1 \,\cos (s)+1\right)\,\left(r_2 \,\cos (t)+1\right)+1\right)}^2 -{r_1 }^2 \,\left(r_2^2 +1\right)+r_2^2 \right)^2 \\
&-4\,r_2^2 \,\left({\left(\left(r_1 \,\cos (s)+1\right)\,\left(r_2 \,\cos (t)+1\right)+{r_1 }^2 -r_1 \,r_2 \,\sin (s)\,\sin (t)-1\right)}^2 
\right. \\ & \left. \quad\quad\quad +{\left(r_1 \,\sin (s)\,\left(r_2 \,\cos (t)+1\right)+r_2 \,\sin (t)\,\left(r_1 \,\cos (s)+1\right)\right)}^2 \right)\\
\text{Envelope: } & \boxed{\begin{array}{l}
4r_2^2 \left\lbrack {\left(r_1 r_2 \cos (s)+r_1^2 \cos (t)+r_1 \cos \left(s-t\right)+r_2 \right)}^2 
\right. \\ 
\left. \quad\quad\quad- {\left(r_1 \sin (s)+r_2 \sin (t)+r_1 r_2 \sin \left(s+t\right)\right)}^2 
\right. \\ 
\left. \quad\quad\quad-{\left(r_1 \cos (s)+r_2 \cos (t)+r_1 r_2 \cos \left(s+t\right)+r_1^2 \right)}^2 \right\rbrack 
\end{array}}
\end{flalign*}

\vfill \eject

\paragraph{Special case: one arc is on a zero centered circle} ~\\

\begin{figure}[H]
\centering
\includegraphics[width=\textwidth,trim={0 0 0 10mm},clip]{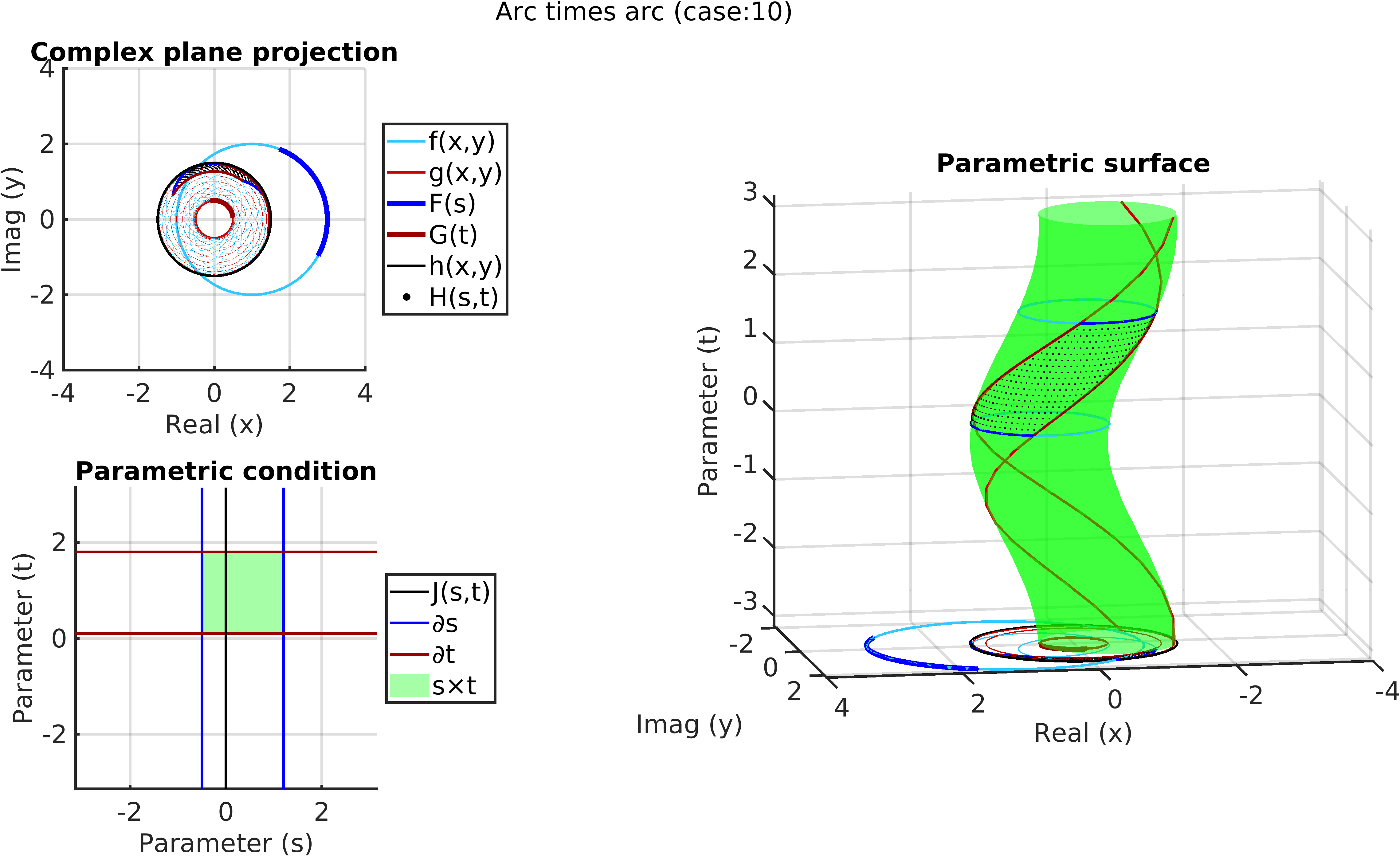}
\caption{Multiplication of two arcs when one of them is on a zero centered circle. Parameters: $r_1=2.0$, $r_2=0.5$, $s=(-0.5,1.2)$, $t=(0.1,1.8)$.}
\label{fig:ArcTimesArc10}
\end{figure}

\textit{\textbf{Operand equations}}
\begin{flalign*}
f(x,y)&={\left(x-1\right)}^2 +y^2-r_1^2 \\ \mathring{f} \left(\rho ,\theta \right)&=\rho^2 -2\rho \cos \left(\theta \right)+1-r_1^2 \\
s(x,y)&=\atantwo\left(y,x-1\right)\\ s^{\circ } \left(\rho ,\theta \right)&=\atantwo\left(\rho \sin \left(\theta \right),\rho \cos \left(\theta \right)-1\right)\\
F(s)&={r_1 e}^{is} +1=\left(1+r_1 \cos (s),r_1 \sin (s)\right)\\
\\ 
g\left(x,y\right)&=x^2 +y^2-r_2^2 
\\
{g}^{\circ } \left(\rho ,\theta \right)&=\rho -r_2 \\
t\left(x,y\right)&=\atantwo\left(y,x\right)\\ t^{\circ } \left(\rho ,\theta \right)&=\theta \\
G(t)&={r_2 e}^{\mathrm{it}} =\left(r_2 \cos (t),r_2 \sin (t)\right)
\end{flalign*}

\textit{\textbf{Parametric combination}}
\begin{flalign*}
H(s,t)
&=\left(\left(r_1 \cos (s)+1\right)r_2 \cos (t)-r_1 r_2 \sin (s)\sin (t),\left(r_1 \cos (s)+1\right)r_2 \sin (t)+r_1 r_2 \sin (s)\cos (t)\right)\\
&=r_2\big( \cos(t)+r_1 \cos(s+t), \sin(t) + r_1 \sin(s+t) \big)
\\
J(s,t)
&=r_2^2 \left\vert \begin{array}{cc}
        -\sin(t) -r_1 \sin (s+t) & -r_1\sin (s+t) \\
        \cos(t) +r_1\cos (s+t) & r_1\cos (s+t)
    \end{array} \right\vert\\
&=r_1r_2^2(\cos(t) \sin(s+t)-\sin(t) \cos(s+t)) \\
&=r_1r_2^2 \sin((s+t)-t) \\
&=r_1 r_2^2 \sin(s)\\
\text{Envelope: } & \boxed{s=k \pi \; (k \in \mathbb{Z})} 
\end{flalign*}

\textit{\textbf{Implicit combination}}
\begin{flalign*}
h(x,y) & =x^4+ y^4 + 2x^2y^2 - 2r_2^2x^2 - 2r_2^2y^2 - 2r_2^2r_1^2x^2 - 2r_2^2r_1^2y^2  - 2r_2^4r_1^2 + r_2^4r_1^4+ r_2^4 \\
&= (x^2+y^2-r_2^2+2r_1r_2^2-r_1^2r_2^2)
(x^2+y^2-r_2^2-2r_1r_2^2-r_1^2r_2^2) \\
\text{Envelope: }& \boxed{\left\{x^2+y^2=(r_2(1-r_1)^2)\right\} \cup \left\{x^2+y^2=(r_2(1+r_1)^2)\right\}}
\end{flalign*}

\textit{\textbf{Mixed combination}}
\begin{flalign*}
u^{\circ } (\rho ,\theta ,t)
&=\atantwo\left(\frac{\rho }{r_2 }\sin \left(\theta -t\right),\frac{\rho }{r_2 }\cos \left(\theta -t\right)-1\right)
\\
u^{\circ } (\rho ,\theta ,t)=0
&\Longrightarrow \frac{\rho }{r_2 }\cos \left(\theta -t\right)-1=0\\
&\Longrightarrow \frac{\rho }{r_2 }\left\lbrack \cos \left(\theta \right)\cos (t)+\sin \left(\theta \right)\sin (t)\right\rbrack - 1 \\
u(x,y,t)
&=\frac{\sqrt{x^2 +y^2 }}{r_2 }\left\lbrack \cos \left(\atantwo\left(y,x\right)\right)\cos (t)+\sin \left(\atantwo\left(y,x\right)\right)\sin (t)\right\rbrack - 1\\
u(x,y,t) = 0 
&\Longrightarrow \sqrt{x^2 +y^2 }\left(\frac{x\cos (t)}{\sqrt{x^2 +y^2 }}+\frac{y\sin (t)}{\sqrt{x^2 +y^2 }}\right)=r_2 \\
&\Longrightarrow x\cos (t)+y\sin (t)-r_2 =0
\end{flalign*}

\begin{flalign*}
h(\rho,\theta)
&=\frac{\rho^2 }{r_2^2 }-2\frac{\rho }{r_2 }\cos \left(\theta -t\right)+1-r_1^2 \\
h(\rho,\theta) = 0
&\Longrightarrow \rho^2 -2r_2 \rho \cos \left(\theta -t\right)+\left(1-r_1^2 \right)r_2 =0\\
&\Longrightarrow \rho^2 -2r_2 \rho \left\lbrack \cos \left(\theta \right)\cos (t)+\sin \left(\theta \right)\sin (t)\right\rbrack +\left(1-r_1^2 \right)r_2^2 =0\\
\hat{h}(x,y,t)
&=x^2 +y^2 -2r_2 \sqrt{x^2 +y^2 }\left\lbrack \cos \left(\atantwo\left(y,x\right)\right)\cos (t)+\sin \left(\atantwo\left(y,x\right)\right)\sin (t)\right\rbrack +\left(1-r_1^2 \right)r_2^2 \\
&=x^2 +y^2 -2r_2 \sqrt{x^2 +y^2 }\left(\frac{x\cos (t)}{\sqrt{x^2 +y^2 }}+\frac{y\sin (t)}{\sqrt{x^2 +y^2 }}\right)+\left(1-r_1^2 \right)r_2^2 \\
&=x^2 +y^2 -2r_2 x\cos (t)-2r_2 y\sin (t)+\left(1-r_1^2 \right)r_2^2 \\
\hat{h}(x,y,t)=0
&\Longrightarrow {\left(x-r_2 \cos (t)\right)}^2 +{\left(y-r_2 \sin (t)\right)}^2 -r_1^2 r_2^2 =0\\
\\
\frac{\partial \hat{h}}{\partial t}&=2\,r_2 \,\sin (t)\,{\left(x-r_2 \,\cos (t)\right)}-2\,r_2 \,\cos (t)\,{\left(y-r_2 \,\sin (t)\right)}\\
\frac{\partial \hat{h}}{\partial t} = 0
&\Longrightarrow 2r_2 x\sin (t)=2r_2 y\cos (t)\\
&\Longrightarrow t=\atan\left(\frac{y}{x}\right)\\
h(x,y)
&={\left(x-r_2 \cos \left(\atan\left(\frac{y}{x}\right)\right)\right)}^2 +{\left(y-r_2 \sin \left(\atan\left(\frac{y}{x}\right)\right)\right)}^2 -r_1^2 r_2^2 =0\\
&={{\left(x-\frac{r_2 \,x}{\sqrt{x^2 +y^2 }}\right)}}^2 +{{\left(y-\frac{r_2 \,y}{\sqrt{x^2 +y^2 }}\right)}}^2
-{r_1 }^2 \,r_2^2\\
h(x,y)=0
&\Longrightarrow x - \frac{r_2 x}{\sqrt{x^2+y^2}} +y - \frac{r_2 y}{\sqrt{x^2+y^2}} \pm r_1 r_2 = 0\\
&\Longrightarrow x - \frac{r_2 x}{\sqrt{x^2+y^2}} +y - \frac{r_2 y}{\sqrt{x^2+y^2}} \pm r_1 r_2 = 0\\
&\Longrightarrow
\text{... simplified with Matlab Symbolic Math toolbox}\\
\text{Envelope: } & \boxed{
x^2 +y^2 -(1\pm r_1 )^2 r_2^2}
\end{flalign*}

\begin{flalign*}
x(s,t)
&=\left(r_1 \cos (s)+1\right)r_2 \cos (t)-r_1 r_2 \sin (s)\sin (t)\\
y(s,t)
&=\left(r_1 \cos (s)+1\right)r_2 \sin (t)+r_1 r_2 \sin (s)\cos (t)\\
J(s,t)
&=h\left(\left(r_1 \cos (s)+1\right)r_2 \cos (t)-r_1 r_2 \sin (s)\sin (t),\left(r_1 \cos (s)+1\right)r_2 \sin (t)+r_1 r_2 \sin (s)\cos (t)\right)\\
&={{\left(r_2 \,\sin (t)\,{\left(r_1 \,\cos (s)+1\right)}+r_1 \,r_2 \,\cos (t)\,\sin (s)\right)}}^2 -{{\left(r_2 +r_1 \,r_2 \right)}}^2 
\\ & + {{\left(r_2 \,\cos (t)\,{\left(r_1 \,\cos (s)+1\right)}-r_1 \,r_2 \,\sin (s)\,\sin (t)\right)}}^2 \\
J(s,t)=0
&\Longrightarrow 2\,r_1 \,r_2^2 \,{\left(\cos (s)-1\right)}=0\\
\text{Envelope: } & \boxed{s=k \pi \; (k \in \mathbb{Z})} 
\end{flalign*}

\vfill \eject

\paragraph{Special case: both arcs are on zero centered circles} ~\\

\begin{figure}[H]
\centering
\includegraphics[width=\textwidth,trim={0 0 0 10mm},clip]{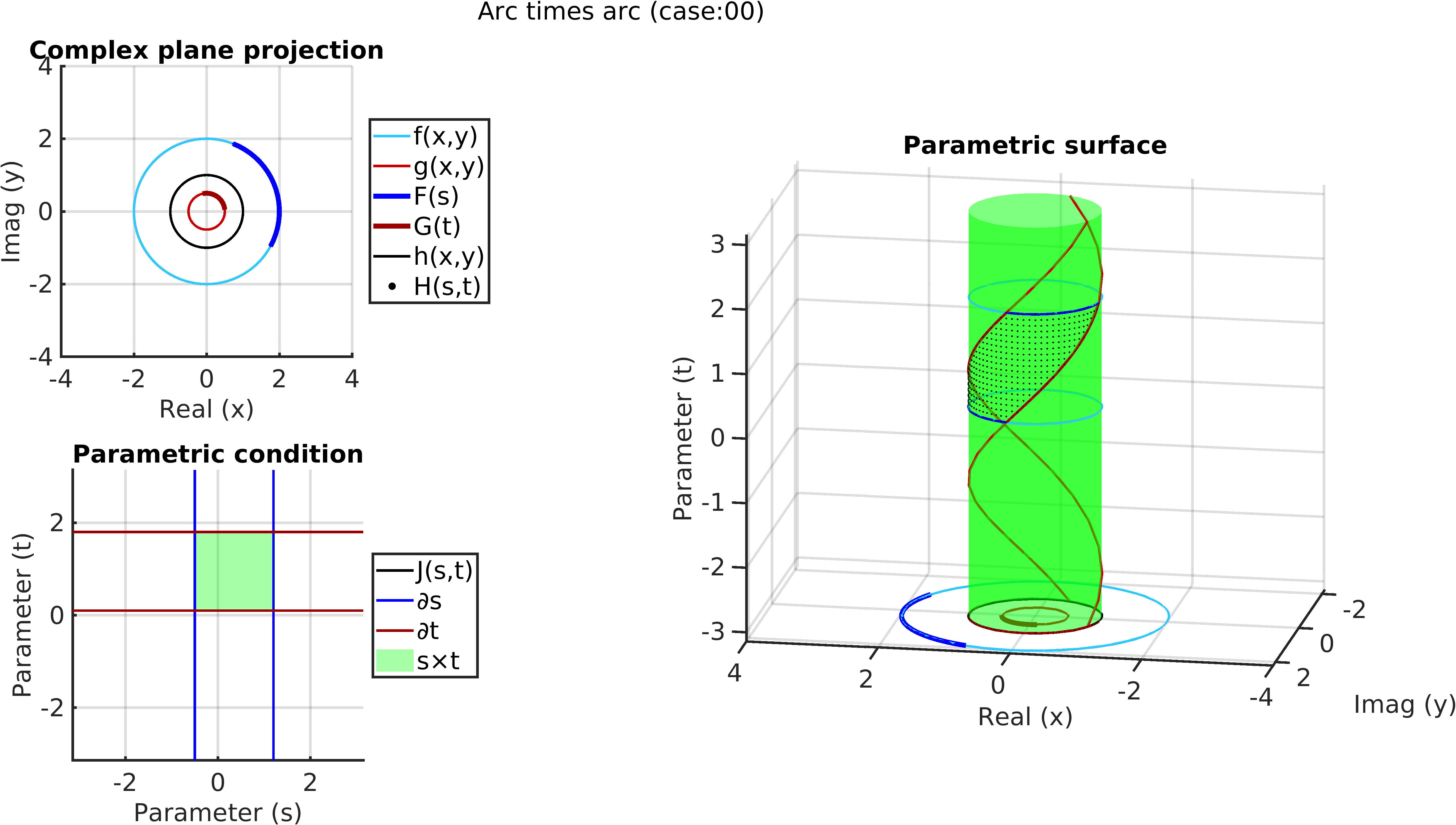}
\caption{Multiplication of two arcs when both are on zero centered circles. Parameters: $r_1=2.0$, $r_2=0.5$, $s=(-0.5,1.2)$, $t=(0.1,1.8)$.}
\label{fig:ArcTimesArc00}
\end{figure}

\textit{\textbf{Operand equations}}
\begin{flalign*}
f(x,y)&=x^2 +y^2-r_1^2 \\ \mathring{f} \left(\rho ,\theta \right)&=\rho -r_1 \\
s(x,y)&=\atantwo\left(y,x\right)\\ s^{\circ } \left(\rho ,\theta \right)&=\theta \\
F(s)&={r_1 e}^{is} =\left(r_1 \cos (s),r_1 \sin (s)\right)\\
\\ 
g\left(x,y\right)&=x^2 +y^2-r_2^2 
\\
{g}^{\circ } \left(\rho ,\theta \right)&=\rho -r_2 \\
t\left(x,y\right)&=\atantwo\left(y,x\right)\\ t^{\circ } \left(\rho ,\theta \right)&=\theta \\
G(t)&={r_2 e}^{\mathrm{it}} =\left(r_2 \cos (t),r_2 \sin (t)\right)
\end{flalign*}

\textit{\textbf{Parametric combination}}
\begin{flalign*}
H(s,t)
&=\left(r_1 r_2 \cos (s)\cos (t)-r_1 r_2 \sin (s)\sin (t),r_1 r_2 \cos (s)\sin (t)+r_1 r_2 \sin (s)\cos (t)\right)\\
&=r_1 r_2 (cos(s+t),sin(s+t))
\\
J(s,t)
&=r_1^2r_2^2 
\left\vert \begin{array}{cc}
        -\sin (s+t) & -\sin (s+t) \\
        \cos (s+t) & \cos (s+t)
    \end{array} \right\vert \\
&=0 \\
\text{Envelope: } & \boxed{\text{all } (s,t)}
\end{flalign*}

\textit{\textbf{Implicit combination}}
\begin{flalign*}
h(x,y) & =x^2+y^2-r_1^2r_2^2 \\
\text{Envelope: } & \boxed{x^2+y^2=(r_1r_2)^2}
\end{flalign*}

\textit{\textbf{Mixed combination}}
\begin{flalign*}
u^{\circ } (\rho ,\theta ,t)
&=\theta -t\\
u(x,y,t)
&=\atantwo\left(y,x\right)-t\\
\\
h(\rho,\theta)
&=\frac{\rho }{r_2 }-r_1 \\
h(\rho,\theta) = 0 &\Longrightarrow
\rho - r_1 r_2 = 0\\
\hat{h}(x,y,t)
&=\sqrt{x^2 +y^2 } - r_1 r_2 \\
\hat{h}(x,y,t) = 0
&\Longrightarrow x^2 +y^2 -r_1^2 r_2^2 =0\\
\frac{\partial \hat{h}}{\partial t}&=0\\
h(x,y)
&=x^2 +y^2 -r_1^2 r_2^2 \\
\text{Envelope: } & \boxed{x^2+y^2=(r_1r_2)^2}\\
\\
x(s,t)
&=r_1 r_2 \cos (s)\cos (t)-r_1 r_2 \sin (s)\sin (t)\\
y(s,t)
&=r_1 r_2 \cos (s)\sin (t)+r_1 r_2 \sin (s)\cos (t)\\
J(s,t)
&=h\left(r_1 r_2 \cos (s)\cos (t)-r_1 r_2 \sin (s)\sin (t),r_1 r_2 \cos (s)\sin (t)+r_1 r_2 \sin (s)\cos (t)\right)\\
&={{\left(r_1 \,r_2 \,\cos (s)\,\sin (t)+r_1 \,r_2 \,\cos (t)\,\sin (s)\right)}}^2 +{{\left(r_1 \,r_2 \,\sin (s)\,\sin (t)-r_1 \,r_2 \,\cos (s)\,\cos (t)\right)}}^2 -{r_1 }^2 \,r_2^2 \\
&=0\\
\text{Envelope: } & \boxed{\text{all } (s,t)}
\end{flalign*}

\vfill \eject